\newtheorem{theorem}{Theorem}[section]
\newtheorem{lemma}[theorem]{Lemma}
\newtheorem{proposition}{Proposition}[section]
\theoremstyle{definition}
\theoremstyle{remark}
\newtheorem{remark}[theorem]{Remark}
\numberwithin{equation}{section}
\let\p=\partial
\let\b=\beta
\let\hide\iffalse
\newcommand{\rrr}{\mathbb R^3}
\newcommand{\bq}{\begin{equation}}
\newcommand{\eq}{\end{equation}}
\begin{document}
	
	\title[ ]{Ill-posedness of the Boltzmann-BGK model in the exponential class} 
	
	\author{Donghyun Lee}
	\address{Department of Mathematics, Pohang University of Science and Technology, Republic of Korea}
	\email{donglee@postech.ac.kr}
		\author{Sungbin Park}
	\address{Department of Mathematics, Pohang University of Science and Technology, Republic of Korea}
	\email{parksb2942@postech.ac.kr}
	\author{Seok-Bae Yun}
	\address{Department of Mathematics, Sungkyunkwan University, Suwon 440-746, Republic of Korea}
	\email{sbyun01@skku.edu}

	\subjclass[2010]{82C40, 35Q20, 82C40, 35A01, 35A02}
	
	
	\keywords{BGK model, Boltzmann equation, Local Maxwellian, Ill-posedness, Well-posedness}
	
\begin{abstract}
    BGK (Bhatnagar-Gross-Krook) model is a relaxation-type model of the Boltzmann equation, which is popularly used in place of the Boltzmann equation in physics and engineering. In this paper, we address the ill-posedness problem for the BGK model, in which the solution instantly escapes the initial solution space. 
    For this, we propose two ill-posedness scenarios, namely, the homogeneous and the inhomogeneous ill-posedness mechanisms. In the former case, we find a class of spatially homogeneous solutions to the BGK model, where removing the small velocity part of the initial data triggers ill-posedness by increasing temperature. For the latter, we construct a spatially inhomogeneous solution to the BGK model such that the local temperature constructed from the solution has a polynomial growth in spatial variable.
    These ill-posedness properties for the BGK model pose a stark contrast with the Boltzmann equation for which the solution map is, at least for a finite time, stable in the corresponding solution spaces.
\end{abstract}
	
\maketitle
\tableofcontents

\section{Introduction}

\subsection{BGK model of the Boltzmann equation\nopunct}
The Boltzmann equation is a fundamental equation connecting the particle level description and the fluid level description of a gas system. The Boltzmann equation is written by 
\begin{equation} \label{eq:Boltzmann}
    \p_{t}f + v\cdot\nabla_{x}f = Q(f,f) := \iint_{\mathbb{R}^{N}\times  \mathbb{S}^{N-1}} B(v-u, \omega)\big[ f(u')f(v') - f(u)f(v)\big]d\omega du,\quad N \geq 2.
\end{equation}
The velocity distribution function $f(t,x,v)$ is the mass density function of the gas molecules at position $x\in\Omega$ with velocity $v\in\mathbb{R}^N$ at time $t>0$. Post-collisional velocities are defined by
\[
    u' = u - ((u-v)\cdot\omega)\omega,\quad  v' = v + ((u-v)\cdot\omega)\omega.
\]
Throughout this paper, we assume that the collision kernel $B(v-u,\omega)$ satisfies the cutoff hard or soft potential conditions: 
\begin{equation} \label{Collision_kernel}
    B(v-u,\omega) \leq |v-u|^{\kappa} b(\cos\theta),\quad -N< \kappa \leq 1,\quad 0\leq b(\cos\theta) \leq C|\cos\theta|,\quad \cos\theta = \frac{(v-u)\cdot\omega}{|v-u|}.
\end{equation}
Especially for Section 6, we used collision kernel
\begin{align*}
    B(v-u,\omega) = |v-u|^{\kappa} b(\cos\theta),\quad 0< \kappa \leq 1,\quad b(\cos\theta) =  C|\cos\theta|.
\end{align*}

The Boltzmann equation, however, may not be very practical in that it usually involves extremely high computational costs, mostly due to the high dimensional variables and the complicated structure of the collision operator. In this regard, a relaxational model equation was introduced as a numerically amenable model equation of the Boltzmann equation \cite{BGK,Wal}. The equation is now called the BGK (Bhatnagar–Gross–Krook) model and is widely used to simulate various kinetic or hydrodynamic flow problems in place of the Boltzmann equation: 
\begin{align}\label{BGK}
    \begin{split}
        \partial_t f + v\cdot\nabla_x f &= \frac{1}{\tau}\left\{\mathcal{M}(f) - f\right\},\\
        f(0,x,v)&=f_0(x,v),
    \end{split}
\end{align}
where $\tau$ is the relaxation time, which will be normalized to $1$ throughout this paper. It can be done by the Euler scaling $(t,x)\mapsto (\tau t, \tau x)$ for constant $\tau>0$.
The local Maxwellian $\mathcal{M}(f)$ associated to $f$ takes the following form:
\begin{eqnarray*}
    \mathcal{M}(f)=\frac{\rho}{(2\pi T)^{N/2}}\exp\left(-\frac{|v-u|^2}{2T}\right),
\end{eqnarray*}
where the local density $\rho(t,x)$, the bulk velocity $u(t,x)$, and the local temperature $T(t,x)$ are constructed from $f$ by the following relation:
\begin{align*}
    \rho(t,x) &= \int_{\mathbb{R}^N} f(t,x,v)\,dv,\\
    (\rho u)(t,x) &= \int_{\mathbb{R}^N} vf(t,x,v)\,dv,\\
    \rho(|u|^2+N T)(t,x) &= \int_{\mathbb{R}^N} |v|^2 f(t,x,v)\,dv.
\end{align*}
An explicit computation shows that $\mathcal{M}(f)$ shares the first three moments with $f$:
\[
    \int_{\mathbb{R}^N}(1,v,|v|^2)\big\{\mathcal{M}(f)-f\big\}\,dv=0.
\]
This gives the conservation laws of mass, momentum, and energy: if $f$ decays to $0$ fast enough for large $x$ and $v$, we have 
\[
    \frac{d}{dt}\int_{\mathbb{R}_x^N\times\mathbb{R}_v^N}f\,(1,v,|v|^2)\,dxdv=0.
\]
The fact that the local Maxwellian shares the first three moments with the distribution function also leads to the following somewhat unexpected inequality:
\[
    \int_{\mathbb{R}^N}\big\{\mathcal{M}(f)-f\big\}\ln f\,dv\leq 0,
\]
from which the celebrated H-theorem can be derived:
\[
    \frac{d}{dt}\int_{\mathbb{R}_x^N\times\mathbb{R}_v^N}f\ln f\, dxdv \leq 0.
\]

Throughout the paper, we use the following notation:
\begin{align*}
    \|f\|_{L_v^p}=\left\{
        \begin{array}{ll}
            \left(\int_{\mathbb{R}^N}|f(v)|^pdv\right)^p&1\leq p<\infty,\\
            ess\displaystyle\sup_{v\in\mathbb{R}^N}|f(v)|&p=\infty,
        \end{array}
    \right.
\end{align*}
when $f$ is spatially homogeneous. We also use similar notation $\|f\|_{L^{p}_{x,v}}$ if $f$ is spatially inhomogeneous. We define
\begin{align*}
    \|f\|_{L^{q}_{v} L^{p}_{x}}=  \| \| f(\cdot, v)\|_{L^{p}_{x}} \|_{L^{q}_{v}}.
\end{align*}
Definition for $\|f\|_{L^{q}_{x} L^{p}_{v}}$ is also similar.

\subsection{Two instability scenarios for the BGK model\nopunct}

Let us start with the uniform boundedness of the relaxation operator of the BGK model in the space of exponentially decaying functions, which was the main motivation for the current ill-posedness problem. In the existence problem for the BGK model, it is crucial to control the local Maxwellian in a suitable norm. The first such estimate was derived by Perthame and Pulvirenti \cite{P-P} in the space of functions decaying at polynomial orders in velocity ($\beta>0$): 
\begin{align}\label{Mcontrol}
    \big\|(1+|v|^2)^{\beta}\mathcal{M}(f)\big\|_{L^{\infty}_{x,v}}\leq C_{\mathcal{M}}\big\|(1+|v|^2)^{\beta}f\big\|_{L^{\infty}_{x,v}}
\end{align}
through a combination of various highly nontrivial moment estimates of $f$. It was later extended to general $L^p$ space in \cite{Z-H}. This estimate and its variants became a starting point for many of the existence results of the BGK model \cite{C-Z,Mischler,PY1,PY2,Perthame,Son-Yun,Yun2,W-Z,Z-H,Z-VP,Zhang}. The discrete version of such an estimate was also derived to show the convergence of numerical schemes \cite{BCRY,Bello,Issau,RSY,RY}.
When the smallness is involved, there are a couple of ways in which we can avoid the use of such estimates. A few such examples are near-equilibrium problems where the initial perturbation is sufficiently small near the global Maxellian \cite{BKLY,Yun1,Yun22,Yun3}, or stationary value problems in which the equations are posed in a sufficiently small domain \cite{Brull-Yun,BGCY,Bang Y,HY}. For the existence problem with general initial or boundary data without such smallness, however, we are not aware of any methods that can avoid the use of \eqref{Mcontrol}

As mentioned above, the control of the local Maxwellian in \eqref{Mcontrol} has been an extremely useful estimate in the mathematical study of the BGK model, but it is a rough estimate in the sense that it says the Maxwellian, which is an exponential function in $v$, decays at polynomial order in $v$:
\[
    \mathcal{M}(f)\leq \frac{C}{(1+|v|^2)^{\beta}}
\]
with $C=C_{\infty}\|f(1+|v|^2)^{\beta}\|_{L^{\infty}}$.
Therefore, a very natural question arises whether \eqref{Mcontrol} can be
sharpened into an exponential version: 
\begin{equation} \label{exp_est}
    \big\|e^{\alpha|v|^2}\mathcal{M}(f)\big\|_{L^{\infty}}\leq C\big\|e^{\alpha|v|^2}f\big\|_{L^{\infty}}.
\end{equation}
If possible, such an estimate will lead to refined results to many of the existence results reported so far. 

In this paper, however, we prove that such an exponential estimate of the local Maxwellian is not available in general. 
More precisely, we will show that there is a sequence of functions $f^n$ such that $\big\|\mathcal{M}(f^n)e^{\alpha|v|^{\beta}}\big\|_{L^{p}}$ blows up while $\big\|f^n e^{\alpha|v|^{\beta}}\big\|_{L^{p}}$ remain finite as $n$ goes to infinity for the full range of  $\beta > 0$ and $p\in[1,\infty]$. See Theorem \ref{thm:1} and \ref{thm:2}. This failure of the exponential estimate \eqref{exp_est} leads to the ill-posedness result for the BGK model in the same exponential class. In the current work, we develop two different ill-posedness scenarios: spatially homogeneous ill-posedness and spatially inhomogeneous ill-posedness. To the best of our knowledge, such a strong ill-posedness result has never been reported for the Boltzmann equation or the BGK model. Moreover, it is also shown that the Boltzmann equation is well-posed in the corresponding solution spaces, at least for a finite time, which shows a stark discrepancy between the dynamics of the Boltzmann equation and the BGK model. 
\subsubsection{Spatially homogeneous Ill-posedness mechanism:}

We construct a spatially homogeneous solution to the BGK model that instantly escapes the class of exponentially decaying functions as it evolves in time. For this, we remove a small velocity region of the Maxwllian weight used to define the solution space and verify that the solution evolving from such initial data has a slower decay speed for $v$ than that of the Maxwellian weight.

The mechanism behind this instant ill-posedness is as follows. First, removing the small velocity region decreases the density and increases the temperature. As time starts to flow, the solution starts to evolve toward the Maxwellian with the changed density and temperature of the initial data. It turns out that the temperature increase is more dominant than the density decrease in the modified Maxwellian, which leads to a slower decay speed of the solution for $v$ compared to the original Maxwellian. We write these results in weighted norm language as in the following theorem: 

\begin{theorem} [homogeneous ill-posedness for the BGK model] \label{thm:1} 
	For any given $N\geq 1$, $\alpha>0$, $\beta>0$, $\varepsilon > 0$, and $1\leq p\leq \infty$, the following statements hold:
	\begin{enumerate}
		\item If $\beta\geq 2$, then there exists a spatially homogeneous non-negative measurable function $f_{0}$ such that
		\[
			\left\|e^{\alpha|v|^{\b}}f_{0}\,\right\|_{L^{p}_{v}}<\varepsilon,
		\] 
		but the corresponding solution $f$ to the initial value problem \eqref{BGK} satisfies
		\[
			\left\|e^{\alpha|v|^{\b}}f(t)\right\|_{L_{v}^p}=\infty
		\] 
		for any $t>0$.
		\item If $0<\beta< 2$,	there exists a sequence of spatially homogeneous non-negative measurable function $f_{n,0}$ such that
		\[
			\sup_n\left\|e^{\alpha|v|^{\b}}f_{n,0}\,\right\|_{L^{p}_{v}}<\varepsilon,
		\] 
		but the corresponding solution $f_n$ to the initial value problem \eqref{BGK} for each $n$ satisfies
		\[
			\lim_{n\rightarrow \infty}\left\|e^{\alpha|v|^{\b}}f_n(t)\right\|_{L_{v}^p}=\infty
		\] 
		for any $t>0$.
	\end{enumerate}

\end{theorem}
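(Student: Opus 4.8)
The key structural fact is that, in the spatially homogeneous setting, \eqref{BGK} degenerates to a linear ODE. Since $f(t,\cdot)$ has no $x$-dependence, the moments $\int_{\mathbb{R}^{N}}(1,v,|v|^{2})f\,dv$ are conserved in time, so the density $\rho$, bulk velocity $u$ and temperature $T$ of $f(t)$ coincide with those of $f_{0}$ for all $t$; hence $\mathcal{M}(f(t))=\mathcal{M}(f_{0})=:M_{0}$ is a \emph{fixed} Maxwellian and $f$ solves $\partial_{t}f=M_{0}-f$. Its unique solution is $f(t,v)=e^{-t}f_{0}(v)+(1-e^{-t})M_{0}(v)$, so in particular $f(t,v)\ge(1-e^{-t})M_{0}(v)$ for every $t>0$, and therefore
\[
	\big\|e^{\alpha|v|^{\b}}f(t)\big\|_{L^{p}_{v}}\ \ge\ (1-e^{-t})\,\big\|e^{\alpha|v|^{\b}}M_{0}\big\|_{L^{p}_{v}},\qquad t>0.
\]
The entire problem thus reduces to choosing $f_{0}$ (resp.\ a sequence $f_{n,0}$) with small $\|e^{\alpha|v|^{\b}}f_{0}\|_{L^{p}_{v}}$ while making $\|e^{\alpha|v|^{\b}}M_{0}\|_{L^{p}_{v}}$ infinite (resp.\ divergent). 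Note that $M_{0}$ is a genuine Gaussian with temperature $T_{0}=T_{0}(f_{0})$, and $e^{\alpha|v|^{\b}}M_{0}\notin L^{p}_{v}$ precisely when $\b>2$, or when $\b=2$ and $T_{0}\ge 1/(2\alpha)$; for $\b<2$ it always lies in $L^{p}_{v}$.

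For part (1) ($\b\ge 2$) I would take $f_{0}=\delta\,\mathbf{1}_{\{|v|\le R\}}$, which has $u_{0}=0$ and $T_{0}=R^{2}/(N+2)$ \emph{independent of} $\delta$. Fixing $R$ with $R^{2}/(N+2)>1/(2\alpha)$ and then $\delta>0$ small enough that $\delta\|e^{\alpha|v|^{\b}}\mathbf{1}_{\{|v|\le R\}}\|_{L^{p}_{v}}<\varepsilon$ (a finite number once $R$ is fixed), the remark above gives $\|e^{\alpha|v|^{\b}}M_{0}\|_{L^{p}_{v}}=\infty$, hence $\|e^{\alpha|v|^{\b}}f(t)\|_{L^{p}_{v}}=\infty$ for every $t>0$.

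For part (2) ($0<\b<2$) a single datum cannot work by the last remark, so the temperature must run off to infinity along a sequence. I would take $f_{n,0}=c_{n}\,\mathbf{1}_{\{|v|\le R_{n}\}}$ with $R_{n}\to\infty$, and choose $c_{n}$ so that $\sup_{n}\|e^{\alpha|v|^{\b}}f_{n,0}\|_{L^{p}_{v}}<\varepsilon$; since $\|e^{\alpha|v|^{\b}}\mathbf{1}_{\{|v|\le R_{n}\}}\|_{L^{p}_{v}}$ is $e^{\alpha R_{n}^{\b}}$ times a power of $R_{n}$, one takes $c_{n}$ of size $e^{-\alpha R_{n}^{\b}}$ up to a power of $R_{n}$, so $\rho_{0,n}\sim c_{n}R_{n}^{N}$ decays like $e^{-\alpha R_{n}^{\b}}$ (modulo polynomial factors) while $T_{0,n}=R_{n}^{2}/(N+2)\to\infty$. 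It remains to show $\|e^{\alpha|v|^{\b}}M_{0,n}\|_{L^{p}_{v}}\to\infty$. The radial exponent $\Phi_{n}(r)=\alpha r^{\b}-r^{2}/(2T_{0,n})$ is maximized at $r_{n}^{\ast}=(\alpha\b T_{0,n})^{1/(2-\b)}$ with $\Phi_{n}(r_{n}^{\ast})=\alpha(1-\tfrac{\b}{2})(r_{n}^{\ast})^{\b}\sim C\,R_{n}^{2\b/(2-\b)}$ for some $C>0$; since $|\Phi_{n}''|\lesssim 1/T_{0,n}\to0$ near $r_{n}^{\ast}$, restricting the $v$-integral to the unit shell $\{r_{n}^{\ast}\le|v|\le r_{n}^{\ast}+1\}$ gives
\[
	\big\|e^{\alpha|v|^{\b}}M_{0,n}\big\|_{L^{p}_{v}}\ \gtrsim\ \rho_{0,n}\,T_{0,n}^{-N/2}\,e^{\Phi_{n}(r_{n}^{\ast})}\times(\text{power of }R_{n})\ \sim\ e^{-\alpha R_{n}^{\b}}\,e^{C R_{n}^{2\b/(2-\b)}}\times(\text{power of }R_{n}).
\]
Because $\tfrac{2\b}{2-\b}-\b=\tfrac{\b^{2}}{2-\b}>0$, the growing exponential $e^{C R_{n}^{2\b/(2-\b)}}$ beats the decay $e^{-\alpha R_{n}^{\b}}$, so this tends to $\infty$; then $\|e^{\alpha|v|^{\b}}f_{n}(t)\|_{L^{p}_{v}}\ge(1-e^{-t})\|e^{\alpha|v|^{\b}}M_{0,n}\|_{L^{p}_{v}}\to\infty$ for each fixed $t>0$.

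The one genuinely delicate point is the exponent competition in part (2): the weighted-norm constraint forces $\rho_{0,n}$ to decay, and the \emph{rate} of that decay is governed by the profile of $f_{n,0}$. The flat profile $\mathbf{1}_{\{|v|\le R_{n}\}}$ is chosen so that this rate, $e^{-\alpha R_{n}^{\b}}$, matches the weight exponent $\b$ and is slow enough to be overtaken by $e^{C R_{n}^{2\b/(2-\b)}}$ for \emph{all} $\b\in(0,2)$; the superficially natural recipe "delete a small ball from a fixed Gaussian $M_{\rho_{\ast},0,T_{\ast}}$" instead makes $\rho_{0,n}$ decay like $e^{-R_{n}^{2}/(2T_{\ast})}$, which wins the comparison only when $\tfrac{2\b}{2-\b}>2$, i.e.\ $\b>1$. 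Everything else — the elementary moment computations, the Laplace-type lower bound, and the $p=\infty$ versus $p<\infty$ bookkeeping (including the borderline $\b=2$, $T_{0}=1/(2\alpha)$, which is why $T_{0}$ is taken strictly larger) — is routine.
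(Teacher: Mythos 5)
Your reduction is exactly the paper's: in the spatially homogeneous case the moments are conserved, so $f(t)=e^{-t}f_0+(1-e^{-t})\mathcal{M}(f_0)$ and everything reduces to making $\big\|e^{\alpha|v|^{\b}}\mathcal{M}(f_0)\big\|_{L^p_v}$ infinite (for $\b\ge2$) or divergent along a sequence (for $\b<2$) while keeping the weighted norm of the data small; the mechanism (temperature large or tending to infinity, Laplace point at $r^*=(\alpha\b T)^{1/(2-\b)}$, exponent comparison $2\b/(2-\b)>\b$, and the second-derivative estimate justifying the unit-shell lower bound for $p<\infty$) is the same as in Theorem~\ref{thm:2_1} and Propositions \ref{prop:2_1}--\ref{prop:2_3}. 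The only genuine difference is the concrete data: the paper uses weight-matched truncated profiles $A_{n,p}e^{-\alpha|v|^{\b}}\mathbf{1}_{n\le|v|\le n^2}$ and extracts $\rho_n$, $u_n=0$, $T_n\sim n^2/N$ via the Tricomi-type asymptotics of Lemmas \ref{lem:Tri_integral}--\ref{lem:Tinfty_asym}, whereas your flat balls $c_n\mathbf{1}_{|v|\le R_n}$ give $u=0$ and $T=R_n^2/(N+2)$ \emph{exactly} and push the $e^{-\alpha R_n^{\b}}$ decay into the amplitude $c_n$ through the normalization. What you buy is exact moment formulas; what you still owe (for $p<\infty$) is the endpoint asymptotic $\int_0^{R}e^{p\alpha r^{\b}}r^{N-1}\,dr\sim C e^{p\alpha R^{\b}}R^{N-\b}$ needed to size $c_n$, so the saving is modest. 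The argument is correct, including the treatment of the borderline $\b=2$ (taking $T_0$ strictly above $1/(2\alpha)$) and the correct observation that for $\b<2$ no single datum can work, which is why the theorem is stated sequentially there.
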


\subsubsection{Ill-posedness from spatial inhomogeneity:}  We suggest another ill-posedness mechanism in which the presence of the spatial variable plays a crucial role. 
More precisely, we construct initial data, which is basically - not exactly -  a homogeneous Maxwellian, but removed from inside in a spatially inhomogeneous manner. The removal of set in the velocity domain varies with respect to the position in the spatial domain. The intuition behind the choice of such inhomogeneously truncated Maxwellian is that it gives rise to a solution for which the local temperature behaves, at least for a short time, like fractional polynomials of the modulus of the spatial variable. It leads to the desired blow-up of the solution in a suitable polynomial-exponential decaying class.

To state the result, we define a weight function
\begin{align}\label{def_w}
    w_{\alpha,\beta,\delta}(v) = (1+|v|^2)^{\delta}e^{\alpha|v|^\beta}
\end{align}
for $\alpha,\beta>0$ and $\delta\geq 0$. In most cases, $\alpha$ and $\beta$ will be fixed, and $\delta$ will play an important role in the detailed analysis. Therefore, we will frequently write the weight function by $w_\delta(v)$ if there is no confusion, especially in Section 5.\\

\begin{theorem} [inhomogeneous Ill-posedness of the BGK model] \label{thm:2}
    Let our spatial domain $\Omega = \mathbb{R}^N_x$. For any given $N\geq 1$, $0 < \alpha^{\prime} \leq \alpha$, $0 < \beta \leq 2$, $\delta \geq 0$, $\varepsilon > 0$, and $1\leq p, q\leq \infty$, there exists a spatially inhomogeneous non-negative measurable function $f_0$ such that
    \[
        \left\|w_{\alpha,\beta,\delta}f_0\right\|_{L^p_xL^q_v} 
        \leq \varepsilon\quad (resp., \left\|w_{\alpha,\beta,\delta}f_0\right\|_{L^q_vL^p_x}\leq \varepsilon),
    \]
    but the solution $f$ to the initial value problem \eqref{BGK} defined in $\mathbb{R}_{x}^{N}\times \mathbb{R}_{v}^{N}$ satisfies
    \[
        \left\|w_{\alpha',\beta,\delta}f(t)\right\|_{L^p_xL^q_v}=\infty\quad (resp., \left\|w_{\alpha',\beta,\delta}f(t)\right\|_{L^q_vL^p_x}=\infty),
    \] 
    for any $t>0$. In fact, we can choose $f_0$ in a more restricted class satisfying
    \[ 
        \left\|w_{\alpha,\beta,\delta}f_0\right\|_{L^p_xL^q_v} 
        + \left\|w_{\alpha,\beta,\delta}f_0\right\|_{L^\infty_{x,v}}
        \leq \varepsilon\quad (resp., \left\|w_{\alpha,\beta,\delta}f_0\right\|_{L^q_vL^p_x} +  \left\|w_{\alpha,\beta,\delta}f_0\right\|_{L^\infty_{x,v}}\leq \varepsilon).
    \] 
    to get the identical result.
\end{theorem}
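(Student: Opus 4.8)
\smallskip
\noindent\textbf{Proof plan.}
The idea is to take for $f_0$ a fixed, spatially homogeneous Maxwellian of small amplitude and low temperature, from which a position‑dependent set is deleted in velocity space, and to show that transport and relaxation together manufacture, at a far‑away spatial cell, a velocity profile whose spread — hence whose local temperature — grows polynomially in $|x|$; such growth is incompatible with the weight $w_{\alpha',\beta,\delta}$. Concretely, fix $\bar T>0$, chosen $<\tfrac1{2\alpha}$ when $\beta=2$ and arbitrary otherwise, a small amplitude $c_0=c_0(\e)>0$, a family of measurable ``holes'' $E(x)\subset\rr^N_v$ and a spatially decaying envelope $\chi(x)\in(0,1]$, and set
\[
    f_0(x,v)=c_0\,\chi(x)\,\frac{1}{(2\pi\bar T)^{N/2}}\,e^{-|v|^2/2\bar T}\,\mathbf 1_{\{v\notin E(x)\}}.
\]
Since $\beta\le 2$ and $\bar T$ is subcritical, $w_{\alpha,\beta,\delta}(v)e^{-|v|^2/2\bar T}$ is bounded on $\rr^N_v$; choosing $c_0$ small and $\chi$ with small enough $L^p_x$ norm therefore makes $\|w_{\alpha,\beta,\delta}f_0\|_{L^p_xL^q_v}+\|w_{\alpha,\beta,\delta}f_0\|_{L^\infty_{x,v}}\le\e$, and the identical estimate holds with the two norms interchanged, so the ``more restricted class'' requirement is met. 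As $f_0$ is dominated by a polynomially weighted $L^\infty_{x,v}$ function, the existence theory resting on \eqref{Mcontrol} produces a nonnegative mild solution $f$ to \eqref{BGK} on $\rr^N_x\times\rr^N_v$, satisfying the characteristic form
\[
    f(t,x,v)=e^{-t}f_0(x-vt,v)+\int_0^t e^{-(t-s)}\mathcal{M}(f)(s,x-v(t-s),v)\,ds
\]
together with the pointwise lower bounds $f\ge e^{-t}f_0(x-vt,v)$ and $f\ge\int_{t/2}^t e^{-(t-s)}\mathcal{M}(f)(s,x-v(t-s),v)\,ds\ge0$; in particular $\rho(t,x)>0$ for all $t\ge0$, $x\in\rr^N$, because $E(x-vt)$ cannot be all of $\rr^N_v$.

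\smallskip
The core step is the a priori estimate that, for some $t_0>0$ and some exponent $\gamma$ with $2-\beta<\gamma\le2$ (only $T(t,x)\to\infty$ as $|x|\to\infty$ is needed when $\beta=2$), the hydrodynamic fields of $f$ satisfy, for all $t\in(0,t_0]$ and $|x|$ large,
\[
    T(t,x)\ \gs\ \frac{(1+|x|)^{\gamma}}{t^{\gamma}},\qquad \rho(t,x)\ \gs\ (1+|x|)^{-a},\qquad |u(t,x)|\ \ls\ \frac{1+|x|}{t},
\]
with $a\ge0$ determined by the decay of $\chi$. The mechanism is that $E(x)$ is tuned so that, at a far‑away $x$, the solution at time $t$ is a superposition of a slow population — the heated leftover of the truncated Maxwellian, together with slow particles transported from nearby cells, centered near $v=0$ — and a fast population of particles emanating from the region where $f_0$ is left essentially intact, centered near $v\approx x/t$; these carry comparable mass, so the variance $\rho^{-1}\int|v-u|^2 f\,dv$ picks up the square of the separation, which is of order $|x|^2/t^2$. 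Making this rigorous is the heart of the argument: one must lower‑bound the contributions of the transported datum and of the relaxation term to the mass and the energy of $f(t,x,\cdot)$, upper‑bound $|u(t,x)|$, and verify that the relaxation operator does not prematurely thermalize the bimodal profile, while simultaneously balancing $E(x)$ and $\chi$ against the smallness of $f_0$.

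\smallskip
Granting this estimate, the conclusion follows by feeding it into the Duhamel lower bound. For $|v|$ large the backward point $y=x-v(t-s)$ has $|y|\approx(t-s)|v|$, so $\mathcal{M}(f)(s,y,v)\gs \rho(s,y)\,T(s,y)^{-N/2}\exp\!\big(-|v-u(s,y)|^2/2T(s,y)\big)$ with $|v-u(s,y)|^2/2T(s,y)\ls |v|^{2-\gamma}/(t-s)^{\gamma}$; integrating over $s\in[t/2,t]$ yields, for each fixed $t\in(0,t_0]$ and a.e.\ $x$, a bound of the form $f(t,x,v)\gs c_{t,x}\,|v|^{-\gamma N/2}\,e^{-|v|^{2-\gamma}/(c\,t^{\gamma})}$ for all large $|v|$. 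Since $2-\gamma<\beta$ (resp.\ $T(s,y)\ge\tfrac1{2\alpha'}$ for large $|y|$ when $\beta=2$), the weight $w_{\alpha',\beta,\delta}$ overwhelms this decay, so $\big\|w_{\alpha',\beta,\delta}f(t,x,\cdot)\big\|_{L^q_v}=\infty$ for a.e.\ $x$, and hence $\|w_{\alpha',\beta,\delta}f(t)\|_{L^p_xL^q_v}=\infty$ for every $t>0$; the $L^q_vL^p_x$ statement is obtained verbatim. (When $\beta<2$ one may alternatively argue at the level of $\mathcal{M}(f)$: the polynomial growth of $T(t,x)$ together with the at‑most‑polynomial decay of $\rho(t,x)$ makes $x\mapsto\|w_{\alpha',\beta,\delta}\mathcal{M}(f)(t,x,\cdot)\|_{L^q_v}\gs\rho(t,x)\,T(t,x)^{-N/2}e^{c\,T(t,x)^{\beta/(2-\beta)}}$ non‑integrable on $\rr^N_x$, whence the same conclusion after one Duhamel iteration.)

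\smallskip
The hard part is the a priori estimate of the second paragraph: producing a genuine polynomial‑in‑$|x|$ lower bound for the local temperature of the solution at positive time, with simultaneous control from below on $\rho$ and from above on $|u|$, and checking that these survive the tuning of $E(x)$ and $\chi$ against the smallness of $f_0$. The weighted bounds on $f_0$, the appeal to the existence theorem so that ``the solution $f$'' is well defined, and the handling of the two norm orderings are routine.
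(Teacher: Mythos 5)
Your high-level strategy (truncate a Maxwellian-type profile in a position-dependent way so that the local temperature grows polynomially in $|x|$, then feed the macroscopic bounds into Duhamel) is the right one and matches the paper's, but the key a priori estimate you posit is both left unproven and, as stated, unachievable — and the mechanism you invoke for it is wrong. You claim $T(t,x)\gs (1+|x|)^{\gamma}/t^{\gamma}$ with $\gamma>2-\beta$ (possibly near $2$), $\rho(t,x)\gs(1+|x|)^{-a}$, and attribute the heating to a bimodal profile in which a fast population transported from near the origin, centered at $v\approx x/t$, carries mass comparable to the slow leftover. It cannot: the mass arriving at $x$ with $v\approx x/t$ is controlled by $f_0(x-vt,v)$ evaluated at $|v|\approx|x|/t$, which under your Gaussian envelope is $\ls e^{-|x|^2/(2\bar T t^2)}$ — exponentially small against any polynomial in $|x|$ — so it contributes nothing to the variance at the claimed order. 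Relatedly, $T\gs|x|^{\gamma}/t^{\gamma}$ and $|u|\ls|x|/t$ blow up as $t\to0^+$ at fixed $x$, contradicting continuity of the macroscopic fields at $t=0$ (the second moment is bounded by $\int|v|^2f_0(x-vt,v)\,dv\le c_0N\bar T$ uniformly). The correct mechanism is that the \emph{initial data itself} must already have $T_0(x)\sim|x|^{2\gamma}$, which forces the hole $E(x)$ to remove the entire bulk $\{|v|\le|x|^{\gamma}\}$ and hence forces $\rho_0(x)$ to decay like the envelope evaluated at $|v|=|x|^{\gamma}$ — exponentially in a power of $|x|$, not polynomially as you assume; the short-time analysis then only needs to show these fields persist (this is the content of the paper's Theorem \ref{thm:Ext_Uni}, proved by a contraction argument whose closure requires $\gamma\le\min\{\tfrac1{1+\beta},\tfrac12\}$, i.e.\ \emph{small} $\gamma$).

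This breaks your concluding step. With the admissible $\gamma\le\tfrac12$ one has $2-\gamma>\beta$ for $\beta<2$, so for each fixed $x$ the Gaussian factor $e^{-|v|^2/(2T(x))}$ with $T(x)\sim|x|^{2\gamma}$ still dominates $e^{\alpha'|v|^{\beta}}$ and $\|w_{\alpha',\beta,\delta}f(t,x,\cdot)\|_{L^q_v}$ is \emph{finite} at every $x$ (cf.\ Proposition \ref{prop:2_3}); the divergence must instead be read as $|x|\to\infty$, from the non-integrability of $x\mapsto\rho(t,x)T(t,x)^{-N/2}\exp(cT(t,x)^{\beta/(2-\beta)})$ — precisely the route you relegate to a parenthetical "alternative." For that route the choice of envelope matters: with your Gaussian $e^{-|v|^2/2\bar T}$ the density penalty $e^{-|x|^{2\gamma}/2\bar T}$ beats the gain $e^{c|x|^{2\beta\gamma/(2-\beta)}}$ unless $\beta>1$, whereas the paper matches the envelope to the weight ($e^{-\alpha|v|^{\beta}}$, penalty $e^{-\alpha|x|^{\beta\gamma}}$) to cover all $0<\beta\le2$. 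Finally, the $L^q_vL^p_x$ ordering is not "obtained verbatim": there the blow-up must be located at $x$ of size $|x_v|\sim|v|^{(4-\beta)/(4\gamma)}$ for large $|v|$, which is a separate computation.
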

\begin{remark}
    (1) Above Theorem \ref{thm:2} implies the ill-posedness in $w_{\delta}$-weighted $L^p_xL^q_v\cap L^\infty_{x,v}$ (resp., $w_{\delta}$-weighted $L^q_vL^p_x\cap L^\infty_{x,v}$) as well as $w_{\delta}$-weighted $L^p_xL^q_v$ (resp., ($w_{\delta}$-weighted) $L^q_vL^p_x$).
    (2) We mention that this result is stronger than Theorem \ref{thm:1} in the sense that Theorem \ref{thm:1} implies sequential blow up in a weighted $L^p$ norm for $0 < \beta \leq 2$ case (See (2) of Theorem \ref{thm:2_1}), while Theorem \ref{thm:2} shows a strong ill-posedness in $w_{\alpha,\beta,\delta}$-weighted $L^{p}_{x}L^{q}_{v}$ space even for $0 < \beta \leq 2$.
\end{remark}

We mention that it is a somewhat unexpected result since ill-posed solutions are usually sought in a suitably reduced or symmetric function.
This is because it is sufficient to find a specific example that breaks down the stability property to prove such ill-posedness.  Therefore, for kinetic equations, it is very natural to attempt to look for such examples in the class of spatially homogeneous functions where the dependence of the distribution function on the spatial variables is suppressed. In the current work, however, we were able to find a spatially inhomogeneous ill-posedness mechanism where the presence of spatial variables plays a crucial role in pushing the solution out of the initial solution space.

Before closing this subsection, we mention that our result shows that the relaxation operator cannot be a bounded operator between the space of functions with exponential decay at the functional analysis level. This, however, does not prevent the possibility that such an estimate can be recovered if we restrict our interest to the solutions space of the BGK model, not a general function class. This is especially so since our explicit construction of the sequence of functions $f^n$ is designed such that the velocity distributions are concentrated in the far field, which is not relevant as a solution of collisional or relaxational kinetic equations.\\

\subsection{Comparison with the well-posedness result of the Boltzmann equation\nopunct}

The BGK model is known to produce qualitatively satisfactory simulations at much lower computational cost compared to the Boltzmann equation. Therefore, it is popularly employed in various important flow simulations in physics and engineering. However, since the BGK model somewhat neglects the complicated collision process, the BGK description of the Boltzmann flow cannot be exact. So far, such discrepancy between the BGK model and the Boltzmann has been investigated or measured mostly in terms of transport coefficients, such as the incorrectness of the viscosity and the thermal conductivity obtained from the BGK model in the Navier-Stokes limit (See \cite{A-P,Cercignani,CC,Sone,Sone2}). Our result reveals for the first time the discrepancy between the two equations on a more subtle mathematical level. It shows that the mapping property of the solution operator of the Boltzmann equation and the BGK model is fundamentally different: In the above-mentioned Theorem \ref{thm:1} and Theorem \ref{thm:2}, we showed that the solution map of the BGK model between the space of functions with exponential or polynomial-exponential  decay can instantly blow up: there exists a unique solution $f_{BGK}(t)$ having some initial $f_0$ such that
$$
    \frac{\big\|f_{BGK}(t)\big\|_{X}}{\|f_0 \big\|_{X}}=\infty
$$
for any arbitrarily small $t$. Here $\|\cdot\|_X$ denotes the norm employed either in Theorem \ref{thm:1} or \ref{thm:2}.\\
On the other hand, we show in the following Theorem \ref{thm:boltz} that the solution operator of the Boltzmann equation between the same solution space is bounded local in time:
$$
    \sup_{f_0\in X} \frac{\big\|f_{BE}(t)\big\|_{X}}{\|f_0 \big\|_{X}}<\infty,
$$
Here, $f_{BE}(t)$ is the solution of the Boltzmann equation at time $t$ corresponding to the same initial data $f_0$.
\begin{theorem} [Well-posedness of the Boltzmann] \label{thm:boltz}
    Assume $N$, $\alpha$, $\beta$, and $\delta$ satisfy
    \begin{equation}\label{index:weight}
        \begin{split}
            N\geq 2,\quad \alpha>0,\quad \max\Big\{0, \frac{2}{N-1}\kappa, \frac{2}{3}(1+\kappa)\Big\}<\beta\leq 2,\quad 2\delta>N+1,
        \end{split}
    \end{equation}
    and $1\leq p,q\leq \infty$. If $N=2$, we impose one more condition $1+\kappa\leq \beta$. Let our spatial domain to be $\mathbb{R}^N_x$ or $\mathbb{T}^N_x$. Then, if $f_0$ satisfies  
    \[
        \left\|w_{\alpha,\beta,\delta}f_0\right\|_{L^q_vL^p_x} + \left\|w_{\alpha,\beta,\delta}f_0\right\|_{L^\infty_{x,v}}<\infty,
    \] 
    where $w_{\alpha,\beta,\gamma}(v)$ is defined in \eqref{def_w}, then there exists $T^*>0$ such that the Boltzmann equation \eqref{eq:Boltzmann} has a unique solution $f(t,x,v)$ satisfying
    \begin{align*}
        \left\|w_{\alpha, \beta, \delta}f(t)\right\|_{L^\infty_{x,v}} &\leq 3\left\|w_{\alpha, \beta, \delta}f_0\right\|_{L^\infty_{x,v}}, \\
        \left\|w_{\alpha, \beta, \delta}f(t)\right\|_{L^q_vL^p_x} &\leq \left\|w_{\alpha, \beta, \delta}f_0\right\|_{L^q_vL^p_x}\exp\left(C\left\|w_{\alpha, \beta, \delta}f_0\right\|_{L^\infty_{x,v}}t\right)
    \end{align*}
    for $0\leq t\leq T^*$. In particular, if $1\leq q\leq p\leq \infty$, we have
    \begin{align*}
        \left\|w_{\alpha, \beta, \delta}f(t)\right\|_{L^p_xL^q_v}\leq \left\|w_{\alpha, \beta, \delta}f_0\right\|_{L^q_vL^p_x}\exp\left(C\left\|w_{\alpha, \beta, \delta}f_0\right\|_{L^\infty_{x,v}}t\right)
    \end{align*}
    for $0\leq t\leq T^*$.
\end{theorem}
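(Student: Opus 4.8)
The plan is to recast \eqref{eq:Boltzmann} in mild form along free‑transport characteristics and run a fixed‑point argument in the weighted space. Splitting the cutoff operator as $Q(f,f)=Q^{+}(f,f)-f\,L(f)$ with
\[
Q^{+}(f,g)(v)=\iint_{\mathbb{R}^N\times\mathbb{S}^{N-1}}\!\!B(v-u,\sigma)\,f(u')g(v')\,d\sigma du,\qquad L(f)(v)=\iint_{\mathbb{R}^N\times\mathbb{S}^{N-1}}\!\!B(v-u,\sigma)\,f(u)\,d\sigma du,
\]
the equation is equivalent to
\[
f(t,x,v)=e^{-\int_0^t L(f)(\tau,x-(t-\tau)v,v)d\tau}f_0(x-tv,v)+\int_0^t e^{-\int_s^t L(f)(\tau,x-(t-\tau)v,v)d\tau}\,Q^{+}(f,f)(s,x-(t-s)v,v)\,ds,
\]
which preserves non‑negativity (hence $L(f)\ge 0$ and $0\le e^{-\int L(f)}\le 1$ along the iteration). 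Writing $w=w_{\alpha,\beta,\delta}$, I will construct the solution as a fixed point in $C\big([0,T];\,Y\big)$, $Y$ being the $w$‑weighted $L^{q}_vL^{p}_x\cap L^{\infty}_{x,v}$ space, for $T$ small depending on $\|wf_0\|_{L^{\infty}_{x,v}}$.

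The analytic heart is two weighted bilinear estimates for $Q^{+}$. Using $u'+v'=u+v$, $|u'|^2+|v'|^2=|u|^2+|v|^2$, the hypothesis $\beta\le 2$, and $(a+b)^{\gamma}\le a^{\gamma}+b^{\gamma}$ for $0\le\gamma\le 1$, one obtains the submultiplicativity $w(v)\le C\,w(u')w(v')$, whence
\[
w(v)\,Q^{+}(f,g)(x,v)\le C\iint B(v-u,\sigma)\,\frac{w(v)}{w(u')w(v')}\big[w(u')f(x,u')\big]\big[w(v')g(x,v')\big]\,d\sigma du .
\]
Everything then reduces to the Schur‑type bound
\[
\mathcal{K}:=\sup_{v}\iint_{\mathbb{R}^N\times\mathbb{S}^{N-1}}B(v-u,\sigma)\,\frac{w(v)}{w(u')w(v')}\,d\sigma du<\infty
\]
and to its dual (obtained by the collisional change of variables $u\leftrightarrow u'$, $v\leftrightarrow v'$): these give $\|wQ^{+}(f,g)\|_{L^{\infty}_{x,v}}\lesssim\|wf\|_{L^{\infty}_{x,v}}\|wg\|_{L^{\infty}_{x,v}}$, and, since $Q^{+}$ is local in $x$, bounding $w(u')f(x,u')\le\|wf\|_{L^{\infty}_{x,v}}$ pointwise in $x$ and applying Minkowski's inequality first in $x$ and then in $v$ reduces the mixed estimate to the $L^{q}_v$‑boundedness of $G\mapsto\iint B\,\tfrac{w(v)}{w(u')w(v')}G(v')\,d\sigma du$ — again a consequence of $\mathcal{K}$ and its dual by a Schur/interpolation argument — yielding $\|wQ^{+}(f,g)\|_{L^{q}_vL^{p}_x}\lesssim\|wf\|_{L^{\infty}_{x,v}}\|wg\|_{L^{q}_vL^{p}_x}$ (and the symmetric assignment) for all $1\le p,q\le\infty$. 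I expect $\mathcal{K}<\infty$ to be the main obstacle: one must split the $(u,\sigma)$ integral into grazing vs.\ non‑grazing and $|u|$ small vs.\ large, pass to the Carleman/$\omega$ representation, exploit the cutoff $b(\cos\theta)\le C|\cos\theta|$, and balance the hard‑potential factor $|v-u|^{\kappa}$ and the Jacobian of the collisional substitution against the decay of $w(v)/\big(w(u')w(v')\big)$. This balancing is exactly where $\beta>\max\{0,\tfrac{2}{N-1}\kappa,\tfrac{2}{3}(1+\kappa)\}$ (with the extra condition $1+\kappa\le\beta$ for $N=2$) and $2\delta>N+2$ are needed.

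Granting these estimates, the argument closes in the usual way. Free transport is measure preserving on $\mathbb{R}^N_x$ and on $\mathbb{T}^N_x$, so $\|h(\cdot-tv,v)\|_{L^p_x}=\|h(\cdot,v)\|_{L^p_x}$ for each $v$; dropping $e^{-\int L(f)}\le 1$ and applying Minkowski in $v$ to the mild formula gives
\[
\|wf(t)\|_{L^{\infty}_{x,v}}\le\|wf_0\|_{L^{\infty}_{x,v}}+C\!\int_0^t\!\|wf(s)\|_{L^{\infty}_{x,v}}^2\,ds,\qquad \|wf(t)\|_{L^{q}_vL^{p}_x}\le\|wf_0\|_{L^{q}_vL^{p}_x}+C\!\int_0^t\!\|wf(s)\|_{L^{\infty}_{x,v}}\|wf(s)\|_{L^{q}_vL^{p}_x}\,ds .
\]
A continuity/bootstrap argument with $T$ small turns the first inequality into $\|wf(t)\|_{L^{\infty}_{x,v}}\le 3\|wf_0\|_{L^{\infty}_{x,v}}$ on $[0,T]$; feeding this into the second and applying Gronwall yields the stated bound with factor $\exp(C\|wf_0\|_{L^{\infty}_{x,v}}t)$. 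Existence and uniqueness of the fixed point follow by applying the same bilinear estimates to $Q^{+}(f,f)-Q^{+}(g,g)=Q^{+}(f-g,f)+Q^{+}(g,f-g)$ together with $|e^{-a}-e^{-b}|\le|a-b|$ for $a,b\ge 0$ to control the difference of integrating factors (if necessary the contraction is run with a slightly smaller polynomial index $\delta'<\delta$, still admissible since $2\delta>N+2$, and the full $w_{\alpha,\beta,\delta}$ bound is recovered afterward from the a priori estimate). Finally, for $1\le q\le p\le\infty$, Minkowski's integral inequality gives $\|wf(t)\|_{L^{p}_xL^{q}_v}\le\|wf(t)\|_{L^{q}_vL^{p}_x}$, which is the last assertion.
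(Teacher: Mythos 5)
Your proposal follows essentially the same route as the paper: mild formulation along free‑transport characteristics, non‑negativity of the iteration, weighted bilinear control of $Q^{+}$, a Schur‑type kernel bound as the analytic heart, bootstrap to the $L^{\infty}_{x,v}$ a priori bound, Gronwall for the mixed norm, Minkowski to pass from $L^{q}_vL^{p}_x$ to $L^{p}_xL^{q}_v$ when $q\le p$, and a contraction run in a slightly weakened polynomial weight (the paper uses $w_{\delta-\kappa/2}$ for $\kappa>0$) whose deficit is recovered a posteriori. You also correctly identify the finiteness of $\mathcal{K}$ as the decisive technical step and correctly sketch the collisional change of variables and the reduction to an $L^{q}_v$ Schur bound.

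Two points in your sketch are thinner than the paper and worth flagging. First, pure submultiplicativity $w(v)\le w(u')w(v')$ is too weak to make $\mathcal{K}$ finite: for hard potentials $\iint B(v-u,\sigma)\,du\,d\sigma$ diverges, so one needs genuine \emph{decay} of the ratio $w(v)/(w(u')w(v'))$ in $u$. The paper extracts this from the refined inequality behind Lemma~\ref{lem:ineq} (producing a dyadic family of exponential gains $e^{-c_n|u|^{\beta}}$ on the annuli $2^{-n-1}|v|^2\le|u|^2\le 2^{-n}|v|^2$) together with the Carleman representation and the cutoff $b(\cos\theta)\lesssim|\cos\theta|$; the constraints $\beta>\tfrac{2}{N-1}\kappa$ and $\beta>\tfrac{2}{3}(1+\kappa)$ are exactly what make the resulting dyadic sums converge. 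You do flag this as the main obstacle, but the mechanism is the dyadic weight gain, not submultiplicativity alone. Second, the ``dual'' of $\mathcal{K}$ needed for the $L^{1}_v$ endpoint is not a cosmetic relabelling: because both $w(u')$ and $w(v')$ carry the polynomial $(1+|\cdot|^2)^{\delta}$, the paper must split $Q^{+}$ into $Q_{1},Q_{2}$ according to which post‑collisional variable absorbs the polynomial, and the $L^{1}_v$ estimate (Lemma~\ref{lem:Q_gainL1}) then requires the geometric cone lemmas (\ref{lem:cone}, \ref{lem:cone2}) to control the angular measure of the relevant sets. Only after both endpoints are in hand does the Riesz--Thorin step (Lemma~\ref{Riesz-Thorin}) give the full range $1\le q\le\infty$; this matches what you call the ``Schur/interpolation argument,'' but the $L^{1}$ endpoint is a separate and substantial piece of work rather than a formal dual of $\mathcal{K}$.
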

\begin{remark} \label{contrast2}
	(1) Theorem \ref{thm:2} and Theorem \ref{thm:boltz} provide a stark contrast between the BGK model and the Boltzmann equation in the sense that for any $1 \leq p, q \leq \infty$, the Boltzmann equation \eqref{eq:Boltzmann} is locally well-posed in ($w_{\delta}$-weighted) $L^q_vL^p_x\cap L^{\infty}_{x,v}$ by Theorem \ref{thm:boltz}, while the BGK model \eqref{BGK} is ill-posed in ($w_{\delta}$-weighted) $L^q_vL^p_x\cap L^{\infty}_{x,v}$ by Theorem \ref{thm:2}.\\
	(2) For the existence and uniqueness of the Boltzmann solution, we refer to some classical works \cite{Seiji Ukai, DL, IS, KS, Grad, Carlemann} and a well-known review paper \cite{V}. We also refer to some well-posedness results \cite{DHWY, DuanAdv,DKL2019, KLP} and blow-up results \cite{ISN,ACI}. In particular, these well-posed solutions have a Gaussian tail with extra polynomial decay in velocity, i.e., $\delta > 0$ and $\beta=2$ in \eqref{def_w}. For the existence results in the mixed Strichartz type norm, we refer to \cite{Arsenio,HJ,TRN1,TRN2}. And \cite{BGS}, which demonstrates the non-existence of self-similar type singularities of Landau, Vlasov-Poisson-Landau, and non-cutoff Boltzmann equation is also noteworthy. \\
	(3) Theorem \ref{thm:boltz} extends well-posedness results to $\beta \leq 2$ in (weighted) $L^{q}_{v}L^{p}_{x}\cap L^{\infty}_{x,v}$ for the Boltzmann equation \eqref{eq:Boltzmann}. It covers all cut-off type potential \eqref{Collision_kernel}. The key argument is to apply a new dyadic decomposition type argument in $Q_{\text{gain}}$ estimates and to use Riesz-Thorin interpolation. \\
	(4) The Boltzmann equation can also be ill-posed in other function spaces. We refer to recent interesting results \cite{CH,CSZ} where the authors showed well/ill-posedness results of the Boltzmann equation in $H^s$ Sobolev space. 
\end{remark}

\begin{remark}
    As noted in Remark \ref{contrast2} (3), the condition on $\beta$ in \eqref{index:weight} can be viewed as an extension of the previous works on $\beta = 2$, which is the Gaussian weight case. In Lemma \ref{lem:int_bound4} and \ref{lem:Q_gainL1}, which are the key lemmas in the proof of Theorem \ref{thm:boltz}, a crucial step is, heuristically, to control
    \begin{align*}
        \|w_{\alpha,\beta,\gamma}Q_{gain}(w_{\alpha,\beta,\gamma}^{-1},w_{\alpha,\beta,\gamma}^{-1})\|_{L^\infty_v}.
    \end{align*}
    This naturally leads to a competition between the growth factor $|v|^\kappa$ from $B(v-u,\omega)$ and the decay factor from $\beta$ and $\delta$ in the weight $w^{-1}_{\alpha,\beta,\delta}(v) = (1+|v|^2)^{-\delta} e^{-\alpha|v|^\beta}$. For this reason, we gave enough $\delta$ and the lower bound on $\beta$ depending on $\kappa$ in \eqref{index:weight}. The precise constraints on $\beta$ and $\delta$ are obtained through explicit estimates in the proofs; we refer to Lemma \ref{lem:int_bound4}, Proposition \ref{prop:wellposed}, and Lemma \ref{lem:Q_gainL1}. In particular, when $N=2$, the condition $1+\kappa \leq \beta$ is from the estimate \eqref{5_9:J_12_9}.
\end{remark}

The exponential weight has a strong reminiscence of the moment propagation property of the Boltzmann equation. In spatially homogeneous Boltzmann equation, polynomial or exponential moment propagation has been studied in various collision kernel settings, following the Povzner-type estimates. In particular, the propagation of exponential-type moments in angular non-cutoff settings has been widely researched in recent studies. A typical result can be written as follows: if a solution of the spatially homogeneous Boltzmann equation $f(x,v)$ satisfies $\|f_0 e^{\alpha_0 |v|^\beta}\|_{L^1_v}<\infty$ or $\|f_0 e^{\alpha_0 |v|^\beta}\|_{L^\infty_v}<\infty$, then there exists $0<\alpha$ such that $\|f(t) e^{\alpha |v|^\beta}\|_{L^1_v}<C$ or $\|f(t) e^{\alpha |v|^\beta}\|_{L^\infty_v}<C$ for some constant $C$ for all $t>0$. We refer to \cite{Povzner, Elmroth, Desvillettes, Bobylev, GPV, L1Mouhot, A-G, LM} for an angular cutoff case and \cite{ACGM, LM, TAGP, GPT, Fournier, CHJ} for an angular non-cutoff. For the Landau equation, there are some similar results on the moment propagation; we refer to \cite{V2, Desvillettes2, Desvillettes3, NGLS}. In \cite{NGLS}, the authors obtain the Gaussian upper bound preserving exponent coefficient $\alpha$.

There are fewer results on moment estimates in the spatially inhomogeneous Boltzmann equation. We simply refer to \cite{GPV, GMM, IMS, CS, HF}. For the Landau case, the Gaussian upper bound can be obtained using a maximum principle (See \cite{CSS}). Most studies assume the macroscopic fields are uniformly bounded in time.

The main contribution of Theorem \ref{thm:boltz} from existing exponential moments propagation results can be summarized as follows.
(1) We propagate the exponential weight with some polynomial $(1+|v|)^\delta e^{\alpha |v|^\beta}$ with no loss on the coefficients $\alpha$, $\beta$, and $\delta$ for local in time. We also stress that the propagation remains even when the collision kernel is soft potential; see \eqref{Collision_kernel}.
(2) Thanks to Riesz-Thorin type estimates, we construct exponential moments propagation in $L^q_vL^p_x$ space for all $1\leq p,q\leq \infty$. If we further assume $p\leq q$, the result can be extended to $L^p_xL^q_v$ space using Minkowski's integral inequality.
(3) Our technique can be applied to spatially inhomogeneous settings with no assumption on the macroscopic fields.\\

\noindent \textbf{Organization} The paper is organized as follows. In the following Section 2, we provide the proof of Theorem \ref{thm:1}. In the section, we explicitly construct counterexamples in each case $\beta>2$, $\beta = 2$, and $0<\beta\leq 2$ in general function spaces $1\leq p \leq \infty$. After some technical lemmas in Section 3, we prove the spatially inhomogeneous ill-posedness theory in Section 4, including extra possible polynomial velocity weight. In Section 5, we prove the well-posedness theory of the cutoff hard or soft potential Boltzmann equation \eqref{eq:Boltzmann}, which contrasts sharply with the ill-posedness theory of the BGK as was mentioned in Remark \ref{contrast2}. In the last Section 6, using the well-known $L^1$ convergence theory of the spatially homogeneous Boltzmann equation, we construct a solution in $e^{\alpha|v|^2}$ weighted space that blows up as $t\rightarrow \infty$.

\section{Ill-posedness theory of spatially homogeneous BGK}

In this section, we prove Theorem \ref{thm:1} on the ill-posedness property of the spatially homogeneous BGK model, which is an immediate consequence of the following unboundedness of the relaxation operator in the exponential class:

\begin{theorem}\label{thm:2_1}
    Let $N\geq 1$, $\alpha>0$, $\beta>0$, and $1\leq p\leq \infty$ be given. Then the following statements hold:
    \begin{enumerate}
        \item Let $\beta \geq 2$. Then, for any any fixed $\alpha' > 0$, we can find a non-negative measurable function $f(v)$ such that
        \[
            \|e^{\alpha |v|^\beta}f\,\|_{L_v^p}<\infty,\quad \text{but}\quad \big\|e^{\alpha'|v|^\beta}\mathcal{M}(f)\,\big\|_{L_{v}^p} = \infty.
        \]
        
        \item Let $0 < \beta \leq 2$. Then,  for any fixed $\alpha' > 0$, there exists a sequence of 
        non-negative measurable functions $\{f_n\}_{n=1}^{\infty}$
        such that 
        \[
            \sup_n \|e^{\alpha|v|^\beta}f_n\,\|_{L^p_v}<\infty,\quad \text{but}\quad \lim\limits_{n\rightarrow\infty} \|e^{\alpha'|v|^\beta}\mathcal{M}(f_n)\,\|_{L^p_v} = \infty.
        \]
    \end{enumerate}

\end{theorem}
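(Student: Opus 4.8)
The plan is to build the function(s) $f$ explicitly as a small piece of a Maxwellian supported far away in velocity, and to compute directly how the removal of the bulk of the mass changes the macroscopic fields $\rho, u, T$. The core heuristic: if $f$ is essentially supported near a large speed $R$, then the density $\rho = \int f\,dv$ is small, but the energy $\int |v|^2 f\,dv \sim R^2 \rho$ is comparatively huge, so the temperature $T \sim R^2/N$ is forced to be large. The resulting local Maxwellian $\mathcal{M}(f)$ then decays like $e^{-|v|^2/(2T)} = e^{-C|v|^2/R^2}$, which is a far slower decay than $e^{-\alpha'|v|^\beta}$ once $R$ is large (for $\beta \geq 2$ automatically; for $\beta < 2$ only in the limit). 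Concretely, I would take $f$ to be (a normalized multiple of) $\mathbf{1}_{\{|v-v_0|\leq 1\}}\, e^{-c|v|^2}$ or even just a bump $\mathbf{1}_{\{|v-v_0|\le 1\}}$ times a tiny constant, with $|v_0| = R$; for part (1) one fixed large $R$ suffices, for part (2) one takes $R = R_n \to \infty$.

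The first step is to record the macroscopic quantities of such an $f$. With $f = \lambda\, g(v - v_0)$ for a fixed nonnegative bump $g$ of unit mass supported in the unit ball and $|v_0|=R$, one gets $\rho = \lambda$, $u = v_0 + O(1)$ (hence $|u|^2 = R^2 + O(R)$), and $\int |v|^2 f = \lambda(|v_0|^2 + O(R)) $, so from $\rho(|u|^2 + NT) = \int |v|^2 f$ we obtain $NT = O(R)$. That is the one subtlety to get right: with a bump of radius $1$ the naive $T\sim R^2$ cancels, and instead $T$ grows only like $R$. That still suffices: $\mathcal{M}(f)$ at velocity $v$ satisfies $\mathcal{M}(f)(v) \gtrsim \frac{\lambda}{T^{N/2}} e^{-|v-u|^2/(2T)}$, and along the ray through $u$ this is $\gtrsim \frac{\lambda}{T^{N/2}} e^{-|v|^2/(CR)}$ for $|v| \gtrsim R$. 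Then $e^{\alpha'|v|^\beta}\mathcal{M}(f)(v) \gtrsim \frac{\lambda}{T^{N/2}} e^{\alpha'|v|^\beta - |v|^2/(CR)}$, and for $\beta \geq 2$ the exponent $\to +\infty$ as $|v|\to\infty$, giving $\|e^{\alpha'|v|^\beta}\mathcal{M}(f)\|_{L^p_v} = \infty$ for every $p$ including $p=\infty$. (If one wants $T \sim R^2$ rather than $\sim R$, widen the bump to radius $\sim R$; either way the conclusion holds, and I'd pick whichever makes the bookkeeping cleanest.) Meanwhile $\|e^{\alpha|v|^\beta} f\|_{L^p_v} = \lambda \|e^{\alpha|v|^\beta} g(\cdot - v_0)\|_{L^p_v} \leq \lambda\, e^{\alpha(R+1)^\beta}\|g\|_{L^p_v} < \infty$, finite for each fixed $R$; this completes part (1).

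For part (2), when $0<\beta<2$ the exponent $\alpha'|v|^\beta - |v|^2/(CR)$ no longer tends to $+\infty$ for fixed $R$, but its maximum over $|v|$ is attained around $|v| \sim (CR\alpha'\beta/2)^{1/(2-\beta)}$ and equals $c(\alpha',\beta)\, R^{\beta/(2-\beta)} \to \infty$ as $R\to\infty$. So with $R = R_n\to\infty$ and $\lambda = \lambda_n$ chosen (e.g. $\lambda_n = e^{-\alpha(R_n+1)^\beta}$, or even just $\lambda_n\equiv 1$ since the $f$-side bound $\lambda_n e^{\alpha(R_n+1)^\beta}\|g\|_{L^p_v}$ only needs to stay \emph{bounded}, not small — and if smallness is wanted one rescales at the end as in the reduction to Theorem \ref{thm:1}), one arranges $\sup_n \|e^{\alpha|v|^\beta}f_n\|_{L^p_v} < \infty$ while the sup norm — and hence every $L^p_v$ norm — of $e^{\alpha'|v|^\beta}\mathcal{M}(f_n)$ grows at least like $\frac{\lambda_n}{T_n^{N/2}} e^{c R_n^{\beta/(2-\beta)}} \to \infty$. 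The main obstacle, and the place requiring genuine care, is exactly this competition in the exponent: one must verify that the temperature grows slowly enough in $R$ (polynomially, not exponentially) that $\exp(cR^{\beta/(2-\beta)})$ genuinely dominates the $T^{N/2}$ and $\lambda$ prefactors; this is why the precise scaling of the bump (radius $1$ versus radius $\sim R$, and the corresponding $T \sim R$ versus $T\sim R^2$) and of $\lambda_n$ must be pinned down. Everything else — the moment computations, the lower bound on $\mathcal{M}(f)$ along a ray, and the elementary $L^p$ estimates — is routine. The deduction of Theorem \ref{thm:1} from Theorem \ref{thm:2_1} is then immediate: the spatially homogeneous BGK solution with initial datum $f_0$ is $f(t) = e^{-t} f_0 + (1-e^{-t})\mathcal{M}(f_0)$ (since $\rho, u, T$ are conserved in the homogeneous case, $\mathcal{M}(f)$ is time-independent), so $\|e^{\alpha'|v|^\beta} f(t)\|_{L^p_v} \geq (1-e^{-t})\|e^{\alpha'|v|^\beta}\mathcal{M}(f_0)\|_{L^p_v} = \infty$ for every $t>0$, while the initial norm is finite (and, after scaling $f_0 \mapsto \epsilon f_0$, below $\varepsilon$).
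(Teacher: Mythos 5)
Your overall scheme — concentrate mass at large speed so that the local temperature blows up, then observe that the resulting Maxwellian decays too slowly for the weight $e^{\alpha'|v|^\beta}$ — is exactly the paper's idea, and your reduction to the homogeneous formula $f(t)=e^{-t}f_0+(1-e^{-t})\mathcal{M}(f_0)$ matches the paper's proof of Theorem~\ref{thm:1}. But the concrete temperature computation on which your construction rests is wrong, and the error is not harmless.

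For $f=\lambda\,g(\cdot-v_0)$ with $g$ a unit-mass bump of \emph{fixed} radius and $|v_0|=R$, you assert $NT=O(R)$ by subtracting ``$|u|^2=R^2+O(R)$'' from ``$\int|v|^2f/\rho=R^2+O(R)$''. In fact both $O(R)$ terms are the \emph{same} cross term $2v_0\cdot\int wg(w)\,dw$ and cancel exactly, so
\[
NT \;=\; \int |w|^2 g(w)\,dw \;-\;\Big|\int w\,g(w)\,dw\Big|^2 \;=\; O(1),
\]
the variance of $g$ — it does not grow with $R$ at all. With $T$ bounded and $|u|\sim R$, the maximum of the exponent $\alpha'|v|^\beta-|v-u|^2/(2T)$ for $\beta<2$ is of order $\alpha'R^\beta$ (attained essentially at $v=u$), and this is entirely absorbed by the normalization $\lambda\sim e^{-\alpha(R+1)^\beta}$ whenever $\alpha'\leq\alpha$, which is precisely the regime of interest. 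So the unit-radius bump construction simply does not produce the blow-up for $\beta<2$; the statement ``either way the conclusion holds'' is false, and widening the bump to radius $\sim R$ (hence $T\sim R^2$) is not a bookkeeping choice but an essential structural requirement.

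What the paper does instead is take data that is \emph{isotropic} about the origin — the spherical shell $A_{n,p}e^{-\alpha|v|^\beta}\mathbf{1}_{n\leq|v|\leq n^2}$ of Propositions~\ref{prop:2_2}--\ref{prop:2_3} — so that $u=0$ and $NT=\int|v|^2f/\rho\sim n^2$ purely from the shell's radius. This is the cleanest way to force $T\to\infty$ while keeping the normalization under control via Lemma~\ref{lem:Tinfty_asym}: the exponent gain then scales like $T^{\beta/(2-\beta)}\sim n^{2\beta/(2-\beta)}$, which strictly dominates the lost $\alpha n^\beta$ for every $0<\beta<2$, exactly the inequality $\frac{2\beta}{2-\beta}>\beta$. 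Your corrected wide-bump construction (radius $\sim R$ centered at $|v_0|=R$) also gives $T\sim R^2$ and would work; you should redo the exponent estimate with $T^{\beta/(2-\beta)}\sim R^{2\beta/(2-\beta)}$ rather than your claimed $R^{\beta/(2-\beta)}$, since the latter only dominates $R^\beta$ for $\beta>1$. Finally, you should know that the paper's Proposition~\ref{prop:2_2}~(2) treats the $\beta=2$ sequence case with a genuinely different mechanism — bounded $T$, $|u_n|\to\infty$, and the sharp condition $\frac{\alpha'}{1-2T\alpha'}>\alpha$ — producing a sequence whose Maxwellian norms are each finite but diverge; your construction (when it works at all) produces a sequence whose Maxwellian norms are eventually all infinite, which is a weaker and less informative exhibit even though it still formally satisfies the theorem statement.
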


\begin{remark}
    If $f$ is non-negative and satisfies $\|e^{\alpha|v|^\beta}f\,\|_{L^p}<\infty$, then the local Maxwellian $\mathcal{M}(f)$ constructed from $f$ is well defined as a $L^p$ function. See \cite{P-P,Zhang}.	
\end{remark}

The proof of Theorem \ref{thm:1} is a simple consequence of Theorem \ref{thm:2_1}.
\begin{proof} [\textbf{Proof of Theorem \ref{thm:1}}] 
    Consider the BGK model \eqref{BGK} with initial data $f_0(v)$. Since our choice of the initial data is spatially homogeneous, the time evolution of the distribution function is spatially homogeneous. Therefore, the corresponding solution $f$ satisfies the spatially homogeneous BGK model:
	\begin{align}\label{hom BGK}
		\partial_t f=\mathcal{M}(f)-f,\qquad f(0,v)=f_{0}.
	\end{align}
    We will choose $f_0$ properly to make the ill-posedness for each $\beta$ case.

	For any $C>0$ and a solution $f(t, x, v)$ of BGK model \eqref{BGK}, set $f_C = C^{-1}f(t,v)$. Since $\mathcal{M}(f_C) = C^{-1}\mathcal{M}(f)$, $f_C$ is again a solution of \eqref{BGK}. Therefore, it is enough to assume that $\|e^{\alpha|v|^\beta }f_{0}\|_{L^p_v}<\infty$  or $\sup_n \|e^{\alpha|v|^\beta }f_{0, n}\|_{L^p_v}<\infty$ in Theorem \ref{thm:1} (1) and (2) respectively.
	
	In the spatially homogeneous case, the macroscopic fields are conserved in time so that
	\[
	  \mathcal{M}(f)=\mathcal{M}(f_{0}).
	\]
	Therefore, \eqref{hom BGK} is simplified further as
	\begin{align*}
		\partial_t f=\mathcal{M}(f_{0})-f_n,\qquad f(0,v)=f_{0},
	\end{align*}
	which can be solved explicitly:
	\begin{align*}
		f(v,t)=e^{-t}f_{0}+(1-e^{-t})\mathcal{M}(f_{0}).
	\end{align*}
	Taking the weighted $L^p$ norm on both sides, we have
	\begin{align*}
		\big\|e^{\alpha|v|^\beta}f(t)\big\|_{L^{p}_v}\geq (1-e^{-t})\big\|e^{\alpha|v|^\beta}\mathcal{M}(f_{0})\big\|_{L^p}.
	\end{align*}
    Thus, we conclude Theorem \ref{thm:1} by taking $f_0$ as in Theorem \ref{thm:2_1}.
\end{proof}

\subsection{The proof of Theorem \ref{thm:2_1}\nopunct} 	

We now move on to the proof of Theorem \ref{thm:2_1}, which is divided into the three cases:
$(\beta>2)$, $(\beta = 2)$, and $(0<\beta<2)$. Theorem \ref{thm:2_1} follows from the combination of 
Proposition \ref{prop:2_1} $(\beta>2)$, Proposition \ref{prop:2_2}  $(\beta = 2)$, and Proposition \ref{prop:2_3}  $(0<\beta<2)$ below.\\

\noindent$\bullet$ {\bf The case of $\beta>2$:} 
The proof for the case $\beta>2$ is almost trivial since the exponent in the exponential weight is strictly bigger than that of the local Maxwellian in the far field:
\begin{proposition}\label{prop:2_1}
    Let $N\geq 1$, $\alpha'>0$, $\beta>2$ and $1\leq p\leq \infty$. For any $f(v)$ with nonzero $\rho$ and $T$, we have $\big\|e^{\alpha' |v|^\beta}\mathcal{M}(f)\big\|_{L_v^p} = \infty$.
\end{proposition}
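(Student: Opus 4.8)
The statement is essentially elementary, so the plan is a direct pointwise computation. Since $\rho=\rho(f)>0$ and $T=T(f)>0$ are assumed nonzero (and positive by their very construction from a nonnegative $f$), the local Maxwellian is a genuine Gaussian, and I would begin by writing it out explicitly:
\[
	e^{\alpha'|v|^\beta}\mathcal{M}(f)(v)=\frac{\rho}{(2\pi T)^{N/2}}\exp\!\left(\alpha'|v|^\beta-\frac{|v-u|^2}{2T}\right).
\]
The prefactor $\rho/(2\pi T)^{N/2}$ is a fixed positive constant, so the behavior of the right-hand side is governed entirely by the exponent.

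Next I would bound the subtracted term crudely: by the triangle inequality $\tfrac{|v-u|^2}{2T}\le \tfrac{(|v|+|u|)^2}{2T}\le \tfrac{1}{T}|v|^2+\tfrac{1}{T}|u|^2$, where $u=u(f)$ is a fixed vector. Hence
\[
	\alpha'|v|^\beta-\frac{|v-u|^2}{2T}\ \ge\ \alpha'|v|^\beta-\frac{1}{T}|v|^2-\frac{|u|^2}{T}.
\]
Because $\beta>2$, the term $\alpha'|v|^\beta$ dominates $\tfrac{1}{T}|v|^2$ for $|v|$ large, so this lower bound tends to $+\infty$ as $|v|\to\infty$. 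Consequently $e^{\alpha'|v|^\beta}\mathcal{M}(f)(v)\to\infty$ as $|v|\to\infty$; in particular there is $R>0$ with $e^{\alpha'|v|^\beta}\mathcal{M}(f)(v)\ge 1$ for all $|v|\ge R$.

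From here the conclusion is immediate for every $p\in[1,\infty]$. For $p=\infty$ the function is unbounded, so its essential supremum is $\infty$. For $1\le p<\infty$,
\[
	\big\|e^{\alpha'|v|^\beta}\mathcal{M}(f)\big\|_{L^p_v}^p\ \ge\ \int_{|v|\ge R}\big(e^{\alpha'|v|^\beta}\mathcal{M}(f)(v)\big)^p\,dv\ \ge\ \int_{|v|\ge R}1\,dv=\infty.
\]
There is no real obstacle in this argument; the only point that requires (trivial) attention is invoking the hypothesis that $\rho$ and $T$ are nonzero, which is exactly what guarantees $\mathcal{M}(f)$ is a nondegenerate Gaussian with strictly positive amplitude and finite, positive temperature, so that the competition in the exponent is genuinely between $|v|^\beta$ and $|v|^2$.
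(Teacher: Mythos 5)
Your proof is correct and takes essentially the same approach as the paper: both reduce the claim to observing that the exponent $-|v-u|^2/(2T)+\alpha'|v|^\beta$ blows up as $|v|\to\infty$ when $\beta>2$, and you merely spell out the elementary bound and the $L^p$ conclusion that the paper leaves implicit.
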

\begin{proof}
    Note that
    \begin{align*}
        e^{\alpha' |v|^\beta}\mathcal{M}(f) = \frac{\rho}{(2\pi T)^{N/2}}\exp\left(-\frac{|v-u|^2}{2T}+\alpha' |v|^\beta\right),
    \end{align*}
    and the exponent
       $ -|v-u|^2/2T+\alpha' |v|^\beta$
    blows up as $v\rightarrow \infty$ for $\beta>2$. This gives the desired result.
\end{proof}

\noindent$\bullet$ {\bf The case of $\beta=2$:}
In this case, there is a competition between the growth of the weight and the decay in the Maxwellian distribution. Therefore, we have to carefully compare the growth of the exponential weight with the decay speed of the local Maxwellian, which is determined by the macroscopic temperature. We start with two technical lemmas. We mention that Lemma \ref{lem:Tri_integral} will be frequently used in Sections 2 and 4.
\begin{lemma}\label{lem:Tri_integral}
    For $\alpha,\beta>0$ and $n\geq 0$, we have
    \begin{align*}
        \lim_{x\rightarrow \infty}\left(\frac{1}{\alpha\beta} e^{-\alpha x^\beta}x^{(n+1)-\beta}\right)^{-1}\int_x^\infty e^{-\alpha r^\beta} r^n \,dr = 1.
    \end{align*}
\end{lemma}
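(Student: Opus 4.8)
The plan is to prove the asymptotic $\int_x^\infty e^{-\alpha r^\beta} r^n\, dr \sim \frac{1}{\alpha\beta} e^{-\alpha x^\beta} x^{(n+1)-\beta}$ as $x\to\infty$ by a standard Laplace-type / L'Hôpital argument, since both sides tend to $0$ and the integral has no closed form for general $\beta$. First I would set $F(x) = \int_x^\infty e^{-\alpha r^\beta} r^n\, dr$ and $G(x) = \frac{1}{\alpha\beta} e^{-\alpha x^\beta} x^{(n+1)-\beta}$, and observe that both $F(x)\to 0$ (the integrand is integrable near $\infty$ because $e^{-\alpha r^\beta}$ decays faster than any power) and $G(x)\to 0$ as $x\to\infty$. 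Then I would apply L'Hôpital's rule to $F/G$, computing $F'(x) = -e^{-\alpha x^\beta} x^n$ and
\[
    G'(x) = \frac{1}{\alpha\beta}\Big( -\alpha\beta x^{\beta-1} e^{-\alpha x^\beta} x^{(n+1)-\beta} + \big((n+1)-\beta\big) e^{-\alpha x^\beta} x^{n-\beta}\Big) = e^{-\alpha x^\beta}\Big( -x^n + \tfrac{(n+1)-\beta}{\alpha\beta} x^{n-\beta}\Big).
\]

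Dividing, $F'(x)/G'(x) = \big(1 - \tfrac{(n+1)-\beta}{\alpha\beta} x^{-\beta}\big)^{-1} \to 1$ as $x\to\infty$, since $\beta>0$ forces $x^{-\beta}\to 0$. To invoke L'Hôpital cleanly I must check that $G'$ does not vanish for all large $x$ and that $G(x)\neq 0$: indeed $G(x) = \frac{1}{\alpha\beta} e^{-\alpha x^\beta} x^{(n+1)-\beta} > 0$ for $x>0$, and the bracket in $G'(x)$ is negative for large $x$ (the $-x^n$ term dominates $x^{n-\beta}$ since $\beta>0$), so $G'(x)<0$ there; likewise $F'(x)<0$. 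Hence L'Hôpital applies on a neighborhood of $+\infty$ and yields $\lim_{x\to\infty} F(x)/G(x) = \lim_{x\to\infty} F'(x)/G'(x) = 1$, which is exactly the claim.

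An alternative, slightly more hands-on route that avoids discussing the hypotheses of L'Hôpital: substitute $s = \alpha r^\beta$, so $r = (s/\alpha)^{1/\beta}$, $dr = \frac{1}{\alpha\beta}(s/\alpha)^{1/\beta - 1}\, ds$, giving $F(x) = \frac{1}{\alpha\beta}\alpha^{-(n+1)/\beta}\int_{\alpha x^\beta}^\infty e^{-s} s^{(n+1)/\beta - 1}\, ds$, an upper incomplete Gamma integral $\Gamma(a, y)$ with $a = (n+1)/\beta$ and $y = \alpha x^\beta \to \infty$; then the classical asymptotic $\Gamma(a,y) \sim y^{a-1} e^{-y}$ (itself one integration by parts plus the same dominated-remainder estimate) reassembles to the stated formula. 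I would present the L'Hôpital version as the main proof since it is shortest. The only mild obstacle is the bookkeeping to confirm the non-vanishing of $G'$ near infinity so that L'Hôpital is legitimate, but as noted this is immediate from $\beta>0$; no genuine difficulty is expected.
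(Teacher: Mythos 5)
Your main argument is correct: the derivative computations check out, $F'(x)/G'(x) = \bigl(1 - \tfrac{(n+1)-\beta}{\alpha\beta}x^{-\beta}\bigr)^{-1} \to 1$, and you verify the non-vanishing of $G'$ near infinity needed to apply L'H\^opital in the $0/0$ form at $+\infty$. The paper instead takes exactly your ``alternative'' route: it substitutes $\alpha r^\beta \mapsto t$ to rewrite the integral as $\frac{1}{\beta\alpha^{(n+1)/\beta}}\int_{\alpha x^\beta}^\infty e^{-t} t^{\frac{(n+1)-\beta}{\beta}}\,dt$ and then quotes Tricomi's asymptotic expansion of the upper incomplete Gamma function to conclude. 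So your primary proof is genuinely different and, I would say, preferable as a self-contained argument: it needs nothing beyond L'H\^opital and elementary differentiation, whereas the paper's version delegates the real analytic content to a cited classical expansion. What the paper's route buys is more information --- Tricomi's relation gives the full asymptotic series with explicit error terms, not just the leading-order ratio --- but for the statement of this lemma only the leading term is needed, and your L'H\^opital argument delivers it directly. Either version is acceptable; if you wanted the quantitative remainder later you would fall back on the incomplete-Gamma formulation.
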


\begin{proof}
    By the change of variable $\alpha r^\beta\mapsto t$, we have
    \begin{align*}
        \int_x^\infty e^{-\alpha r^\beta} r^n \,dr &=\int_{\alpha x^\beta}^\infty e^{-t} \left(\frac{t}{\alpha}\right)^{\frac{n}{\beta}} \frac{1}{\beta\alpha^{1/\beta}} t^{\frac{1-\beta}{\beta}}\,dt\\
        &=\frac{1}{\beta\alpha^{(n+1)/\beta}} \int_{\alpha x^\beta}^\infty e^{-t}t^{\frac{(n+1)-\beta}{\beta}}\,dt.
    \end{align*}
    Using Tricomi's relation \cite{Tricomi}:
    \begin{align*}
        \int_{x}^\infty e^{-t} t^{a}\,dt = \frac{e^{-x}x^{a+1}}{x-a}\left[1-\frac{a}{(x-a)^2}+\frac{2a}{(x-a)^3} + O\left(\frac{a^2}{(x-a)^4}\right)\right]\quad (x\rightarrow \infty),
    \end{align*}
    it completes the proof.
\end{proof}
The next lemma is a consequence of Lemma \ref{lem:Tri_integral}.
\begin{lemma}\label{lem:Tinfty_asym}
    For $N\geq 1$, $\alpha,\beta>0$ and $n\geq 0$, let
    \begin{equation*}
        f_n(v) = C(n)e^{-\alpha |v|^\beta}\mathbf{1}_{n\leq |v|}\quad \textrm{or}\quad C(n)e^{-\alpha |v|^\beta}\mathbf{1}_{n\leq |v|\leq n^2},
    \end{equation*}
    where $C(n)$ is a constant depending only on $n$. Then, the macroscopic fields of $f_n$ satisfy
    \begin{align*}
        \lim_{n\rightarrow \infty} \left(\frac{|\mathbb{S}^{N-1}|C(n) n^{N-\beta}}{\alpha\beta}e^{-\alpha n^\beta}\right)^{-1}\rho_n = 1,
        \quad u_n=0,\quad
        \lim_{n\rightarrow \infty} \left(\frac{n^2}{N}\right)^{-1}T_n = 1. 
    \end{align*}
    where $|\mathbb{S}^{N-1}|$ is the volume of $N-1$ sphere.
\end{lemma}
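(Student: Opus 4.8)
The plan is to reduce everything to the one–dimensional integral asymptotics supplied by Lemma \ref{lem:Tri_integral}, via a passage to polar coordinates. First, since each $f_n$ is a radial function of $v$, the momentum integral $\int_{\mathbb{R}^N} v f_n(v)\,dv$ vanishes by oddness, so $u_n = 0$ immediately, and the temperature relation collapses to $N\rho_n T_n = \int_{\mathbb{R}^N}|v|^2 f_n(v)\,dv$.

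Next, in the unbounded case I would write, in polar coordinates,
\[
    \rho_n = C(n)\,|\mathbb{S}^{N-1}|\int_n^\infty e^{-\alpha r^\beta} r^{N-1}\,dr,\qquad
    \int_{\mathbb{R}^N}|v|^2 f_n(v)\,dv = C(n)\,|\mathbb{S}^{N-1}|\int_n^\infty e^{-\alpha r^\beta} r^{N+1}\,dr,
\]
and apply Lemma \ref{lem:Tri_integral} (with its index ``$n$'' taken to be $N-1$ and $N+1$ respectively) to get
\[
    \rho_n \sim \frac{|\mathbb{S}^{N-1}|C(n)}{\alpha\beta}\,e^{-\alpha n^\beta} n^{N-\beta},\qquad
    \int_{\mathbb{R}^N}|v|^2 f_n(v)\,dv \sim \frac{|\mathbb{S}^{N-1}|C(n)}{\alpha\beta}\,e^{-\alpha n^\beta} n^{N+2-\beta}
\]
as $n\to\infty$. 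The first relation is precisely the claimed asymptotic for $\rho_n$; dividing the second by the first and by $N$ yields $T_n\sim n^2/N$, since the common prefactor $|\mathbb{S}^{N-1}|C(n)(\alpha\beta)^{-1}e^{-\alpha n^\beta}$ cancels and $n^{N+2-\beta}/n^{N-\beta}=n^2$.

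For the truncated case $f_n = C(n)e^{-\alpha|v|^\beta}\mathbf{1}_{n\le|v|\le n^2}$, I would split $\int_n^{n^2} = \int_n^\infty - \int_{n^2}^\infty$ and note that, again by Lemma \ref{lem:Tri_integral}, $\int_{n^2}^\infty e^{-\alpha r^\beta} r^k\,dr \sim (\alpha\beta)^{-1} e^{-\alpha n^{2\beta}} n^{2(k+1-\beta)}$; since $e^{-\alpha n^{2\beta}} = o(e^{-\alpha n^\beta} n^{-m})$ for every $m\geq 0$, this tail is negligible against $\int_n^\infty e^{-\alpha r^\beta} r^k\,dr$, and the same asymptotic equivalents carry over verbatim. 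The only mildly delicate point is the bookkeeping with ratios of asymptotic equivalents — that $a_n\sim A_n$ and $b_n\sim B_n$ with $B_n\neq 0$ give $a_n/b_n\sim A_n/B_n$ — which is routine; I do not anticipate any genuine obstacle once Lemma \ref{lem:Tri_integral} is available.
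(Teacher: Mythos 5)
Your proposal is correct and follows essentially the same route as the paper's own proof: polar coordinates to reduce to one-dimensional radial integrals, Lemma \ref{lem:Tri_integral} applied with exponents $N-1$ and $N+1$, radial symmetry for $u_n=0$, and for the truncated case the observation that the tail $\int_{n^2}^\infty$ is exponentially negligible relative to $\int_n^\infty$. The paper in fact only spells out the truncated case (stating it is the harder one) and carries out the splitting $\int_n^{n^2}=\int_n^\infty-\int_{n^2}^\infty$ exactly as you outline.
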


\begin{proof}
    We only prove for $f_n(v) = C(n)e^{-\alpha |v|^\beta}\mathbf{1}_{n\leq |v|\leq n^2}$, which is slightly harder. Since $f_n$ is isotropic about $v$, $u_n = 0$. For $\rho_n$ and $T_n$, we have
    \begin{align*}
        \rho_n &= C(n)\int_{\mathbb{R}^N}e^{-\alpha |v|^\beta}\mathbf{1}_{n\leq |v|\leq n^2}\,dv = C(n)|\mathbb{S}^{N-1}|\int_n^{n^2}e^{-\alpha r^\beta}r^{N-1}\,dr,\\
        NT_n &= \frac{C(n)}{\rho_n}\int_{\mathbb{R}^N}|v|^2e^{-\alpha |v|^\beta}\mathbf{1}_{n\leq |v|\leq n^2}\,dv=\frac{\int_n^{n^2}e^{-\alpha r^\beta}r^{N+1}\,dr}{\int_n^{n^2}e^{-\alpha r^\beta}r^{N-1}\,dr}.
    \end{align*}
  We then apply Lemma \ref{lem:Tri_integral},
    \begin{align*}
        &\lim_{n\rightarrow \infty} \frac{\rho_n}{\frac{|\mathbb{S}^{N-1}|C(n) n^{N-\beta}}{\alpha\beta}e^{-\alpha n^\beta}} = \lim_{n\rightarrow \infty} \frac{\int_n^\infty e^{-\alpha r^\beta}r^{N-1}\,dr}{\frac{n^{N-\beta}}{\alpha\beta}e^{-\alpha n^\beta}} - \frac{n^{2(N-\beta)}e^{-\alpha n^{2\beta}}}{n^{N-\beta}e^{-\alpha n^\beta}}\frac{\int_{n^2}^\infty e^{-\alpha r^\beta}r^{N-1}\,dr}{\frac{n^{2(N-\beta)}}{\alpha\beta}e^{-\alpha n^{2\beta}}} = 1,\\
        &\lim_{n\rightarrow \infty} \frac{NT_n}{n^2} =\lim_{n\rightarrow \infty} \frac{\int_n^\infty e^{-\alpha r^\beta}r^{N+1}\,dr}{\frac{n^{N-\beta+2}}{\alpha\beta}e^{-\alpha n^\beta}} - \frac{n^{2(N-\beta+2)}e^{-\alpha n^{2\beta}}}{n^{N-\beta+2}e^{-\alpha n^\beta}}\frac{\int_{n^2}^\infty e^{-\alpha r^\beta}r^{N+1}\,dr}{\frac{n^{2(N-\beta+2)}}{\alpha\beta}e^{-\alpha n^{2\beta}}} = 1.
    \end{align*}
    This yields the desired result.
\end{proof}

Now we prove Theorem \ref{thm:2_1} for $\beta = 2$. In fact, the following statements explain more than what is stated in Theorem \ref{thm:2_1} (2).
\begin{proposition}\label{prop:2_2}
    Let $N\geq 1$, $\alpha>0$, $\beta = 2$, and $1\leq p\leq \infty$. \\
    \begin{enumerate}
        \item For any $\alpha'>0$, there exists a function $f$ such that $\left\|e^{\alpha|v|^2}f\right\|_{L^p_v}<\infty$, but $\left\|e^{\alpha' |v|^2}\mathcal{M}(f)\right\|_{L^p_v}=\infty$, satisfying $\alpha' > \frac{1}{2T}$.
        
        \item For any $\alpha'>0$, there exists a sequence of function $f_n$ such that $\sup_n \left\|e^{\alpha|v|^2}f_n\right\|_{L^p_v}<\infty$, but $\lim\limits_{n\rightarrow\infty}\left\|e^{\alpha' |v|^2}\mathcal{M}(f_n)\right\|_{L^p_v}=\infty$, satisfying $\alpha' < \frac{1}{2T_n}$ for all $n$. Moreover, we have $\inf_n \frac{\alpha'}{1-2T_n\alpha'}>\alpha$ and $|u_n|\rightarrow \infty$. 
    \end{enumerate}
\end{proposition}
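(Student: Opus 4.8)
The plan is to prove both statements by writing down explicit test functions and reducing every estimate to a standard Gaussian integral. Since $\mathcal M(f)$ depends on $f$ only through $\rho,u,T$, the one computation I really need is a completion of the square: whenever $\alpha'<\frac1{2T}$, with $a:=\frac1{2T}-\alpha'>0$,
\[
	\alpha'|v|^2-\frac{|v-u|^2}{2T}
	=-a\Big|v-\frac{u}{2aT}\Big|^2+\frac{\alpha'|u|^2}{1-2T\alpha'},
\]
so that $e^{\alpha'|v|^2}\mathcal M(f)$ is a translate of the fixed Gaussian $\frac{\rho}{(2\pi T)^{N/2}}\exp\big(\frac{\alpha'|u|^2}{1-2T\alpha'}\big)e^{-a|v|^2}$, and hence by translation invariance
\[
	\big\|e^{\alpha'|v|^2}\mathcal M(f)\big\|_{L^p_v}
	=\frac{\rho\,\big\|e^{-a|v|^2}\big\|_{L^p_v}}{(2\pi T)^{N/2}}\,
	\exp\!\Big(\frac{\alpha'|u|^2}{1-2T\alpha'}\Big),
\]
the Gaussian factor being finite. (When $\alpha'\geq\frac1{2T}$ the exponent $\alpha'|v|^2-|v-u|^2/(2T)$ is not bounded above, so the $L^p_v$ norm is automatically infinite.) Thus a weighted Maxwellian blows up in $L^p_v$ either because its temperature is so large that $\alpha'>\frac1{2T}$ (part (1)), or because $|u|\to\infty$ while the competition exponent $\frac{\alpha'}{1-2T\alpha'}$ stays strictly larger than $\alpha$ (part (2)).

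For part (1), with $\beta=2$ I would take, for a sufficiently large integer $n_\ast$,
\begin{equation}\label{initial1}
	f(v)=e^{-\alpha|v|^2}\,\mathbf{1}_{\{n_\ast\le|v|\le n_\ast^2\}}.
\end{equation}
Then $e^{\alpha|v|^2}f=\mathbf{1}_{\{n_\ast\le|v|\le n_\ast^2\}}$ is bounded with compact support, so $\|e^{\alpha|v|^2}f\|_{L^p_v}<\infty$ for all $p\in[1,\infty]$. By Lemma \ref{lem:Tinfty_asym} (with $\beta=2$, $C(n)\equiv1$) we have $u=0$ and $T=T_{n_\ast}$ with $T_{n_\ast}/(n_\ast^2/N)\to1$, so $T_{n_\ast}\geq n_\ast^2/(2N)$ for large $n_\ast$, and taking $n_\ast^2>N/\alpha'$ forces $\alpha'>\frac1{2T_{n_\ast}}$. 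Since $u=0$,
\[
	e^{\alpha'|v|^2}\mathcal M(f)=\frac{\rho}{(2\pi T_{n_\ast})^{N/2}}\exp\!\Big(\big(\alpha'-\tfrac1{2T_{n_\ast}}\big)|v|^2\Big)
\]
has a strictly positive coefficient in the exponent, so $\|e^{\alpha'|v|^2}\mathcal M(f)\|_{L^p_v}=\infty$ for all $p$; together with $\alpha'>\frac1{2T}$ this is exactly (1).

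For part (2), fix $\alpha'>0$ and choose any
\[
	T_\ast\in\Big(\max\big\{0,\ \tfrac1{2\alpha'}-\tfrac1{2\alpha}\big\},\ \tfrac1{2\alpha'}\Big),
\]
a nonempty open interval for every $\alpha'>0$; clearing denominators, this choice gives simultaneously $\alpha'<\frac1{2T_\ast}$ and $\frac{\alpha'}{1-2T_\ast\alpha'}>\alpha$. Put $\rho=\sqrt{(N+2)T_\ast}$, $e_1=(1,0,\dots,0)$, and
\begin{equation}\label{2_2:f_n}
	f_n(v)=e^{-\alpha(n+\rho)^2}\,\mathbf{1}_{B(n e_1,\rho)}(v).
\end{equation}
Since $|v|\le n+\rho$ on $B(ne_1,\rho)$ we have $f_n(v)\le e^{-\alpha|v|^2}$ pointwise, hence $\|e^{\alpha|v|^2}f_n\|_{L^\infty_v}\le1$ and, integrating over the fixed-volume ball, $\sup_n\|e^{\alpha|v|^2}f_n\|_{L^p_v}<\infty$ for all $p$. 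A direct computation of the first three moments of the uniform density on $B(ne_1,\rho)$ gives $u_n=ne_1$ (so $|u_n|=n\to\infty$), $T_n=\frac{\rho^2}{N+2}=T_\ast$ for every $n$, and $\rho_n=|B_N(1)|\rho^N e^{-\alpha(n+\rho)^2}$; in particular $\alpha'<\frac1{2T_n}$ and $\inf_n\frac{\alpha'}{1-2T_n\alpha'}=\frac{\alpha'}{1-2T_\ast\alpha'}>\alpha$. Plugging $u=u_n$, $T=T_\ast$, $\rho=\rho_n$ into the identity of the first paragraph gives
\[
	\big\|e^{\alpha'|v|^2}\mathcal M(f_n)\big\|_{L^p_v}
	=C_0\,\rho_n\,\exp\!\Big(\frac{\alpha' n^2}{1-2T_\ast\alpha'}\Big)
	=C_1\exp\!\Big(n^2\big(\tfrac{\alpha'}{1-2T_\ast\alpha'}-\alpha\big)-2\alpha\rho n-\alpha\rho^2\Big),
\]
with $C_0,C_1>0$ depending only on $N,p,\alpha,\alpha',T_\ast$; the coefficient of $n^2$ being positive, this tends to $\infty$, which proves (2).

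The moment computation for the uniform density on a ball, the value of $\|e^{-a|v|^2}\|_{L^p_v}$, and the asymptotic input from Lemma \ref{lem:Tinfty_asym} are all routine. The one point that needs care — and is really the content of the ``Moreover'' clause in (2) — is that the temperature must be placed inside the window $\big(\frac1{2\alpha'}-\frac1{2\alpha},\ \frac1{2\alpha'}\big)$: the upper bound $T_\ast<\frac1{2\alpha'}$ keeps $e^{\alpha'|v|^2}\mathcal M(f_n)$ integrable in $v$ (the bump does not itself grow at infinity), while the lower bound $T_\ast>\frac1{2\alpha'}-\frac1{2\alpha}$ is precisely what makes the gain $\exp\big(\frac{\alpha' n^2}{1-2T_\ast\alpha'}\big)$ produced by the receding bulk velocity outweigh the loss $e^{-\alpha n^2}$ hidden in $\rho_n$. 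Checking that this window is nonempty for every $\alpha'>0$, and that it is realized by a $v$-translated, compactly supported datum still dominated by $e^{-\alpha|v|^2}$ (so that the $L^\infty_v$, hence all $L^p_v$, bounds persist), is the heart of the construction; the natural choice is the \emph{flat} datum \eqref{2_2:f_n} whose height is exactly the value of $e^{-\alpha|v|^2}$ at the far edge of the supporting ball, rather than a translated Gaussian, since a translated Gaussian dominated by $e^{-\alpha|v|^2}$ is automatically too narrow to reach the upper part of the window.
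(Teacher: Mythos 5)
Your proof is correct, and part (1) is essentially the paper's argument (the same annulus-supported truncated Gaussian, isotropic so $u=0$, with $T\sim n_*^2/N$ forcing $\alpha'>\frac{1}{2T}$). Part (2), however, takes a genuinely different and in fact cleaner route. The paper builds $f_n$ from two pieces — a small bump at the origin plus a thin Gaussian-capped ball of radius $1/n^2$ centered at distance $n$ — and must then run a page of asymptotic estimates to show $\rho_n\sim C n^{-2N}e^{-\alpha n^2}$, $|u_n|\approx n$, and only $T_n\to T$ in the limit; it also computes $\|e^{\alpha'|v|^2}\mathcal M(f_n)\|_{L^p}$ separately for $p=\infty$ (locating the maximizer) and $p<\infty$ (cylindrical coordinates). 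Your single flat ball $e^{-\alpha(n+\rho)^2}\mathbf{1}_{B(ne_1,\rho)}$ makes every moment exact and $n$-independent where it should be ($u_n=ne_1$, $T_n=\rho^2/(N+2)=T_*$ on the nose), and your completing-the-square identity delivers the exact weighted $L^p$ norm for all $p$ at once by translation invariance, reducing the blow-up to a one-line comparison of $\exp\bigl(\frac{\alpha'n^2}{1-2T_*\alpha'}\bigr)$ against the $e^{-\alpha(n+\rho)^2}$ hidden in $\rho_n$. The identification of the admissible temperature window $\bigl(\frac{1}{2\alpha'}-\frac{1}{2\alpha},\frac{1}{2\alpha'}\bigr)$ is exactly the paper's condition $0<\alpha'T<\tfrac12$, $\frac{\alpha'}{1-2T\alpha'}>\alpha$, so the "Moreover" clause is fully covered. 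One cosmetic caveat: your parenthetical claim that for $\alpha'\ge\frac{1}{2T}$ the exponent is "not bounded above" fails in the boundary case $\alpha'=\frac{1}{2T}$, $u=0$ (where it is identically zero and the $L^\infty$ norm is finite); since part (1) only invokes the strict inequality $\alpha'>\frac{1}{2T}$, this does not affect the proof, but the aside should be stated for strict inequality or with $p<\infty$.
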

\begin{proof} (1)
    Define 
    \begin{align*}
        f_n(v) = A_{n,p} e^{-\alpha |v|^2}\mathbf{1}_{n\leq |v|\leq n^2},
    \end{align*}
    where
    \begin{align}\label{2_2:def_Anp}
        A_{n,p} \coloneqq 
        \begin{cases}
            \left(\frac{1}{|B_N(1)|(n^{2N}-n^N)}\right)^{1/p} & 1\leq p<\infty,\\
            1 & p = \infty
        \end{cases}
    \end{align}
    for sufficiently large but fixed $n$, which will be chosen soon. By our choice of $A_{n,p}$, we have $\left\|e^{\alpha |v|^\beta} f_n\right\|_{L^p_v} = 1$. Thanks to Lemma \ref{lem:Tinfty_asym}, we can choose $n$ large enough such that
    \begin{align*}
        \rho_n\geq \frac{1}{2}A_{n,p}\frac{|\mathbb{S}^{N-1}| n^{N-2}}{2\alpha}e^{-\alpha n^2},
        \quad u_n=0,\quad
        \frac{n^2}{2N}\leq T_n\leq \frac{2n^2}{N}.
    \end{align*}
    Using these bounds, we have
    \begin{align*}
        e^{\alpha' |v|^2}\mathcal{M}(f_n) &=\frac{\rho}{(2\pi T_n)^{N/2}}\exp\left\{-\left(\frac{|v-u_n|^2}{2T_n}-\alpha'|v|^2\right)\right\}\\
        &\geq \frac{A_{n,p}|\mathbb{S}^{N-1}|}{4\alpha}\frac{n^{N-2}e^{-\alpha n^2}}{(2\pi \frac{N}{2n^2})^{N/2}}\exp\left\{-\left(\frac{|v|^2}{2\frac{n^2}{2N}}-\alpha'|v|^2\right)\right\}\\
        &=\frac{A_{n,p}|\mathbb{S}^{N-1}|}{4\alpha}\frac{n^{N-2}e^{-\alpha n^2}}{(2\pi \frac{N}{2n^2})^{N/2}}
        \exp\left\{\left(\alpha'-\frac{N}{n^2}\right)|v|^2\right\}.
    \end{align*} 
    \noindent Choosing $n$ sufficiently large to satisfy  
    \begin{align*}
        \alpha'-\frac{N}{n^2}>0,
    \end{align*}
    we get the desired result:
    \begin{align*}
        \left\|e^{\alpha' |v|^2}\mathcal{M}(f_n)\right\|_{L^p_v} = \infty.
    \end{align*}\\
    
    \noindent (2) For a fixed $\alpha'>0$, choose $T$ satisfying $0<\alpha' T<1/2$, but $\frac{\alpha'}{1-2T\alpha'}>\alpha$. Let us define $\vec{n} = (0,\ldots, 0, -n)$ for $n\geq 1$ and
    \begin{align}\label{2_2:f_n}
        f_n(v) = \frac{NT}{n^2|B_N(1)|}\left(A'_{n,p}\int_{\mathbb{R}^N}dw\,e^{-\alpha|w|^2}\mathbf{1}_{|w-\vec{n}|\leq 1/n^2}\right)\mathbf{1}_{|v|\leq 1} + A'_{n,p}e^{-\alpha|v|^2}\mathbf{1}_{|v-\vec{n}|\leq 1/n^2},
    \end{align}
    where 
    \begin{align*}
    A'_{n,p} \coloneqq 
        \begin{cases}
            \left(\frac{1}{|B_N(1)|n^{2N}}\right)^{1/p} & 1\leq p<\infty,\\
            1 & p = \infty.
        \end{cases}
    \end{align*}
    For $n\geq 1$, we obtain
    \begin{align*}
        \left\|e^{\alpha|v|^2}f_n\right\|_{L^p_v}&\leq \frac{NT}{n^2|B_N(1)|}\left(A'_{n,p}\int_{\mathbb{R}^N}dw\,e^{-\alpha|w|^2}\mathbf{1}_{|w-\vec{n}|\leq 1/n^2}\right)\left\|e^{\alpha|v|^2}\mathbf{1}_{|v|\leq 1}\right\|_{L^p_v} + A'_{n,p}\left\|\mathbf{1}_{|v-\vec{n}|\leq 1/n^2}\right\|_{L^p_v}\\
        &\leq \frac{NT}{n^2|B_N(1)|}\left(A'_{n,p}\int_{\mathbb{R}^N}dw\,e^{-\alpha|w|^2}\mathbf{1}_{|w-\vec{n}|\leq 1/n^2}\right)e^{\alpha}|B_N(1)| + A'_{n,p}\left\|\mathbf{1}_{|v-\vec{n}|\leq 1/n^2}\right\|_{L^p_v}\\
        &\leq \frac{e^{\alpha}NT}{n^2} + 1
    \end{align*}
    as $A'_{n,p}\leq 1$ and $\int_{\mathbb{R}^N}e^{-\alpha|v|^2}\mathbf{1}_{|v-\vec{n}|\leq 1/n^2}\,dv\leq 1$.
    We first calculate the macroscopic fields of $f_n$. The macroscopic density $\rho_n$ satisfies
    \begin{equation}\label{2_2:rho_bound}
        \begin{split}
            \rho_n &= \int_{\mathbb{R}^N}dv\,\frac{NT}{n^2|B_N(1)|}\left(A'_{n,p}\int_{\mathbb{R}^N}dw\,e^{-\alpha|w|^2}\mathbf{1}_{|w-\vec{n}|\leq 1/n^2}\right)\mathbf{1}_{|v|\leq 1} + \int_{\mathbb{R}^N} dv\,A'_{n,p}e^{-\alpha|v|^2}\mathbf{1}_{|v-\vec{n}|\leq 1/n^2}\\
            &=\left(\frac{NT}{n^2}\frac{1}{|B_N(1)|}\int_{\mathbb{R}^N} \mathbf{1}_{|v|\leq 1}\,dv + 1\right)A'_{n,p}\int_{\mathbb{R}^N}dv\,e^{-\alpha|v|^2}\mathbf{1}_{|v-\vec{n}|\leq 1/n^2}\\
            &=\left(\frac{NT}{n^2} + 1\right)A'_{n,p}\int_{\mathbb{R}^N}dv\,e^{-\alpha|v|^2}\mathbf{1}_{|v-\vec{n}|\leq 1/n^2}.
        \end{split}
    \end{equation}
    Now, since the integral in \eqref{2_2:rho_bound} is bounded above and below by
    \begin{equation*}
        \begin{split}
            \int_{\mathbb{R}^N}dv\,A'_{n,p}e^{-\alpha|v|^2}\mathbf{1}_{|v-\vec{n}|\leq 1/n^2}&\leq A'_{n,p}e^{-\alpha(n-1/n^2)^2}\int_{\mathbb{R}^N}dv\,\mathbf{1}_{|v-\vec{n}|\leq 1/n^2} \\
            &=\frac{|B_N(1)|}{n^{2N}}A'_{n,p}e^{-\alpha(n^2-2/n + 1/n^4)},\\
            \int_{\mathbb{R}^N}dv\,A'_{n,p}e^{-\alpha|v|^2}\mathbf{1}_{|v-\vec{n}|\leq 1/n^2}&\geq A'_{n,p}e^{-\alpha(n+1/n^2)^2}\int_{\mathbb{R}^N}dv\,\mathbf{1}_{|v-\vec{n}|\leq 1/n^2}\\
            &= \frac{|B_N(1)|}{n^{2N}}A'_{n,p}e^{-\alpha(n^2+2/n + 1/n^4)},
        \end{split}
    \end{equation*}
    we obtain
    \begin{align*}
        \lim_{n\rightarrow\infty}\left(\frac{|B_N(1)|A'_{n,p}}{n^{2N}}\left(\frac{NT}{n^2}+1\right)e^{-\alpha n^2}\right)^{-1}\rho_n = 1.
    \end{align*}
    We then estimate $\rho_n(u_n-\vec{n})$:
    \begin{equation}\label{2_2:u_integral}
        \begin{split}
            |\rho_n (u_n-\vec{n})|&\leq \int_{\mathbb{R}^N}dv\,|v-\vec{n}|f_n(v)\\
            &= \int_{\mathbb{R}^N}dv\,\frac{NT}{n^2|B_N(1)|}\left(A'_{n,p}\int_{\mathbb{R}^N}e^{-\alpha|w|^2}\mathbf{1}_{|w-\vec{n}|\leq 1/n^2}\,dw\right)|v-\vec{n}|\mathbf{1}_{|v|\leq 1}\\
            &\quad + \int_{\mathbb{R}^N}dv\,A'_{n,p}|v-\vec{n}|e^{-\alpha|v|^2}\mathbf{1}_{|v-\vec{n}|\leq 1/n^2}\\
            &\leq \int_{\mathbb{R}^N}dv\,\frac{NT}{n^2|B_N(1)|}\left(A'_{n,p}\int_{\mathbb{R}^N}e^{-\alpha|w|^2}\mathbf{1}_{|w-\vec{n}|\leq 1/n^2}\,dw\right)(|v|+n)\mathbf{1}_{|v|\leq 1}\\
            &\quad + \int_{\mathbb{R}^N}dv\,A'_{n,p}|v-\vec{n}|e^{-\alpha|v|^2}\mathbf{1}_{|v-\vec{n}|\leq 1/n^2}\\
            &\leq \left(\frac{NT}{n^2}(n+1) + \frac{1}{n^2}\right)A'_{n,p}\int_{\mathbb{R}^N}dv\,e^{-\alpha|v|^2}\mathbf{1}_{|v-\vec{n}|\leq 1/n^2}.
        \end{split}
    \end{equation}
    
    Therefore, we have
    \begin{align}\label{2_2:u_bound}
        |u_n-\vec{n}|\leq \frac{NT(n+1)+1}{NT+n^2}
    \end{align}
    from \eqref{2_2:u_integral} and \eqref{2_2:rho_bound}.
    
    We now turn to the estimate of $N\rho_n T_n$. For this, we decompose the integral into
    \begin{align*}
        N\rho_n T_n = \int_{\mathbb{R}^N}dv\,|v-u_n|^2 f_n(v) = \int_{\mathbb{R}^N}dv\,(|v-\vec{n}|^2 + 2(v-\vec{n})\cdot (\vec{n}-u_n) + |\vec{n}-u_n|^2) f_n(v).
    \end{align*}
    The integral involving $|v-\vec{n}|^2$ is bounded from above and below as follows:
    \begin{align*}
        \int_{\mathbb{R}^N}dv\,|v-\vec{n}|^2 f_n(v)
        &\leq\int_{\mathbb{R}^N}dv\, \frac{NT}{n^2|B_N(1)|}\left(A'_{n,p}\int_{\mathbb{R}^N}dw\,e^{-\alpha|w|^2}\mathbf{1}_{|w-\vec{n}|\leq 1/n^2}\right)(|v|+n)^2 \mathbf{1}_{|v|\leq 1}\\
        &\quad +\int_{\mathbb{R}^N}dv\,A'_{n,p}|v-\vec{n}|^2 e^{-\alpha|v|^2}\mathbf{1}_{|v-\vec{n}|\leq 1/n^2}\\
        &\leq \left(NT\frac{(n+1)^2}{n^2}+\frac{1}{n^4}\right)A'_{n,p}\int_{\mathbb{R}^N}dv\,e^{-\alpha|v|^2}\mathbf{1}_{|v-\vec{n}|\leq 1/n^2}
        \end{align*}
    and
    \begin{align*}
        \int_{\mathbb{R}^N}dv\,|v-\vec{n}|^2 f_n(v)
        &\geq\int_{\mathbb{R}^N}dv\, \frac{NT}{n^2|B_N(1)|}\left(A'_{n,p}\int_{\mathbb{R}^N}dw\,e^{-\alpha|w|^2}\mathbf{1}_{|w-\vec{n}|\leq 1/n^2}\right)(n-|v|)^2 \mathbf{1}_{|v|\leq 1}\\
        &\geq NT\frac{(n-1)^2}{n^2}A'_{n,p}\int_{\mathbb{R}^N}dv\,e^{-\alpha|v|^2}\mathbf{1}_{|v-\vec{n}|\leq 1/n^2}.
    \end{align*}
   On other hand, using \eqref{2_2:u_bound} and \eqref{2_2:u_integral}, we bound the term containing $(v-\vec{n})\cdot (\vec{n}-u_n)$ as follows:
    \begin{align*}
        \bigg| \int_{\mathbb{R}^N}dv\, (v-\vec{n})\cdot (\vec{n}-u_n) f_n(v)\bigg|
        &\leq \int_{\mathbb{R}^N}dv\, |v-\vec{n}||\vec{n}-u_n| f_n(v)\\
        &\leq \frac{NT(n+1)+1}{NT+n^2}\int_{\mathbb{R}^N}dv\, |v-\vec{n}|f_n(v)\\
        &\leq \frac{(NT(n+1)+1)^2}{(NT+n^2)n^2}A'_{n,p}\int_{\mathbb{R}^N}dv\,e^{-\alpha|v|^2}\mathbf{1}_{|v-\vec{n}|\leq 1/n^2}.
    \end{align*}
    Lastly, we use \eqref{2_2:u_bound} to bound the last integral containing $|\vec{n}-u_n|^2$ by
    \begin{align*}
        \int_{\mathbb{R}^N}dv\, |\vec{n}-u_n|^2 f_n(v)\leq \left(\frac{NT(n+1)+1}{NT+n^2}\right)^2\int_{\mathbb{R}^N}dv\,f_n(v) = \left(\frac{NT(n+1)+1}{NT+n^2}\right)^2\rho_n.
    \end{align*}
    Combining all three terms and dividing by \eqref{2_2:rho_bound}, we obtain
    \begin{align*}
        NT_n&\leq \left(\frac{NT}{n^2}+1\right)^{-1}\left(NT\frac{(n+1)^2n^2+1}{n^4} + 2\frac{(NT(n+1)+1)^2}{(NT+n^2)n^2} + \frac{(NT(n+1)+1)^2}{(NT+n^2)n^2}\right),\\
        NT_n&\geq \left(\frac{NT}{n^2}+1\right)^{-1}\left(NT\frac{(n-1)^2n^2+1}{n^4} - 2\frac{(NT(n+1)+1)^2}{(NT+n^2)n^2} - \frac{(NT(n+1)+1)^2}{(NT+n^2)n^2}\right),
    \end{align*}
    from which we conclude that
    \begin{align*}
        \lim_{n\rightarrow \infty} NT_n = NT.
    \end{align*}
    
    Finally, we can choose large enough $M$ such that $\alpha'<\frac{1}{2T_n}$ and $\frac{\alpha'}{1-2T_n\alpha'}>\alpha+\epsilon$ for some $\epsilon>0$ for all $n\geq M$.
    Summarising the estimate of the macroscopic fields of $f_n$ so far, we have
    \begin{equation}\label{2_2:macro}
        \begin{split}
            &\lim_{n\rightarrow\infty}\left(\frac{|B_N(1)|A'_{n,p}}{n^{2N}}\left(\frac{NT}{n^2}+1\right)e^{-\alpha n^2}\right)^{-1}\rho_n = 1,\\
            &n - \frac{NT(n+1)+1}{NT+n^2}\leq |u_n|\leq n + \frac{NT(n+1)+1}{NT+n^2},\\
            &\lim_{n\rightarrow \infty} T_n = T,\quad \alpha'<\inf_{n\geq M}\frac{1}{2T_n},\quad \inf_{n\geq M}\frac{\alpha'}{1-2T_n\alpha'}>\alpha+\epsilon
        \end{split}
    \end{equation}
    for some $M$ and $\epsilon>0$.
    
    Now, we first prove the proposition for the $p=\infty$ case. The maximum of $e^{\alpha'|v|^2}\mathcal{M}(f)(v)$ is attained at
    \begin{align*}
        v = \frac{1}{1-2T\alpha'}u,
    \end{align*}
    and the maximum value is
    \begin{align*}
        \left\|e^{\alpha' |v|^2}\mathcal{M}(f)\right\|_{L^\infty_v} = \frac{\rho}{(2\pi T)^{N/2}}\exp\left\{\frac{\alpha'}{(1-2T\alpha')}u^2\right\}.
    \end{align*}
    
    In view of the macroscopic estimate \eqref{2_2:macro}, we obtain
    \begin{equation}\label{2_2:blowup}
        \begin{split}
            &\liminf_{n\rightarrow \infty}\left\|e^{\alpha' |v|^2}\mathcal{M}(f_n)\right\|_{L^\infty_v} = \liminf_{n\rightarrow \infty}\frac{\rho_n}{(2\pi T_n)^{N/2}}\exp\left\{\frac{\alpha'}{(1-2T_n\alpha')}u_n^2\right\}\\
            &\geq \liminf_{n\rightarrow \infty}\frac{|B_N(1)|}{n^{2N}(2\pi T)^{N/2}}\left(\frac{NT}{n^2}+1\right)\exp\left\{\frac{\alpha'}{(1-2T_n\alpha')}\left(n - \frac{NT(n+1)+1}{NT+n^2}\right)^2 -\alpha n^2\right\}\\
            &\geq \frac{C(N, T)}{n^{2N}}\exp\left\{(\alpha+\epsilon)(n^2-3NT) -\alpha n^2\right\}
        \end{split}
    \end{equation}
    for some constant $C(N, T)$. In the final step, we used
    \begin{align*}
        \lim_{n\rightarrow \infty}2n\frac{NT(n+1)+1}{NT+n^2}\leq 3NT.
    \end{align*}
    Since RHS of \eqref{2_2:blowup} blows up as $n\rightarrow \infty$, we get the desired norm blow-up.\\
            
    Let us move on to the case $1\leq p<\infty$. Using cylindrical coordinate aligning $-u_n$ to $z$ axis, we compute $\left\|e^{\alpha' |v|^2}\mathcal{M}(f_n)\right\|^p_{L_v^p}$ with $0<\alpha' T<1/2$ as follows:
    \begin{align*}
        &\left\|e^{\alpha' |v|^2}\mathcal{M}(f_n)\right\|^p_{L^p_v} \\
        &=\frac{\rho_n^p}{(2\pi T_n)^{Np/2}}\int_{\mathbb{R}^N}\exp\left\{-\frac{p|v-\vec{n}|^2}{2T_n} + p\alpha' |v|^2\right\}\,dv\\
        &=\frac{|\mathbb{S}^{N-2}|\rho_n^p}{(2\pi T_n)^{pN/2}}\int_{-\infty}^\infty \int_0^\infty \exp\left\{-p\frac{r^2+(z+u_n)^2}{2T_n} + p\alpha' (r^2+z^2)\right\}r^{N-2}\,dr dz\\
        &=\frac{|\mathbb{S}^{N-2}|\rho_n^p}{(2\pi T_n)^{pN/2}}\int_{-\infty}^\infty\exp\left\{p\left(-\frac{(z+u_n)^2}{2T_n} + \alpha' z^2\right)\right\}\,dz\int_0^\infty \exp\left\{p\left(-\frac{1}{2T_n} + \alpha' \right)r^2\right\}r^{N-2}\,dr\\
        &=\frac{|\mathbb{S}^{N-2}|\rho_n^p}{(2\pi T_n)^{pN/2} p^{N/2}}\exp\left(\frac{p\alpha'}{1-2\alpha' T_n} u_n^2\right)\left(-\alpha'+\frac{1}{2T_n}\right)^{-N/2}\int_{-\infty}^\infty\exp\left(-z^2\right)\,dz\int_0^\infty\exp\left(-r^2\right)r^{N-2}\,dr\\
        &=\frac{|\mathbb{S}^{N-2}|\rho_n^p (2 T_n)^{N/2}}{(2\pi T_n)^{Np/2} \left(p-2p\alpha' T_n\right)^{N/2}}\exp\left(\frac{p\alpha'}{1-2\alpha' T_n} u_n^2\right)\int_{-\infty}^\infty\exp\left(-z^2\right)\,dz\int_0^\infty\exp\left(-r^2\right)r^{N-2}\,dr\\
        &=C(p, \alpha', N) \frac{\rho_n^p}{T_n^{N(p-1)/2}p^{N /2}\left(1-2\alpha' T_n\right)^{N/2}}\exp\left(\frac{p\alpha'}{1-2\alpha' T_n} u_n^2\right)
    \end{align*}
    for some constant $C(p, \alpha' ,N)$. It shows that
    \begin{align*}
        \left\|e^{\alpha' |v|^2}\mathcal{M}(f_n)\right\|_{L^p_v} = C(p, \alpha', N) \frac{\rho_n}{T_n^{N(p-1)/2p}\left(p-2p\alpha' T_n\right)^{N/2p}}\exp\left(\frac{\alpha'}{1-2\alpha' T_n} u_n^2\right).
    \end{align*}
    Using the same sequence of functions \eqref{2_2:f_n} and using \eqref{2_2:macro}, we get a similar blow-up profile of \eqref{2_2:blowup}.
\end{proof}

There is a remark on the proposition.
\begin{remark}
    For $\alpha'=\alpha$, we always have a blow-up result as $\frac{\alpha}{1-2T\alpha}>\alpha$ for $T>0$. For $\frac{\alpha'}{1-2T\alpha'}\leq \alpha$, using the $f_n$ constructed in \eqref{2_2:f_n} and the asymptotic macroscopic fields \eqref{2_2:macro}, we obtain
    \begin{align*}
        &\limsup_{n\rightarrow \infty}\left\|e^{\alpha' |v|^2}\mathcal{M}(f_n)\right\|_{L^\infty_v} = \limsup_{n\rightarrow \infty}\frac{\rho_n}{(2\pi T_n)^{N/2}}\exp\left\{\frac{\alpha'}{(1-2T_n\alpha')}u_n^2\right\}\\
        &\leq \limsup_{n\rightarrow \infty}\frac{|B_N(1)|}{n^{2N}(2\pi T)^{N/2}}\left(\frac{NT}{n^2}+1\right)\exp\left\{\frac{\alpha'}{(1-2T_n\alpha')}\left(n + \frac{NT(n+1)+1}{NT+n^2}\right)^2 -\alpha n^2\right\}\\
        &=0.
    \end{align*}
    From this example, we conjecture that $\frac{\alpha'}{1-2T\alpha'}$ is the critical point so that any solution of the Boltzmann equation can maintain at most exponential weight $e^{\frac{\alpha'}{1-2T\alpha'}}$ globally in time.
\end{remark}

\noindent$\bullet$ {\bf The case of $\beta<2$:}
In contrast to the other cases, the weighted norm $\big\|e^{\alpha|v|^\beta} \mathcal{M}(f)\big\|_{L^p_v}$ is always finite in this case since the decay of the Maxwellian distribution is greater than the growth of exponential weight. As $T\rightarrow \infty$, however, the decay of the Maxwellian distribution becomes weak. So, there is some room to make the exponential norm dominant. We will show that this norm can be, in fact, arbitrarily large compared to $\left\|e^{\alpha|v|^2} f_n(v)\right\|_{L^p_v}$ in isotropic velocity case, i.e., $u=0$ case.

\begin{proposition}\label{prop:2_3}
    Let $N\geq 1$, $\alpha>0$, $\alpha'>0$, $0<\beta<2$, and $1\leq p\leq \infty$. Then, there exists a sequence of functions $f_n$ satisfying
    \begin{align*}
        \sup_n \left\|e^{\alpha|v|^\beta} f_n(v)\right\|_{L^p_v} = 1,\text{ but}\quad \lim_{n\rightarrow \infty} \left\|e^{\alpha'|v|^\beta}\mathcal{M}(f_n)(v)\right\|_{L^p_v} = \infty.
    \end{align*}

\end{proposition}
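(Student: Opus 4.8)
The plan is to recycle the truncated--Gaussian test functions from the $\beta=2$ analysis, only with $|v|^\beta$ in the exponent. Concretely, set
\[
 f_n(v) = A_{n,p}\, e^{-\alpha|v|^\beta}\,\mathbf 1_{n\le |v|\le n^2},
\]
with $A_{n,p}$ exactly as in \eqref{2_2:def_Anp}, so that $e^{\alpha|v|^\beta}f_n = A_{n,p}\mathbf 1_{n\le|v|\le n^2}$ and hence $\|e^{\alpha|v|^\beta}f_n\|_{L^p_v}=1$ for every $n$ and every $1\le p\le\infty$. With the denominator thus normalized, the proposition reduces to showing $\|e^{\alpha'|v|^\beta}\mathcal{M}(f_n)\|_{L^p_v}\to\infty$ as $n\to\infty$.

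Since $f_n$ is of the form covered by Lemma \ref{lem:Tinfty_asym}, it is isotropic ($u_n=0$) and, for $n$ large, its macroscopic fields satisfy $\rho_n \gtrsim A_{n,p}\,n^{N-\beta}e^{-\alpha n^\beta}$ and $\frac{n^2}{2N}\le T_n\le \frac{2n^2}{N}$, so $\mathcal{M}(f_n)(v)=\frac{\rho_n}{(2\pi T_n)^{N/2}}e^{-|v|^2/(2T_n)}$ is a well-defined positive $L^p$ function. The heart of the argument is then a Laplace--type lower bound for $\|e^{\alpha'|v|^\beta}\mathcal{M}(f_n)\|_{L^p_v}$. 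The exponent of the integrand is $g_n(v):=\alpha'|v|^\beta - |v|^2/(2T_n)$, which is eventually negative because $\beta<2$ (so the norm is finite), but attains large positive values at the ``resonant'' scale $|v|\sim T_n^{1/(2-\beta)}$. Evaluating on the sphere $|v|=\lambda\, T_n^{1/(2-\beta)}$ and using $T_n^{2/(2-\beta)}/T_n=T_n^{\beta/(2-\beta)}$ gives
\[
 g_n(v)=\Big(\alpha'\lambda^{\beta}-\tfrac12\lambda^{2}\Big)\,T_n^{\beta/(2-\beta)} .
\]
Because $\beta<2$, we may fix $\lambda=\lambda_0$ small enough that the bracket is a strictly positive constant, and it stays bounded below by a positive constant $c_0$ throughout the shell $\tfrac12\lambda_0 T_n^{1/(2-\beta)}\le|v|\le\lambda_0 T_n^{1/(2-\beta)}$, whose volume is $\gtrsim T_n^{N/(2-\beta)}$. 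Integrating over that shell (or, for $p=\infty$, just evaluating at one such point) yields
\[
 \big\|e^{\alpha'|v|^\beta}\mathcal{M}(f_n)\big\|_{L^p_v}\gtrsim \frac{\rho_n}{(2\pi T_n)^{N/2}}\,T_n^{N/((2-\beta)p)}\,e^{\,c_0\, T_n^{\beta/(2-\beta)}},
\]
with the convention $T_n^{N/((2-\beta)p)}=1$ when $p=\infty$.

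To finish, insert $\rho_n\gtrsim A_{n,p}n^{N-\beta}e^{-\alpha n^\beta}$ and $n^2/(2N)\le T_n\le 2n^2/N$ into the last display. All of $A_{n,p}$, the factor $T_n^{N/2}$ in the denominator, and $T_n^{N/((2-\beta)p)}$ are merely polynomial in $n$, while the exponential factor is bounded below by $e^{-\alpha n^\beta+c\,n^{2\beta/(2-\beta)}}$ for some $c>0$. Since $\frac{2\beta}{2-\beta}-\beta=\frac{\beta^2}{2-\beta}>0$ for $\beta\in(0,2)$, this exponent tends to $+\infty$ and dominates every polynomial factor, so $\|e^{\alpha'|v|^\beta}\mathcal{M}(f_n)\|_{L^p_v}\to\infty$. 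Combined with $\|e^{\alpha|v|^\beta}f_n\|_{L^p_v}=1$, this gives the asserted blow-up of the ratio.

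The main obstacle is the Laplace--type lower bound: one must convert the pointwise growth of $e^{\alpha'|v|^\beta}\mathcal{M}(f_n)$ near the resonant radius into an $L^p$ lower bound uniformly in $p\in[1,\infty]$, i.e. show the growth happens on a region of polynomially large volume rather than at a single point. Working with the convenient radius $\lambda_0 T_n^{1/(2-\beta)}$ instead of the exact maximizer $(\alpha'\beta T_n)^{1/(2-\beta)}$ is what keeps this step elementary and avoids any second--order Laplace expansion; the remaining inputs (the macroscopic asymptotics from Lemma \ref{lem:Tinfty_asym} and the polynomial-versus-exponential comparison) are routine.
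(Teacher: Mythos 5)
Your proof is correct, and it takes a genuinely different (and arguably cleaner) route than the paper's for the $1\le p<\infty$ case. Both arguments use the same test functions $f_n=A_{n,p}e^{-\alpha|v|^\beta}\mathbf 1_{n\le|v|\le n^2}$ and the same macroscopic asymptotics from Lemma \ref{lem:Tinfty_asym}. Where they diverge is the Laplace-type lower bound. The paper integrates over the width-one interval $[(\alpha'\beta T_n)^{1/(2-\beta)},(\alpha'\beta T_n)^{1/(2-\beta)}+1]$ centered at the exact maximizer of $g_n(v)=\alpha'|v|^\beta-|v|^2/(2T_n)$, and then invokes Lemma \ref{lem:max3} (a second-order Taylor estimate) to show that the exponent barely changes over that interval; this recovers essentially the sharp constant $\frac{\alpha'}{2}(\alpha'\beta)^{\beta/(2-\beta)}(2-\beta)$ in front of $T_n^{\beta/(2-\beta)}$. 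You instead fix a small $\lambda_0$ with $\alpha'(\lambda_0/2)^\beta-\lambda_0^2/2>0$ and integrate over the fat shell $\frac12\lambda_0 T_n^{1/(2-\beta)}\le|v|\le\lambda_0 T_n^{1/(2-\beta)}$, where $g_n\ge c_0 T_n^{\beta/(2-\beta)}$ for a fixed $c_0>0$. This forfeits the sharp constant but buys you a volume factor $T_n^{N/(2-\beta)}$ for free, with no Taylor-expansion lemma and no careful bookkeeping of how the maximizer depends on $n$. Since both the paper's $(\alpha'\beta T_n)^{(N-1)/(2-\beta)}$ and your $T_n^{N/((2-\beta)p)}$ are merely polynomial in $n$ and the exponent $c\,n^{2\beta/(2-\beta)}-\alpha n^\beta\to\infty$ either way, both close the argument; yours is shorter and more self-contained, the paper's tracks the precise blow-up rate.
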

\begin{proof}
    As in the $\beta = 2$ case, we choose 
    \begin{equation*}
        f_n(v) = A_{n,p}e^{-\alpha |v|^\beta}\mathbf{1}_{n\leq |v|\leq n^2},
    \end{equation*}
    where
    \begin{align*}
        A_{n,p} \coloneqq 
        \begin{cases}
            \left(\frac{1}{|B_N(1)|(n^{2N}-n^N)}\right)^{1/p} & 1\leq p<\infty,\\
            1 & p = \infty.
        \end{cases}
    \end{align*}
    Note that $A_{n,p}$ is chosen to normalize $\left\|e^{\alpha|v|^\beta}f_n(v)\right\|_{L^p_v} = 1$ for all $p$ and $n$. Using Lemma \ref{lem:Tinfty_asym}, we can choose large enough $N(p,\beta)$ such that
    \begin{align}\label{2_3:macro}
        \rho_n\geq \frac{1}{2}A_{n,p}\left(\frac{|\mathbb{S}^{N-1}|n^{N-\beta}}{\alpha\beta}e^{-\alpha n^\beta}\right),\quad  \frac{n^2}{2N}\leq T_n\leq \frac{2n^2}{N}
    \end{align}
    for all $n\geq N(p,\beta)$.
    
    We first prove the $p=\infty$ case, which is easier than the other cases, and then extend the proof to $1\leq p<\infty$.\\
    
    \noindent \textit{(i) The case $p=\infty$.}
    
    We first observe that the function
    \begin{align*}
        e^{\alpha'|v|^\beta}\mathcal{M}(f_n)(v) =\frac{\rho_n}{(2\pi T_n)^{N/2}}\exp\left(\alpha'|v|^\beta-\frac{|v|^2}{2T_n}\right)
    \end{align*} 
    attains its maximum value at $v_n = (\alpha'\beta T_n)^{1/(2-\beta)}$, so
    \begin{align*}
        \begin{split}
            \left\|e^{\alpha'|v|^\beta}\mathcal{M}(f_n)\right\|_{L^\infty_v}&= e^{\alpha'|v|^\beta}\mathcal{M}(f_n)(v_n)\\
            &=\frac{\rho_n}{(2\pi T_n)^{3/2}}\exp\left\{\bigg(\alpha'(\alpha'\beta)^{\beta/(2-\beta)} - \frac{(\alpha'\beta)^{2/(2-\beta)}}{2}\bigg)T_n^{\beta/(2-\beta)}\right\}\\
            & =\frac{\rho_n}{(2\pi T_n)^{3/2}}\exp\left(\frac{\alpha'}{2}(\alpha'\beta)^{\beta/(2-\beta)}\left(2-\beta\right)T_n^{\beta/(2-\beta)}\right).
        \end{split}
    \end{align*}
    
    Therefore, using \eqref{2_3:macro}, we obtain
    \begin{align}
        &\liminf_{n\rightarrow \infty}\left\|e^{\alpha'|v|^\beta}\mathcal{M}(f_n)\right\|_{L^\infty_v}\notag\\
        &\geq \liminf_{n\rightarrow \infty}\frac{\left(\frac{|\mathbb{S}^{N-1}| n^{N-\beta}}{\alpha\beta}e^{-\alpha n^\beta}\right)}{2\left(\frac{4\pi}{N}n^2\right)^{N/2}}\exp\left[{\left(\frac{\alpha'}{2}(\alpha'\beta)^{\beta/(2-\beta)}\left(2-\beta\right)\right)\left(\frac{n^2}{2N}\right)^{\beta/(2-\beta)}}\right]\notag\\
        &= \liminf_{n\rightarrow \infty} \frac{C(\alpha,\beta)}{ n^{\beta}}\exp\left[{\left(\frac{1}{2N}\right)^{\beta/(2-\beta)}
        \left(\frac{\alpha'}{2}(\alpha'\beta)^{\beta/(2-\beta)}\left(2-\beta\right)\right)n^{2\beta/(2-\beta)} - \alpha n^\beta}\right]\label{2_3:blowup},
    \end{align}
    where $C(\alpha,\beta)$ is a constant. Under our assumption $0<\beta<2$, we have
    \begin{align*}
        \frac{\alpha}{2}(\alpha'\beta)^{\beta/(2-\beta)}\left(2-\beta\right)>0,\quad \frac{2\beta}{(2-\beta)}> \beta.
    \end{align*}
    Therefore, 
    \begin{align*}
        \liminf_{n\rightarrow \infty}\left|e^{\alpha|v|^\beta}\mathcal{M}(f_n)(v_n)\right|\geq \liminf_{n\rightarrow \infty}\frac{C(\alpha,\beta)}{n^{\beta}}\exp\left[\frac{1}{2}\frac{(2-\beta)\alpha'(\alpha'\beta)^{\beta/(2-\beta)}}{2(2N)^{\beta/(2-\beta)}}n^{2\beta/(2-\beta)}\right]=\infty.
    \end{align*}
    It completes the proof for the $p=\infty$ case.\newline
   
    \noindent \textit{(ii) The case $1\leq p<\infty$.}
    
    We first bound $L^p_v$ norm of $e^{\alpha'|v|^\beta}\mathcal{M}(f)$ by
    \begin{align}\label{2_3:max1}
        \begin{split}
            \left\|e^{\alpha'|v|^\beta}\mathcal{M}(f^n)\right\|^p_{L^p_v} &= \left(\frac{|\mathbb{S}^{N-1}|\rho_n}{(2\pi T_n)^{N/2}}\right)^p \int_0^\infty e^{p\left(\alpha' r^\beta-\frac{r^2}{2T_n}\right)} r^{N-1} dr\\
            &\geq \left(|\mathbb{S}^{N-1}|\frac{\rho_n}{(2\pi T_n)^{N/2}}\right)^p \int_{(\alpha'\beta T_n)^{1/(2-\beta)}}^{(\alpha'\beta T_n)^{1/(2-\beta)}+1}e^{p\left(\alpha' r^\beta-\frac{r^2}{2T_n}\right)} r^{N-1} dr.
        \end{split}
    \end{align}
    Since $e^{p\left(\alpha' r^\beta-\frac{r^2}{2T_n}\right)}$ decreases monotonically after the maximum point $r = (\alpha\beta T_n)^{1/(2-\beta)}$, we have
    \begin{align}\label{2_3:max2}
        \begin{split}
            &\int_{(\alpha'\beta T_n)^{1/(2-\beta)}}^{(\alpha'\beta T_n)^{1/(2-\beta)}+1}e^{p\left(\alpha' r^\beta-\frac{r^2}{2T_n}\right)} r^{N-1} dr\\
            &\geq (\alpha'\beta T_n)^{(N-1)/(2-\beta)}\exp{p\left(\alpha' \left((\alpha'\beta T_n)^{1/(2-\beta)}+1\right)^\beta-\frac{\left((\alpha'\beta T_n)^{1/(2-\beta)}+1\right)^2}{2T_n}\right)}.
        \end{split}
    \end{align}
    We need the following lemma to proceed.
    \begin{lemma}\label{lem:max3}
        We have
        \begin{align*}
            \lim_{n\rightarrow \infty}\frac{\exp{p\left(\alpha' \left((\alpha'\beta T_n)^{1/(2-\beta)}+1\right)^\beta-\frac{\left((\alpha'\beta T_n)^{1/(2-\beta)}+1\right)^2}{2T_n}\right)}}{\exp{p\left(\alpha' \left((\alpha'\beta T_n)^{1/(2-\beta)}\right)^\beta-\frac{\left((\alpha'\beta T_n)^{1/(2-\beta)}\right)^2}{2T_n}\right)}}= 1
        \end{align*}
        for $\alpha'>0$ and $0<\beta<2$.
    \end{lemma}
    \begin{proof}
        Let $v_n = (\alpha'\beta T_n)^{1/(2-\beta)}$ and define
        \begin{align*}
            g(x) = \alpha' x^\beta - \frac{x^2}{2T_n}.
        \end{align*}
        Since $g'(v_n) = 0$, by the Taylor's theorem, there exists $c$ between $v_n$ and $v_n+1$ such that 
        \begin{align*}
            g\left(v_n+1\right) &= g(v_n) +g^{\prime}(v_n)+\frac{1}{2}g^{\prime\prime}(c)\\ &=g(v_n)+\frac{1}{2}\left(\alpha'\beta(\beta-1)c^{\beta-2} - \frac{1}{T_n}\right).
        \end{align*}
        Since $c$ and $T_n$ go to infinity as $n\rightarrow \infty$, we get
        \begin{align*}
            \lim_{n\rightarrow \infty}\exp\left(p\big\{g(v_n+1)-g(v_n)\big\}\right) = \exp(0)=1.
        \end{align*}
        It gives the desired result.
    \end{proof}
    
    From \eqref{2_3:macro}, \eqref{2_3:max1}, \eqref{2_3:max2}, and Lemma \ref{lem:max3}, we obtain
    \begin{align*}
        &\liminf_{n\rightarrow \infty}\left\|e^{\alpha'|v|^\beta}\mathcal{M}(f^n)\right\|_{L^p_v}\\
        &\geq \liminf_{n\rightarrow \infty}|\mathbb{S}^{N-1}|\frac{\rho_n}{(2\pi T_n)^{N/2}}(\alpha'\beta T_n)^{\frac{N-1}{p(2-\beta)}}\exp{\left(\alpha' \left((\alpha'\beta T_n)^{1/(2-\beta)}\right)^\beta-\frac{(\alpha'\beta T_n)^{2/(2-\beta)}}{2T_n}\right)}\\
        &\geq \liminf_{n\rightarrow \infty} |\mathbb{S}^{N-1}|\frac{C(\alpha,\alpha', \beta, p, N)}{(n^{2N}-n^N)^{1/p}n^{-\frac{2(N-1)}{p(2-\beta)}+\beta}}\exp\left[\frac{(2-\beta)\alpha'(\alpha'\beta)^{\beta/(2-\beta)}}{2(2N)^{\beta/(2-\beta)}}n^{2\beta/(2-\beta)}-\alpha n^\beta\right].
    \end{align*}
    The exponential part is the same as \eqref{2_3:blowup}, so we get the exponential blow up about $n$ overwhelming the front polynomial decays. Finally, we get weighted $L^p$ norm blow-up for $1\leq p<\infty$.
\end{proof}

\section{Technical Lemmas for inhomogeneous Ill-posedness}

In this section, we prove various technical lemmas that will be used to establish the spatially inhomogeneous ill-posedness result in Section 4.

The next lemma is elementary but will be frequently used in Section 4.
\begin{lemma}\label{lem:power0}
    If $a,b\geq 0$ and $0\leq c\leq 2$, then
    \begin{align*}
        (a+b)^c\leq \max\{c, 1\}(a^c+b^c).
    \end{align*}
\end{lemma}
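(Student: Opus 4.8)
The plan is to split into the two regimes $0\le c\le 1$ and $1\le c\le 2$, in which $\max\{c,1\}$ equals $1$ and $c$ respectively, and handle each by the elementary concavity/convexity of $x\mapsto x^c$ on $[0,\infty)$.

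For $0\le c\le 1$ I would prove the stronger subadditivity $(a+b)^c\le a^c+b^c$ (so that the constant $1=\max\{c,1\}$ is enough). The case $a=b=0$ is trivial, so assume $a+b>0$ and set $s=a/(a+b)$, $t=b/(a+b)$, whence $s,t\in[0,1]$ and $s+t=1$. Since $0\le c\le 1$ we have $x\le x^c$ for all $x\in[0,1]$, hence $1=s+t\le s^c+t^c$; multiplying by $(a+b)^c$ gives $(a+b)^c\le a^c+b^c$.

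For $1\le c\le 2$ I would use convexity of $x\mapsto x^c$:
\[
    (a+b)^c = 2^c\Big(\tfrac{a+b}{2}\Big)^c \le 2^c\cdot\tfrac{a^c+b^c}{2} = 2^{c-1}(a^c+b^c).
\]
It then suffices to check $2^{c-1}\le c$ on $[1,2]$, which holds because $c\mapsto 2^{c-1}$ is convex and agrees with the linear function $c\mapsto c$ at the endpoints $c=1$ and $c=2$, so the convex function lies below the chord on the whole interval. Combining the two displays yields $(a+b)^c\le c(a^c+b^c)=\max\{c,1\}(a^c+b^c)$.

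The lemma is elementary, so there is no genuine obstacle; the only point worth being careful about is that the constant in the convex regime must be exactly $c$ rather than merely $2^{c-1}$, which is why the comparison $2^{c-1}\le c$ on $[1,2]$ is the one inequality I would spell out. The degenerate cases (one or both of $a,b$ equal to zero) are immediate.
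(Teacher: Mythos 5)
Your proof is correct, and it takes a genuinely different route from the paper's. The paper normalizes to $t=a/b$ and studies $g(t)=(t+1)^c-\max\{c,1\}(t^c+1)$ directly: for $0\le c\le 1$ it shows $g$ is non-increasing with $g(0)\le 0$, and for $1<c\le 2$ it locates the interior critical point $t_0=1/(c^{1/(c-1)}-1)$, checks $t_0\le 1$, and evaluates $g(t_0)$ to see it is nonpositive. You instead use the standard normalization $s+t=1$ together with $x\le x^c$ on $[0,1]$ to get full subadditivity in the concave regime, and the midpoint-convexity bound $(a+b)^c\le 2^{c-1}(a^c+b^c)$ plus the endpoint-and-convexity comparison $2^{c-1}\le c$ on $[1,2]$ in the convex regime. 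Your argument avoids the explicit critical-point computation, at the small cost of the extra comparison $2^{c-1}\le c$; the paper's argument pins down where the constant $c$ is actually attained, which is slightly more informative but more work.
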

\begin{proof}
    If $b=0$, it is trivially satisfied, so let us assume $b\neq 0$. We take $t = a/b$ to get
    \begin{align}\label{power0:1}
        (t+1)^c - \max\{c, 1\}(t^c+1)\leq 0
    \end{align}
    for $t\geq 0$. For $0\leq c\leq 1$, by the derivative test,
    \begin{align*}
        \left((t+1)^c - (t^c+1)\right)' = c(t+1)^{c-1} - ct^{c-1}\leq 0
    \end{align*}
    for all $t>0$ with $(0+1)^c - 1\leq 0$. So, it holds for $0\leq c\leq 1$.

    For $1<c\leq 2$, we  show that the maximum of \eqref{power0:1} on $t\geq 0$ should be negative. Indeed, the maximum point $t_0$ of \eqref{power0:1} should satisfy
    \begin{align}\label{power0:2}
        \left((t+1)^c - c(t^c+1)\right)'(t_0) = c(t_0+1)^{c-1} - c^2t_0^{c-1} = 0.
    \end{align}
    Therefore,  
    \begin{align*}
        t_0 = \frac{1}{c^{\frac{1}{c-1}}-1}.
    \end{align*}
    Since $c^{\frac{1}{c-1}}$ is a decreasing function on $1< c\leq 2$ with the minimum value $2$ at $c=2$,  we find $t_0\leq1$. From the relation \eqref{power0:2}, we have
    \begin{align*}
        (t_0+1)^c - c(t_0^c+1) = c(t_0+1)t_0^{c-1} - c(t_0^c+1) = c(t_0^{c-1} - 1)\leq 0.
    \end{align*}
    It completes the proof.
\end{proof}
\begin{remark}
    If we set $a=1$, $b = |x|^\gamma$ $(\gamma>0)$, and $c=2$, we have
    \begin{align*}
        (1+|x|^\gamma)^2\leq 2(1+|x|^{2\gamma}).
    \end{align*}
    Conversely, if we set $b = |x|^{2\gamma}$ and $c=1/2$, then
    \begin{align*}
        (1+|x|^{2\gamma})^{1/2}\leq (1+|x|^{\gamma}).
    \end{align*}
    Therefore, we can interchange $(1+|x|^{\gamma})^2$ and $(1+|x|^{2\gamma})$ with some constant multiplication in computation.
\end{remark}

The following lemma will be used to bound a small perturbation in a base; see \eqref{3_8:Lem_3_3_appli} or \eqref{first} for applications.
\begin{lemma}\label{lem:power1}
    For $0\leq x\leq 1$ and $0\leq c\leq 1$,
    \begin{align*}
        1-x\leq (1-x)^c\leq 1-cx.
    \end{align*}
    If $1\leq c\leq 2$, we have
    \begin{align*}
        1-cx\leq (1-x)^c\leq 1-cx+ \frac{c^2}{2}x^2.
    \end{align*}
    Finally, for $a\geq b>0$, we have
    \begin{align*}
        (a-b)^c\geq\begin{cases}
            a^c-a^{c-1}b & 0\leq c\leq 1,\\
            a^c-ca^{c-1}b & 1\leq c\leq 2.
        \end{cases}
    \end{align*}
\end{lemma}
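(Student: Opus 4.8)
The plan is to reduce every claim to a Bernoulli-type inequality, each of which is handled by a one-variable monotonicity argument, plus a single exponential sandwich for the quadratic upper bound; the last pair of inequalities then comes for free by homogeneity.

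First I would treat $0\le c\le 1$. The lower bound $1-x\le(1-x)^c$ is immediate: since $0\le 1-x\le 1$ and $c\le 1$, the map $t\mapsto t^c$ dominates the identity on $[0,1]$, so $(1-x)^c\ge(1-x)^1$. For the upper bound, set $h(x)=1-cx-(1-x)^c$ on $[0,1]$; then $h(0)=0$ and $h'(x)=-c+c(1-x)^{c-1}=c\big((1-x)^{c-1}-1\big)\ge 0$, because $c-1\le 0$ together with $0<1-x\le 1$ forces $(1-x)^{c-1}\ge 1$ (the endpoint $x=1$ being checked directly). Hence $h\ge 0$. For $1\le c\le 2$ the lower bound $1-cx\le(1-x)^c$ is the same computation with the inequality reversed: $g(x)=(1-x)^c-(1-cx)$ satisfies $g(0)=0$ and $g'(x)=c\big(1-(1-x)^{c-1}\big)\ge 0$, since now $c-1\ge 0$ makes $(1-x)^{c-1}\le 1$.

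The quadratic upper bound for $1\le c\le 2$ is the only step needing an idea, and it is the one I expect to be the main obstacle: a direct Taylor expansion of $(1-x)^c$ with Lagrange remainder fails near $x=1$, where the remainder $\tfrac12 c(c-1)(1-\xi)^{c-2}x^2$ blows up when $c<2$. Instead I would route through the exponential. Since $\ln(1-x)\le -x$ for $0\le x<1$ and $c>0$, we get $(1-x)^c=e^{c\ln(1-x)}\le e^{-cx}$ (with the trivial case $0\le e^{-c}$ at $x=1$). Combining this with the elementary estimate $e^{-y}\le 1-y+\tfrac{y^2}{2}$ for $y\ge 0$ — proved by noting that $r(y)=1-y+\tfrac{y^2}{2}-e^{-y}$ has $r(0)=r'(0)=0$ and $r''(y)=1-e^{-y}\ge 0$, hence $r\ge 0$ — and applying it with $y=cx\ge 0$ yields $(1-x)^c\le 1-cx+\tfrac{c^2}{2}x^2$.

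Finally, the two inequalities for $a\ge b>0$ follow by homogeneity. Writing $a-b=a(1-t)$ with $t=b/a\in(0,1]$, we have $(a-b)^c=a^c(1-t)^c$. For $0\le c\le 1$ apply the lower bound $(1-t)^c\ge 1-t$ to obtain $(a-b)^c\ge a^c(1-t)=a^c-a^{c-1}b$; for $1\le c\le 2$ apply $(1-t)^c\ge 1-ct$ to obtain $(a-b)^c\ge a^c(1-ct)=a^c-ca^{c-1}b$. This closes the argument, the only nonroutine ingredient being the exponential sandwich, which is precisely what sidesteps the divergence of the Taylor remainder as $x\to 1$.
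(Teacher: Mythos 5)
Your proof is correct. Most of it coincides with the paper's argument: the $0\leq c\leq 1$ case and the lower bound for $1\leq c\leq 2$ are the same derivative tests, and the final inequalities are obtained by the same homogenization $(a-b)^c=a^c(1-b/a)^c$. The one place you diverge is the quadratic upper bound for $1\leq c\leq 2$. You route through the exponential, $(1-x)^c\leq e^{-cx}\leq 1-cx+\tfrac{c^2}{2}x^2$, which is valid and neatly avoids the blow-up of the Lagrange remainder $(1-\xi)^{c-2}$ near $x=1$. The paper instead sandwiches the derivative: it applies the already-proved $0\leq c-1\leq 1$ estimates to $(1-x)^{c-1}$ to get $-c\bigl(1-(c-1)x\bigr)\leq -c(1-x)^{c-1}\leq -c(1-cx)$ and integrates from $0$ to $x$. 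That bootstrapping approach has the minor advantage of simultaneously producing the sharper lower bound $1-cx+\tfrac{c(c-1)}{2}x^2$ (not needed in the statement), while your exponential sandwich is arguably the cleaner self-contained way to get the stated upper bound. Either argument fully proves the lemma.
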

\begin{proof}
    We first consider the $0\leq c\leq 1$ case. Since $0\leq 1-x\leq 1$, the left inequality is trivial. The right inequality can be checked using derivative analysis. For $1\leq c\leq 2$, taking derivative and applying the inequalities for $0\leq c-1\leq 1$, we have
    \begin{align*}
        -c(1-(c-1)x)\leq \left((1-x)^c\right)' = -c(1-x)^{c-1}\leq -c(1-x)\leq -c(1-cx).
    \end{align*}
    Integrating both sides from $0$ to $x$, we obtain
    \begin{align*}
        1-cx\leq 1-cx + \frac{c(c-1)}{2}x^2\leq (1-x)^c\leq 1-cx + \frac{c^2}{2}x^2.
    \end{align*}
    To prove the last inequality, we modify $(a-b)^c = a^c(1-b/a)^c$ and apply the inequalities for $x = b/a$.
\end{proof}

The following three lemmas are intended to precisely define the bounds of the radius of a set of $\{r:r\geq |x\pm rt|\}$, which will be crucially used in the next section.
\begin{lemma}\label{lem:power3}
    If $2\leq a\leq 3$, $x\geq t>0$, then
    \begin{align*}
        \left(x+\frac{2t}{a}\right)^a&\geq x^a + t x^{a-1} + \frac{2t^2}{a} x^{a-2},\\
        \left(x-\frac{t}{a}\right)^a&\leq x^a - t x^{a-1} + \frac{t^2}{a} x^{a-2},\\
        \left(x+\frac{t}{2a}\right)^a&\leq x^a + t x^{a-1} + \frac{t^2}{2a} x^{a-2},\\
        \left(x-\frac{t}{2a}\right)^a&\geq x^a - t x^{a-1} + \frac{t^2}{2a} x^{a-2}.
    \end{align*}
\end{lemma}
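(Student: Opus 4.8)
The plan is to first scale away $x$ and then treat all four inequalities by a single second-order Taylor expansion. Since $x>0$, I would divide each inequality through by $x^{a}$ and set $s=t/x$, which by hypothesis ranges over $(0,1]$. The four claims then become the one-variable inequalities
\begin{align*}
    &\Big(1+\tfrac{2s}{a}\Big)^{a}\geq 1+s+\tfrac{2s^{2}}{a},
    &&\Big(1-\tfrac{s}{a}\Big)^{a}\leq 1-s+\tfrac{s^{2}}{a},\\
    &\Big(1+\tfrac{s}{2a}\Big)^{a}\leq 1+s+\tfrac{s^{2}}{2a},
    &&\Big(1-\tfrac{s}{2a}\Big)^{a}\geq 1-s+\tfrac{s^{2}}{2a},
\end{align*}
to be verified for all $a\in[2,3]$ and $s\in(0,1]$; note that in every case the base $1\pm(\cdot)$ stays in $(0,2)$, so all powers are well defined.

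The common tool is the Taylor expansion with Lagrange remainder of $h(u)=(1+u)^{a}$ about $u=0$: namely $(1+u)^{a}=1+au+\tfrac{a(a-1)}{2}(1+\xi)^{a-2}u^{2}$ for some $\xi$ strictly between $0$ and $u$. The structural point is that $0\leq a-2\leq 1$, so $(1+\xi)^{a-2}$ is nondecreasing in $\xi$ and is therefore pinched between its endpoint values on each of the short intervals that occur. For the first inequality I take $u=2s/a\in(0,1]$, use $(1+\xi)^{a-2}\geq 1$ to get $(1+\tfrac{2s}{a})^{a}\geq 1+2s+\tfrac{2(a-1)}{a}s^{2}$, and then note that the difference with the right-hand side equals $s+\tfrac{2(a-2)}{a}s^{2}\geq 0$. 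For the fourth I take $u=-s/(2a)$, so that $1+\xi\in(1-\tfrac{s}{2a},1)\subset(\tfrac34,1)$ and hence $(1+\xi)^{a-2}\geq(\tfrac34)^{a-2}\geq\tfrac34$; substituting and simplifying reduces the claim to $\tfrac s2\geq s^{2}\tfrac{19-3a}{32a}$, which since $s\leq1$ follows from $16a\geq 19-3a$, i.e. $a\geq 1$.

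The hypothesis $a\leq 3$ enters only in the two upper bounds. For the second inequality I take $u=-s/a$; then $1+\xi\in(0,1)$, so $(1+\xi)^{a-2}\leq 1$ and $(1-\tfrac sa)^{a}\leq 1-s+\tfrac{a-1}{2a}s^{2}$, and the target bound follows from $\tfrac{a-1}{2a}\leq\tfrac1a$, which is precisely $a\leq 3$. For the third I take $u=s/(2a)\leq\tfrac14$; then $1+\xi<\tfrac54$, so $(1+\xi)^{a-2}<\tfrac54$, which gives $(1+\tfrac{s}{2a})^{a}<1+\tfrac s2+\tfrac{5(a-1)}{32a}s^{2}$, and the difference with the right-hand side is $\tfrac s2+\tfrac{21-5a}{32a}s^{2}\geq 0$. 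Alternatively, and more in the spirit of the earlier lemmas, for these two cases one can write $(1+u)^{a}=(1+u)^{2}(1+u)^{a-2}$, apply Lemma \ref{lem:power1} with exponent $c=a-2\in[0,1]$ to the second factor, expand the product and collect powers of $t$ to obtain $(x-\tfrac ta)^{a}\leq x^{a}-tx^{a-1}+\tfrac{2a-3}{a^{2}}t^{2}x^{a-2}-\tfrac{a-2}{a^{3}}t^{3}x^{a-3}$; dropping the cubic term (nonpositive since $a\geq 2$), the claim again reduces to $\tfrac{2a-3}{a^{2}}\leq\tfrac1a$, i.e. $a\leq 3$.

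The only real obstacle is bookkeeping: correctly tracking the sign of the remainder and of the cubic correction terms, and confirming that the crude endpoint bounds on $(1+\xi)^{a-2}$ together with $s^{2}\leq s$ on $(0,1]$ leave enough slack. There is no deeper difficulty — the constants $\tfrac{2t}{a},\tfrac ta,\tfrac{t}{2a}$ on the left and $\tfrac1a,\tfrac1{2a}$ on the right are chosen exactly so that each inequality holds uniformly for $a\in[2,3]$ and $0<t\leq x$, with $a\leq 3$ being the sharp constraint in the two upper-bound cases.
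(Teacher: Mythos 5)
Your argument is correct. After rescaling to one variable (you use $s=t/x\in(0,1]$; the paper uses $w=t/(ax)\in(0,1/a]$, which is the same move) the two proofs diverge in method: the paper verifies each one-variable inequality by a derivative test, showing the difference between the two sides has sign-definite derivative and vanishes at the origin, whereas you invoke Taylor's theorem with Lagrange remainder, $(1+u)^a = 1+au+\tfrac{a(a-1)}{2}(1+\xi)^{a-2}u^2$, and bound $(1+\xi)^{a-2}$ by its endpoint values using $0\le a-2\le 1$. Both routes are equally elementary; the Taylor route has the virtue of making transparent exactly where each hypothesis enters ($a\ge 2$ for the two lower bounds, $a\le 3$ for the two upper bounds, and $s\le 1$ to absorb the $s^2$ terms), at the cost of the crude but adequate endpoint bounds $\tfrac34\le(1+\xi)^{a-2}\le\tfrac54$. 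Your alternative via $(1+u)^a=(1+u)^2(1+u)^{a-2}$ and Lemma \ref{lem:power1} is clean for the $(x-\tfrac ta)^a$ case, but note that as stated Lemma \ref{lem:power1} only bounds $(1-x)^c$, so it does not directly give the needed estimate on $(1+\tfrac{s}{2a})^{a-2}$ for the third inequality; since you only carried that alternative out for the second inequality and the main Taylor argument already covers all four, this is a harmless remark rather than a gap.
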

\begin{proof}
    We start from elementary inequalities: for $0\leq w\leq \frac{1}{a}$ with $2\leq a\leq 3$. We claim that
    \begin{align*}
        (1+2w)^a&\geq 1+aw + 2aw^2,\\
        (1-w)^a&\leq 1-aw + aw^2,\\
        (1+w/2)^a&\leq 1+aw + aw^2/2,\\
        (1-w/2)^a&\geq 1-aw + aw^2/2,
    \end{align*}
    which can be proved by the derivative test: for $0\leq w\leq 1/a$ and $2\leq a\leq 3$,
    \begin{align*}
        \left((1+2w)^a - 1-aw - 2aw^2\right)' &= 2a(1+2w)^{a-1}-a-4aw\geq 2a(1+2w)-a-4aw = a\geq 0,\\
        \left((1-w)^a - 1+aw - aw^2\right)' &= -a(1-w)^{a-1}+a-2aw\leq -a(1-w)^2+a-2aw=-aw^2\leq 0,\\
        \left((1+w/2)^a - 1-aw - aw^2/2\right)' &= \frac{a}{2}(1+w/2)^{a-1}-a-aw\leq \frac{a}{2}(1+w+w^2/4)-a-aw \\
        &= \frac{a}{2}\left(-1-w + \frac{w^2}{2}\right)\leq 0,\\
        \left((1-w/2)^a - 1+aw - aw^2/2\right)' &= -\frac{a}{2}(1-w/2)^{a-1}+a-aw\geq -\frac{a}{2}(1-w/2)+a-aw=\frac{a}{2} - \frac{3}{4}aw\geq 0.
    \end{align*}
    Now, we substitute $w = \frac{t}{ax}$ and multiply both sides by $x^a$ to finish the proof.
\end{proof}

\begin{lemma}\label{lem:pert_ineq1}
    For $\frac{1}{3}\leq \gamma\leq \frac{1}{2}$ and $t\leq 1$, assume $x\geq t^{\frac{1}{1-\gamma}}$. Then we have
    \begin{align*}
        \left(x^\gamma + 2\gamma t x^{-(1-2\gamma)}\right)^{1/\gamma}&\geq x + \left(x^\gamma + 2\gamma t x^{-(1-2\gamma)}\right)t,\\
        \left(x^\gamma - \gamma t x^{-(1-2\gamma)}\right)^{1/\gamma}&\leq x - \left(x^\gamma - \gamma t x^{-(1-2\gamma)}\right)t,\\
        \left(x^\gamma + \frac{\gamma}{2} t x^{-(1-2\gamma)}\right)^{1/\gamma}&\leq x + \left(x^\gamma + \frac{\gamma}{2}t x^{-(1-2\gamma)}\right)t,\\
        \left(x^\gamma - \frac{\gamma}{2} t x^{-(1-2\gamma)}\right)^{1/\gamma}&\geq x - \left(x^\gamma - \frac{\gamma}{2}t x^{-(1-2\gamma)}\right)t.
    \end{align*}
\end{lemma}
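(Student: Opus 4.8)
The plan is to derive all four inequalities simultaneously from Lemma~\ref{lem:power3} by a single change of variables. Put $a:=1/\gamma$, so that $\tfrac13\le\gamma\le\tfrac12$ becomes $2\le a\le3$, the admissible range there. In each of the four bounds of Lemma~\ref{lem:power3} I would substitute
\[
  z:=x^{\gamma},\qquad \tau:=t\,x^{-(1-2\gamma)} .
\]
Then $z^{a}=x$, $z^{a-1}=x^{1-\gamma}$, $z^{a-2}=x^{1-2\gamma}$, so that $\tfrac{2\tau}{a}=2\gamma t\,x^{-(1-2\gamma)}$, $\tfrac{\tau}{a}=\gamma t\,x^{-(1-2\gamma)}$, $\tfrac{\tau}{2a}=\tfrac{\gamma}{2}t\,x^{-(1-2\gamma)}$ are exactly the perturbations sitting inside the $1/\gamma$-th powers in Lemma~\ref{lem:pert_ineq1}, while
\[
  \tau z^{a-1}=t\,x^{-(1-2\gamma)}x^{1-\gamma}=t\,x^{\gamma},\qquad \tau^{2}z^{a-2}=t^{2}x^{-2(1-2\gamma)}x^{1-2\gamma}=t^{2}x^{-(1-2\gamma)} .
\]
Plugging these into the four inequalities of Lemma~\ref{lem:power3} reproduces the four asserted bounds: the first-order term comes out as $t\,x^{\gamma}$ and the second-order term as a fixed multiple of $t^{2}x^{-(1-2\gamma)}$, carrying the same weight $x^{-(1-2\gamma)}$ as the perturbation on the left-hand side.

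The one hypothesis to verify is the standing assumption ``$z\ge\tau>0$'' of Lemma~\ref{lem:power3}. We may take $t>0$ (for $t=0$ everything is a trivial equality), so $z,\tau>0$. The inequality $z\ge\tau$, namely $x^{\gamma}\ge t\,x^{-(1-2\gamma)}$, becomes $x^{1-\gamma}\ge t$, i.e. $x\ge t^{1/(1-\gamma)}$, after multiplying by $x^{1-2\gamma}$. Since $0<t\le1$ and $\tfrac{\gamma}{1-\gamma}\le\tfrac{1}{1-\gamma}$, we have $t^{\gamma/(1-\gamma)}\ge t^{1/(1-\gamma)}$, so the hypothesis $x\ge t^{\gamma/(1-\gamma)}$ already yields $x\ge t^{1/(1-\gamma)}$, as needed. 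The same computation shows $\gamma t\,x^{-(1-2\gamma)}\le x^{\gamma}$, so the bases of the two ``$-$'' expressions are positive and the fractional powers are well defined.

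I do not expect any genuine obstacle beyond this bookkeeping: the analytic content is already contained in Lemma~\ref{lem:power3}, whose proof (a derivative test on $(1\pm2w)^{a}$, $(1\pm\tfrac{w}{2})^{a}$ for $0\le w\le1/a$, $2\le a\le3$) handles the non-integer exponent $a=1/\gamma$. If one prefers not to quote Lemma~\ref{lem:power3}, the same argument runs inline: factor $x^{\gamma}\pm c\,\gamma t\,x^{-(1-2\gamma)}=x^{\gamma}(1\pm c\,\gamma t\,x^{-(1-\gamma)})$ with $c\in\{2,1,\tfrac12\}$, set $w:=\gamma t\,x^{-(1-\gamma)}$ (the hypothesis together with $t\le1$ forces $0\le w\le\gamma=1/a$), apply the scalar inequalities $(1+2w)^{a}\ge1+aw+2aw^{2}$, $(1-w)^{a}\le1-aw+aw^{2}$, $(1+\tfrac{w}{2})^{a}\le1+aw+\tfrac{a}{2}w^{2}$, $(1-\tfrac{w}{2})^{a}\ge1-aw+\tfrac{a}{2}w^{2}$, and multiply through by $x^{\gamma a}=x$, simplifying exponents exactly as above. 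The only step needing care in either route is recording that $x\ge t^{\gamma/(1-\gamma)}$ and $t\le1$ keep the relevant perturbation ratio below $1/a$.
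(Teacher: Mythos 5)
Your approach is exactly the paper's: substitute into Lemma~\ref{lem:power3} with $a=1/\gamma$. The paper states this in one sentence ("substitute $a=1/\gamma$ and divide both sides by $x^{(1-\gamma)/\gamma}$"), and your explicit change of variables $z=x^\gamma$, $\tau=t\,x^{-(1-2\gamma)}$ is the cleanest way to make that cryptic line precise, including the correct reduction of the hypothesis $z\ge\tau$ to $x\ge t^{1/(1-\gamma)}$, which is implied by $x\ge t^{\gamma/(1-\gamma)}$ and $t\le1$.

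One thing you glossed over, though: your own computation gives a second-order term $\tau^2 z^{a-2}=t^2 x^{-(1-2\gamma)}$, so the right-hand side you actually derive is, e.g., $x+\bigl(x^\gamma+2\gamma t\,x^{-(1-2\gamma)}\bigr)t$, whereas the lemma as printed writes $x+\bigl(x^\gamma+2\gamma t\,x^{1-2\gamma}\bigr)t$ — the exponent on the final $x$ has the opposite sign. You assert your substitution "reproduces the four asserted bounds," but it does not reproduce them verbatim; it reproduces a corrected version. In fact the printed statement appears to contain a sign typo: the proof of Lemma~\ref{lem:pert_ineq2} uses these bounds precisely in the form $r^{1/\gamma}\gtrless x\pm rt$ with $r=x^\gamma\pm c\gamma t\,x^{-(1-2\gamma)}$, so the second factor on the right must carry the same weight $x^{-(1-2\gamma)}$ as the perturbation inside the power — exactly what your computation produces. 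So your derivation is right, and it is what the downstream lemma actually needs; you should flag that it establishes the (corrected) $x^{-(1-2\gamma)}$ form rather than claim to have matched the printed statement.
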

\begin{proof}
    We substitute $a = \frac{1}{\gamma}$ and plug $x^{1-\gamma}$ into $x$ position in Lemma \ref{lem:power3}. Then,
    \begin{align*}
        \left(x^{1-\gamma}+2\gamma t\right)^{1/\gamma}&\geq x^{\frac{1-\gamma}{\gamma}} + t x^{\frac{(1-\gamma)^2}{\gamma}} + 2\gamma t^2 x^{\frac{(1-\gamma)(1-2\gamma)}{\gamma}},\\
        \left(x^{1-\gamma}-\gamma t\right)^{1/\gamma}&\leq x^{\frac{1-\gamma}{\gamma}} - t x^{\frac{(1-\gamma)^2}{\gamma}} + \gamma t^2 x^{\frac{(1-\gamma)(1-2\gamma)}{\gamma}},\\
        \left(x^{1-\gamma}+\frac{\gamma}{2}t\right)^{1/\gamma}&\leq x^{\frac{1-\gamma}{\gamma}} + t x^{\frac{(1-\gamma)^2}{\gamma}} + \frac{\gamma}{2}t^2 x^{\frac{(1-\gamma)(1-2\gamma)}{\gamma}},\\
        \left(x^{1-\gamma}-\frac{\gamma}{2}t\right)^{1/\gamma}&\geq x^{\frac{1-\gamma}{\gamma}} - t x^{\frac{(1-\gamma)^2}{\gamma}} + \frac{\gamma}{2}t^2 x^{\frac{(1-\gamma)(1-2\gamma)}{\gamma}}.
    \end{align*}
    Multiplying both sides by $x^{\frac{-1+2\gamma}{\gamma}}$ yields the lemma.
\end{proof}
    
\begin{lemma}\label{lem:pert_ineq2}
    For $\frac{1}{3}\leq \gamma \leq \frac{1}{2}$ and $t\leq 1$, assume $x\geq t^{\frac{1}{1-\gamma}}$. If $r\geq x^\gamma + 2\gamma t x^{-(1-2\gamma)}$ $($resp., $r\geq x^\gamma - \frac{\gamma}{2} t x^{-(1-2\gamma)}$$)$, then $r\geq |x + r t|^\gamma$ $($resp., $r\geq |x - r t|^\gamma$$)$. Conversely, if $r\leq x^\gamma + \frac{\gamma}{2} t x^{-(1-2\gamma)}$ $($resp., $r\leq x^\gamma - \gamma t x^{-(1-2\gamma)}$$)$, then $r\leq |x + r t|^\gamma$ $($resp., $r\leq |x - r t|^\gamma$$)$.
\end{lemma}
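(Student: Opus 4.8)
The plan is to recast each of the four assertions as the problem of locating the unique radius at which $r\mapsto r-|x\pm rt|^\gamma$ changes sign on $[0,\infty)$, and then to pin that radius down using Lemma~\ref{lem:pert_ineq1}. For the ``$+$'' statements, note that $x+rt>0$ for $r\ge 0$, so I would set $\Phi_+(r):=(x+rt)^\gamma$; since $0<\gamma<1$ this is increasing and strictly concave, with $\Phi_+(0)=x^\gamma>0$ and $\Phi_+(r)=o(r)$ as $r\to\infty$, so $r-\Phi_+(r)$ is strictly convex, negative at $r=0$, and $\to+\infty$. Hence it has a unique zero $r_*>0$, and $r\ge|x+rt|^\gamma$ holds exactly for $r\ge r_*$ while $r\le|x+rt|^\gamma$ holds exactly for $r\le r_*$. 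Since $s\mapsto s^{1/\gamma}$ is increasing on $[0,\infty)$, the inequality $R\ge\Phi_+(R)$ is equivalent to $R^{1/\gamma}\ge x+Rt$; applying this with $R=x^\gamma+2\gamma t\,x^{-(1-2\gamma)}$ and invoking the first inequality of Lemma~\ref{lem:pert_ineq1} (whose right-hand side is $x+Rt$ once the $x^{1-2\gamma}$ printed there is read as $x^{-(1-2\gamma)}$, as its derivation from Lemma~\ref{lem:power3} shows) gives $R\ge\Phi_+(R)$, hence $r_*\le R$; and applying it with $R''=x^\gamma+\tfrac{\gamma}{2}t\,x^{-(1-2\gamma)}$ and the third inequality of Lemma~\ref{lem:pert_ineq1} gives $R''\le\Phi_+(R'')$, hence $r_*\ge R''$. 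These are precisely the two ``$+$'' claims.

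For the ``$-$'' statements I would run the same argument, with one extra step forced by the absolute value. Put $\Phi_-(r):=|x-rt|^\gamma$. On $[0,x/t]$ we have $\Phi_-(r)=(x-rt)^\gamma$, which is concave in $r$, so $r-\Phi_-(r)$ is convex there, equal to $-x^\gamma<0$ at $r=0$ and to $x/t>0$ at $r=x/t$; hence it has a unique zero $r_-^*\in(0,x/t)$, strictly negative before it and nonnegative after it on $[0,x/t]$. For $r\ge x/t$ I would check directly that $\Phi_-(r)=(rt-x)^\gamma\le(rt)^\gamma\le(r/2)^\gamma\le r$, using $t\le\tfrac12$ and $r\ge x/t\ge 1$ (the last because $x\ge t^{\gamma/(1-\gamma)}\ge t$ when $\gamma\le\tfrac12$, $t\le 1$), so $r-\Phi_-(r)\ge 0$ there as well; thus the sign change of $r-\Phi_-(r)$ on all of $[0,\infty)$ is still exactly at $r_-^*$. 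Then, exactly as above, $(R_-^{\mathrm{lo}})^{1/\gamma}\le x-R_-^{\mathrm{lo}}t$ with $R_-^{\mathrm{lo}}=x^\gamma-\gamma t\,x^{-(1-2\gamma)}$ (the second inequality of Lemma~\ref{lem:pert_ineq1}, legitimate since $x-R_-^{\mathrm{lo}}t\ge 0$ as $R_-^{\mathrm{lo}}\le x^\gamma\le x/t$) gives $R_-^{\mathrm{lo}}\le\Phi_-(R_-^{\mathrm{lo}})$, hence $r_-^*\ge R_-^{\mathrm{lo}}$; and $(R_-^{\mathrm{hi}})^{1/\gamma}\ge x-R_-^{\mathrm{hi}}t$ with $R_-^{\mathrm{hi}}=x^\gamma-\tfrac{\gamma}{2}t\,x^{-(1-2\gamma)}$ (the fourth inequality of Lemma~\ref{lem:pert_ineq1}) gives $R_-^{\mathrm{hi}}\ge\Phi_-(R_-^{\mathrm{hi}})$, hence $r_-^*\le R_-^{\mathrm{hi}}$. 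These are the two ``$-$'' claims.

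Throughout, the elementary side conditions that keep the statement meaningful --- that the threshold radii $x^\gamma\pm(\text{const})\,t\,x^{-(1-2\gamma)}$ are nonnegative and lie below $x/t$, and that $x-rt\ge 0$ in the range of $r$ under consideration --- all follow from $x\ge t^{\gamma/(1-\gamma)}$ together with $t\le\tfrac12$ and $\tfrac13\le\gamma\le\tfrac12$, via the single estimate $x^{1-\gamma}\ge t^{\gamma}\ge t$. The one genuinely delicate point I anticipate is the claim that $r-|x-rt|^\gamma$ changes sign only once despite the corner of $|x-rt|$ at $r=x/t$: the piecewise convexity argument settles $[0,x/t]$ cleanly, but ruling out a second sign change on $[x/t,\infty)$ is where the hypotheses $t\le\tfrac12$ and $x\ge t^{\gamma/(1-\gamma)}$ are actually used, and is the step I would treat most carefully. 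Everything else reduces to the convexity/monotonicity observation above plus a direct substitution into Lemma~\ref{lem:pert_ineq1}.
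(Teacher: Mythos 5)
Your proof is correct and follows essentially the same route as the paper's: both reduce the four assertions to evaluating Lemma~\ref{lem:pert_ineq1} at the threshold radii and then propagating the sign via a single-crossing argument, phrased in the paper through the monotonicity of $r\mapsto r^{1/\gamma}-(x\pm rt)$ away from its minimum and in your write-up through the convexity of $r\mapsto r-|x\pm rt|^{\gamma}$. Your explicit verification that $r\geq |x-rt|^{\gamma}$ for $r\geq x/t$ (which the paper merely asserts) and your corrected reading of the misprint $x^{1-2\gamma}$ versus $x^{-(1-2\gamma)}$ on the right-hand sides of Lemma~\ref{lem:pert_ineq1} both fill in details that the paper leaves implicit.
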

\begin{proof}
    We first prove $r\geq |x+rt|^\gamma$ case. We first note that $r^{1/\gamma} - (x + rt)$ is an increasing function if
    \begin{align*}
        \gamma^{-1}r^{1/\gamma-1} - t\geq 0.
    \end{align*}
    That is, when
    \begin{align}\label{3_7:eq11}
        r\geq (\gamma t)^{\frac{\gamma}{1-\gamma}}.
    \end{align}
    From what we have in Lemma \ref{lem:pert_ineq1}, it is enough to show that $x^\gamma + 2\gamma t x^{-(1-2\gamma)}\geq (\gamma t)^{\frac{\gamma}{1-\gamma}}$. It can be easily checked that
    \begin{align}\label{3_7:eq2}
        x^{\gamma} +2\gamma tx^{-(1-2\gamma)}\geq x^{\gamma}\geq t^{\frac{\gamma}{1-\gamma}}\geq (\gamma t)^{\frac{\gamma}{1-\gamma}}
    \end{align}
    for $\frac{1}{3}\leq \gamma\leq \frac{1}{2}$ and $t\leq 1$. Therefore, we get the desired inequality $r\geq |x+rt|^\gamma$ when
    \begin{align*}
        r\geq x^\gamma + 2\gamma t x^{-(1-2\gamma)}.
    \end{align*}
    
    Next, we check $r\geq |x-rt|^\gamma$. It is enough to examine the inequality for $r\leq \frac{x}{t}$ since the inequality is immediately satisfied when $r\geq \frac{x}{t}$. $r^{1/\gamma} - (x-rt)$ is an increasing function in the region, so we can directly apply Lemma \ref{lem:pert_ineq1} so that $r\geq |x-rt|^\gamma$ if
    \begin{align*}
        r\geq \min\left\{x^\gamma -\frac{\gamma}{2}t x^{-(1-2\gamma)},\frac{x}{t}\right\}.
    \end{align*}
    
    We check $r\leq |x+rt|^\gamma$. Note that $r^{1/\gamma} - (x+rt)$ has minimum at $r = (\gamma t)^{\frac{\gamma}{1-\gamma}}$ by \eqref{3_7:eq11}. Also, by \eqref{3_7:eq2}, it holds that
    \begin{align*}
        x^{\gamma} +\frac{\gamma}{2} tx^{-(1-2\gamma)}\geq (\gamma t)^{\frac{\gamma}{1-\gamma}}.
    \end{align*}
    Since $r\leq |x+rt|^\gamma$ at $r = 0$ and $r = x^{\gamma} +\frac{\gamma}{2} tx^{-(1-2\gamma)}$ by Lemma \ref{lem:pert_ineq1}, the inequality holds for $0\leq r\leq x^{\gamma} +\frac{\gamma}{2} tx^{-(1-2\gamma)}$.
    
    Finally, we prove $r\leq |x-rt|^\gamma$. $r^{1/\gamma}-(x-rt)$ is increasing function for $0\leq r\leq \frac{x}{t}$. As the inequality holds at $r = x^\gamma - \gamma t x^{-(1-2\gamma)}$ by Lemma \ref{lem:pert_ineq1} and $x^\gamma - \gamma t x^{-(1-2\gamma)}\leq \frac{x}{t}$, it holds for $0\leq r\leq x^\gamma - \gamma t x^{-(1-2\gamma)}$.\\
\end{proof}

The following two lemmas are crucially used in the next section to bound the integral $\int_{\mathbb{R}^N} |v|^n\mathcal{M}(f)(\tau, x-v(t-\tau), v)\,dv$. We refer to Theorem \ref{thm:Ext_Uni} to explain why we assume
\begin{align*}
    \mathcal{M}(f)(\tau, x, v)\simeq C\frac{\exp\left(-\alpha |x|^{\beta\gamma}\right)}{(1+|x|^{2\gamma})^{m/2}}\exp\left(-\frac{\left(v-u(\tau, x)\right)^2}{C_T\left(1+|x|^{2\gamma}\right)}\right)
\end{align*}
in the next lemma with $m\in\mathbb{R}$.
    
Although the proof is technical and detailed, the main idea can be summarized as follows: The integral scales $|v|$ by $|x|^\gamma$. As a result, we have
\begin{align*}
    \int_{\mathbb{R}^N} dv\, |v|^n\mathcal{M}(f)(\tau, x-v(t-\tau), v)
    &\leq C(1+|x|^\gamma)^{(n+N)\gamma}\frac{\exp\left(-\alpha |x|^{\beta\gamma}\right)}{(1+|x|^{2\gamma})^{m/2}}\\
    & = C(1+|x|^\gamma)^{(n-m+N)\gamma}\exp\left(-\alpha |x|^{\beta\gamma}\right)
\end{align*}
for some constant $C$. This type of scaling will be the key idea in Section 4.

\begin{lemma}\label{lem:int_bound1}
    Let $\alpha\geq 0$, $0<\beta\leq 2$, $0<C_T<\infty$, $N\geq 1$, and $\gamma$ satisfies $0<\gamma\leq \frac{1}{1+\beta}$. Suppose $u$ satisfies $|u|(t,x)\leq C_u(1+|x|^\gamma)$ for $0\leq t\leq 1$, then it satisfies
    \begin{align*}
        &\sup_{0\leq t,\tau\leq 1}\sup_{x\in\mathbb{R}^N} \frac{1}{(1+|x|^\gamma)^{n-m+N} e^{-\alpha |x|^{\beta\gamma}}}\int_{\mathbb{R}^N} dv\, |v|^n\frac{\exp\left(-\alpha|x-vt|^{\beta\gamma}\right)}{\left(1+|x-vt|^{2\gamma}\right)^{m/2}}\exp\left(-\frac{\left(v-u(\tau, x-vt)\right)^2}{C_T\left(1+|x-vt|^{2\gamma}\right)}\right)\\
        &\leq C(n, m, \alpha, \beta, C_u, C_T, N, \gamma)<\infty,
    \end{align*}
    for all $n \geq 0$ and $m \in \mathbb{R}$.
\end{lemma}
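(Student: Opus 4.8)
My plan is to first eliminate the drift $u$, then split the $v$–integral into a \emph{bulk} piece, where $x-vt$ is comparable to $x$, and a \emph{tail} piece, where $|v|$ is so large that the Gaussian factor alone carries all the decay that is needed. For Step 1 (removing $u$) I would use $(v-u)^2\ge\tfrac12|v|^2-|u|^2$ together with the hypothesis on $u$ and the elementary bound $(1+|x-vt|^\gamma)^2\le 2(1+|x-vt|^{2\gamma})$ (see \ref{lem:power0}) to get $|u(\tau,x-vt)|^2\le 2C_u^2(1+|x-vt|^{2\gamma})\tau^{1/2}\le 2C_u^2(1+|x-vt|^{2\gamma})$ for $\tau\le t_0<1$; hence the Gaussian factor is $\le e^{2C_u^2/C_T}\exp\!\left(-\frac{|v|^2}{2C_T(1+|x-vt|^{2\gamma})}\right)$. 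This removes the $\tau$–dependence entirely, and it remains to bound $\int_{\mathbb R^N}|v|^n\,e^{-\alpha|x-vt|^{\beta\gamma}}(1+|x-vt|^{2\gamma})^{-m/2}\exp\!\left(-\frac{|v|^2}{2C_T(1+|x-vt|^{2\gamma})}\right)dv$ by a constant times $(1+|x|^\gamma)^{n-m+N}e^{-\alpha|x|^{\beta\gamma}}$, uniformly in $x$ and $0\le t\le t_0$.

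For Step 2 (bulk) I fix $R_0=2$. The case $|x|<R_0$ is trivial, since all the $x$–weights are then comparable to constants and one only needs finiteness, which follows by splitting $\{|v|\le M\}\cup\{|v|>M\}$ and using $1+|x-vt|^{2\gamma}\lesssim 1+|v|^{2\gamma}$ on the second piece (which produces the super-Gaussian decay $e^{-c|v|^{2-2\gamma}}$). For $|x|\ge R_0$ I split into $\mathcal A=\{|v|t\le\frac12|x|\}$ and $\mathcal B=\{|v|t>\frac12|x|\}$. On $\mathcal A$ one has $\frac12|x|\le|x-vt|\le\frac32|x|$, so $1+|x-vt|^{2\gamma}\simeq 1+|x|^{2\gamma}$ and $(1+|x-vt|^{2\gamma})^{-m/2}\le C_m(1+|x|^{2\gamma})^{-m/2}$ for either sign of $m$; and $|x|^{\beta\gamma}-|x-vt|^{\beta\gamma}\le|x|^{\beta\gamma-1}|v|t$ by \ref{lem:power1} (note $\beta\gamma<1$). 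When $|v|t\le 1+|x|^\gamma$ this is $\lesssim|x|^{\gamma(1+\beta)-1}\le 1$ because $\gamma\le\frac{1}{1+\beta}$; when $1+|x|^\gamma<|v|t\le\frac12|x|$ the same inequality $\gamma(1+\beta)\le 1$ forces $\alpha|x|^{\beta\gamma-1}|v|t$ below half of the Gaussian exponent $\frac{c|v|^2}{1+|x|^{2\gamma}}$ once $t_0$ is small. Hence on $\mathcal A$, $e^{-\alpha|x-vt|^{\beta\gamma}}\exp\!\left(-\frac{|v|^2}{2C_T(1+|x-vt|^{2\gamma})}\right)\le Ce^{-\alpha|x|^{\beta\gamma}}\exp\!\left(-\frac{c|v|^2}{2(1+|x|^{2\gamma})}\right)$, and the substitution $v=\sqrt{1+|x|^{2\gamma}}\,w$ turns the $\mathcal A$–integral into $C_{n,m}(1+|x|^{2\gamma})^{(n-m+N)/2}e^{-\alpha|x|^{\beta\gamma}}\simeq C_{n,m}(1+|x|^\gamma)^{n-m+N}e^{-\alpha|x|^{\beta\gamma}}$, as required.

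For Step 3 (tail) I note that on $\mathcal B$ we have $|v|>\frac{|x|}{2t}\ge 1$ and $|x-vt|\le 3|v|$, so $1+|x-vt|^{2\gamma}\le C|v|^{2\gamma}$; consequently the Gaussian exponent $A:=\frac{|v|^2}{2C_T(1+|x-vt|^{2\gamma})}$ satisfies $A\ge c|v|^{2-2\gamma}$ and, using $|v|\ge|x|/(2t_0)$, $|x|\ge 1$, and $2-2\gamma>\beta\gamma$ (implied by $\gamma\le\frac{1}{1+\beta}<\frac{2}{2+\beta}$), also $A\ge c\,t_0^{-(2-2\gamma)}|x|^{\beta\gamma}$. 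I then write $e^{-A}=e^{-A/3}\cdot e^{-A/3}\cdot e^{-A/3}$ and use: the first factor $\le e^{-\alpha|x|^{\beta\gamma}}$ (choosing $t_0$ small so that $\frac{1}{3}c\,t_0^{-(2-2\gamma)}\ge\alpha$); the second $\le C_{n,m}(1+|x|^\gamma)^{n-m+N}$ (trivial when $n-m+N\ge 0$, and otherwise because $e^{-A/3}\le e^{-c|x|^{2-2\gamma}}$ decays faster than any negative power of $1+|x|^\gamma$); the third $\le e^{-c|v|^{2-2\gamma}}$. Combined with $e^{-\alpha|x-vt|^{\beta\gamma}}\le 1$ and $(1+|x-vt|^{2\gamma})^{-m/2}\le C(1+|v|)^{\gamma\max\{-m,0\}}$, this bounds the $\mathcal B$–integral by $C_{n,m}(1+|x|^\gamma)^{n-m+N}e^{-\alpha|x|^{\beta\gamma}}\int_{\mathbb R^N}|v|^n(1+|v|)^{\gamma\max\{-m,0\}}e^{-c|v|^{2-2\gamma}}dv$, and the last integral converges because $2-2\gamma>0$. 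Adding the $\mathcal A$, $\mathcal B$, and $|x|<R_0$ estimates and dividing by $(1+|x|^\gamma)^{n-m+N}e^{-\alpha|x|^{\beta\gamma}}$ yields the asserted uniform bound, with $t_0$ and the constant depending only on $n,m,\alpha,\beta,C_u,C_T,N,\gamma$.

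The main obstacle is the tail $\mathcal B$: there $x-vt$ bears no relation to $x$, so the factors $e^{-\alpha|x-vt|^{\beta\gamma}}$ and $(1+|x-vt|^{2\gamma})^{-m/2}$ are useless for reconstructing the target weight $e^{-\alpha|x|^{\beta\gamma}}(1+|x|^\gamma)^{n-m+N}$, and since $m$ may be any real number the polynomial weight can even grow in $|v|$. All the $x$–decay must therefore be extracted from the Gaussian, which succeeds precisely because on $\mathcal B$ the Gaussian exponent is at least a multiple of $|x|^{2-2\gamma}$ with $2-2\gamma>\beta\gamma$; this is exactly where the constraint $\gamma\le\frac{1}{1+\beta}$ and the smallness of $t_0$ are used. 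The same inequality $\gamma(1+\beta)\le 1$ is also what tames the annulus $1+|x|^\gamma<|v|t\le\frac12|x|$ of the bulk region, where $e^{-\alpha|x-vt|^{\beta\gamma}}$ is not directly comparable to $e^{-\alpha|x|^{\beta\gamma}}$ and part of the Gaussian has to be spent.
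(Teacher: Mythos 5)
Your proof is correct, and it takes a genuinely more elementary route than the paper's. Both arguments share the same skeleton: a separate treatment of $|x|$ bounded, a bulk region where $|v|t\lesssim|x|$ (so $|x-vt|\simeq|x|$ and the change of variables $v\mapsto(1+|x|^\gamma)w$ does the work), and a tail region where $|v|t\gtrsim|x|$ and all the $x$-decay has to come out of the Gaussian factor alone because $x-vt$ is uncontrolled. The constraint $\gamma(1+\beta)\le 1$, equivalently $2-2\gamma>\beta\gamma$, is used in both proofs at precisely the same spots. Where you diverge is in the machinery. You eliminate the drift up front with $(v-u)^2\ge\tfrac12|v|^2-|u|^2$ (note $\tfrac12|v|^2-2v\cdot u+2|u|^2\ge\tfrac12(|v|-2|u|)^2\ge 0$, so the bound holds) together with $|u|^2\le 2C_u^2(1+|x-vt|^{2\gamma})\tau^{1/2}$, turning the Gaussian into a centered one at the cost of a harmless constant; after that the whole analysis is $u$-free. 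The paper instead retains $u$, passes to spherical coordinates, completes the square, and tracks the shifted center $U(t,\tau,x,v)$ through three sub-integrals $I_{1,1},I_{1,2},I_{1,3}$ in the bulk and then invokes the Tricomi-based Lemma \ref{lem:Tri_integral} both there and in the tail. Your tail estimate simply splits the Gaussian exponent $A\gtrsim t_0^{-(2-2\gamma)}|x|^{2-2\gamma}$ into three equal parts to simultaneously reproduce $e^{-\alpha|x|^{\beta\gamma}}$, control the possibly growing weight $(1+|x|^\gamma)^{n-m+N}$ with $n-m+N<0$, and keep $e^{-c|v|^{2-2\gamma}}$ for integrability; the paper's sharper asymptotics (needed elsewhere, e.g.\ Lemma \ref{lem:Tinfty_asym}) are not actually required for this one-sided bound. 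Your bulk treatment via Lemma \ref{lem:power1} plus a dichotomy on $|v|t\lessgtr 1+|x|^\gamma$ is a clean replacement for the paper's $I_{1,1}/I_{1,2}/I_{1,3}$ split with the auxiliary quantities $U$ and $M$. Net effect: shorter, no spherical coordinates, no Tricomi, at no loss of generality for the statement as given.
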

\begin{proof}
    We first rewrite the integral in the spherical coordinates. We denote $v = r\vec{\sigma}$ and use $\left||x|-|v|t\right|\leq|x-vt|\leq \left||x|+|v|t\right|$ to write
    \begin{align}
        &\int_{\mathbb{R}^N} dv\, |v|^n\frac{\exp\left(-\alpha|x-vt|^{\beta\gamma}\right)}{\left(1+|x-vt|^{2\gamma}\right)^{m/2}}\exp\left(-\frac{\left(v-u\right)^2}{C_T\left(1+|x-vt|^{2\gamma}\right)}\right) \notag \\
        &\leq
        \begin{dcases}
            \int_{\mathbb{S}^{N-1}}\int_0^\infty d\sigma dr\, r^{n+N-1}\frac{\exp\left(-\alpha\left||x|-rt\right|^{\beta\gamma}\right)}{\left(1+\left||x|-rt\right|^{2\gamma}\right)^{m/2}}\exp\left(-\frac{(r-|u|(\tau, x-rt\sigma))^2}{C_T\left(1+(|x|+rt)^{2\gamma}\right)}\right) & m\geq 0\\
            \int_{\mathbb{S}^{N-1}}\int_0^\infty d\sigma dr\,  r^{n+N-1}\frac{\exp\left(-\alpha\left||x|-rt\right|^{\beta\gamma}\right)}{\left(1+\left||x|+rt\right|^{2\gamma}\right)^{m/2}}\exp\left(-\frac{(r-|u|(\tau, x-rt\sigma))^2}{C_T\left(1+(|x|+rt)^{2\gamma}\right)}\right) & m< 0
        \end{dcases}\notag\\
        &\leq
        \begin{dcases}
            |\mathbb{S}^{N-1}|\sup_{\sigma\in\mathbb{S}^{N-1}}\int_0^\infty dr\, r^{n+N-1}\frac{\exp\left(-\alpha\left||x|-rt\right|^{\beta\gamma}\right)}{\left(1+\left||x|-rt\right|^{2\gamma}\right)^{m/2}}\exp\left(-\frac{(r-|u|(\tau, x-rt\sigma))^2}{C_T\left(1+(|x|+rt)^{2\gamma}\right)}\right) & m\geq 0,\\
            |\mathbb{S}^{N-1}|\sup_{\sigma\in\mathbb{S}^{N-1}}\int_0^\infty dr\,  r^{n+N-1}\frac{\exp\left(-\alpha\left||x|-rt\right|^{\beta\gamma}\right)}{\left(1+\left||x|+rt\right|^{2\gamma}\right)^{m/2}}\exp\left(-\frac{(r-|u|(\tau, x-rt\sigma))^2}{C_T\left(1+(|x|+rt)^{2\gamma}\right)}\right) & m< 0.
        \end{dcases}\label{3_8:RHS}
    \end{align}
    In the proof, we will properly bound the integral uniformly for $\sigma\in\mathbb{S}^{N-1}$.
    
    We will focus on bounding $m\geq 0$ case of the integral in \eqref{3_8:RHS}; the $m<0$ case can be checked following the similar line of proof. Furthermore, we will replace $n+N-1$ by $n\geq 0$ since $N\geq 1$ and $n\geq 0$. 

    Now, let us initiate the proof by investigating the $t = 0$ case, which is the easiest case in our proof. It is just
    \begin{align}\label{3_8:int_t0}
        \int_0^\infty dr\,r^n\frac{\exp\left(-\alpha|x|^{\beta\gamma}\right)}{\left(|x|^{2\gamma}+1\right)^{m/2}}\exp\left(-\frac{\left(r-|u|(\tau, x)\right)^2}{C_T\left(1+|x|^{2\gamma}\right)}\right).
    \end{align}
    
    We divide the cases $|x|\leq 1$ and $|x|\geq 1$. For $|x|\leq 1$, $|u|(\tau, x)$ and $1+|x|^{2\gamma}$ are bounded by constants, so \eqref{3_8:int_t0} is bounded by a constant not depending on $x$ and $\tau$.
    
    For $|x|\geq 1$, considering a change of variable $r\mapsto |x|^\gamma r$, we get
    \begin{align*}
        \eqref{3_8:int_t0} &= \frac{|x|^{(n+1)\gamma}}{\left(|x|^{2\gamma}+1\right)^{m/2}}\exp\left(-\alpha|x|^{\beta\gamma}\right)\int_0^\infty dr\,r^n\exp\left(-\frac{\left(r-|x|^{-\gamma}|u|(\tau, x)\right)^2}{C_T\left(1+|x|^{-2\gamma}\right)}\right)\\
        &\leq C(1+|x|^\gamma)^{(n+1-m)\gamma}\exp\left(-\alpha|x|^{\beta\gamma}\right)\int_0^\infty dr\,r^n\exp\left(-\frac{\left(r-|x|^{-\gamma}|u|(\tau, x)\right)^2}{C_T\left(1+|x|^{-2\gamma}\right)}\right)
    \end{align*}
    for some constant $C$ for $|x|\geq 1$. Again, $|x|^{-\gamma}|u|(\tau, x)$ and $1+|x|^{-2\gamma}$ are bounded by constants. Therefore, the integral is bounded, not depending on $x$ and $\tau\leq 1$. Combining analysis of $|x|\leq 1$ and $|x|\geq 1$, we bound integral \eqref{3_8:int_t0} in the form of Lemma \ref{lem:int_bound1}.
    
    From now on, we will safely assume $t>0$. From $(x-y)^2\geq \frac{1}{2}x^2 - y^2$ and $|u|(\tau, x-rt\sigma)\leq C_u(1+(|x|+rt)^\gamma)$,
    \begin{align*}
        \exp\left(-\frac{(r-|u|(\tau, x-rt\sigma))^2}{C_T\left((|x|+rt)^{2\gamma}+1\right)}\right)&\leq \exp\left(-\frac{\frac{r^2}{2} - C^2_u(1+(|x|+rt)^{2\gamma})}{C_T\left((|x|+rt)^{2\gamma}+1\right)}\right)\\
        &\leq \exp\left(\frac{C_u^2}{C_T}\right)\exp\left(-\frac{r^2}{2C_T\left((|x|+rt)^{2\gamma}+1\right)}\right).
    \end{align*}
    Therefore, it is enough to bound
    \begin{align*}
        \int_0^\infty dr\, r^n\frac{\exp\left(-\alpha\left||x|-rt\right|^{\beta\gamma}\right)}{\left(\left||x|-rt\right|^{2\gamma}+1\right)^{m/2}}\exp\left(-\frac{r^2}{C_T\left((|x|+rt)^{2\gamma}+1\right)}\right)
    \end{align*}
    by absorbing $2$ into $C_T$. We again divide the cases $|x|\leq \max\{(18 C_T\alpha)^{\frac{1}{1-\gamma}}, 1\}$ and $|x|\geq \max\{(18 C_T\alpha)^{\frac{1}{1-\gamma}}, 1\}$; the splitting constant is intentionally chosen for the later step \eqref{3_8:secondpre}. Before \eqref{3_8:secondpre}, one can just treat $\max\{(18 C_T\alpha)^{\frac{1}{1-\gamma}}, 1\}$ as a constant bigger than $1$.

    \subsubsection{The case of $|x|\leq \max\{(18 C_T\alpha)^{\frac{1}{1-\gamma}}, 1\}$}
    \noindent In this case, we claim that
    \begin{align*}
        \int_0^\infty dr\, r^n\frac{\exp\left(-\alpha\left||x|-rt\right|^{\beta\gamma}\right)}{\left(\left||x|-rt\right|^{2\gamma}+1\right)^{m/2}}\exp\left(-\frac{r^2}{C_T\left((|x|+rt)^{2\gamma}+1\right)}\right)<\infty
    \end{align*}
    uniformly about $x$ and $t$. Indeed, 
    \begin{align*}
        &\int_0^\infty dr\, r^n\frac{\exp\left(-\alpha\left||x|-rt\right|^{\beta\gamma}\right)}{\left(\left||x|-rt\right|^{2\gamma}+1\right)^{m/2}}\exp\left(-\frac{r^2}{C_T\left((|x|+rt)^{2\gamma}+1\right)}\right)\\
        &\leq \int_0^\infty dr\, r^n\exp\left(-\frac{r^2}{C_T\left((M+rt)^{2\gamma}+1\right)}\right),
    \end{align*}
    where $M = \max\{(18 C_T\alpha)^{\frac{1}{1-\gamma}}, 1\}$, and the final integral converges as $2\gamma\leq 1$. (If $m\geq 0$, the polynomial part is bounded by $r^n \left((1+rt)^{2\gamma}+1)\right)^{m/2}$, and it again has a finite integral value thanks to the exponential decay.) It shows
    \begin{align*}
        \sup_{0<t\leq 1}\sup_{|x|\leq \max\{(18 C_T\alpha)^{\frac{1}{1-\gamma}}, 1\}}\int_0^\infty dr\, r^n\frac{\exp\left(-\alpha\left||x|-rt\right|^{\beta\gamma}\right)}{\left(\left||x|-rt\right|^{2\gamma}+1\right)^{m/2}}\exp\left(-\frac{r^2}{C_T\left((|x|+rt)^{2\gamma}+1\right)}\right)<\infty.
    \end{align*}
    \\
    
    We move on to the next case $|x|\geq \max\{(18 C_T\alpha)^{\frac{1}{1-\gamma}}, 1\}$ in bounding \eqref{3_8:RHS}. We divide the integral region by
    \begin{equation}\label{3_8:int_divide}
        \begin{split}
            I_1 &= \int_0^{t^{-1}|x|} dr\, r^n\frac{\exp\left(-\alpha\left||x|-rt\right|^{\beta\gamma}\right)}{\left(\left||x|-rt\right|^{2\gamma}+1\right)^{m/2}}\exp\left(-\frac{r^2}{C_T\left((|x|+rt)^{2\gamma}+1\right)}\right),\\
            I_2 &= \int_{t^{-1}|x|}^\infty dr\, r^n \frac{\exp\left(-\alpha\left||x|-rt\right|^{\beta\gamma}\right)}{\left(\left||x|-rt\right|^{2\gamma}+1\right)^{m/2}}\exp\left(-\frac{r^2}{C_T\left((|x|+rt)^{2\gamma}+1\right)}\right).
        \end{split}
    \end{equation}
    
    \subsubsection{The case of $|x|\geq \max\{(18 C_T\alpha)^{\frac{1}{1-\gamma}}, 1\}$ and $0\leq r\leq t^{-1}|x|$}
    Replacing $r = c|x|$ for some $c\in (0, t^{-1})$,
    \begin{align}\label{3_8:Lem_3_3_appli}
        \begin{split}
            &r^n\frac{\exp\left(-\alpha\left||x|-rt\right|^{\beta\gamma}\right)}{\left(\left||x|-rt\right|^{2\gamma}+1\right)^{m/2}}\exp\left(-\frac{r^2}{C_T\left((|x|+rt)^{2\gamma}+1\right)}\right)\,dr\\
            &=c^n |x|^{n+1}\frac{\exp\left(-\alpha|x|^{\beta\gamma}(1-ct)^{\beta\gamma}\right)}{\left(|x|^{2\gamma}(1-ct)^{2\gamma}+1\right)^{m/2}}\exp\left(-\frac{|x|^2c^2}{C_T\left(|x|^{2\gamma}(1+ct)^{2\gamma}+1\right)}\right)\,dc\\
            &\leq c^n |x|^{n+1}\frac{\exp\left(-\alpha|x|^{\beta\gamma}\right)}{\left(|x|^{2\gamma}(1-ct)^{2\gamma}+1\right)^{m/2}}\exp\left(-\frac{|x|^2c^2}{C_T\left(|x|^{2\gamma}(1+ct)^{2\gamma}+1\right)} + \alpha ct |x|^{\beta\gamma}\right)\,dc,
        \end{split}
    \end{align}
    where we used $(1-ct)^{\beta\gamma}\geq 1-ct$ as $0\leq ct\leq 1$ and $\beta\gamma\leq 1$.
    
    Using $(1+ct)^{2\gamma}\leq 2^{2\gamma}\leq 8$ and $1\leq |x|^{2\gamma}$, we rearrange the terms in exponential by
    \begin{align*}
        &-\frac{|x|^2c^2}{C_T\left(|x|^{2\gamma}(1+ct)^{2\gamma}+1\right)} + \alpha ct |x|^{\beta\gamma}\leq -\frac{|x|^{2-2\gamma}c^2}{C_T\left((1+ct)^{2\gamma} + |x|^{-2\gamma}\right)} + \alpha ct |x|^{\beta\gamma}\\
        &\leq -\frac{|x|^{2-2\gamma}c^2}{9C_T} + \alpha ct |x|^{\beta\gamma}\\
        &\leq -|x|^{2-2\gamma}\left(\frac{c}{3\sqrt{C_T}} - \frac{3\sqrt{C_T}}{2}\alpha t |x|^{\beta\gamma-2+2\gamma}\right)^2 + \frac{9C_T}{4}\alpha^2 t^2 |x|^{2\beta\gamma - 2 + 2\gamma}.
    \end{align*}
    Since $|x|\geq 1$, $\beta\gamma - 1+\gamma\leq 0$, and $0<t\leq 1$, the third term is bounded by
    \begin{align}\label{3_8:bound1}
        \frac{9C_T}{4}\alpha^2 t^2 |x|^{2\beta\gamma - 2 + 2\gamma}\leq \frac{9 C_T}{4}\alpha^2.
    \end{align}
    Finally, we obtain
    \begin{equation} \label{3_8:subsec1}
        \begin{split}   
            I_1\leq e^{C_1}\int_0^{t^{-1}} dc\,c^n \frac{|x|^{n+1}\exp\left(-\alpha|x|^{\beta\gamma}\right)}{\left(|x|^{2\gamma}|1-ct|^{2\gamma}+1\right)^{m/2}}\exp\left(-|x|^{2-2\gamma}\left(\frac{c}{3\sqrt{C_T}} - \frac{3\sqrt{C_T}}{2}\alpha t |x|^{\beta\gamma-2+2\gamma}\right)^2\right)
        \end{split}   
    \end{equation}
    for some constant $C_1$.
    
    Taking change of variable again from $c$ to $|x|^{\gamma-1}c$, we obtain
    \begin{align}
        &\int_0^{t^{-1}}dc\, c^n \frac{|x|^{n+1}\exp\left(-\alpha|x|^{\beta\gamma}\right)}{\left(|x|^{2\gamma}|1-ct|^{2\gamma}+1\right)^{m/2}}\exp\left(-|x|^{2-2\gamma}\left(\frac{c}{3\sqrt{C_T}} - \frac{3\sqrt{C_T}}{2}\alpha t |x|^{\beta\gamma-2+2\gamma}\right)^2\right)\notag\\
        &=\int_0^{|x|^{1-\gamma} t^{-1}}dc\,c^n \frac{|x|^{(n+1)\gamma}\exp\left(-\alpha |x|^{\beta\gamma}\right)}{\left(|x|^{2\gamma}\left|1-|x|^{\gamma-1}ct\right|^{2\gamma}+1\right)^{m/2}}\exp\left(-\left(\frac{c}{3\sqrt{C_T}} - \frac{3\sqrt{C_T}}{2}\alpha t |x|^{\beta\gamma-1+\gamma}\right)^2\right)\label{3_8:subsec1_1}.
    \end{align}
    By \eqref{3_8:bound1}, $\frac{3\sqrt{C_T}}{2}\alpha t |x|^{\beta\gamma-1+\gamma}\leq \frac{3\sqrt{C_T}}{2}\alpha$, so we can treat it as a constant.
    
    We split the integral by
    \begin{align*}
        I_{1,1} &\coloneqq \int_0^{\frac{|x|^{1-\gamma} t^{-1}}{2}} dc\,c^n \frac{|x|^{(n+1)\gamma}\exp\left(-\alpha |x|^{\beta\gamma}\right)}{\left(|x|^{2\gamma}\left|1-|x|^{\gamma-1}ct\right|^{2\gamma}+1\right)^{m/2}}\exp\left(-\left(\frac{c}{3\sqrt{C_T}} - \frac{3\sqrt{C_T}}{2}\alpha t |x|^{\beta\gamma-1+\gamma}\right)^2\right),\\
        I_{1,2} &\coloneqq \int_{\frac{|x|^{1-\gamma} t^{-1}}{2}}^{|x|^{1-\gamma} t^{-1}} dc\,c^n \frac{|x|^{(n+1)\gamma}\exp\left(-\alpha |x|^{\beta\gamma}\right)}{\left(|x|^{2\gamma}\left|1-|x|^{\gamma-1}ct\right|^{2\gamma}+1\right)^{m/2}}\exp\left(-\left(\frac{c}{3\sqrt{C_T}} - \frac{3\sqrt{C_T}}{2}\alpha t |x|^{\beta\gamma-1+\gamma}\right)^2\right).
    \end{align*}
    For $I_{1,1}$, we have
    \begin{align*}
        \frac{1}{\left(|x|^{2\gamma}\left|1-|x|^{\gamma-1}ct\right|^{2\gamma}+1\right)^{m/2}}\leq \frac{1}{\left(\frac{1}{4}|x|^{2\gamma}+1\right)^{m/2}}.
    \end{align*}
    Therefore,
    \begin{equation} \label{3_8:firstpre}
        \begin{split}
            I_{1,1}&\leq \frac{|x|^{(n+1)\gamma}\exp\left(-\alpha|x|^{\beta\gamma}\right)}{\left(\frac{1}{4}|x|^{2\gamma}+1\right)^{m/2}}\int_0^{\frac{|x|^{1-\gamma} t^{-1}}{2}}dc\,c^n \exp\left(-\left(\frac{c}{3\sqrt{C_T}} - \frac{3\sqrt{C_T}}{2}\alpha t |x|^{\beta\gamma-1+\gamma}\right)^2\right)\\
            &\leq C_2(1+|x|^\gamma)^{(n+1-m)}\exp\left(-\alpha|x|^{\beta\gamma}\right) \int_{-\infty}^\infty dc\, \left(|c|+\frac{3\sqrt{C_T}}{2}\alpha\right)^n \exp\left(-\frac{c^2}{9C_T}\right)\\
            &\leq C_3(1+|x|^\gamma)^{(n+1-m)}\exp\left(-\alpha|x|^{\beta\gamma}\right)
        \end{split}
    \end{equation}
    for some constant $C_2$ and $C_3$ independent to $t$ and $|x|$.

    For the other part, as $|x|\geq \max\{(18C_T\alpha)^{\frac{1}{1-\gamma}}, 1\}$, we have
    \begin{align*}
        \frac{c}{3\sqrt{C_T}} - \frac{3\sqrt{C_T}}{2}\alpha t |x|^{\beta\gamma-1+\gamma}\geq \frac{c}{3\sqrt{C_T}} - \frac{3\sqrt{C_T}}{2}\alpha\geq \frac{c}{6\sqrt{C_T}}
    \end{align*}
    for $c\geq \frac{|x|^{1-\gamma} t^{-1}}{2}$. Therefore, just bounding the denominator of the integrand by $1$, we obtain
    \begin{equation}\label{3_8:secondpre}
        \begin{split}
            I_{1,2}&\leq |x|^{(n+1)\gamma}\exp\left(-\alpha|x|^{\beta\gamma}\right)\int_{\frac{|x|^{1-\gamma} t^{-1}}{2}}^\infty dc\,c^n \exp\left(-\left(\frac{c}{3\sqrt{C_T}} - \frac{3\sqrt{C_T}}{2}\alpha t |x|^{\beta\gamma-1+\gamma}\right)^2\right)\\
            &\leq |x|^{(n+1)\gamma}\exp\left(-\alpha|x|^{\beta\gamma}\right)\int_{\frac{|x|^{1-\gamma} t^{-1}}{2}}^\infty dc\,c^n \exp\left(-\frac{c^2}{36C_T}\right).
        \end{split}
     \end{equation}
    Since $\frac{|x|^{1-\gamma} t^{-1}}{2}\geq \frac{1}{2}$, applying Lemma \ref{lem:Tri_integral}, there exists a constant $C$ such that
    \begin{align}\label{3_8:secondpre_1}
        \int_{\frac{|x|^{1-\gamma} t^{-1}}{2}}^\infty dc\,c^n \exp\left(-\frac{c^2}{36C_T}\right)\leq C\left(\frac{36C_T}{2} \left(\frac{|x|^{1-\gamma} t^{-1}}{2}\right)^{n-1}\right)\exp\left(-\frac{\left(\frac{|x|^{1-\gamma} t^{-1}}{2}\right)^2}{36C_T}\right).
    \end{align}
    Using the fact that exponential decay suppresses any polynomial growth, we obtain
    \begin{equation}\label{3_8:secondsub}
        \begin{split}
            (\ref{3_8:secondpre_1}) &\leq C_5(1+|x|)^{-m\gamma}
        \end{split}
    \end{equation}
    for some constant $C_5$ independent to $t$ and $|x|\geq 1$.
    
    Combining \eqref{3_8:subsec1}, \eqref{3_8:subsec1_1}, \eqref{3_8:firstpre}, \eqref{3_8:secondpre}, \eqref{3_8:secondpre_1}, and \eqref{3_8:secondsub}, we finally have
    \begin{equation}\label{3_8:firstres}
        \begin{split}
            \sup_{0<t\leq 1}\sup_{0\leq \tau\leq 1} I_1\leq e^{C_1}(I_{1,1}+I_{1,2} + I_{1,3})\leq e^{C_1}(C_2+C_4 + C_5)(1+|x|)^{(n+1-m)\gamma}\exp\left(-\alpha|x|^{\beta\gamma}\right)
        \end{split}
    \end{equation}
    for $|x|\geq \max\{(18 C_T\alpha)^{\frac{1}{1-\gamma}}, 1\}$.

    \subsubsection{The case of $|x|\geq \max\{(18 C_T\alpha)^{\frac{1}{1-\gamma}}, 1\}$ and $r\geq t^{-1}|x|$\nopunct}
    We start from \eqref{3_8:int_divide}. For $r\geq t^{-1}|x|$,
    \begin{equation}\label{3_8:subsec2}
        \begin{split}
            I_2&=\int_{t^{-1}|x|}^{\infty} dr\,r^{n}\frac{\exp\left(-\alpha\left||x|-rt\right|^{\beta\gamma}\right)}{\left(\left||x|-rt\right|^{2\gamma}+1\right)^{m/2}}\exp\left(-\frac{r^2}{C_T\left((|x|+rt)^{2\gamma}+1\right)}\right)\\
            &\leq \int_{t^{-1}|x|}^{\infty} dr\,r^{n} \exp\left(-\frac{r^2}{C_T\left((|x|+rt)^{2\gamma}+1\right)}\right)\\
            &\leq |x|^{(n+1)\gamma}\int_{t^{-1}|x|^{1-\gamma}}^{\infty}dr\, r^n \exp\left(-\frac{r^2}{C_T\left(\left(1+\frac{rt}{|x|^{1-\gamma}}\right)^{2\gamma}+1\right)}\right).
        \end{split}
    \end{equation}
    In the last step, we used the change of variable $r\mapsto |x|^\gamma r$.
    
    Using Lemma \ref{lem:Tri_integral} again, therefore, we have bound
    \begin{equation}\label{3_8:thirdpre}
        \begin{split}
            &\int_{t^{-1}|x|^{1-\gamma}}^{\infty} dr \,r^n \exp\left(-\frac{r^2}{C_T\left(\left(1+\frac{rt}{|x|^{1-\gamma}}\right)^{2\gamma}+1\right)}\right)\\
            &= \int_{t^{-1}|x|^{1-\gamma}}^{\infty}dr\, r^n \exp\left(-\frac{|x|^{2\gamma-2\gamma^2}r^{2-2\gamma}}{5C_T t^{2\gamma}}\right)\\
            &\leq C\frac{5C_T|x|^{(n-1)(1-\gamma)}}{(2-2\gamma)t^{n-1}} \exp\left(-\frac{|x|^{2-2\gamma}}{5C_T t^2}\right)
        \end{split}
    \end{equation}
    for some constant $C$. For the first to second line, we used
    \begin{align*}
        \left(1+\frac{rt}{|x|^{1-\gamma}}\right)^{2\gamma}+1&\leq 2\left(1^{2\gamma}+\left(\frac{rt}{|x|^{1-\gamma}}\right)^{2\gamma}\right)+1 = 2\left(\frac{rt}{|x|^{1-\gamma}}\right)^{2\gamma}+3\\
        &\leq 5\left(\frac{rt}{|x|^{1-\gamma}}\right)^{2\gamma},
    \end{align*}
    applying Lemma \ref{lem:power0} and $\frac{rt}{|x|^{1-\gamma}}\geq 1$.

    By the choice of $\beta$ and $\gamma$, we have $2-2\gamma>1-\gamma\geq\beta\gamma$. Therefore, the decay speed of $\exp\left(-\frac{|x|^{2-2\gamma}}{10 C_T}\right)$ is greater than $\exp\left(-\alpha|x|^{\beta\gamma}\right)$ for all $0<t\leq 1$, so
    \begin{equation}\label{3_8:thirdsub}
        \begin{split}
            &\frac{5C_T|x|^{(n-1)(1-\gamma)}}{(2-2\gamma)t^{n-1}} \exp\left(-\frac{|x|^{2-2\gamma}}{5C_T t^2}\right) \\
            &\leq \left\{\sup_{0<t\leq 1}\frac{5C_T|x|^{(n-1)(1-\gamma)}}{(2-2\gamma)t^{n-1}} \exp\left(-\frac{|x|^{2-2\gamma}}{10C_T t^2}\right)\right\}\exp\left(-\frac{|x|^{2-2\gamma}}{10C_T t^2}\right)\\
            &\leq C_6 (1+|x|)^{-m\gamma}\exp\left(-\frac{|x|^{2-2\gamma}}{10C_T}\right)\leq C_7(1+|x|)^{-m\gamma} \exp\left(-\alpha|x|^{\beta\gamma}\right)
        \end{split}
    \end{equation}
    for some constant $C_6$ and $C_7$.
    
    Combining \eqref{3_8:subsec2}, \eqref{3_8:thirdpre}, and \eqref{3_8:thirdsub}, we get desired upper bound for $r\geq t^{-1}|x|$ case
    \begin{equation}\label{3_8:secondres}
        \begin{split}
            \sup_{0<t\leq 1}\sup_{0\leq \tau\leq 1} I_2\leq C_7(1+|x|^\gamma)^{(n+1-m)}\exp\left(-\alpha|x|^{\beta\gamma}\right)
        \end{split}
    \end{equation}
    for $|x|\geq \max\{(18 C_T\alpha)^{\frac{1}{1-\gamma}}, 1\}$.
    
    Merging the two cases \eqref{3_8:firstres} and \eqref{3_8:secondres}, we finally prove Lemma \ref{lem:int_bound1}.
\end{proof}

Even though Lemma \ref{lem:int_bound1} covers the whole case, we write down the corollary here to stress the polynomial bound.
\begin{lemma}\label{lem:int_bound2}
    For $|u|(t,x)\leq C_u(1+|x|^\gamma)$ for $0\leq t\leq 1$, we have
    \begin{align*}
        &\sup_{0\leq t,\tau\leq 1}\sup_{x\in\mathbb{R}^N} \frac{1}{\left(1+|x|^\gamma\right)^{n-m+N}}\int_{\mathbb{R}^N} dv\, \frac{|v|^n}{\left(|x-vt|^{2\gamma}+1\right)^{m/2}}\exp\left(-\frac{\left(v-u(\tau, x-vt)\right)^2}{C_T\left(|x-vt|^{2\gamma}+1\right)}\right)\\
        &\leq C(n, m, C_u, C_T, N, \gamma)<\infty
    \end{align*}
    for $n\geq 0$ and $m\in\mathbb{R}$.
\end{lemma}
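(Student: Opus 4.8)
The natural plan is to recognize Lemma~\ref{lem:int_bound2} as the special case $\alpha=0$ of Lemma~\ref{lem:int_bound1}. In Lemma~\ref{lem:int_bound1} the hypothesis is $\alpha\ge 0$, so $\alpha=0$ is admissible, and at $\alpha=0$ both exponential factors $e^{-\alpha|x|^{\beta\gamma}}$ and $\exp\!\left(-\alpha|x-vt|^{\beta\gamma}\right)$ appearing there equal $1$ identically. Hence the normalizing weight $(1+|x|^\gamma)^{n-m+N}e^{-\alpha|x|^{\beta\gamma}}$ on the left collapses to $(1+|x|^\gamma)^{n-m+N}$, and the integrand collapses to $\dfrac{|v|^n}{(|x-vt|^{2\gamma}+1)^{m/2}}\exp\!\left(-\dfrac{(v-u(\tau,x-vt))^2}{C_T(|x-vt|^{2\gamma}+1)}\right)$. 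Moreover the supremum over $0<t,\tau\le t_0$ is taken over a subset of $0\le t,\tau\le t_0$. Thus the conclusion of Lemma~\ref{lem:int_bound1}, specialized to $\alpha=0$, is verbatim the conclusion of Lemma~\ref{lem:int_bound2}.

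The only bookkeeping point is the admissible range of $\gamma$. Lemma~\ref{lem:int_bound1} requires $0<\beta\le 2$ and $0<\gamma\le\frac1{1+\beta}$, but once $\alpha=0$ the exponent $\beta$ disappears both from the integrand and from the resulting constant. Hence, for any $\gamma$ in the range we actually work with in Section~4 — in particular any $0<\gamma<1$ — we may simply fix $\beta\coloneqq\min\{2,\,\tfrac1\gamma-1\}\in(0,2]$, which satisfies $0<\gamma\le\frac1{1+\beta}$, and apply Lemma~\ref{lem:int_bound1} with this $\beta$ and $\alpha=0$. Since the standing assumption $|u|(t,x)\le C_u(1+|x|^\gamma)t^{1/4}$ is exactly the hypothesis imposed on $u$ in Lemma~\ref{lem:int_bound1}, we obtain the stated bound with $C(n,m,C_u,C_T,N,\gamma,t_0)$ equal to the constant $C(n,m,0,\beta,C_u,C_T,N,\gamma,t_0)$ produced there, which no longer depends on $\alpha$ or $\beta$. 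This finishes the argument.

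It is worth noting that no new idea is needed: the delicate part has already been carried out in the proof of Lemma~\ref{lem:int_bound1}, namely (i) the change of variables $v\mapsto|x|^\gamma v$ in the regime $|x|\ge1$, which is precisely what produces the gain $(1+|x|^\gamma)^{n-m+N}$ and absorbs the polynomial weight $(|x-vt|^{2\gamma}+1)^{-m/2}$ for both signs of $m$, and (ii) the splitting of the radial integral at $r=t^{-1}|x|$, where on the far part $r\ge t^{-1}|x|$ one uses that $|x-vt|$ is large together with the Tricomi-type tail estimate of Lemma~\ref{lem:Tri_integral} to extract super-polynomial decay in $|x|$. If one preferred a self-contained proof of Lemma~\ref{lem:int_bound2}, one could rerun that argument with $\alpha=0$: the estimates \eqref{3_8:bound1} and \eqref{3_8:bound2}, as well as the comparison between $\exp\!\left(-c|x|^{2-2\gamma}\right)$ and $\exp\!\left(-\alpha|x|^{\beta\gamma}\right)$, all become vacuous, so the proof only shortens. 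The main (and already handled) obstacle is the far-field region $r\ge t^{-1}|x|$ together with the uniformity of all constants in $t,\tau\in(0,t_0]$ and $x\in\mathbb{R}^N$.
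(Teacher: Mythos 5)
Your proposal is correct and is exactly the paper's proof: the paper literally states "It is $\alpha=0$ case of Lemma \ref{lem:int_bound1}." Your additional remarks on the choice of $\beta$ and on which steps of the Lemma \ref{lem:int_bound1} proof become vacuous at $\alpha=0$ are accurate bookkeeping, though the paper leaves them implicit since in Section 4 the parameter $\gamma$ is already constrained to $\frac13\le\gamma\le\min\{\frac1{1+\beta},\frac12\}$ so no new $\beta$ needs to be invented.
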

\begin{proof}
    It is the $\alpha = 0$ case of Lemma \ref{lem:int_bound1}.
\end{proof}

\section{Ill-posedness theory of spatially inhomogeneous BGK}
In this section, we prove Theorem \ref{thm:2}. In particular, we construct an ill-posed solution explicitly for the spatially inhomogeneous BGK equation for a very short time. The initial data is constructed by removing a small velocity part proportional to the spatial coordinates so that the local temperature diverges as $|x|\rightarrow \infty$, incurring the norm blow-up as in the homogeneous case. Considering the ill-posedness scenario in the homogeneous BGK model, it may be reasonable to take a strong cutoff like $e^{-\alpha|v|^\beta}\mathbf{1}_{|x|^{100}}$ to make the temperature blow up fast. However, it causes some problems in showing the well-posedness of the macroscopic fields; see Lemma \ref{lem:int_bound1} and \ref{lem:int_bound2}. Counter-intuitively, therefore, we will use relatively small $\gamma$ depending on $\beta$ for $f_0(x,v)\sim e^{-\alpha |v|^\beta}\mathbf{1}_{|v|\geq |x|^\gamma}$.\\

The local in-time existence and uniqueness of such a solution are given as follows.
\begin{theorem}\label{thm:Ext_Uni}
    Let our spatial domain to be $\mathbb{R}^N_x$. For any given $N\geq 1$, $\alpha>0$, $0<\beta\leq 2$, $\frac{1}{3}\leq\gamma\leq \min\{\frac{1}{\beta+1}, \frac{1}{2}\}$, $\delta\geq 0$, and $\varpi\geq 0$, define 
    \begin{align}\label{4_1:initial}
        f_0(x,v) = (1+|x|)^{-\varpi}(1+|v|^2)^{-\delta} e^{-\alpha |v|^\beta}\mathbf{1}_{|x|^\gamma\leq |v|\leq 2|x|^\gamma + 10}.
    \end{align}
    Then, there exists $t_0(\alpha,\beta,\gamma,\delta,\varpi, N)$ such that there exists a unique solution $f(t,x,v)$ to \eqref{BGK} for $0\leq t\leq t_0$ having initial data $f_0$. Moreover, its macroscopic fields satisfy
    \begin{equation}\label{4_1:inhomo_macro}
        \begin{split}
            &C_1 \leq \frac{\rho(t,x)}{\left(1+|x|\right)^{(N-2\delta - \beta)\gamma-\varpi}\exp\left(-\alpha|x|^{\beta\gamma}\right)}\leq C_2,\\
            &|u(t,x)|\leq C_3(1+|x|^{\gamma})t^{1/6},\\
            &C_4 \leq \frac{T(t,x)}{1+|x|^{2\gamma}}\leq C_5
        \end{split}
    \end{equation}
    for all $0\leq t\leq t_0$ for some constants $0<C_i(\alpha,\beta,\gamma,\delta,\varpi,N)<\infty$.
\end{theorem}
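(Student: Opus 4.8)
The plan is to solve \eqref{BGK} by a Picard iteration on the mild (Duhamel) form along the straight-line characteristics,
\begin{equation*}
f(t,x,v) = e^{-t} f_0(x - vt, v) + \int_0^t e^{-(t-s)}\, \mathcal{M}(f)(s, x - v(t-s), v)\, ds,
\end{equation*}
with seed $f^{0}(t,\cdot)\equiv f_0$ and update $f^{k+1}(t,x,v) = e^{-t} f_0(x-vt,v) + \int_0^t e^{-(t-s)} \mathcal{M}(f^{k})(s, x-v(t-s), v)\, ds$. I would run the iteration in the set $\mathcal{X}_{t_0}$ of nonnegative measurable $g$ on $[0,t_0]\times\mathbb{R}^N\times\mathbb{R}^N$ whose macroscopic fields $\rho_g,u_g,T_g$ satisfy the bounds \eqref{4_1:inhomo_macro} with a fixed choice of constants, and which moreover obey a fixed pointwise majorization $g(t,x,v)\le \Lambda\,(1+|x|)^{-\omega}\,e^{-\alpha|x|^{\beta\gamma}}\exp\!\big(v^2/(C_0(1+|x|^{2\gamma}))\big)$; the spatially localized Gaussian weight is legitimate because the shift $|u_g|^2\lesssim (1+|x|^{2\gamma})t^{1/2}$ in the Maxwellian majorant is a small fraction of the local temperature for $t_0$ small, and $\mathcal{X}_{t_0}$ is a complete metric space for $d(g_1,g_2)=\sup_{t,x,v}(1+|x|)^{\omega}e^{\alpha|x|^{\beta\gamma}}\exp(-v^2/(C_0(1+|x|^{2\gamma})))|g_1-g_2|$. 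The role of carrying the three field bounds in the definition of $\mathcal{X}_{t_0}$ is that they force $\mathcal{M}(g)$ to have exactly the profile assumed in Lemma \ref{lem:int_bound1}, namely $\mathcal{M}(g)(s,x,v)\le C(1+|x|^{2\gamma})^{-m/2}e^{-\alpha|x|^{\beta\gamma}}\exp(-(v-u_g)^2/(C_T(1+|x|^{2\gamma})))$ for a suitable $m\in\mathbb{R}$; the hypothesis $\tfrac13\le\gamma\le\min\{\tfrac1{\beta+1},\tfrac12\}$ is imposed precisely so that $0<\beta\gamma\le 1-\gamma$, the decay condition that lemma requires.

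First I would pin down the base data. Since $f_0$ in \eqref{4_1:initial} is isotropic in $v$, $u_0\equiv 0$; for $|x|$ large its velocity support is the annulus $|x|^\gamma\le|v|\le 2|x|^\gamma+10$, and integrating in spherical coordinates while invoking Lemma \ref{lem:Tri_integral} --- exactly as in the proof of Lemma \ref{lem:Tinfty_asym} --- gives $\rho_0(x)\sim (1+|x|)^{(N-2\delta-\beta)\gamma-\omega}e^{-\alpha|x|^{\beta\gamma}}$ and $T_0(x)\sim (1+|x|^{2\gamma})/N$, whereas for bounded $|x|$ all three fields lie between positive constants. This establishes \eqref{4_1:inhomo_macro} at $t=0$ (with $u=0$, so the $t^{1/4}$ bound is trivial), i.e. $f^0\in\mathcal{X}_{t_0}$, and is the base case of an induction.

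The heart of the argument is to show $\Phi\colon g\mapsto f^{k+1}$ maps $\mathcal{X}_{t_0}$ into itself for $t_0$ small. Given $g\in\mathcal{X}_{t_0}$, the $e^{-t}f_0$ term of $\Phi(g)$ is explicit, and the $\mathcal{M}(g)$ term is handled by Lemma \ref{lem:int_bound1} (and its $\alpha=0$ corollary Lemma \ref{lem:int_bound2}): the induction hypothesis $|u_g|(s,x)\le C_3(1+|x|^\gamma)s^{1/4}$ is precisely the standing assumption of that lemma, so applying it for the relevant $n$ yields $\int_{\mathbb{R}^N}|v|^n\mathcal{M}(g)(s,x-v(t-s),v)\,dv\le C(1+|x|^\gamma)^{n-m+N}e^{-\alpha|x|^{\beta\gamma}}$, and feeding this into the definitions of $\rho_{\Phi(g)}$, $\rho_{\Phi(g)}u_{\Phi(g)}$ and $\rho_{\Phi(g)}(|u|^2+NT)_{\Phi(g)}$ reproduces the \emph{upper} bounds in \eqref{4_1:inhomo_macro} and the pointwise majorization, with the $|v|\sim|x|^\gamma$ scaling flagged before Lemma \ref{lem:int_bound1}. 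For the \emph{lower} bounds --- which are indispensable, since without $\rho>0$ and $T$ bounded below $\mathcal{M}(\Phi(g))$ is neither defined nor controlled --- I would discard the nonnegative $\mathcal{M}(g)$ contribution and bound from below by $e^{-t}\int f_0(x-vt,v)\,dv$ and $e^{-t}\int|v-u_{\Phi(g)}|^2 f_0(x-vt,v)\,dv$: Lemma \ref{lem:pert_ineq2} (whose range $\tfrac13\le\gamma\le\tfrac12$ is used here, with Lemmas \ref{lem:power3}, \ref{lem:pert_ineq1} as inputs) shows that for $t\le t_0$ the transported support $\{v:|x-vt|^\gamma\le|v|\le 2|x-vt|^\gamma+10\}$ still contains an annulus of radius comparable to $|x|^\gamma$ and of comparable measure, so that $\rho_{\Phi(g)}\gtrsim\rho_0$ and $N\rho_{\Phi(g)}T_{\Phi(g)}\gtrsim|x|^{2\gamma}\rho_0$ --- the latter because on that annulus $|v|\gtrsim|x|^\gamma$ while $|u_{\Phi(g)}|\lesssim|x|^\gamma t$ is a small fraction of it for $t_0$ small; since also $\rho_{\Phi(g)}\lesssim\rho_0$, this gives $T_{\Phi(g)}\gtrsim 1+|x|^{2\gamma}$. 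The bound $|u_{\Phi(g)}|\le C_3(1+|x|^\gamma)t^{1/4}$ itself follows because the $\mathcal{M}(g)$ part contributes an $O(t)$ term by Lemma \ref{lem:int_bound1}, while $\int v\,f_0(x-vt,v)\,dv$ differs from $\int v\,f_0(x,v)\,dv=0$ only by an $O(t)(1+|x|^\gamma)\rho_0$ correction arising from the $t$-displacement of the annular support (Lemmas \ref{lem:power1}, \ref{lem:pert_ineq1}), whence $|u_{\Phi(g)}|\lesssim(1+|x|^\gamma)t$, comfortably inside the $t^{1/4}$ budget; the power $1/4$ in the velocity bound is chosen only to leave this slack and to make $\mathcal{X}_{t_0}$ self-consistent with the hypotheses of Lemma \ref{lem:int_bound1}. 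Choosing the constants in \eqref{4_1:inhomo_macro} and $\Lambda$ generously relative to the base step, and then $t_0$ small, closes the self-mapping.

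Finally, contraction and uniqueness. On $\mathcal{X}_{t_0}$ the fields of every element stay in a fixed compact range bounded away from $\rho=0$ and $T=0$, so $\mathcal{M}(\cdot)$ is Lipschitz as a function of $(\rho,u,T)$; combining this with the moment bounds above and applying Lemma \ref{lem:int_bound1}/\ref{lem:int_bound2} once more to the difference, one obtains $d(\Phi(g_1),\Phi(g_2))\le C t_0\, d(g_1,g_2)$, so $\Phi$ is a contraction for $t_0$ small and has a unique fixed point $f\in\mathcal{X}_{t_0}$; this $f$ is the desired mild solution, which is classical by the explicit characteristic formula, and its membership in $\mathcal{X}_{t_0}$ is precisely \eqref{4_1:inhomo_macro}. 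Uniqueness in the class follows from the same estimate applied to the difference of two solutions together with Grönwall. I expect the main obstacle to be the simultaneous, $x$-uniform control of all three macroscopic fields: the growing temperature $T\sim|x|^{2\gamma}$ forces $\mathcal{M}(f)$ to spread in $v$ on the scale $|x|^\gamma$, so characteristics displace $x$ by $\sim|x|^\gamma t$, and one must guarantee --- this is where the constraint $\beta\gamma\le 1-\gamma<2-2\gamma$ enters, through Lemma \ref{lem:int_bound1} --- that these far-field characteristic contributions are Gaussian-suppressed below the weight $e^{\alpha|x|^{\beta\gamma}}$, while at the same time the lower bounds on $\rho$ and $T$ needed to keep $\mathcal{M}$ non-degenerate survive the transport, which is exactly the content of the support-tracking Lemmas \ref{lem:pert_ineq1}--\ref{lem:pert_ineq2}.
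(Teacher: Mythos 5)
Your overall architecture mirrors the paper's: solve the Duhamel form by Picard iteration, seed with $f_0(x-vt,v)$, impose the macroscopic-field bounds \eqref{4_1:inhomo_macro} as part of the solution set, verify the base case with Lemma \ref{lem:Tri_integral} and Lemma \ref{lem:pert_ineq2}, and close the self-mapping with Lemma \ref{lem:int_bound1}. Where you genuinely diverge is the \emph{norm}. The paper works in the space $X$ defined via $\|f\| = \sup_{t\le t_0}\big\|\int_{\mathbb{R}^N}(1+|x|^{2\gamma}+v^2)f(t,x,v)\,dv\big\|_{L^\infty_x}$, an integrated moment norm, which is the natural topology because the nonlinearity depends on $f$ only through the moments $\int(1,v,|v|^2)f\,dv$. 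You propose instead a pointwise sup with a spatially localized Gaussian weight in $v$. (Incidentally, the signs of your exponents are flipped: a majorant $g\le\Lambda\cdots e^{+v^2/\cdots}$ carries no decay, and a metric weight $e^{-v^2/\cdots}$ is too weak to control moments; both should be the reciprocal of what you wrote.)

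The concrete gap is in the contraction step. You write that ``on $\mathcal{X}_{t_0}$ the fields of every element stay in a fixed compact range bounded away from $\rho=0$ and $T=0$, so $\mathcal{M}(\cdot)$ is Lipschitz as a function of $(\rho,u,T)$.'' This is false: by the very bounds \eqref{4_1:inhomo_macro} you are enforcing, $\rho(t,x)\to 0$ and $T(t,x)\to\infty$ as $|x|\to\infty$, so $(\rho,u,T)$ is in no $x$-uniform compact set, and the Lipschitz constant of $\mathcal{M}$ in these variables degenerates. The paper resolves exactly this by not appealing to a black-box Lipschitz property: it writes $\mathcal{M}(f_2)-\mathcal{M}(f_1)=\int_0^1\partial_\lambda\mathcal{M}(f_\lambda)\,d\lambda$, expands $\partial_\lambda\mathcal{M}(f_\lambda)$ into the three explicit terms $D_1,D_2,D_3$ of \eqref{4_1:derivlambda}, and then checks term by term that the $x$-dependent factors ($\rho_2/\rho_\lambda$, $1/T_\lambda$, $|v-u_\lambda|^2/T_\lambda^2$, $\ldots$) combine with the $(1+|x|^{2\gamma}+v^2)$ weight so that only a constant times the moment-norm of $f_1-f_2$ survives, using Lemma \ref{lem:int_bound2} to absorb the Gaussian in $v$. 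Your remark ``applying Lemma \ref{lem:int_bound1}/\ref{lem:int_bound2} once more to the difference'' does not close this either: those lemmas bound $\int_{\mathbb{R}^N}|v|^n\,\mathcal{M}(f)(\tau,x-vt,v)\,dv$, i.e.\ $v$-\emph{integrated} quantities, whereas a sup metric in $(x,v)$ demands a pointwise, not integral, comparison of the Gaussian at $x-v(t-\tau)$ against your weight at $x$. One can likely prove such a pointwise analogue of Lemma \ref{lem:int_bound1} (the same exponent gymnastics $\beta\gamma\le 1-\gamma<2-2\gamma$ still works), but that lemma does not exist in the paper and is not implied by it, so it has to be stated and proved. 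So either switch to the paper's moment norm, where Lemmas \ref{lem:int_bound1}--\ref{lem:int_bound2} plug in directly, or carry out the contraction with the explicit $D_1,D_2,D_3$ decomposition and a new pointwise Gaussian-comparison lemma.
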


Assuming Theorem \ref{thm:Ext_Uni}, we prove Theorem \ref{thm:2} first. We note that the increase of the macroscopic temperature $T$ in $x$ in \eqref{4_1:inhomo_macro} suggest that we have a weighted norm blow-up as $x\rightarrow \infty$ as in the proof of proposition \ref{prop:2_2} and \ref{prop:2_3}.\\

\begin{proof} [\textbf{Proof of Theorem \ref{thm:2}}] 

    For any $C>0$ and a solution of \eqref{BGK}, set $f_C = C^{-1}f(t,x,v)$. Since $\mathcal{M}(f_C) = C^{-1}\mathcal{M}(f)$, $f_C$ is again a solution of \eqref{BGK}. Therefore, it is enough to show the ill-posedness with the initial data satisfying $\left\|w_{\alpha,\beta,\delta}(v)f_0(x,v)\right\|_{L^p_xL^q_v} + \left\|w_{\alpha,\beta,\delta}(v)f_0(x,v)\right\|_{L^\infty_{x,v}}<\infty$ or $\left\|w_{\alpha,\beta,\delta}(v)f_0(x,v)\right\|_{L^q_vL^p_x} + \left\|w_{\alpha,\beta,\delta}(v)f_0(x,v)\right\|_{L^\infty_{x,v}}<\infty$.\\
    
    Let us first check the ill-posedness for the $L^p_xL^q_v$ case. We choose $\varpi$ in \eqref{4_1:initial} as follows:
    \begin{align}\label{omega_range}
        \begin{cases}
            \varpi\geq 0 & p=q=\infty,\\
            \varpi\geq N\gamma q^{-1} & p=\infty,q<\infty,\\
            \varpi>Np^{-1} & 1\leq p<\infty, q= \infty,\\
            \varpi>N(\gamma q^{-1}+p^{-1}) & 1\leq p,q<\infty.
        \end{cases}
    \end{align}
    Then, it satisfies
    \begin{align*}
        \left\|w_{\alpha,\beta,\delta}(v)f_0(x,v)\right\|_{L^p_xL^q_v} + \left\|w_{\alpha,\beta,\delta}(v)f_0(x,v)\right\|_{L^\infty_{x,v}} <\infty.
    \end{align*}
    By Theorem \ref{thm:Ext_Uni}, there exists a unique solution $f(t,x,v)$ for $0\leq t\leq t_0$.
    
    For any $0<\alpha'\leq \alpha$, using \eqref{4_1:inhomo_macro}, we have
    \begin{align}\label{4_1:finalform}
        \begin{split}
            &(1+|v|^2)^{\delta}e^{\alpha' |v|^\beta} f(t,x,v)\geq\int_0^t d\tau\,e^{-(t-\tau)}(1+|v|^2)^{\delta}e^{\alpha' |v|^\beta} \mathcal{M}(f)(\tau, x-v(t-\tau), v)\\
            &\geq C\int_0^t d\tau\,(1+|v|^2)^{\delta}e^{\alpha' |v|^\beta} \frac{(1+|x-v(t-\tau)|)^{(N-2\delta-\beta)\gamma-\varpi}e^{-\alpha|x-v(t-\tau)|^{\beta\gamma}}}{(1+|x-v(t-\tau)|^{2\gamma})^{N/2}}\\
            &\qquad\times\exp\left(-\frac{(v-u(\tau, x-v(t-\tau))^2}{2C_4\left(1+|x-v(t-\tau)|^{2\gamma}\right)}\right)
        \end{split}
    \end{align}
    for some constant $C>0$.
    We focus on the exponential part of the integral in RHS:
    \begin{align}\label{4_1:exponential}
        \exp\left(\alpha'|v|^\beta - \alpha|x-v(t-\tau)|^{\beta\gamma}-\frac{(v-u)^2}{2C_4\left(1+|x-v(t-\tau)|^{2\gamma}\right)}\right).
    \end{align}
    
    From now on, we will restrict $|x|\geq 1$. 
    
    \noindent \textit{(i) The case $\beta\leq 1$.}
    
    Set $v_{x,z} = v_x+z$, where $v_x\in\mathbb{R}^N$ is a vector satisfying 
    \begin{align*}
        |v_x| = \left(\frac{\alpha'\beta C_4}{8}|x|^{2\gamma}\right)^{1/(2-\beta)},
    \end{align*}
    and $z\in\mathbb{R}^N$ satisfies $|z|\leq 1$. Let us choose $|x|$ large enough so that $|v_x|\geq 2$, then $\frac{1}{2}|v_x|\leq |v_{x,z}|\leq 2|v_x|$. Using Lemma \ref{lem:power0}, we bound
    \begin{align}\label{4_1:betagammabound}
        \begin{split}
            &\alpha\frac{\left|x-v_{x,z}(t-\tau)\right|^{\beta\gamma}}{|v_x|^\beta} \\
            &\leq \alpha\frac{|x|^{\beta\gamma}+\left(2|v_x|(t-\tau)\right)^{\beta\gamma}}{|v_x|^\beta}\\
            &\leq \alpha\left(\left(\frac{8}{\alpha'\beta C_4}\right)^{\frac{\beta}{2-\beta}}|x|^{-\frac{\beta^2\gamma}{2-\beta}}+2^{\beta\gamma}\left(\frac{8}{\alpha'\beta C_4}\right)^{\frac{\beta(1-\gamma)}{2-\beta}}(t-\tau)^{\beta\gamma}|x|^{-\frac{2\beta\gamma}{2-\beta}(1-\gamma)}\right).
        \end{split}
    \end{align}
    The RHS goes to $0$ as $|x|\rightarrow\infty$, so we can choose large enough $x$ so that
    \begin{align*}
        \alpha|x-v_{x,z}(t-\tau)|^{\beta\gamma}\leq \frac{\alpha'}{2}|v_x|^\beta
    \end{align*}
    for all $0\leq |z|\leq 1$.
    
    As $\frac{2\gamma}{2-\beta}\leq 2\gamma\leq 1$, if we choose $t_0$ small enough so that $2\left(\frac{\alpha'\beta C_4}{8}\right)^{1/(2-\beta)}(t-\tau)\leq \frac{1}{2}$ for all $\tau\leq t\leq t_0$ and $|x|\geq 1$, we can bound
    \begin{equation}\label{4_1:v_xzbound}
        \begin{split}
            \left|x-v_{x,z}(t-\tau)\right|&\leq |x|+2(t-\tau)|v_x| = |x|+2\left(\frac{\alpha'\beta C_4}{8}\right)^{1/(2-\beta)}(t-\tau)|x|^{\frac{2\gamma}{2-\beta}}\leq 2|x|,\\
            \left|x-v_{x,z}(t-\tau)\right|&\geq \left||x|-2(t-\tau)|v_x|\right|=\left||x|-2\left(\frac{\alpha'\beta C_4}{8}\right)^{1/(2-\beta)}(t-\tau)|x|^{\frac{2\gamma}{2-\beta}}\right|\geq \frac{1}{2}|x|.
        \end{split}
    \end{equation}
    Finally, as $\frac{2\gamma}{2-\beta}> \gamma$, we have
    \begin{align*}
        \frac{|u|(\tau, x-v_{x,z}(t-\tau))}{|x|^{\frac{2\gamma}{2-\beta}}} &\leq C_3(1+\left|x-v_{x,z}(t-\tau)\right|^\gamma)|x|^{-\frac{2\gamma}{2-\beta}}\tau^{1/6}\leq C_3(1+2^\gamma |x|^\gamma)|x|^{-\frac{2\gamma}{2-\beta}}\tau^{1/6}\\
        &\leq 3C_3|x|^{\gamma - \frac{2\gamma}{2-\beta}}\tau^{1/6}.
    \end{align*}
    Thus, we can make $(v_{x,z}-u)^2\leq 2 v_x^2$ by choosing large enough $x$.
    
    Therefore, we get
    \begin{equation}\label{4_1:lowerbound1}
        \begin{split}
            &\liminf_{x\rightarrow\infty} \exp\left(\alpha'|v_{x,z}|^\beta - \alpha\left|x-v_{x,z}(t-\tau)\right|^{\beta\gamma}-\frac{(v_{x,z}-u)^2}{2C_4\left(1+\left|x-v_{x,z}(t-\tau)\right|^{2\gamma}\right)}\right)\\
            &\geq \liminf_{x\rightarrow\infty}\exp\left(\frac{1}{2}\alpha'|v_x|^\beta-\frac{v_x^2}{C_4\left(1+\frac{1}{2^{2\gamma}}|x|^{2\gamma}\right)}\right)\\
            &\geq \liminf_{x\rightarrow\infty}\exp\left(\frac{1}{2}\alpha'|v_x|^\beta-\frac{2v_x^2}{C_4|x|^{2\gamma}}\right)\geq \liminf_{x\rightarrow\infty}\left(\exp\left(\alpha'|v_x|^\beta-\frac{4|v_x|^2}{C_4|x|^{2\gamma}}\right)\right)^{1/2},
        \end{split}
    \end{equation}
    where we used \eqref{4_1:betagammabound} and \eqref{4_1:v_xzbound} in the middle line. It resembles the homogeneous BGK model case with temperature $T = \frac{C_4}{8}|x|^{2\gamma}$, so we get the exponential blow-up at $v = v_{x,z}$ for $|z|\leq 1$ as $x\rightarrow \infty$ from the proof of proposition \ref{prop:2_3}, overwhelming all polynomial decay about $x$ in \eqref{4_1:finalform}. Therefore, it shows $L^\infty_{x,v}$ norm blows up of \eqref{4_1:finalform}.

    To compute the $L^q_v$ lower bound, using Fatou's lemma, we bound it for $0<t\leq t_0$ by
    \begin{align}
        &\liminf_{x\rightarrow\infty}\left\|(1+|v|^2)^{\delta}e^{\alpha' |v|^\beta} f(t,x,v)\right\|^q_{L^q_v}\notag\\
        &\geq \liminf_{x\rightarrow\infty}\int_{\mathbb{R}^N}dv\,\Bigg(\int_0^t d\tau\, e^{-(t-\tau)}e^{\alpha' |v|^\beta} \mathcal{M}(f)(\tau, x-v(t-\tau), v)\Bigg)^q\notag\\
        &\geq C\liminf_{x\rightarrow\infty}\int_{\mathbb{R}^N}dv\,\Bigg(\int_0^t d\tau\,e^{\alpha' |v|^\beta} \frac{(1+|x-v(t-\tau)|)^{(N-2\delta-\beta)\gamma-\varpi}e^{-\alpha|x-v(t-\tau)|^{\beta\gamma}}}{(1+|x-v(t-\tau)|^{2\gamma})^{N/2}}\notag\\
        &\qquad \times \exp\left(-\frac{(v-u(\tau, x-v(t-\tau)))^2}{2C_4\left(1+|x-v(t-\tau)|^{2\gamma}\right)}\right)\Bigg)^q\notag\\
        &\geq C\int_{\mathbb{R}^N}dv\,\left(\int_0^t d\tau\,\liminf_{x\rightarrow\infty}\frac{e^{\alpha' |v|^\beta}e^{-\alpha|x-v(t-\tau)|^{\beta\gamma}}}{(1+|x-v(t-\tau)|)^{(2\delta + \beta)\gamma + \varpi}}\exp\left(-\frac{(v-u(\tau, x-v(t-\tau)))^2}{2C_4\left(1+|x-v(t-\tau)|^{2\gamma}\right)}\right)\right)^q \notag\\
        &\geq C\int_{|z|\leq 1}dv\,\left(\int_0^t d\tau\,\liminf_{x\rightarrow\infty}\frac{e^{\alpha' |v_{x,z}|^\beta}e^{-\alpha|x-v_{x,z}(t-\tau)|^{\beta\gamma}}}{(1+|x-v_{x,z}(t-\tau)|)^{(2\delta + \beta)\gamma + \varpi}}\exp\left(-\frac{(v_{x,z}-u(\tau, x-v_{x,z}(t-\tau)))^2}{2C_4\left(1+|x-v_{x,z}(t-\tau)|^{2\gamma}\right)}\right)\right)^q\label{4_1:Lqblowup_inter}.
    \end{align}
    From the last line, we restricted the domain of integration to $v_x+z$, with $|z|\leq 1$. By \eqref{4_1:v_xzbound} and \eqref{4_1:lowerbound1}, we can write
    \begin{align}\label{4_1:Lqblowup}
        \begin{split}
            \eqref{4_1:Lqblowup_inter}&\geq C\int_{|z|\leq 1}dz\,\left(\int_0^t d\tau\,\liminf_{x\rightarrow\infty}\left[\frac{1}{(1+2|x|)^{(4\delta + 2\beta)\gamma + 2\varpi}}\exp\left(\alpha'|v_x|^\beta-\frac{4|v_x|^2}{C_4|x|^{2\gamma}}\right)\right]^{1/2}\right)^q.
        \end{split}
    \end{align}
    The final integrand is independent of $\tau$ and $v$ and blows up as $x\rightarrow\infty$. It proves
    \begin{align*}
        \left\|w_{\alpha',\beta,\delta}(v)f(t,x,v)\right\|_{L^p_xL^q_v} = \infty
    \end{align*}
    for any $1\leq p,q\leq \infty$ for any $0<t\leq t_0$.\\

    \noindent \textit{(ii) The case $1<\beta\leq 2$.}
    
    We again write $v_{x,z} = v_x+z$, where $v_x\in\mathbb{R}^N$ is a vector satisfying 
    \begin{align*}
        |v_x| = |x|^{2\gamma},
    \end{align*}
    and $z\in\mathbb{R}^N$ satisfies $|z|\leq 1$. Choosing $|x|\geq 2^{-2\gamma}$, we can assume $\frac{1}{2}|x|^{2\gamma}\leq |v_{x,z}|\leq 2|x|^{2\gamma}$ for all $|z|\leq 1$. Following a similar line of proof, we can obtain the similar bounds
    \begin{align*}
        &\alpha\left|x-v_{x,z}(t-\tau)\right|^{\beta\gamma}\leq \frac{\alpha'}{2}|v_x|^\beta,\\
        &\frac{1}{2}|x|\leq \left|x-v_{x,z}(t-\tau)\right|\leq 2|x|,\\
        &\frac{|u|(\tau, x-v_{x,z}(t-\tau))}{|x|^{2\gamma}} \leq 3C_3|x|^{-\gamma}\tau^{1/4}\Rightarrow (v_{x,z}-u)^2\leq 2 v_x^2
    \end{align*}
    for all $|z|\leq 1$, large $x$, and small $t_0$. Therefore, we obtain
    \begin{align*}
        &\liminf_{x\rightarrow\infty} \exp\left(\alpha'|v_{x,z}|^\beta - \alpha\left|x-v_{x,z}(t-\tau)\right|^{\beta\gamma}-\frac{(v_{x,z}-u)^2}{2C_4\left(1+\left|x-v_{x,z}(t-\tau)\right|^{2\gamma}\right)}\right)\\
        &\geq \liminf_{x\rightarrow\infty}\exp\left(\frac{1}{2}\alpha'|v_x|^\beta-\frac{v_x^2}{C_4\left(1+\frac{1}{2^{2\gamma}}|x|^{2\gamma}\right)}\right)\\
        &\geq \liminf_{x\rightarrow\infty}\exp\left(\frac{1}{2}\alpha'|x|^{2\beta\gamma}-\frac{2|x|^{4\gamma}}{C_4|x|^{2\gamma}}\right) = \liminf_{x\rightarrow\infty}\left(\exp\left(\alpha'|x|^{2\beta\gamma}-\frac{4}{C_4}|x|^{2\gamma}\right)\right)^{1/2}
    \end{align*}
    for all $|z|\leq 1$ and small enough $t_0$. Finally, as $\beta>1$, $|x|^{2\beta\gamma}$ is the dominating term and we get blow-up at $v_{x,z}$ for all $|z|\leq 1$. It shows
    \begin{align*}
        \left\|w_{\alpha',\beta,\delta}f(t)\right\|_{L^p_xL^q_v} = \infty
    \end{align*}
    for any $1\leq p,q\leq \infty$ for any $0<t\leq t_0$. This ends the proof of Theorem \ref{thm:2} for the $L^p_xL^q_v$ norm.\\
    
    Now, we prove the same argument for $L^q_vL^p_x$ space. We first state that $\|w_{\alpha,\beta,\delta}(v)f_0(x,v)\|_{L^q_vL^p_x}<\infty$ if and only if $\varpi$ satisfies \eqref{omega_range}.
    
    We have the following:
    \begin{align*}
        \{(x,v):|x|^\gamma\leq |v|\leq 2|x|^\gamma+10\} = \left\{(x,v):\frac{(\max\{|v|-10, 0\})^{1/\gamma}}{2}\leq |x|\leq |v|^{1/\gamma}\right\}.
    \end{align*}

    We first assume $p<\infty$. Using $\frac{1}{1+r}\leq \frac{1}{(1+r^N)^{1/N}}$ for $r\geq 0$,
    \begin{align*}
        \int_\Omega \left(f_0(x,v)\right)^p\,dx &=(1+|v|^2)^{-p\delta} e^{-p\alpha |v|^\beta}\int_\Omega (1+|x|)^{-p\varpi}\mathbf{1}_{\frac{(\max\{|v|-10, 0\})^{1/\gamma}}{2}\leq |x|\leq |v|^{1/\gamma}}\,dx \\
        &\leq C_N(1+|v|^2)^{-p\delta} e^{-p\alpha |v|^\beta}\int_{\frac{(\max\{|v|-10, 0\})^{1/\gamma}}{2}}^{|v|^{1/\gamma}} \frac{r^{N-1}}{(1+r^N)^{p\varpi/N}}\,dr.
    \end{align*}

    If $p\varpi\leq N$, then the integral blows up as $v\rightarrow \infty$. If $p\varpi>N$, using $\frac{1}{1+r^N}\leq C'_N\frac{1}{(1+r)^N}$ for $r\geq 0$ for some constant $C'_N$,
    \begin{align*}
        &(1+|v|^2)^{-p\delta} e^{-p\alpha |v|^\beta}\int_0^\infty \frac{r^{N-1}}{(1+r^N)^{p\varpi/N}}\mathbf{1}_{\frac{(\max\{|v|-10, 0\})^{1/\gamma}}{2}\leq |x|\leq |v|^{1/\gamma}}\\
        &\leq C_{p,\varpi, N}(1+|v|^2)^{-p\delta} e^{-p\alpha |v|^\beta}\left[-(1+r^N)^{1-p\varpi/N}\right]_{\frac{(\max\{|v|-10, 0\})^{1/\gamma}}{2}}^{|v|^{1/\gamma}}\\
        &\leq C_{p,\varpi, N}(1+|v|^2)^{-p\delta} e^{-p\alpha |v|^\beta}\left(1+\frac{(\max\{|v|-10, 0\})^{1/\gamma}}{2}\right)^{N-p\varpi}.
    \end{align*}
    
    Now, if $q<\infty$, we have
    \begin{align*}
        &\int_{\mathbb{R}^N} \left[w_{\alpha,\beta,\delta}(v)\left(\int_\Omega \left(f_0(x,v)\right)^p\,dx\right)^{1/p}\right]^q\,dv\\
        &\leq C_{p,\varpi, N}\int_{\mathbb{R}^N}(1+\frac{(\max\{|v|-10, 0\})^{1/\gamma}}{2})^{qN/p-q\varpi}\,dv\\
        &\leq C_{p,\varpi, N}\left(\int_{|v|\leq 20}\,dv + \int_{|v|> 20} \left(1+\frac{(|v|/2)^{1/\gamma}}{2}\right)^{qN/p-q\varpi}\,dv\right),
    \end{align*}
    and the integral converges if and only if $\varpi$ satisfies \eqref{omega_range}. For $q=\infty$, we can again check that $\varpi$ should satisfies \eqref{omega_range}.
    
    If $p=\infty$, for $\varpi\geq 0$,
    \begin{align*}
        \|f_0(x,v)\|_{L^\infty_x} &= (1+|v|^2)^{-\delta} e^{-\alpha |v|^\beta} \|(1+|x|)^{-\varpi}\mathbf{1}_{\frac{(\max\{|v|-10, 0\})^{1/\gamma}}{2}\leq |x|\leq |v|^{1/\gamma}}\|_{L^\infty_x}\\
        &=(1+|v|^2)^{-\delta} e^{-\alpha |v|^\beta} \left(1+\frac{(\max\{|v|-10, 0\})^{1/\gamma}}{2}\right)^{-\varpi},
    \end{align*}
    and it is finite on $w_{\alpha,\beta,\delta}(v)$ weighted $L^q_v$ norm if and only if $\varpi$ satisfies \eqref{omega_range}. Finally, we checked that
    \begin{align*}
        \left\|w_{\alpha,\beta,\delta}(v)f_0(x,v)\right\|_{L^q_vL^p_x} + \left\|w_{\alpha,\beta,\delta}(v)f_0(x,v)\right\|_{L^\infty_{x,v}} <\infty.
    \end{align*}
    
    To show the ill-posedness, let us consider $\mathcal{M}(f)(\tau, x-v(t-\tau), v)$. As before, we will focus on the blow up of exponential term \eqref{4_1:exponential}:
    \begin{align*}
        \exp\left(\alpha'|v|^\beta - \alpha|x-v(t-\tau)|^{\beta\gamma}-\frac{(v-u)^2}{2C_4\left(1+|x-v(t-\tau)|^{2\gamma}\right)}\right).
    \end{align*}
    
    Set $x_{v,z} = x_v+z$, where $x_v\in\mathbb{R}^N$ is a vector satisfying 
    \begin{align*}
        |x_v| = |v|^{\frac{4-\beta}{4\gamma}},
    \end{align*}
    and $z\in\mathbb{R}^N$ satisfies $|z|\leq 1$. Let us choose $|v|$ large enough so that $|x_v|\geq 2$, then $\frac{1}{2}|x_v|\leq |x_{v,z}|\leq 2|x_v|$. Using Lemma \ref{lem:power0}, we bound
    \begin{align}\label{4_1:betagammabound2}
        \alpha\frac{\left|x_{v,z}-v(t-\tau)\right|^{\beta\gamma}}{|v|^\beta}\leq \alpha\frac{|2x_{v}|^{\beta\gamma}+|v(t-\tau)|^{\beta\gamma}}{|v|^\beta}= \alpha \left(2^{\beta\gamma}|v|^{\frac{\beta(4-\beta)}{4} - \beta} + (t-\tau)^{\beta\gamma}|v|^{\beta(\gamma-1)}\right).
    \end{align}
    The RHS goes to $0$ as $|v|\rightarrow\infty$, so we can choose large enough $v$ so that
    \begin{align*}
        \alpha|x_{v,z}-v(t-\tau)|^{\beta\gamma}\leq \frac{\alpha'}{2}|v|^\beta
    \end{align*}
    for all $0\leq |z|\leq 1$.
    
    As $\frac{4-\beta}{4\gamma}\geq \frac{4-2}{4\frac{1}{2}}\geq 1$, if we choose $t_0$ small enough so that $t-\tau\leq 1/4$ for all $t\leq\tau\leq t_0$ and $|v|\geq 1$, we can assume
    \begin{equation}\label{4_1:v_xzbound2}
        \begin{split}
            \left|x_{v,z}-v(t-\tau)\right|&\leq 2|x_v|+(t-\tau)|v| = 2|v|^{\frac{4-\beta}{4\gamma}}+(t-\tau)|v|\leq 3|v|^{\frac{4-\beta}{4\gamma}},\\
            \left|x_{v,z}-v(t-\tau)\right|&\geq \left|\frac{1}{2}|x_v|-(t-\tau)|v|\right|=\left|\frac{1}{2}|v|^{\frac{4-\beta}{4\gamma}} - (t-\tau)|v|\right|\geq \frac{1}{4}|v|^{\frac{4-\beta}{4\gamma}}.
        \end{split}
    \end{equation}
    Finally, we have
    \begin{align*}
        |u|(\tau, x_{v,z}-v(t-\tau)) &\leq C_3(1+\left|x_{v,z}-v(t-\tau)\right|^\gamma)\tau^{1/4}\leq C_3(1+3^\gamma |v|^{\frac{4-\beta}{4}})\tau^{1/4}.
    \end{align*}
    Thus, we can make $|u|(\tau, x_{v,z}-v(t-\tau))\leq \frac{|v|}{2}$ by choosing large enough $v$ as $\frac{4-\beta}{4}<1$. It implies $(v-u)^2\leq 2|v|^2$.
    
    Putting all these bounds to \eqref{4_1:exponential}, we get
    \begin{equation*}
        \begin{split}
            &\liminf_{v\rightarrow\infty} \exp\left(\alpha'|v|^\beta - \alpha\left|x_{v,z}-v(t-\tau)\right|^{\beta\gamma}-\frac{(v-u)^2}{2C_4\left(1+\left|x_{v,z}-v(t-\tau)\right|^{2\gamma}\right)}\right)\\
            &\geq \liminf_{v\rightarrow\infty}\exp\left(\frac{1}{2}\alpha'|v|^\beta-\frac{|v|^2}{C_4\left(1+\frac{1}{4^{2\gamma}}|v|^{\frac{4-\beta}{2}}\right)}\right)\\
            &\geq \liminf_{v\rightarrow\infty}\exp\left(\frac{1}{2}\alpha'|v|^\beta-\frac{4|v|^2}{C_4|v|^{\frac{4-\beta}{2}}}\right)= \liminf_{v\rightarrow\infty}\exp\left(\frac{1}{2}\alpha'|v|^\beta-\frac{4}{C_4}|v|^{\beta/2}\right),
        \end{split}
    \end{equation*}
   where we used \eqref{4_1:betagammabound2} and \eqref{4_1:v_xzbound2} in the middle line. Comparing the exponent of $|v|$, we conclude that \eqref{4_1:exponential} blows up at $x = x_{v,z}$ for $|z|\leq 1$ as $v\rightarrow \infty$. Following similar calculation in \eqref{4_1:Lqblowup}, it shows that the
   \begin{align*}
       \left\|\int_0^t w_{\alpha',\beta,\delta}(v)\mathcal{M}(f)(\tau, x-v(t-\tau), v)\,d\tau\right\|_{L^q_vL^p_x} = \infty
   \end{align*}
   for any small enough $t>0$ and $\alpha'>0$. Eventually, we obtain
   \begin{align*}
       \|w_{\alpha',\beta,\delta}f(t)\|_{L^q_vL^p_x} = \infty
   \end{align*}
   for any $t\leq t_0$ following the similar proof step of the $L^p_xL^q_v$ case.\\
\end{proof}

The remainder of the section is devoted to the proof of the local existence result. Before diving into the complicated calculation, let us briefly sketch the proof of Theorem \ref{thm:Ext_Uni}. 
To prove the existence and uniqueness of the solution, we first define a solution space $X$ to be the space of functions satisfying
\begin{equation}\label{4_1:spacedef}
    \begin{split}
        &~\sup_{t\leq t_0}\left\|\int_{\mathbb{R}_{v}^{N}} (1+|x|^{2\gamma}+v^2)f(t,x,v)\,dv\right\|_{L^\infty_{x}}<\infty,\\
        &C_1 \leq \frac{\rho(t,x)}{\left(1+|x|\right)^{(N-2\delta - \beta)\gamma-\varpi}\exp\left(-\alpha|x|^{\beta\gamma}\right)}\leq C_2,\\
        &\hspace{-0.3cm}|u(t,x)|\leq C_3(1+|x|^{\gamma})t^{1/6},\qquad
        C_4 \leq \frac{T(t,x)}{1+|x|^{2\gamma}}\leq C_5
    \end{split}
\end{equation}
for some constants $0<C_i<\infty$ and $0\leq t\leq t_0$, where $t_0>0$ is a fixed small constant. The constants will be chosen in the proof. It is a Banach space with the norm given by
\begin{align*}
    \|f\| \coloneqq\sup_{t\leq t_0}\left\|\int_{\mathbb{R}_{v}^{N}} (1+|x|^{2\gamma}+v^2)f(t,x,v)\,dv\right\|_{L^\infty_{x}}.
\end{align*}
Now, define the solution operator $\mathcal{T}:X\rightarrow X$ by
\begin{align}\label{4_1:Tmap}
    \mathcal{T}f = e^{-t}f_0(x-vt, v) + \int_0^t e^{-(t-\tau)}\mathcal{M}(f)(\tau, x-v(t-\tau), v)\,d\tau.
\end{align}

First, we need to check that $\mathcal{T}$ is well-defined. Precisely speaking, we need to prove that there exists $t_0$ such that the density $\rho$, bulk velocity $u$, and local temperature $T$ of $\mathcal{T}f$ is well-defined throughout $t\leq t_0$. The idea is to show that the macroscopic fields do not vary much from the initial data for a very short time in the sense that
\begin{align*}
    C\leq \sup_{t,x}\frac{\left|\int (1,v,v^2) f(t,x,v)\,dv\right|}{\left|\int (1,v,v^2) f_0(x,v)\,dv\right|}\leq C'
\end{align*}
for some constant $C$ and $C'$. For this, we will prove that
\begin{align*}
    \sup_{x}\frac{\left|\int_{\mathbb{R}^N} \int_0^t (1,v,v^2)\mathcal{M}(f)(\tau, x-v(t-\tau), v)\,d\tau dv\right|}{\left|\int (1,v,v^2) f_0(x,v)\,dv\right|} = O(t^\sigma)
\end{align*}
for some $\sigma>0$.
    
To explain why it is possible, let us replace $\rho$, $u$, and $T$ in $\mathcal{M}(f)$ by the estimates in \eqref{4_1:spacedef}:
\begin{align*}
    &\mathcal{M}(f)(\tau, x-v(t-\tau), v) \\
    &= \frac{\rho(\tau, x-v(t-\tau))}{(2\pi T(\tau, x-v(t-\tau)))^{N/2}}\exp\left(-\frac{(v-u(\tau, x-v(t-\tau)))^2}{2T(\tau, x-v(t-\tau))}\right)\\
    &\simeq C\frac{\exp\left(-\alpha|x-v(t-\tau)|^{\beta\gamma}\right)}{\left(1+|x-v(t-\tau)|^{2\gamma}\right)^{m/2}}\exp\left(-\frac{\left(v-u(\tau, x-v(t-\tau))\right)^2}{2C_4\left(1+|x-v(t-\tau)|^{2\gamma}\right)}\right)
\end{align*}
with $|u|(t, x)\leq C_3(1+|x|^\gamma)t^{1/6}$. Therefore, we can apply Lemma \ref{lem:int_bound1} and bound the integral uniformly about $t$. 

After checking the well-definedness of $\mathcal{T}$, we check that $\mathcal{T}$ satisfies contraction mapping property and finally apply the Banach fixed point theorem to conclude the existence and uniqueness of the solution.\\
\begin{proof}[Proof of Theorem \ref{thm:Ext_Uni}]
The solution $f$ of \eqref{BGK} should satisfy
    \begin{align*}
        f(t,x,v) = e^{-t}f_0(x-vt, v) + \int_0^t e^{-(t-s)}\mathcal{M}(f)(\tau, x-v(t-\tau), v)\,d\tau.
    \end{align*}
    To apply a fixed point argument to prove the existence and uniqueness of the solution, we define an operator $\mathcal{T}:X\rightarrow X$ as \eqref{4_1:Tmap} and $X$ by \eqref{4_1:spacedef}. Recall that $t_0$ will be determined in the proof; tentatively, we will restrict $t_0 < 1$. 
    We prove the theorem step by step.\\

    \noindent \textit{(i)} In this step, we prove $g(t,x,v) = f_0(x-vt ,v)\in X$.

    We define
    \begin{align}\label{4_1:macro0_def}
        (\tilde{\rho}_0, \tilde{\rho}_0 \tilde{u}_0, \tilde{E}_0)(t,x)=\int_{\mathbb{R}^N} (1, v, |v|^2) f_0(x-vt, v)\,dv.
    \end{align}
    Note that $\tilde{E}_0$ is computed using the weight $|v|^2$. To show that $g(t,x,v) = f_0(x-vt, v)\in X$, we need to verify $f_0$ satisfies \eqref{4_1:spacedef}; the condition for temperature is replaced by
    \begin{align*}
        C_4\leq \frac{1}{1+|x|^{2\gamma}}\frac{\tilde{E}_0 - \tilde{\rho}_0|\tilde{u}_0|^2}{N \tilde{\rho}_0}\leq C_5.
    \end{align*}

    The density $\tilde{\rho}_0(t,x)$ is bounded as follows:
    \begin{align*}
        \tilde{\rho}_0(t,x) &= \int_{\mathbb{R}^N} (1+|x-vt|)^{-\varpi}(1+|v|^2)^{-\delta} e^{-\alpha |v|^\beta}\mathbf{1}_{|x-vt|^\gamma\leq |v|\leq 2|x-vt|^\gamma + 10}\,dv\\
        &\leq \int_{\mathbb{R}^N} (1+\max\{|v|/2-5, 0\}^{1/\gamma})^{-\varpi}(1+|v|^2)^{-\delta}e^{-\alpha |v|^\beta}\mathbf{1}_{\left||x|-|v|t\right|^\gamma\leq |v|}\,dv\\
        &=|\mathbb{S}^{N-1}|\int_0^\infty (1+\max\{r/2-5, 0\}^{1/\gamma})^{-\varpi}(1+r^2)^{-\delta}e^{-\alpha r^\beta} \mathbf{1}_{\left||x|-rt\right|^\gamma\leq r} r^{N-1}\,dr,\\
        \intertext{and}
        \tilde{\rho}_0(t,x) &\geq \int_{\mathbb{R}^N} (1+|v|^{1/\gamma})^{-\varpi}(1+|v|^2)^{-\delta} e^{-\alpha |v|^\beta}\mathbf{1}_{\left||x|+|v|t\right|^\gamma\leq |v|\leq 2\left||x|-|v|t\right|^\gamma + 10}\,dv\\
        &=|\mathbb{S}^{N-1}|\int_0^\infty (1+r^{1/\gamma})^{-\varpi}(1+r^2)^{-\delta} e^{-\alpha r^\beta} \mathbf{1}_{\left||x| + rt\right|^\gamma\leq r\leq 2\left||x| - rt\right|^\gamma + 10} r^{N-1}\,dr.
    \end{align*}
    For $|x|\geq 1$, $\frac{1}{3}\leq \gamma \leq \min\left\{\frac{1}{1+\beta}, \frac{1}{2}\right\}$, and $t<\frac{1}{2}$, from Lemma \ref{lem:Tri_integral} and Lemma \ref{lem:pert_ineq2}, there exist constants $0<C_1<C_2<\infty$ such that
    \begin{align}\label{4_1:pre_rho0_bound}
        \begin{split}
            \tilde{\rho}_0(t,x)&\leq |\mathbb{S}^{N-1}|\int_{|x|^\gamma - \gamma t|x|^{-(1-2\gamma)}}^\infty (1+\max\{r/2-5, 0\}^{1/\gamma})^{-\varpi}(1+r^2)^{-\delta} e^{-\alpha r^\beta} r^{N-1}\,dr\\
            &\leq \frac{C_2}{2}\left(1+|x|\right)^{(N-2\delta -\beta)\gamma - \varpi} e^{-\alpha |x|^{\beta\gamma}},\\
            \tilde{\rho}_0(t,x)&\geq |\mathbb{S}^{N-1}|\int_{|x|^\gamma + 2\gamma t|x|^{-(1-2\gamma)}}^{2|x|^\gamma -4\gamma t|x|^{-(1-2\gamma)}} (1+r^{1/\gamma})^{-\varpi} (1+r^2)^{-\delta}e^{-\alpha r^\beta} r^{N-1}\,dr\\
            &\geq 2C_1\left(1+|x|\right)^{(N-2\delta -\beta)\gamma - \varpi} e^{-\alpha |x|^{\beta\gamma}}.
        \end{split}
    \end{align}
    On the other hand, when $0\leq |x|\leq 1$ and $t<1$, we have
    \begin{align*}
        [2, 10]\subset \{r:\left||x| + rt\right|^\gamma\leq r\leq 2\left||x| - rt\right|^\gamma + 10\}
    \end{align*}
    as $|1+2\cdot 1|^{1/2}\leq 2$ and $|1+10\cdot 1|^{1/2}\leq 10$; we remind that $\gamma\leq 1/2$. Therefore, $\tilde{\rho}_0(t,x)$ satisfies
    \begin{align*}
        |\mathbb{S}^{N-1}|\int_{2}^{10} (1+r^{1/\gamma})^{-\varpi}(1+r^2)^{-\delta}e^{-\alpha r^\beta} r^{N-1}\,dr\leq \tilde{\rho}_0(t,x)\leq |\mathbb{S}^{N-1}|\int_{0}^\infty (1+r^2)^{-\delta} e^{-\alpha r^\beta} r^{N-1}\,dr
    \end{align*}
    so that it is uniformly bounded from above and below. Combining this bound with \eqref{4_1:pre_rho0_bound}, we get
    \begin{align}\label{4_1:rho0_bound}
        2C_1 \leq \frac{\tilde{\rho}_0(t,x)}{\left(1+|x|\right)^{(N-2\delta - \beta)\gamma-\varpi}\exp\left(-\alpha|x|^{\beta\gamma}\right)}\leq \frac{C_2}{2}
    \end{align}
    for some $C_1$ and $C_2$.\\

    For energy $\tilde{E}_0$, following the same calculation, we can show that
    \begin{equation*}
        \begin{split}
           \tilde{E}_0(t,x)&\leq |\mathbb{S}^{N-1}|\int_{|x|^\gamma - \gamma t|x|^{-(1-2\gamma)}}^\infty (1+\max\{r/2-5, 0\}^{1/\gamma})^{-\varpi}(1+r^2)^{-\delta}e^{-\alpha r^\beta} r^{N+1}\,dr\\
            &\leq \frac{C_5}{2}\left(1+|x|\right)^{(N+2-2\delta-\beta)\gamma - \varpi} e^{-\alpha |x|^{\beta\gamma}},\\
            \tilde{E}_0(t,x)&\geq |\mathbb{S}^{N-1}|\int_{|x|^\gamma + 2\gamma t|x|^{-(1-2\gamma)}}^{2|x|^\gamma -4\gamma t|x|^{-(1-2\gamma)}}(1+r^{1/\gamma})^{-\varpi}(1+r^2)^{-\delta}e^{-\alpha r^\beta} r^{N+1}\,dr\\
            &\geq 2C_4\left(1+|x|\right)^{(N+2-2\delta-\beta)\gamma - \varpi} e^{-\alpha |x|^{\beta\gamma}}
        \end{split}
    \end{equation*}
    for $|x|\geq 1$ and
    \begin{align*}
        |\mathbb{S}^{N-1}|\int_{2}^{10} (1+r^{1/\gamma})^{-\varpi}(1+r^2)^{-\delta}e^{-\alpha r^\beta} r^{N+1}\,dr\leq \tilde{E}_0(t,x)\leq |\mathbb{S}^{N-1}|\int_{0}^\infty (1+r^2)^{-\delta}e^{-\alpha r^\beta} r^{N+1}\,dr
    \end{align*}
    for $|x|\leq 1$. Therefore, dividing these bounds by $\tilde{\rho}_0$ bound \eqref{4_1:rho0_bound}, we get
    \begin{align}\label{4_1:E0_bound}
        2C_4 \leq \frac{1}{1+|x|^{2\gamma}}\frac{\tilde{E}_0(t,x)}{N\tilde{\rho}_0}\leq \frac{C_5}{2}
    \end{align}
    for some $C_4$ and $C_5$.\\

    We now turn to the estimate of $|\tilde{u}_0|$. We assume that $t< \frac{1}{2}$. We write $\tilde{\rho}_0 \tilde{u}_0(t, x)$ as follows.
    \begin{align*}
        2\tilde{\rho}_0 \tilde{u}_0(t, x)& = 2\int_{\mathbb{R}^N} v (1+|x-vt|)^{-\varpi}(1+|v|^2)^{-\delta}e^{-\alpha |v|^\beta}\mathbf{1}_{|x-vt|^\gamma\leq |v|\leq 2|x-vt|^\gamma + 10}\,dv\\
        & \leq \int_{\mathbb{R}^N} v (1+|v|^2)^{-\delta}e^{-\alpha |v|^\beta}\\
        &\qquad \times \left((1+|x-vt|)^{-\varpi}\mathbf{1}_{|x-vt|^\gamma\leq |v|\leq 2|x-vt|^\gamma + 10} - (1+|x+vt|)^{-\varpi}\mathbf{1}_{|x+vt|^\gamma\leq |v|\leq 2|x+vt|^\gamma + 10}\right)\,dv.
    \end{align*}
    We decompose the difference term by
    \begin{align*}
        &(1+|x-vt|)^{-\varpi}\mathbf{1}_{|x-vt|^\gamma\leq |v|\leq 2|x-vt|^\gamma + 10} - (1+|x+vt|)^{-\varpi}\mathbf{1}_{|x+vt|^\gamma\leq |v|\leq 2|x+vt|^\gamma + 10}\\
        &=\left((1+|x-vt|)^{-\varpi}-(1+|x+vt|)^{-\varpi}\right)\mathbf{1}_{|x-vt|^\gamma\leq |v|\leq 2|x-vt|^\gamma + 10}\\
        &\quad + (1+|x+vt|)^{-\varpi}\left(\mathbf{1}_{|x-vt|^\gamma\leq |v|}\mathbf{1}_{|v|\leq 2|x-vt|^\gamma + 10}-\mathbf{1}_{|x+vt|^\gamma\leq |v|}\mathbf{1}_{|v|\leq 2|x+vt|^\gamma + 10}\right)\\
        &=\left((1+|x-vt|)^{-\varpi}-(1+|x+vt|)^{-\varpi}\right)\mathbf{1}_{|x-vt|^\gamma\leq |v|\leq 2|x-vt|^\gamma + 10}\\
        &\quad + (1+|x+vt|)^{-\varpi}\left(\mathbf{1}_{|x-vt|^\gamma\leq |v|}-\mathbf{1}_{|x+vt|^\gamma\leq |v|}\right)1_{|v|\leq 2|x-vt|^\gamma + 10}\\
        &\quad + (1+|x+vt|)^{-\varpi}\left(1_{|v|\leq 2|x-vt|^\gamma + 10}-1_{|v|\leq 2|x+vt|^\gamma + 10}\right)\mathbf{1}_{|x+vt|^\gamma\leq |v|}\\
        &\leq \left|(1+|x-vt|)^{-\varpi}-(1+|x+vt|)^{-\varpi}\right|\mathbf{1}_{||x|-|v|t|^\gamma\leq |v|\leq 2(|x|+|v|t)^\gamma + 10}\\
        &\quad + (1+|x+vt|)^{-\varpi}\mathbf{1}_{||x|-|v|t|^\gamma\leq |v|\leq (|x|+|v|t)^\gamma}\\
        &\quad + (1+|x+vt|)^{-\varpi}\mathbf{1}_{2||x|-|v|t|^\gamma\leq |v|-10\leq 2(|x|+|v|t)^\gamma}.
    \end{align*}
    We split the integral by
    \begin{align*}
        D_1&\coloneqq \int_{\mathbb{R}^N} |v| (1+|v|^2)^{-\delta}e^{-\alpha |v|^\beta}\left|(1+|x-vt|)^{-\varpi}-(1+|x+vt|)^{-\varpi}\right|\mathbf{1}_{||x|-|v|t|^\gamma \leq |v|\leq 2(|x|+|v|t)^\gamma + 10}\,dv,\\
        D_2&\coloneqq \int_{\mathbb{R}^N} |v| (1+|v|^2)^{-\delta}e^{-\alpha |v|^\beta}(1+|x+vt|)^{-\varpi}\mathbf{1}_{||x|-|v|t|^\gamma\leq |v|\leq (|x|+|v|t)^\gamma}\,dv,\\
        D_3&\coloneqq \int_{\mathbb{R}^N} |v| (1+|v|^2)^{-\delta}e^{-\alpha |v|^\beta}(1+|x+vt|)^{-\varpi}\mathbf{1}_{2||x|-|v|t|^\gamma\leq |v|-10\leq 2(|x|+|v|t)^\gamma}\,dv.
    \end{align*}
    To bound the integrals, we divide the cases: $|x|\geq 20 t^{\frac{\gamma}{1-\gamma}}$ and the other case.\\

    \noindent \textit{(i) The case $|x|\geq 20 t^{\frac{\gamma}{1-\gamma}}$.}
    
    We first assume $|x|\geq 20 t^{\frac{\gamma}{1-\gamma}}$. For further analysis, we bound the size of the set of the step functions. We claim that
    \begin{align}
        \{||x|-|v|t|^\gamma\leq |v|\}&\subset \{|x|^\gamma - \gamma t |x|^{-(1-2\gamma)}\leq |v|\}\subset \{(1-\frac{1}{2\sqrt{2}})|x|^\gamma\leq |v|\},\label{4_1:set_bound_1}\\
        \{|v|\leq (|x|+|v|t)^\gamma\}&\subset \{|v|\leq |x|^\gamma + 2\gamma t |x|^{-(1-2\gamma)}\}\subset \{|v|\leq (1+\frac{1}{\sqrt{2}})|x|^\gamma\},\label{4_1:set_bound_2}\\
        \{2||x|-|v|t|^\gamma\leq |v|-10\}&\subset \{(2|x|-20t)^\gamma - 2\gamma t (2|x|-20t)^{-(1-2\gamma)}\leq |v|-10\}\notag\\
        &\subset \{|x|^\gamma+5\leq |v|\},\label{4_1:set_bound_3}\\
        \{|v|-10\leq 2(|x|+|v|t)^\gamma\}&\subset \{|v|-10\leq (2|x|+20t)^\gamma + 4\gamma t (2|x|+20t)^{-(1-2\gamma)}\}\notag\\
        &\subset \{|v|\leq 3|x|^\gamma+15\}\label{4_1:set_bound_4}.
    \end{align}
    Except for the last inequalities in each line, all the inequalities are derived from Lemma \ref{lem:pert_ineq2} with
    \begin{align*}
        \{2||x|-|v|t|^\gamma\leq |v|-10\} &= \{|(2|x|-20t)-(|v|-10)(2t)|^\gamma\leq (|v|-10)\},\\
        \{|v|-10\leq 2(|x|+|v|t)^\gamma\} &= \{(|v|-10)\leq ((2|x|+20t)+(|v|-10)(2t))^\gamma\}.
    \end{align*}
    To check the remaining inequalities, we bound $\gamma t |x|^{-(1-2\gamma)}$, $\gamma t (2|x|-20t)^{-(1-2\gamma)}$, and $2|x|\pm 20t$ as follows. For $\frac{1}{3}\leq \gamma\leq \frac{1}{2}$, $|x|\geq 20 t^{\frac{\gamma}{1-\gamma}}\geq t^{\frac{\gamma}{1-\gamma}}$, and $t<\frac{1}{2}$,
    \begin{align}
        &\gamma t |x|^{-(1-2\gamma)}\leq \gamma t t^{-\frac{\gamma(1-2\gamma)}{1-\gamma}}\leq \frac{1}{2}t^{\frac{\gamma^2 + (1-\gamma)^2}{1-\gamma}}\leq \frac{1}{2}t^{1-\gamma}|x|^\gamma\leq \frac{1}{2\sqrt{2}}|x|^\gamma,\notag\\
        &\gamma t (2|x|-20t)^{-(1-2\gamma)}\leq \gamma t (40 t^{\frac{\gamma}{1-\gamma}}-20t^{\frac{\gamma}{1-\gamma}})^{-(1-2\gamma)}= \frac{1}{2} t (20 t^{\frac{\gamma}{1-\gamma}})^{-(1-2\gamma)}\leq \frac{1}{2},\label{4_1:gammat_bound}\\
        &2|x|-20t\geq 2|x|-20t^{\frac{\gamma}{1-\gamma}}\geq 2|x| - |x|\geq |x|,\notag\\
        &2|x|+20t\leq 2|x|+20t^{\frac{\gamma}{1-\gamma}}\leq 2|x| + |x|\geq 3|x|.\notag
    \end{align}
    The last inequalities in each line are deduced from the above estimates. Also, one can readily check that $|v|t\leq \frac{|x|}{2}$ from $|v|\leq (1+\frac{1}{\sqrt{2}})|x|^\gamma$ and $|x|\geq 20 t^{\frac{\gamma}{1-\gamma}}$.
    
    Now, we bound each $D_i$ under $|x|\geq 20 t^{\frac{\gamma}{1-\gamma}}$. By the mean value theorem with \eqref{4_1:set_bound_1} and \eqref{4_1:set_bound_2}, the difference term in $D_1$ is bounded by
    \begin{align}\label{4_1:u_0_MVT}
        \left|(1+|x-vt|)^{-\varpi}-(1+|x+vt|)^{-\varpi}\right|\leq \frac{\varpi}{(1+||x|-|v|t|)^{\varpi+1}}(2|v|t).
    \end{align}

    As $|v|t\leq \frac{|x|}{2}$, it is bounded by $\frac{Ct}{(1+|x|)^\varpi}$ for some constant $C$. Therefore,
    \begin{align*}
        D_1&=\int_{\mathbb{R}^N} |v| (1+|v|^2)^{-\delta}e^{-\alpha |v|^\beta}\left|(1+|x-vt|)^{-\varpi}-(1+|x+vt|)^{-\varpi}\right|\mathbf{1}_{||x|-|v|t|^\gamma \leq |v|\leq 2(|x|+|v|t)^\gamma + 10}\,dv\\
        &\leq \frac{Ct}{(1+|x|)^\varpi}\int_{\mathbb{R}^N} (1+|v|^2)^{\frac{1}{2}-\delta}e^{-\alpha |v|^\beta} \mathbf{1}_{||x|-|v|t|^\gamma\leq |v|}\,dv.
    \end{align*}
    From the analysis in \eqref{4_1:pre_rho0_bound}, it is bounded by
    \begin{align}\label{4_1:D_1_bound1}
        D_1\leq C \left(1+|x|\right)^{(N+1-2\delta -\beta)\gamma - \varpi} e^{-\alpha |x|^{\beta\gamma}} t.
    \end{align}
    Next, we consider $D_2$. Since $(1-\frac{1}{2\sqrt{2}})|x|^\gamma\leq |v|\leq (1+\frac{1}{\sqrt{2}})|x|^\gamma$ and $|v|t\leq \frac{|x|}{2}$ from \eqref{4_1:set_bound_1} and \eqref{4_1:set_bound_2}, using the sphrical coordinate, we have
    \begin{align}\label{4_1:D_2_bound1_1}
        \begin{split}
            D_2&=\int_{\mathbb{R}^N} |v| (1+|v|^2)^{-\delta}e^{-\alpha |v|^\beta}(1+|x+vt|)^{-\varpi}\mathbf{1}_{||x|-|v|t|^\gamma\leq |v|\leq (|x|+|v|t)^\gamma}\,dv\\
            &\leq C|\mathbb{S}^{N-1}|(1+|x|)^{(N-2\delta)\gamma-\varpi}\int_0^\infty e^{-\alpha r^\beta}\mathbf{1}_{|x|^\gamma - \gamma t|x|^{-(1-2\gamma)} \leq r\leq |x|^\gamma + 2\gamma t|x|^{-(1-2\gamma)})}\,dr\\
            &\leq C|\mathbb{S}^{N-1}|(1+|x|)^{(N+1-2\delta-\beta)\gamma-\varpi}\int_0^\infty e^{-\alpha r}\mathbf{1}_{(|x|^\gamma - \gamma t|x|^{-(1-2\gamma)})^\beta \leq r\leq (|x|^\gamma + 2\gamma t|x|^{-(1-2\gamma)}))^\beta}\,dr
        \end{split}
    \end{align}
    for some constant $C$. In the last step, we used a change of variable $r^\beta\mapsto r$. The last integral is bounded by
    \begin{align*}
        \left(\left(|x|^\gamma + 2\gamma t|x|^{-(1-2\gamma)}\right)^\beta - \left(|x|^\gamma - \gamma t|x|^{-(1-2\gamma)}\right)^\beta\right)e^{-\alpha (|x|^\gamma - \gamma t|x|^{-(1-2\gamma)})^\beta}.
    \end{align*}
    
    Now, we need to carefully bound the difference term:
    \begin{align*}
        \left(|x|^\gamma + 2\gamma t|x|^{-(1-2\gamma)}\right)^\beta - \left(|x|^\gamma - \gamma t|x|^{-(1-2\gamma)}\right)^\beta.
    \end{align*}
    Since $|x|\geq t^{\frac{\gamma}{1-\gamma}}$, we have $2\gamma t|x|^{-(1-\gamma)}\leq 2\gamma t^{1-\gamma}\leq 1$. On the other hand, from Lemma \ref{lem:power1}, we have
    \begin{align}\label{4_1:c_beta_bound}
        (1+c)^\beta\leq 1+3c,\quad
        \left(1-\frac{c}{2}\right)^\beta\geq 1-c
    \end{align}
    for $0<\beta\leq 2$ and $0\leq c\leq 1$. Therefore,
    \begin{align}
        \left(|x|^\gamma + 2\gamma t|x|^{-(1-2\gamma)}\right)^\beta - |x|^{\beta\gamma} &= |x|^{\beta\gamma}\left(1 + 2\gamma t|x|^{-(1-\gamma)}\right)^\beta - |x|^{\beta\gamma}\notag\\
        &\leq 6\gamma t |x|^{\beta\gamma - (1-\gamma)},\notag\\
        |x|^{\beta\gamma} - \left(|x|^\gamma -\gamma t|x|^{-(1-2\gamma)}\right)^\beta &= |x|^{\beta\gamma} - |x|^{\beta\gamma}\left(1 -\gamma t|x|^{-(1-\gamma)}\right)^\beta\notag\\
        &\leq 2\gamma t|x|^{\beta\gamma - (1-\gamma)}\label{4_1:D_2_bound1_2}
    \end{align}
    so that
    \begin{align}\label{first}
    	\left(|x|^\gamma + 2\gamma t|x|^{-(1-2\gamma)}\right)^\beta - \left(|x|^\gamma - \gamma t|x|^{-(1-2\gamma)}\right)^\beta
    	\leq 8\gamma t|x|^{\beta\gamma - (1-\gamma)}.
    \end{align}
    Also, by \eqref{4_1:D_2_bound1_2}, we also have
    \begin{align}\label{second}
        e^{-\alpha (|x|^\gamma - \gamma t|x|^{-(1-2\gamma)})^\beta}\leq e^{-\alpha |x|^{\beta\gamma} + \alpha (2\gamma t|x|^{\beta\gamma - (1-\gamma)})}.
    \end{align}
    Since $-1\leq \beta\gamma - 1 + \gamma\leq 0$, $\frac{\gamma}{1-\gamma}\leq 1$, and $|x|\geq t^{\frac{\gamma}{1-\gamma}}$,
    \begin{align*}
        2\gamma t|x|^{\beta\gamma+\gamma -1}\leq 2\gamma t^{\frac{\gamma}{1-\gamma}(\beta\gamma + \gamma - 1) + 1}\leq 2\gamma\leq 1,
    \end{align*}
    and the last term is bounded by a constant.

    We now insert \eqref{first} and \eqref{second} into the integral of \eqref{4_1:D_2_bound1_1} to conclude that
    \begin{align*}
         D_2\leq  C\left(1+|x|\right)^{(N+1-2\delta-\beta)\gamma-\varpi}e^{-\alpha |x|^{\beta\gamma}}\gamma t|x|^{\beta\gamma-(1-\gamma)}.
    \end{align*}
    Since $\beta\gamma-(1-\gamma)\leq 0$, and we are considering the region $|x|\geq t^{\frac{\gamma}{1-\gamma}}$, we have 
    \begin{align*}
        |x|^{\beta\gamma-(1-\gamma)}\leq t^{\frac{\beta\gamma^2}{1-\gamma}-\gamma}\leq t^{-1/2}.
    \end{align*}
    Therefore, we obtain
    \begin{align}\label{4_1:D_2_bound1}
        D_2&\leq C(1+|x|)^{(N+1-2\delta-\beta)\gamma}e^{-\alpha|x|^{\beta\gamma}}t^{1/2}.
    \end{align}

    For $D_3$, we follow a similar line of proof with \eqref{4_1:set_bound_3} and \eqref{4_1:set_bound_4}. As $|x|^\gamma + 5\leq |v|\leq 3|x|^\gamma + 15$, we have
    \begin{align}\label{4_1:D_3_bound1_1}
        \begin{split}
            D_3&=\int_{\mathbb{R}^N} |v| (1+|v|^2)^{-\delta}e^{-\alpha |v|^\beta}(1+|x+vt|)^{-\varpi}\mathbf{1}_{2||x|-|v|t|^\gamma\leq |v|-10\leq 2(|x|+|v|t)^\gamma}\,dv\\
            &\leq C(1+(10+|x|))^{(N-2\delta)\gamma} \int_0^\infty e^{-\alpha r^\beta}\mathbf{1}_{(2|x|-20t)^\gamma - 2\gamma t (2|x|-20t)^{-(1-2\gamma)}\leq r-10\leq (2|x|+20t)^\gamma + 4\gamma t (2|x|+20t)^{-(1-2\gamma)}}\,dr\\
            &\leq C(1+(10+|x|))^{(N+1-2\delta-\beta)\gamma}\\
            &\qquad \times \int_0^\infty e^{-\alpha r}\mathbf{1}_{((2|x|-20t)^\gamma - 2\gamma t (2|x|-20t)^{-(1-2\gamma)}+10)^\beta\leq r\leq ((2|x|+20t)^\gamma + 4\gamma t (2|x|+20t)^{-(1-2\gamma)} + 10)^\beta}\,dr.
        \end{split}
    \end{align}
    Now, we need to compute the last integral. To measure the size of the intergration interval, we estimate the following quantities. From \eqref{4_1:gammat_bound}, we know that $4\gamma t (2|x|\pm 20t)^{-(1-2\gamma)}\leq 2$. Using \eqref{4_1:c_beta_bound}, 
    \begin{align}\label{4_1:2|x|_lower_bound1}
        \begin{split}
            ((2|x|+20t)^\gamma + 4\gamma t (2|x|+20t)^{-(1-2\gamma)} + 10)^\beta&=((2|x|+20t)^\gamma + 10)^\beta\left(1+\frac{4\gamma t (2|x|+20t)^{-(1-2\gamma)}}{(2|x|+20t)^\gamma + 10}\right)^\beta\\
            &\leq ((2|x|+20t)^\gamma + 10)^\beta + \frac{12\gamma t (2|x|+20t)^{-(1-2\gamma)}}{((2|x|+20t)^\gamma + 10)^{1-\beta}},\\
            ((2|x|-20t)^\gamma - 2\gamma t (2|x|-20t)^{-(1-2\gamma)} + 10)^\beta&=((2|x|-20t)^\gamma + 10)^\beta\left(1-\frac{2\gamma t (2|x|-20t)^{-(1-2\gamma)}}{(2|x|-20t)^\gamma + 10}\right)^\beta\\
            &\geq ((2|x|-20t)^\gamma + 10)^\beta - \frac{4\gamma t (2|x|-20t)^{-(1-2\gamma)}}{((2|x|-20t)^\gamma + 10)^{1-\beta}}.
        \end{split}
    \end{align}
    We also need to compute the difference between $(2|x|\pm 20 t)^\gamma$. For $|x|\geq 20 t^{\frac{\gamma}{1-\gamma}}$, it satisfies $10\frac{t}{|x|}\leq \frac{1}{2}$ and $2^\gamma 10t|x|^{\gamma-1}\leq \sqrt{10}$. Therefore, applying \eqref{4_1:c_beta_bound} and $(1+c)^\gamma\leq 1+c$ (resp., $(1-c)^\gamma\geq 1-c$) for $\frac{1}{3}\leq \gamma\leq \frac{1}{2}$ and $0\leq c\leq 1$, we get
    \begin{align}\label{4_1:2|x|_lower_bound2}
        \begin{split}
            ((2|x|+20t)^\gamma + 10)^\beta &= \left(2^\gamma |x|^\gamma\left(1+10\frac{t}{|x|}\right)^\gamma + 10\right)^\beta\\
            &\leq \left(2^\gamma \left(|x|^\gamma + 10t|x|^{\gamma-1}\right) + 10\right)^\beta\\
            &=\left(2^\gamma |x|^\gamma + 10\right)^\beta\left(1+\frac{2^\gamma 10t|x|^{\gamma-1}}{2^\gamma |x|^\gamma + 10}\right)^\beta\\
            &\leq \left(2^\gamma |x|^\gamma + 10\right)^\beta + 60t|x|^{\gamma-1}(2^\gamma |x|^\gamma + 10)^{\beta-1},
        \end{split}
    \end{align}
    and by a similar computation,
    \begin{align}\label{4_1:2|x|_lower_bound3}
        \begin{split}
            ((2|x|-20t)^\gamma + 10)^\beta &\geq \left(2^\gamma |x|^\gamma + 10\right)^\beta\left(1-\frac{2^\gamma 10t|x|^{\gamma-1}}{2^\gamma |x|^\gamma + 10}\right)^\beta\\
            &\geq \left(2^\gamma |x|^\gamma + 10\right)^\beta - 40t|x|^{\gamma-1}(2^\gamma |x|^\gamma + 10)^{\beta-1}.
        \end{split}
    \end{align}

    Thus, from \eqref{4_1:2|x|_lower_bound1}, \eqref{4_1:2|x|_lower_bound2}, and \eqref{4_1:2|x|_lower_bound3}, the total difference is given by
    \begin{align*}
        &((2|x|+20t)^\gamma + 4\gamma t (2|x|+20t)^{-(1-2\gamma)} + 10)^\beta - ((2|x|-20t)^\gamma - 2\gamma t (2|x|-20t)^{-(1-2\gamma)} + 10)^\beta\\
        &\leq C\left(\frac{\gamma t (2|x|+20t)^{-(1-2\gamma)}}{(2|x|-20t)^\gamma + 10)^{1-\beta}} + t|x|^{\gamma-1}(2^\gamma |x|^\gamma + 10)^{\beta-1}\right).
    \end{align*}
    Now, we bound the final term. When $20 t^{\frac{\gamma}{1-\gamma}}\leq |x|\leq 40$, then $2^\gamma |x|^\gamma + 10$ and $(2|x|- 20t)^\gamma + 10$ are uniformly lower and upper bounded by a constant, so regardless of $\beta$, we have
    \begin{align}\label{4_1:2|x|_diff_bound1}
        \begin{split}
            \frac{\gamma t (2|x|+20t)^{-(1-2\gamma)}}{(2|x|-20t)^\gamma + 10)^{1-\beta}} + t|x|^{\gamma-1}(2^\gamma |x|^\gamma + 10)^{\beta-1}&\leq C\left(\gamma t (60t^{\frac{\gamma}{1-\gamma}})^{-(1-2\gamma)} + t(20 t^{\frac{\gamma}{1-\gamma}})^{\gamma-1}\right)\\
            &\leq C\left(\gamma t^{1-\gamma + \frac{\gamma^2}{1-\gamma}} + t^{1-\gamma}\right)\\
            &\leq C t^{1/2}.
        \end{split}
    \end{align}
    If $|x|>40$, then
    \begin{align*}
        |x|^\gamma\leq (2|x|-20t)^\gamma + 10\leq 10|x|^\gamma,\quad  2|x|^\gamma\leq (2|x|)^\gamma + 10\leq 10|x|^\gamma.
    \end{align*}
    Therefore,
    \begin{align}\label{4_1:2|x|_diff_bound2}
        \begin{split}
            \frac{\gamma t (2|x|+20t)^{-(1-2\gamma)}}{(2|x|-20t)^\gamma + 10)^{1-\beta}} + t|x|^{\gamma-1}(2^\gamma |x|^\gamma + 10)^{\beta-1}&\leq C\left(\gamma t (3|x|)^{-(1-2\gamma)}|x|^{\gamma(\beta-1)} + t|x|^{\gamma-1}|x|^{\gamma(\beta-1)}\right)\\
            &\leq C t\left(\gamma|x|^{\gamma \beta + \gamma - 1} + |x|^{\gamma\beta - 1}\right).
        \end{split}
    \end{align}
    Since $\gamma\beta + \gamma - 1\leq 0$, it is bounded by $Ct$. Combining \eqref{4_1:2|x|_diff_bound1} and \eqref{4_1:2|x|_diff_bound2}, we obtain
    \begin{align}\label{4_1:2|x|_diff_bound}
        \begin{split}
            &((2|x|+20t)^\gamma + 4\gamma t (2|x|+20t)^{-(1-2\gamma)} + 10)^\beta - ((2|x|-20t)^\gamma - 2\gamma t (2|x|-20t)^{-(1-2\gamma)} + 10)^\beta\\
            &\leq Ct^{1/2}.
        \end{split}
    \end{align}
    As a corollary of \eqref{4_1:2|x|_diff_bound} applied to \eqref{4_1:2|x|_lower_bound1} and \eqref{4_1:2|x|_lower_bound3}, we have
    \begin{align}\label{4_1:2|x|_exp_bound}
        e^{-\alpha ((2|x|-20t)^\gamma - 2\gamma t (2|x|-20t)^{-(1-2\gamma)}+10)^\beta}\leq C e^{(2^\gamma |x|^\gamma + 10)^\beta}
    \end{align}
    for some constant $C$. Multiplying \eqref{4_1:2|x|_diff_bound} and \eqref{4_1:2|x|_exp_bound} for \eqref{4_1:D_3_bound1_1}, we have
    \begin{align}\label{4_1:D_3_bound1_2}
        D_3\leq C(1+(10+|x|))^{(N+1-2\delta-\beta)\gamma}e^{(2^\gamma |x|^\gamma + 10)^\beta} t^{1/2}.
    \end{align}
    Even though we ignored $(1+|x|)^{-\varpi}$ decay in $D_3$, the exponential decay is much greater than $D_1$ and $D_2$. Therefore, when $|x|$ is large, $D_3$ is much smaller than $D_1$ and $D_2$.
    
    Combining \eqref{4_1:D_1_bound1}, \eqref{4_1:D_2_bound1}, and \eqref{4_1:D_3_bound1_2}, we have
    \begin{align}\label{4_1:rho0u0_bound1}
        \begin{split}
            2|\tilde{\rho}_0\tilde{u}_0|(t,x)&\leq C\left(1+|x|\right)^{(N+1-2\delta-\beta)\gamma-\varpi}e^{-\alpha |x|^{\beta\gamma}}t^{\frac{1}{2}}
        \end{split}
    \end{align}
    when $|x|\geq 20t^{\frac{\gamma}{1-\gamma}}$.\\

    \noindent \textit{(ii) The case $|x|\leq 20 t^{\frac{\gamma}{1-\gamma}}$.}

    We now move on to the region $|x|\leq 20t^{\frac{\gamma}{1-\gamma}}$ for the $2|\tilde{\rho}_0\tilde{u}_0|$ estimate. We first consider $D_1$. The condition $|v|\leq 2(|x|+|v|t)^\gamma + 10$ implies (say) $|v|\leq 20$. Using \eqref{4_1:u_0_MVT} again, we easily check
    \begin{align}\label{4_1:D_1_bound2}
        \begin{split}
            D_1&\leq\int_{\mathbb{R}^N} |v| (1+|v|^2)^{-\delta}e^{-\alpha |v|^\beta}\left|(1+|x-vt|)^{-\varpi}-(1+|x+vt|)^{-\varpi}\right|\mathbf{1}_{|v|\leq 20}\,dv\\
            &\leq Ct.
        \end{split}
    \end{align}
    
    To bound $D_2$, we observe that
    \begin{align*}
        &\left\{v:|v|\leq (|x|+|v|t)^\gamma\right\}\subset \left\{v: |v|\leq 20 t^{\frac{\gamma^2}{1-\gamma}}\right\}.
    \end{align*}
    Indeed, if $|v|> 20t^{\frac{\gamma^2}{1-\gamma}}$, then $(|v|t)^\gamma\leq \frac{1}{20^{1-\gamma}}|v|$ as
    \begin{align*}
         \frac{1}{20^{1-\gamma}}|v|^{1-\gamma}\geq t^{\gamma^2}\geq t^\gamma.
    \end{align*}
    So,
    \begin{align*}
        |x + vt|^\gamma \leq |x|^\gamma + (|v|t)^\gamma\leq \left(20 t^{\frac{\gamma}{1-\gamma}}\right)^\gamma + (|v|t)^\gamma\leq \frac{20^\gamma}{20}|v| + (|v|t)^\gamma\leq \left(\frac{20^\gamma}{20}+\frac{1}{20^{1-\gamma}}\right)|v|< |v|.
    \end{align*}
    Therefore, the integral is bounded by
    \begin{align}\label{4_1:D_2_bound2}
        \begin{split}
            D_2&\leq \int_{\mathbb{R}^N} |v| (1+|v|^2)^{-\delta}e^{-\alpha |v|^\beta}(1+|x+vt|)^{-\varpi}\mathbf{1}_{|v|\leq (|x|+|v|t)^\gamma}\,dv\\
            &\leq C|\mathbb{S}^{N-1}|\int_0^{20 t^{\frac{\gamma^2}{1-\gamma}}} r^N \,dr\leq C t^{\frac{2\gamma^2}{1-\gamma}}\leq C t^{\frac{1}{3}}
        \end{split}
    \end{align}
    since $N\geq 1$ and $\frac{1}{3}\leq \gamma\leq \frac{1}{2}$.
    
    By a similar computation,
    \begin{align*}
        &\left\{v:2||x|-|v|t|^\gamma\leq |v|-10\leq 2(|x|+|v|t)^\gamma\right\}\\
        &\subset \left\{v:0\leq |v|-10\leq (2(|x|+10t)+(|v|-10)(2t))^\gamma\right\}\\
        &\subset \left\{v: 0\leq |v|-10\leq 40 t^{\frac{\gamma^2}{1-\gamma}}\right\}.
    \end{align*}
    Using this bounds, we have
    \begin{align}\label{4_1:D_3_bound2}
        \begin{split}
            D_3&\leq \int_{\mathbb{R}^N} |v| (1+|v|^2)^{-\delta}e^{-\alpha |v|^\beta}(1+|x+vt|)^{-\varpi}\mathbf{1}_{0\leq |v|-10\leq 2(|x|+|v|t)^\gamma}\,dv\\
            &\leq C|\mathbb{S}^{N-1}|\int_{10}^{10 + 40 t^{\frac{\gamma^2}{1-\gamma}}} r^N \,dr\leq C t^{\frac{\gamma^2}{1-\gamma}}\leq C t^{\frac{1}{6}}
        \end{split}
    \end{align}
    as $N\geq 1$ and $\frac{1}{3}\leq \gamma\leq \frac{1}{2}$.\\

    Combining \eqref{4_1:D_1_bound2}, \eqref{4_1:D_2_bound2}, and \eqref{4_1:D_3_bound2}, we obtain
    \begin{align}\label{4_1:rho0u0_bound2}
        \begin{split}
            2|\tilde{\rho}_0\tilde{u}_0|(t,x)&\leq C\left(1+|x|\right)^{(N+1-2\delta-\beta)\gamma-\varpi}e^{-\alpha |x|^{\beta\gamma}}t^{\frac{1}{6}}
        \end{split}
    \end{align}
    for $|x|\leq 20 t^{\frac{\gamma}{1-\gamma}}$ and $t<\frac{1}{2}$. Merging \eqref{4_1:rho0u0_bound1} and \eqref{4_1:rho0u0_bound2}, we finally conclude that \eqref{4_1:rho0u0_bound2} holds for all $x\in\mathbb{R}^N$.

    Dividing \eqref{4_1:rho0u0_bound2} by \eqref{4_1:rhobound}, we finally get
    \begin{align}\label{4_1:u0_bound}
        |\tilde{u}_0|\leq \frac{C_3}{2}(1+|x|^\gamma) t^{1/6}
    \end{align}
    for some constant $C_3$.

    Recall that we define $\tilde{E}_0$ for the weight $|v|^2$ in \eqref{4_1:macro0_def}. From \eqref{4_1:E0_bound} and \eqref{4_1:u0_bound}, the temperature is bounded by
    \begin{align*}
        2C_4 - \frac{C^2_3}{2N}t_0^{1/3}\leq \frac{1}{1+|x|^{2\gamma}}\frac{\tilde{E}_0 - \tilde{\rho}_0|\tilde{u}_0|^2}{N \tilde{\rho}_0} \leq \frac{C_5}{2}.
    \end{align*}
    We can choose small enough $t_0$ and redefining $2C_4$ by $2C_4 - \frac{C^2_3}{2N}t_0^{1/3}$, so the temperature satisfies \eqref{4_1:spacedef}.

    Finally, we can easily check the first condition of \eqref{4_1:spacedef}: from \eqref{4_1:rho0_bound} and \eqref{4_1:E0_bound},
    \begin{align*}
        \int_{\mathbb{R}^{N}} (1+|x|^{2\gamma}+v^2)f_0(x-vt,v)\,dv&\leq (1+|x|^{2\gamma})\tilde{\rho}_0(t,x) + \tilde{E}_0(t,x)\\
        &\leq C\left(1+|x|\right)^{(N-2\delta - \beta+2)\gamma-\varpi}\exp\left(-\alpha|x|^{\beta\gamma}\right).
    \end{align*}
    Since it has exponential decay about $x$, it is uniformly bounded about $x$ and $0\leq t\leq t_0$.\\

    \noindent \textit{(ii)} Now, we show that $\mathcal{T}$ maps $X$ to $X$. In the previous step, we showed that $f_0(x-vt, v)\in X$, so $X$ is not an empty set.
    
    To check that $\mathcal{T}f(t,x,v)$ satisfies the first condition of \eqref{4_1:spacedef}, we compute 
    \begin{align*}
        &\sup_{t\leq t_0}\int_{\mathbb{R}^{N}} (1+|x|^{2\gamma}+v^2)\mathcal{T}f(t,x,v)\,dv\\
        &\leq \sup_{t\leq t_0}e^{-t}\int_{\mathbb{R}^{N}} (1+|x|^{2\gamma}+v^2)f_0(x-vt,v)\,dv \\
        &\quad + \sup_{t\leq t_0}\int_0^t e^{-(t-\tau)}\int_{\mathbb{R}^{N}} (1+|x|^{2\gamma}+v^2)\mathcal{M}(f)(\tau,x-v(t-\tau),v)\,dv d\tau\\
        &\leq \sup_{t\leq t_0}\int_{\mathbb{R}^{N}} (1+|x|^{2\gamma}+v^2)f_0(x-vt,v)\,dv \\
        &\quad + (1-e^{-t_0})\sup_{\tau, t\leq t_0}\int_{\mathbb{R}^{N}} (1+|x|^{2\gamma}+v^2)\mathcal{M}(f)(\tau,x-v(t-\tau),v)\,dv.
    \end{align*}
    The first term is bounded since $f_0(x-vt, v)\in X$. The second term is bounded by applying \eqref{4_1:spacedef} to Lemma \ref{lem:int_bound1}:
    \begin{equation}\label{4_1:Mfbound}
        \begin{split}
            &\int_{\mathbb{R}^N} (1+|x|^{2\gamma}+ v^2)\mathcal{M}(f)(\tau, x-v(t-\tau), v)\,dv\\
            &\leq \int_{\mathbb{R}^N}dv\,(1+\left(|x-vt|+|vt|\right)^{2\gamma}+v^2)\frac{C_2(1+|x-v(t-\tau)|)^{(N-2\delta-\beta)\gamma-\varpi} e^{-\alpha |x-v(t-\tau)|^{\beta\gamma}}}{C_3\left(1+|x-v(t-\tau)|^{2\gamma}\right)^{N/2}}\\
            &\qquad\times \exp\left(-\frac{(v-u)^2}{2C_4(1+|x-v(t-\tau)|^{2\gamma})}\right)\\
            &\leq \int_{\mathbb{R}^N}dv\,(1+|x-vt|^{2\gamma}+|vt|^{2\gamma}+v^2)\frac{C_2(1+|x-v(t-\tau)|)^{(N-2\delta-\beta)\gamma-\varpi} e^{-\alpha |x-v(t-\tau)|^{\beta\gamma}}}{C_3\left(1+|x-v(t-\tau)|^{2\gamma}\right)^{N/2}}\\
            &\qquad\times \exp\left(-\frac{(v-u)^2}{2C_4(1+|x-v(t-\tau)|^{2\gamma})}\right)\\
            &\leq C(1+|x|)^{(N+2-2\delta-\beta)\gamma - \varpi}e^{-\alpha |x|^{\beta\gamma}}
        \end{split}
    \end{equation}
    for some constant $C(t_0)$ by choosing small enough $t_0\leq 1$. In the middle, we used Lemma \ref{lem:power0} to get
    \begin{align*}
        \left(|x-vt|+|vt|\right)^{2\gamma}\leq |x-vt|^{2\gamma}+|vt|^{2\gamma}.
    \end{align*}

    To check the bounds of the macroscopic fields of $\mathcal{T}f$, we will show that the macroscopic fields of $\mathcal{T}f$ do not change rapidly for small enough time $t$ from the ones of initial data $f_0$. We write the macroscopic fields of $\mathcal{T}f$ by
    \begin{align*}
        (\rho_f, \rho_f u_f, N\rho_f T_f)(t,x) &= \int_{\mathbb{R}^N} (1, v, |v-u_f(t,x)|^2) f(t, x, v)\,dv,\\
        (\rho, \rho u, N\rho T)(t,x) &= \int_{\mathbb{R}^N} (1, v, |v-u(t,x)|^2) \mathcal{T}f(t, x, v)\,dv.
    \end{align*}

    \noindent We deal with the density $\rho(t,x)$.  From the definition of $\mathcal{T}$, 
    \begin{align*}
        \rho(t,x) = e^{-t}\tilde{\rho}_0(t,x) + \int_0^t e^{-(t-\tau)}\int_{\mathbb{R}^N} \mathcal{M}(f)(\tau, x-v(t-\tau), v)\,dv d\tau.
    \end{align*}
    
    By the computation in \eqref{4_1:Mfbound}, we have
    \begin{align*}
        \int_0^t e^{-(t-\tau)}\int_{\mathbb{R}^N} \mathcal{M}(f)(\tau, x-v(t-\tau), v)\,dv d\tau\leq C(1-e^{-t})(1+|x|)^{(N-2\delta-\beta)\gamma - \varpi}e^{-\alpha |x|^{\beta\gamma}}.
    \end{align*}
    It shows that
    \begin{align}\label{4_1:rhobound}
        e^{-t}\leq \frac{\rho(t,x)}{\tilde{\rho}_0(t,x)} \leq e^{-t} + C(1-e^{-t})\frac{\left(1+|x|\right)^{(N-2\delta-\beta)\gamma - \varpi}\exp\left(-\alpha|x|^{\beta\gamma}\right)}{\tilde{\rho}_0(t,x)}\leq e^{-t} + C(2C_1)^{-1}(1-e^{-t}),
    \end{align}
    where $C_1$ and $C_2$ are constants in \eqref{4_1:rho0_bound}.
    
    We restrict $e^{-t_0}\geq \frac{1}{2}$ and $C(2C_1)^{-1}(1-e^{-t_0})\leq \frac{3}{2}$ by choosing small enough $t_0$. From \eqref{4_1:rho0_bound}, we have
    \begin{align}\label{4_1:rhobound1}
        C_1\leq \frac{\rho(t,x)}{\left(1+|x|\right)^{(N-2\delta-\beta)\gamma - \varpi}\exp\left(-\alpha|x|^{\beta\gamma}\right)}\leq C_2
    \end{align}
    for all $0\leq t\leq t_0$.

    We consider $|u|(t,x)$. To compute the second $v\mathcal{M}(f)$ integral term, let us use Lemma \ref{lem:int_bound1} to write
    \begin{align*}
        &\int_{\mathbb{R}^N} \int_0^t e^{-(t-\tau)}|v|\mathcal{M}(f)(\tau, x-v(t-\tau), v)\,d\tau dv\\
        &\leq C\int_0^t d\tau\, e^{-(t-\tau)}\int_{\mathbb{R}^N} dv\, |v|\frac{C_2(1+|x-v(t-\tau)|)^{(N-2\delta-\beta)\gamma-\varpi} e^{-\alpha |x-v(t-\tau)|^{\beta\gamma}}}{C_3\left(1+|x-v(t-\tau)|^{2\gamma}\right)^{N/2}}\\
        &\qquad \times \exp\left(-\frac{\left(v-u_f\right)^2}{2C_4\left(1+|x-v(t-\tau)|^{2\gamma}\right)}\right)\\
        &\leq C'(1-e^{t})\left(1+|x|\right)^{(N+1-2\delta-\beta)\gamma-\varpi}e^{-\alpha |x|^{\beta\gamma}}
    \end{align*}
    for some $C'>1$ as $|u_f|(t,x)\leq C_3\left(1 + |x|^{\gamma}\right)t^{1/6}$.

    We choose $t_0$ satisfying
    \begin{align*}
        C'C_1^{-1}(1-e^{-t})(1+|x|)^\gamma\leq \frac{C_3}{2}(1+|x|^\gamma)t^{1/6}
    \end{align*}
    for all $0<t\leq t_0$ and for all $x$. From \eqref{4_1:rhobound}, \eqref{4_1:rhobound1}, and \eqref{4_1:u0_bound}
    \begin{equation}\label{4_1:ubound}
        \begin{split}            
            |u(t,x)| = \frac{\left|\int_{\mathbb{R}^N} v \mathcal{T}f\,dv\right|}{\rho(t,x)} &\leq e^{-t}\frac{\tilde{\rho}_0}{\rho(t,x)}|\tilde{u}_0| + C'(1-e^{-t})(1+|x|)^\gamma\frac{\left(1+|x|\right)^{(N-2\delta-\beta)\gamma-\varpi}e^{-\alpha |x|^{\beta\gamma}}}{\rho(t,x)}\\
            &\leq \frac{C_3}{2}(1+|x|)^\gamma t^{1/6} + C'C_1^{-1}(1-e^{-t})(1+|x|)^\gamma\\
            &\leq C_3(1+|x|^\gamma)t^{1/6}
        \end{split}
    \end{equation}
    for $0\leq t\leq t_0$.

    To bound temperature $T(t,x)$, we start from the computation in \eqref{4_1:Mfbound}:
    \begin{align*}
        &\int_0^t e^{-(t-\tau)}\int_{\mathbb{R}^N} |v|^2\mathcal{M}(f)(\tau, x-v(t-\tau), v)\,d\tau\\
        &\leq C''(1-e^{t})(1+|x|)^{(N+2-2\delta-\beta)\gamma - \varpi}e^{-\alpha |x|^{\beta\gamma}}.
    \end{align*}
    Repeating similar analysis as in \eqref{4_1:rhobound} and \eqref{4_1:rhobound1}, and considering \eqref{4_1:E0_bound}, we can choose small enough $t_0$ satisfying
    \begin{align*}
        \frac{3}{2}C_4(1+|x|^{2\gamma})\leq \frac{\int_{\mathbb{R}^N}|v|^2 \mathcal{T}f\,dv}{N \rho(t,x)}\leq \frac{2}{3}C_5(1+|x|^{2\gamma}).
    \end{align*}
    Finally, since $|u|^2(t,x)\leq C^2_3 (1+|x|^{\gamma})^2t^{1/3}$, we have for some constant $C'''$, 
    \begin{align*}
        \frac{\rho(t,x)|u|^2}{\int_{\mathbb{R}^N}|v|^2 \mathcal{T}f\,dv}\leq \frac{2}{3}\frac{C_3^2(1+|x|^{\gamma})^2t^{1/3}}{N C_4(1+|x|^{2\gamma})} \leq \frac{C'''}{N C_4}t^{1/3}.
    \end{align*}
    Here, we used the remark of Lemma \ref{lem:power0}. By choosing $t_0$ small enough again, we can force $\rho(t,x)|u|^2\leq \frac{1}{3}\int_{\mathbb{R}^N}|v|^2 \mathcal{T}f\,dv$. Thus, we obtain
    \begin{align}\label{4_1:Tbound}
        C_4\leq \frac{T(t,x)}{(1+|x|^{2\gamma})}\leq C_5.
    \end{align}
    
    Combining \eqref{4_1:rhobound1}, \eqref{4_1:ubound}, and \eqref{4_1:Tbound}, we get the well-definedness of $\mathcal{T}$.\\
    
    \noindent \textit{(iii)} In this final step, we show that the solution map $\mathcal{T}$ is a contraction map. For this, we will show that
    \begin{equation}\label{4_1:differenceintegral}
        \begin{split}
            &\sup_{t\leq t_0}\left\|\int_{\mathbb{R}^N} (1+|x|^{2\gamma} + v^2)|\mathcal{T}f_2-\mathcal{T}f_1|(t,x,v)\,dv\right\|_{L^\infty_x}\\
            &\leq \sup_{t\leq t_0}\left\|\int_{\mathbb{R}^N}\int_0^t  e^{-(t-\tau)}(1+|x|^{2\gamma} + v^2)|\mathcal{M}(f_2)-\mathcal{M}(f_1)|(\tau,x-v(t-\tau),v)\,d\tau dv\right\|_{L^\infty_x}\\
            &\leq C\sup_{t\leq t_0}\left\|\int_{\mathbb{R}^N}(1+|x|^{2\gamma} + v^2)|f_2-f_1|(t,x,v)dv\right\|_{L^\infty_x}
        \end{split}
    \end{equation}
    for some constant $C<1$ by choosing small enough $t_0$.
    
    For $f_1,f_2\in X$, we define $f_\lambda = \lambda f_1+(1-\lambda)f_2$ for $0\leq \lambda \leq 1$. Then, we have by the fundamental theorem of calculus,
    \begin{align}\label{4_1:difference}
        \left(\mathcal{M}(f_2) - \mathcal{M}(f_1)\right)(\tau, x-v(t-\tau), v)d\tau &= \int_0^1 \dv{\lambda} \mathcal{M}(f_\lambda)(\tau, x-v(t-\tau), v)\,d\lambda,
    \end{align}
    where
    \begin{equation*}
    	\begin{split}
    		\mathcal{M}(f_\lambda) &= \frac{\rho_\lambda}{(2\pi T_\lambda)^{N/2}} \exp\left(-\frac{|v-u_\lambda|^2}{2T_\lambda}\right)
    	\end{split}
    \end{equation*}
    with
    \begin{equation*}
        \begin{split}
            \rho_\lambda &= \int_{\mathbb{R}^N} \lambda f_1 + (1-\lambda)f_2\,dv = \lambda \rho_1 + (1-\lambda)\rho_2,\\
            \rho_\lambda u_\lambda &= \int_{\mathbb{R}^N} v\left(\lambda f_1 + (1-\lambda)f_2\right)\,dv = \lambda \rho_1u_1 + (1-\lambda)\rho_2u_2,\\
            \rho_\lambda (u^2_\lambda + NT_\lambda) &= \int_{\mathbb{R}^N} v^2\left(\lambda f_1 + (1-\lambda)f_2\right)\,dv = \lambda \rho_1(u^2_1+NT_1) + (1-\lambda)\rho_2(u^2_2+NT_2).
        \end{split}
    \end{equation*}
    We claim that $\rho_\lambda$, $u_\lambda$, and $T_\lambda$ almost satisfy the relation \eqref{4_1:spacedef} by choosing small enough $t_0$:
    \begin{equation}\label{spacedeflambda}
        \begin{split}
            &C_1 \leq \frac{\rho_\lambda(t,x)}{\left(1+|x|\right)^{(N-2\delta - \beta)\gamma}\exp\left(-\alpha|x|^{\beta\gamma}\right)}\leq C_2,\\
            &\hspace{1cm}|u_\lambda(t,x)|\leq C_3(1+|x|^{\gamma})t^{1/6},\\
            &\hspace{1.5cm}\frac{C_4}{2} \leq \frac{T_\lambda(t,x)}{1+|x|^{2\gamma}}\leq 2C_5
        \end{split}
    \end{equation}
    for all $0\leq \lambda\leq 1$. Indeed, $\rho_\lambda$ is a linear combination of $\rho_1$ and $\rho_2$, by \eqref{4_1:spacedef}, it satisfies
    \begin{align*}
        C_1 \leq \frac{\rho_\lambda(t,x)}{\left(1+|x|\right)^{(N-2\delta - \beta)\gamma-\varpi}\exp\left(-\alpha|x|^{\beta\gamma}\right)}\leq C_2.
    \end{align*}
    The bound for $|u_{\lambda}|$ is obtained by
    \begin{align*}
        |u_\lambda|(t,x)\leq \frac{\lambda \rho_1 + (1-\lambda)\rho_2}{\rho_\lambda}\max\{|u_1|,|u_2|\}\leq C_3(1+|x|^\gamma)t^{1/6}.
    \end{align*}
    For $T_\lambda$, we note that
    \begin{align*}
        \frac{1}{N}\left(\min\{u_1^2,u_2^2\}-u^2_\lambda\right) + \min\{T_1, T_2\}\leq T_\lambda\leq \frac{1}{N}\left(\max\{u_1^2,u_2^2\}-u^2_\lambda\right) + \max\{T_1, T_2\}
    \end{align*}
    with
    \begin{align*}
        \max\{u_1^2,u_2^2\}-u^2_\lambda&\leq C_3^2(1+|x|^\gamma)^2t^{1/3},\\
        \min\{u_1^2,u_2^2\}-u^2_\lambda&\geq -C_3^2(1+|x|^\gamma)^2t^{1/3}.
    \end{align*}
    Choosing small enough $t_0$ so that $\frac{C_3^2}{N}t^{1/3}(1+|x|^\gamma)^2\leq \frac{C_4}{2}(1+|x|^{2\gamma})$ for $0\leq t\leq t_0$, we can bound $T_\lambda$ by
    \begin{align*}
        \frac{C_4}{2} \leq \frac{T_\lambda(t,x)}{1+|x|^{2\gamma}}\leq 2C_5.
    \end{align*}
    It proves the claim.
    
    To bound the integral \eqref{4_1:differenceintegral}, let us calculate $\dv{\lambda}\mathcal{M}(f_\lambda)$:
    \begin{equation}\label{4_1:derivlambda}
        \begin{split}
            &\dv{\lambda} \mathcal{M}(f_\lambda) \\
            &= \frac{1}{(2\pi T_\lambda)^{N/2}}(\rho_1-\rho_2)\exp\left(-\frac{|v-u_\lambda|^2}{2T_\lambda}\right)  + \frac{(v-u_\lambda)\cdot (u_1-u_2)}{(2\pi T_\lambda)^{N/2}T_\lambda}\frac{\rho_1\rho_2}{\rho_\lambda}\exp\left(-\frac{|v-u_\lambda|^2}{2T_\lambda}\right)\\
            &\quad -\frac{1}{(2\pi T_\lambda)^{N/2}}\left(\frac{N}{2 T_\lambda} - \frac{|v-u_\lambda|^2}{2T^2_\lambda} \right)\left(\rho_1\rho_2(u_1-u_2)^2\frac{-\lambda\rho_1 + (1-\lambda)\rho_2}{\rho_\lambda^2} + \frac{\rho_1\rho_2(T_1-T_2)}{\rho_\lambda}\right)\\
            &\qquad\quad \times\exp\left(-\frac{|v-u_\lambda|^2}{2T_\lambda}\right)\\
            &\eqqcolon D_1+D_2+D_3.
        \end{split}
    \end{equation}
    
    The difference between the macroscopic field of $f_1$ and $f_2$ is given by
    \begin{equation}\label{4_1:macrodiff}
        \begin{split}
            \rho_1-\rho_2 &= \int_{\mathbb{R}^N} (f_1-f_2)\,dv,\\
            u_1-u_2 &= \rho_1^{-1}\left(\int_{\mathbb{R}^N} v(f_1-f_2)\,dv - u_2\int_{\mathbb{R}^N} (f_1-f_2)\,dv\right),\\
            T_1-T_2
            &=\rho_1^{-1}\left(\int_{\mathbb{R}^N} v^2(f_1-f_2)\,dv - T_2\int_{\mathbb{R}^N} (f_1-f_2)\,dv + u_1\cdot u_2\int_{\mathbb{R}^N} (f_1-f_2)\,dv\right) \\
            &\qquad- \rho_1^{-1}(u_1+u_2)\cdot\left(\int_{\mathbb{R}^N} v(f_1-f_2)\,dv\right).
        \end{split}
    \end{equation}
    From now on, the constant $C$ denotes a general constant depending on the context. The constant will only depend on $\gamma$, $C_i$ in \eqref{spacedeflambda}, and the constants in Lemma \ref{lem:int_bound1} and \ref{lem:int_bound2}. Also, we restrict $t_0\leq 1$.

    Combining \eqref{4_1:differenceintegral}, \eqref{4_1:difference}, and \eqref{4_1:derivlambda}, let us bound the integral term by term. For the first term of \eqref{4_1:derivlambda}, it is bounded as follows:
    \begin{align*}
        &\int_0^1d\lambda \int_{\mathbb{R}^N}dv\,(1+|x|^{2\gamma}+v^2)|D_1|\\
        & \leq \sup_{\lambda}\int_{\mathbb{R}^N}dv\,(1+|x|^{2\gamma}+v^2)\frac{|\rho_1-\rho_2|}{(2\pi T_\lambda)^{N/2}}\exp\left(-\frac{|v-u_\lambda|^2}{2 T_\lambda}\right)\\
        & =\sup_{\lambda}\int_{\mathbb{R}^N}dv\, \frac{1+|x|^{2\gamma}+v^2}{(2\pi T_\lambda)^{N/2}}(\tau, x-v(t-\tau))\exp\left(-\frac{|v-u_\lambda|^2}{2 T_\lambda}\right)\int_{\mathbb{R}^N} dw\, |f_1-f_2|(\tau, x-v(t-\tau), w)\\
        & \leq \sup_{\lambda}\int_{\mathbb{R}^N}dv\, \frac{1+|x-v(t-\tau)|^{2\gamma}+\left(|v|(t-\tau)\right)^{2\gamma}+v^2}{(2\pi T_\lambda)^{N/2}}\exp\left(-\frac{|v-u_\lambda|^2}{2 T_\lambda}\right)\int_{\mathbb{R}^N}dw\,|f_1-f_2|(\tau, x-v(t-\tau), w)\\
        & \leq \sup_{\lambda}\int_{\mathbb{R}^N}dv\, \frac{1}{(2\pi T_\lambda)^{N/2}}\exp\left(-\frac{|v-u_\lambda|^2}{2 T_\lambda}\right)\times \int_{\mathbb{R}^N}dw\,(1+|x-v(t-\tau)|^{2\gamma})|f_1-f_2|(\tau, x-v(t-\tau), w)\\
        &\quad+\sup_{\lambda}\int_{\mathbb{R}^N}dv\, \frac{|v|^{2\gamma}(t-\tau)^{2\gamma}+v^2}{(2\pi T_\lambda)^{N/2}(1+|x-v(t-\tau)|^{2\gamma})}\exp\left(-\frac{|v-u_\lambda|^2}{2 T_\lambda}\right)\\
        &\quad\quad \times \int_{\mathbb{R}^N}dw\,(1+|x-v(t-\tau)|^{2\gamma})|f_1-f_2|(\tau, x-v(t-\tau), w)\\
        & \leq \sup_{\lambda, x} \int_{\mathbb{R}^N}dv\, \frac{1}{(2\pi T_\lambda)^{N/2}}\exp\left(-\frac{|v-u_\lambda|^2}{2 T_\lambda}\right)\times \sup_x \int_{\mathbb{R}^N}dw\,(1+|x|^{2\gamma})|f_1-f_2|(\tau, x, w) \\
        &\quad + \sup_{\lambda, x} \int_{\mathbb{R}^N}dv\, \frac{|v|^{2\gamma}(t-\tau)^{2\gamma}+v^2}{(1+|x-v(t-\tau)|^{2\gamma})(2\pi T_\lambda)^{N/2}}\exp\left(-\frac{|v-u_\lambda|^2}{2 T_\lambda}\right)\times \sup_x \int_{\mathbb{R}^N}dw\,(1+|x|^{2\gamma})|f_1-f_2|(\tau, x, w) \\
        & \leq C\sup_x \int_{\mathbb{R}^N} dw\, (1+|x|^{2\gamma})|f_1-f_2|(\tau, x, w),
    \end{align*}
    where we used Lemma \ref{lem:power0} from the third to fourth line so that
    \begin{align}\label{4_1:simpleineq}
        |x-v(t-\tau)+v(t-\tau)|^{2\gamma}\leq |x-v(t-\tau)|^{2\gamma}+|v(t-\tau)|^{2\gamma}
    \end{align}
    and Lemma \ref{lem:int_bound2} with \eqref{spacedeflambda} for the last step. It shows that the weighted integral of $D_1$ is well bounded.
    
    For $D_2$, using \eqref{4_1:macrodiff} and \eqref{4_1:simpleineq}, we get

    \begin{align*}
        &\int_0^1d\lambda \int_{\mathbb{R}^N}dv\,(1+|x|^{2\gamma}+v^2)|D_2|\\
        &\leq \sup_{\lambda}\int_{\mathbb{R}^N} (1+|x|^{2\gamma}+v^2)\frac{(v-u_\lambda)\cdot (u_1-u_2)}{T_\lambda(2\pi T_\lambda)^{N/2}}\frac{\rho_1\rho_2}{\rho_\lambda}\exp\left(-\frac{|v-u_\lambda|^2}{2 T_\lambda}\right)\,dv\\
        &\leq C\sup_{\lambda, x} \frac{\rho_2}{\rho_\lambda} \int_{\mathbb{R}^N} dv\,\left((1+|x-v(t-\tau)|^\gamma)^2+|v|^{2\gamma}(t-\tau)^{2\gamma}+v^2\right)\frac{|v-u_\lambda|}{T_\lambda(2\pi T_\lambda)^{N/2}}\exp\left(-\frac{|v-u_\lambda|^2}{2 T_\lambda}\right) \\
        &\qquad\times \int_{\mathbb{R}^N}dw\, (1+|w|)|f_1-f_2|(\tau, x-v(t-\tau), w)\\
        &\quad+ C\sup_{\lambda, x} \frac{\rho_2}{\rho_\lambda} \int_{\mathbb{R}^N} dv\,\left(1+|x-v(t-\tau)|^{2\gamma}+|v|^{2\gamma}(t-\tau)^{2\gamma}+v^2\right)\frac{|v-u_\lambda|}{T_\lambda(2\pi T_\lambda)^{N/2}}\exp\left(-\frac{|v-u_\lambda|^2}{2 T_\lambda}\right) \\
        &\qquad\times |u_2|\int_{\mathbb{R}^N}dw\, |f_1-f_2|(\tau, x-v(t-\tau), w)\\
        &\eqqcolon I_1+I_2.
    \end{align*}

    Since
    \begin{equation}\label{4_1:inftyeq}
        \begin{split}
            &\sup_{x}\left\|\int_{\mathbb{R}^N} dw\, (1+|x-v(t-\tau)|^{\gamma})(1+|w|)|f_1-f_2|(\tau, x-v(t-\tau), w)\right\|_{L^\infty_v}\\
            &=\sup_{x}\int_{\mathbb{R}^N} dw\, (1+|x|^{\gamma})(1+|w|)|f_1-f_2|(\tau, x, w),
        \end{split}
    \end{equation}
    $I_1$ is bounded as follows:
    \begin{align}\label{4_1:I_1}
        \begin{split}
            I_1 &\leq C \sup_{\lambda, x} \int_{\mathbb{R}^N} dv\, (1+|x-v(t-\tau)|^{\gamma})\frac{|v-u_\lambda|}{T_\lambda(2\pi T_\lambda)^{N/2}}\exp\left(-\frac{|v-u_\lambda|^2}{2 T_\lambda}\right)\\
            &\qquad\times\sup_x \int_{\mathbb{R}^N} dw\, (1+|x|^{\gamma})(1+|w|)|f_1-f_2|(\tau, x, w)\\
            &\quad + C\sup_{\lambda, x}\int_{\mathbb{R}^N} dv\, \frac{1}{1+|x-v(t-\tau)|^{\gamma}}\frac{(|v|^{2\gamma}(t-\tau)^{2\gamma}+v^2)(|v|+|u_\lambda|)}{T_\lambda(2\pi T_\lambda)^{N/2}}\exp\left(-\frac{|v-u_\lambda|^2}{2 T_\lambda}\right)\\
            &\qquad \times \sup_x \int_{\mathbb{R}^N} dw\, (1+|x|^{\gamma})(1+|w|)|f_1-f_2|(\tau, x, w)\\
            &\leq C \sup_x \int_{\mathbb{R}^N} dw\,(1+|x|^{2\gamma}+w^2)|f_1-f_2|(\tau, x, w).
        \end{split}
    \end{align}
    In the last inequality, we used the remark of Lemma \ref{lem:power0} so that
    \begin{align}\label{4_1:simpleineq2}
        (1+|x-v(t-\tau)|^{2\gamma})^{1/2}\leq (1+|x-v(t-\tau)|^{\gamma})\leq \sqrt{2}(1+|x-v(t-\tau)|^{2\gamma})^{1/2}
    \end{align}
    and then Lemma \ref{lem:int_bound2} with \eqref{spacedeflambda}. For the weight term, we used Cauchy-Schwartz inequality $|x|^\gamma |w|\leq \frac{|x|^{2\gamma}+w^2}{2}$.
    
    Using \eqref{4_1:inftyeq}, $I_2$ is bounded as follows:
    \begin{align}\label{4_1:I_2}
        \begin{split}
            I_2&\leq C\sup_x \int_{\mathbb{R}^N}dv\, \frac{(|v|+|u_\lambda|)|u_2|}{T_\lambda(2\pi T_\lambda)^{N/2}}\exp\left(-\frac{|v-u_\lambda|^2}{2 T_\lambda}\right)\\
            &\qquad \times \sup_{\lambda, x} \int_{\mathbb{R}^N} dw\, (1+|x|^{2\gamma})|f_1-f_2|(\tau, x, w)\\
            &\quad + C\sup_{\lambda, x} \int_{\mathbb{R}^N}dv\, \frac{1}{1+|x-v(t-\tau)|^{2\gamma}}\frac{(|v|+|u_\lambda|)(|v|^{2\gamma}(t-\tau)^{2\gamma}+v^2)|u_2|}{T_\lambda(2\pi T_\lambda)^{N/2}}\exp\left(-\frac{|v-u_\lambda|^2}{2 T_\lambda}\right)\\
            &\qquad \times \sup_x \int_{\mathbb{R}^N} dw\, (1+|x|^{2\gamma})|f_1-f_2|(\tau, x, w)\\
            &\leq C\sup_x \int_{\mathbb{R}^N} dw\,(1+|x|^{2\gamma})|f_1-f_2|(\tau, x, w).
        \end{split}
    \end{align}
    From second to third equation, we again used Lemma \ref{lem:int_bound2}. It shows that the weighted integral of $D_2$ is well bounded.

    For $D_3$, let us first bound it by
    \begin{align*}
        &\frac{1}{\sqrt{2\pi  T_\lambda}}\left(\frac{1}{2 T_\lambda} - \frac{|v-u_\lambda|^2}{2T^2_\lambda} \right)\left(\rho_1\rho_2(u_1-u_2)^2\frac{-\lambda\rho_1 + (1-\lambda)\rho_2}{\rho_\lambda^2} + \frac{\rho_1\rho_2(T_1-T_2)}{\rho_\lambda}\right)\\
        & \leq \frac{\rho_1\rho_2}{\rho_\lambda \sqrt{2\pi  T_\lambda}}\left|\frac{1}{2 T_\lambda} - \frac{|v-u_\lambda|^2}{2T^2_\lambda} \right|\left((u_1-u_2)^2 + |T_1-T_2|\right)\\
        & \leq \frac{\rho_1\rho_2}{\rho_\lambda \sqrt{2\pi  T_\lambda}}\left|\frac{1}{2 T_\lambda} - \frac{|v-u_\lambda|^2}{2T^2_\lambda} \right|\left((|u_1|+|u_2|)|u_1-u_2| + |T_1-T_2|\right)\\
        & \eqqcolon D_{3,1}+D_{3,2}.
    \end{align*}
    For $D_{3,1}$ term, using \eqref{4_1:macrodiff} and \eqref{4_1:simpleineq}, we have
    \begin{align*}
        &\sup_x\int_0^1 d\lambda \int_{\mathbb{R}^N}dv\,(1+|x|^{2\gamma}+v^2)|D_{3,1}|\\
        & \leq \sup_{\lambda, x}\int_{\mathbb{R}^N} \,dv(1+|x|^{2\gamma}+v^2)\frac{\rho_1\rho_2}{\rho_\lambda (2\pi T_\lambda)^{N/2}}\left|\frac{1}{2 T_\lambda} - \frac{|v-u_\lambda|^2}{2T^2_\lambda}\right|(|u_1|+|u_2|)|u_1-u_2|\\
        & \leq C\sup_{\lambda, x} \frac{\rho_2}{\rho_\lambda} \int_{\mathbb{R}^N}dv\, \left((1+|x-v(t-\tau)|^\gamma)^2+|v|^{2\gamma}(t-\tau)^{2\gamma} + v^2\right)\frac{\left(T_\lambda + |v-u_\lambda|^2\right)}{2 T^2_\lambda(2\pi T_\lambda)^{N/2}}\exp\left(-\frac{|v-u_\lambda|^2}{2 T_\lambda}\right)\\
        &\quad\quad \times (|u_1|+|u_2|)\int_{\mathbb{R}^N}dw\, (1+|w|)|f_1-f_2|(\tau, x-v(t-\tau), w)\\
        &\quad + C\sup_{\lambda, x} \frac{\rho_2}{\rho_\lambda} \int_{\mathbb{R}^N}dv\,\left(1+|x-v(t-\tau)|^{2\gamma}+|v|^{2\gamma}(t-\tau)^{2\gamma} + v^2\right)\frac{\left(T_\lambda + |v-u_\lambda|^2\right)}{2T^2_\lambda(2\pi T_\lambda)^{N/2}}\exp\left(-\frac{|v-u_\lambda|^2}{2 T_\lambda}\right)\\
        &\quad\quad \times |u_2|(|u_1|+|u_2|)\int_{\mathbb{R}^N}dw\, |f_1-f_2|(\tau, x-v(t-\tau), w)\\
        &\eqqcolon I_3 + I_4.
    \end{align*}

    Using \eqref{4_1:inftyeq}, \eqref{4_1:simpleineq2}, and Lemma \ref{lem:int_bound2} with \eqref{spacedeflambda}, $I_3$ is bounded as follows:
    \begin{align}\label{4_1:I_3}
        \begin{split}
            I_3 &\leq C\sup_{\lambda, x} \int_{\mathbb{R}^N}dv\,(1+|x-v(t-\tau)|^{\gamma})\frac{\left(T_\lambda + 2(|v|^2+|u_\lambda|^2)\right)(|u_1|+|u_2|)}{2T^2_\lambda(2\pi T_\lambda)^{N/2}}\exp\left(-\frac{|v-u_\lambda|^2}{2 T_\lambda}\right)\\
            &\qquad \times \sup_x \int_{\mathbb{R}^N}dw\, (1+|x|^\gamma)(1+|w|)|f_1-f_2|(\tau, x, w)\\
            &\quad + C\sup_{\lambda, x} \int_{\mathbb{R}^N}dv\,\left(1+|v|^{2\gamma}(t-\tau)^{2\gamma} + v^2\right)\frac{\left(T_\lambda +2(|v|^2+|u_\lambda|^2)\right)(|u_1|+|u_2|)}{2T^2_\lambda(2\pi T_\lambda)^{N/2}(1+|x-v(t-\tau)|^\gamma)}\exp\left(-\frac{|v-u_\lambda|^2}{2 T_\lambda}\right)\\
            &\qquad \times \sup_x \int_{\mathbb{R}^N}dw\, (1+|x|^\gamma)(1+|w|)|f_1-f_2|(\tau, x, w)\\
            &\leq C\sup_x \int_{\mathbb{R}^N}dw\, (1+|x|^{2\gamma}+w^2)|f_1-f_2|(\tau, x, w).
        \end{split}
    \end{align}

    For the same reason for $I_3$, $I_4$ is bounded as follows:
    \begin{align}\label{4_1:I_4}
        \begin{split}
            I_4&\leq C\sup_{\lambda, x} \int_{\mathbb{R}^N}dv\,\frac{\left(T_\lambda + 2(|v|^2+|u_\lambda|^2)\right)(|u_1|+|u_2|)|u_2|}{2T^2_\lambda(2\pi T_\lambda)^{N/2}}\exp\left(-\frac{|v-u_\lambda|^2}{2 T_\lambda}\right)\\
            &\qquad \times \sup_x \int_{\mathbb{R}^N}dw\, (1+|x|^{2\gamma})|f_1-f_2|(\tau, x, w)\\
            &\quad + C\sup_{\lambda, x} \int_{\mathbb{R}^N}dv\,\left(1+|v|^{2\gamma}(t-\tau)^{2\gamma} + v^2\right)\frac{\left(T_\lambda + 2(|v|^2+|u_\lambda|^2)\right)(|u_1|+|u_2|)|u_2|}{2T^2_\lambda(2\pi T_\lambda)^{N/2}(1+|x-v(t-\tau)|^{2\gamma})}\exp\left(-\frac{|v-u_\lambda|^2}{2 T_\lambda}\right)\\
            &\qquad \times \sup_x \int_{\mathbb{R}^N}dw\, (1+|x|^{2\gamma})|f_1-f_2|(\tau, x, w)\\
            &\leq C\sup_x \int_{\mathbb{R}^N}dw\, (1+|x|^{2\gamma})|f_1-f_2|(\tau, x, w).
        \end{split}
    \end{align}
    
    For $D_{3,2}$ term, using again \eqref{4_1:macrodiff} and \eqref{4_1:simpleineq}, we obtain
    \begin{align*}
        &\sup_x\int_0^1 d\lambda \int_{\mathbb{R}^N}dv\,(1+|x|^{2\gamma}+v^2)|D_{3,1}|\\
        & \leq \int_{\mathbb{R}^N} (1+|x|^{2\gamma}+v^2)\frac{\rho_1\rho_2}{\rho_\lambda (2\pi T_\lambda)^{N/2}}\left|\frac{1}{2 T_\lambda} - \frac{|v-u_\lambda|^2}{2T^2_\lambda}\right|\left(T_1-T_2\right)\,dv\\
        & \leq C\sup_x \frac{\rho_2}{\rho_\lambda} \int_{\mathbb{R}^N} dv\, \left(1+|x-v(t-\tau)|^{2\gamma} + |v|^{2\gamma}(t-\tau)^{2\gamma}+v^2\right)\frac{T_\lambda + 2(|v|^2+|u_\lambda|^2)}{2T^2_\lambda(2\pi T_\lambda)^{N/2}}\exp\left(-\frac{|v-u_\lambda|^2}{2T_\lambda}\right)\\
        &\quad\quad \times \int_{\mathbb{R}^N}dw\, (1+w^2)|f_1-f_2|(\tau, x-v(t-\tau), w)\\
        &\quad + C\sup_x \frac{\rho_2}{\rho_\lambda} \int_{\mathbb{R}^N} dv\, \left(1+|x-v(t-\tau)|^{2\gamma} + |v|^{2\gamma}(t-\tau)^{2\gamma}+v^2\right)\frac{T_\lambda + 2(|v|^2+|u_\lambda|^2)}{2T^2_\lambda(2\pi T_\lambda)^{N/2}}\exp\left(-\frac{|v-u_\lambda|^2}{2T_\lambda}\right)\\
        &\quad \quad \times \left(|T_2|+|u_1||u_2|\right) \int_{\mathbb{R}^N}dw\, |f_1-f_2|(\tau, x-v(t-\tau), w)\\
        &\quad  + C\sup_x \frac{\rho_2}{\rho_\lambda} \int_{\mathbb{R}^N} dv\,\left(1+|x-v(t-\tau)|^{2\gamma} + |v|^{2\gamma}(t-\tau)^{2\gamma}+v^2\right)\frac{T_\lambda + 2(|v|^2+|u_\lambda|^2)}{2T^2_\lambda(2\pi T_\lambda)^{N/2}}\exp\left(-\frac{|v-u_\lambda|^2}{2 T_\lambda}\right)\\
        &\quad\quad \times (|u_1|+|u_2|)\int_{\mathbb{R}^N} dw\, (1+|w|)|f_1-f_2|(\tau, x-v(t-\tau), w)\\
        &\eqqcolon I_5+I_6+I_7.
    \end{align*}

    Using \eqref{4_1:inftyeq} and Lemma \ref{lem:int_bound2} with \eqref{spacedeflambda}, $I_5$ is bounded as follows:
    \begin{align}\label{4_1:I_5}
        \begin{split}
            I_5 &\leq C\sup_x \frac{\rho_2}{\rho_\lambda} \int_{\mathbb{R}^N} dv\, (1+|x-v(t-\tau)|^{2\gamma})\frac{T_\lambda + 2(|v|^2+|u_\lambda|^2)}{2T^2_\lambda(2\pi T_\lambda)^{N/2}}\exp\left(-\frac{|v-u_\lambda|^2}{2T_\lambda}\right)\\
            &\qquad \times \sup_x \int_{\mathbb{R}^N}dw\, (1+w^2)|f_1-f_2|(\tau, x, w)\\
            &\quad +C\sup_x \frac{\rho_2}{\rho_\lambda} \int_{\mathbb{R}^N} dv\, \left(|v|^{2\gamma}(t-\tau)^{2\gamma}+v^2\right)\frac{T_\lambda + 2(|v|^2+|u_\lambda|^2)}{2T^2_\lambda(2\pi T_\lambda)^{N/2}}\exp\left(-\frac{|v-u_\lambda|^2}{2T_\lambda}\right)\\
            &\qquad \times \sup_x \int_{\mathbb{R}^N}dw\, (1+w^2)|f_1-f_2|(\tau, x, w)\\
            &\leq C\sup_x \int_{\mathbb{R}^N}dw\, (1+w^2)|f_1-f_2|(\tau, x, w).
        \end{split}
    \end{align}

    For the same reason for $I_5$, $I_6$ is bounded as follows:
    \begin{align}\label{4_1:I_6}
        \begin{split}
            I_6 &\leq C\sup_x \frac{\rho_2}{\rho_\lambda} \int_{\mathbb{R}^N} dv\, \frac{(T_\lambda + 2(|v|^2+|u_\lambda|^2))(|T_2|+|u_1||u_2|)}{2T^2_\lambda(2\pi T_\lambda)^{N/2}}\exp\left(-\frac{|v-u_\lambda|^2}{2T_\lambda}\right)\\
            &\qquad \times \sup_x \int_{\mathbb{R}^N}dw\, (1+|x|^{2\gamma})|f_1-f_2|(\tau, x, w)\\
            &\quad +C\sup_x \frac{\rho_2}{\rho_\lambda} \int_{\mathbb{R}^N} dv\, \left(|v|^{2\gamma}(t-\tau)^{2\gamma}+v^2\right)\frac{(T_\lambda + 2(|v|^2+|u_\lambda|^2))(|T_2|+|u_1||u_2|)}{2T^2_\lambda(2\pi T_\lambda)^{N/2}(1+|x-v(t-\tau)|^{2\gamma})}\exp\left(-\frac{|v-u_\lambda|^2}{2T_\lambda}\right)\\
            &\qquad \times \sup_x \int_{\mathbb{R}^N}dw\, (1+|x|^{2\gamma})|f_1-f_2|(\tau, x, w)\\
            &\leq C\sup_x \int_{\mathbb{R}^N}dw\, (1+|x|^{2\gamma})|f_1-f_2|(\tau, x, w).
        \end{split}
    \end{align}

    For the same reason for $I_5$, Cauchy-Schwartz inequality, and \eqref{4_1:simpleineq2}, $I_7$ is bounded as follows:
    \begin{align}\label{4_1:I_7}
        \begin{split}
            I_7 &\leq C\sup_x \int_{\mathbb{R}^N}dw\, (1+|x|^{\gamma})(1+|w|)|f_1-f_2|(\tau, x, w)\\
            &\qquad \times \sup_x \frac{\rho_2}{\rho_\lambda} \int_{\mathbb{R}^N} dv\, (1+|x-v(t-\tau)|^\gamma)\frac{(T_\lambda + 2(|v|^2+|u_\lambda|^2))(|u_1| + |u_2|)}{2T^2_\lambda(2\pi T_\lambda)^{N/2}}\exp\left(-\frac{|v-u_\lambda|^2}{2T_\lambda}\right)\\
            &\quad +C\sup_x \int_{\mathbb{R}^N}dw\, (1+|x|^{\gamma})(1+|w|)|f_1-f_2|(\tau, x, w)\\
            &\qquad \times \sup_x \frac{\rho_2}{\rho_\lambda} \int_{\mathbb{R}^N} dv\, \left(|v|^{2\gamma}(t-\tau)^{2\gamma}+v^2\right)\frac{(T_\lambda + 2(|v|^2+|u_\lambda|^2))(|u_1| + |u_2|)}{2T^2_\lambda(2\pi T_\lambda)^{N/2}(1+|x-v(t-\tau)|^{\gamma})}\exp\left(-\frac{|v-u_\lambda|^2}{2T_\lambda}\right)\\
            &\leq C\sup_x \int_{\mathbb{R}^N}dw\, (1+|x|^{2\gamma}+w^2)|f_1-f_2|(\tau, x, w).
        \end{split}
    \end{align}
    It shows that the weighted integral of $D_3$ is well bounded.
    
    Combining \eqref{4_1:I_1}, \eqref{4_1:I_2}, \eqref{4_1:I_3}, \eqref{4_1:I_4}, \eqref{4_1:I_5}, \eqref{4_1:I_6}, and \eqref{4_1:I_7}, we finally obtain
    \begin{align*}
        &\int_{\mathbb{R}^N}\int_0^t  e^{-(t-\tau)}(1+|x|^{2\gamma} + v^2)|\mathcal{M}(f_2)-\mathcal{M}(f_1)|(\tau,x-v(t-\tau),v)\,d\tau dv\\
        &\leq \int_0^t  \int_{\mathbb{R}^N} e^{-(t-\tau)}(1+|x|^{2\gamma} + v^2)(|D_1|+|D_2|+|D_3|)\,dv d\tau\\
        &\leq \int_0^t e^{-(t-\tau)}\sum_{i=1}^7 I_i\,d\tau\\
        &\leq C\int_0^t e^{-(t-\tau)}\sup_x \int_{\mathbb{R}^N}(1+|x|^{2\gamma}+v^2)|f_1-f_2|(\tau, x, v)\,dvd\tau\\
        &\leq C(1-e^{-t_0})\sup_{t\leq t_0,x}\int_{\mathbb{R}^N}(1+|x|^{2\gamma}+v^2)|f_1-f_2|(t, x, v)\,dv
    \end{align*}
    and prove \eqref{4_1:differenceintegral} by choosing $t_0$ small enough so that $C(1-e^{-t_0})<1$.
    
    Now, we showed that $\mathcal{T}$ is a contraction mapping. Finally, we employ the Banach fixed point theorem to get the existence and uniqueness of the solution with initial $f_0(x,v)$.
\end{proof}

\section{Well-Posedness theory of the Boltzmann equation}
In this section, we explain the well-posedness theory of the Boltzmann equation with cutoff collision kernel \eqref{Collision_kernel}, which will show stark contrasts with the ill-posedness theory of the previous sections. Throughout this section, spatial domain $\Omega$ is $\mathbb{R}^{N}_{x}$ or $\mathbb{T}^{N}_{x}$.

The following lemma extends the energy conservation $|v|^2+|u|^2 = |v'|^2+|u'|^2$ in collision variable for exponent $0<\beta<2$; see \eqref{w-exp} for an application.
\begin{lemma} \label{lem:ineq}
    Let $x,y\geq 0$ and $0<\alpha\leq 1$. If $y\geq x$, then 
    \begin{equation} \notag
        x^\alpha+y^\alpha\geq (x+y)^\alpha\geq x^\alpha + (2^\alpha - 1)y^\alpha.
    \end{equation}
    If $2^{-n-1}x\leq y\leq 2^{-n}x$, then 
    \begin{equation} \notag
        x^\alpha+y^\alpha\geq (x+y)^\alpha\geq x^\alpha + \frac{\alpha/2}{2^{(1-\alpha)(n+1)}}y^\alpha.
    \end{equation}
\end{lemma}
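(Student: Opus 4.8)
The plan is to reduce both displays to one-variable inequalities via homogeneity and dispatch them with elementary calculus; there is no deep obstacle here. First, the two \emph{left} inequalities $x^\alpha+y^\alpha\geq(x+y)^\alpha$ are nothing but the subadditivity of $t\mapsto t^\alpha$ on $[0,\infty)$ for $0<\alpha\leq 1$: after rescaling so that $x+y=1$ (the case $x+y=0$ being trivial) one has $x^\alpha\geq x$ and $y^\alpha\geq y$ since $0\leq x,y\leq 1$, hence $x^\alpha+y^\alpha\geq x+y=(x+y)^\alpha$. So only the two \emph{right} inequalities require work.

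For the first right inequality I would assume $y\geq x$, $y>0$, put $t=x/y\in[0,1]$, and divide by $y^\alpha$, reducing it to $g(t):=(1+t)^\alpha-t^\alpha\geq 2^\alpha-1$ for $t\in[0,1]$. Since $g'(t)=\alpha\big((1+t)^{\alpha-1}-t^{\alpha-1}\big)\leq 0$ (because $\alpha-1\leq 0$ and $1+t>t$), the function $g$ is nonincreasing, so $g(t)\geq g(1)=2^\alpha-1$; the degenerate case $y=0$ is immediate.

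For the second right inequality, suppose $2^{-n-1}x\leq y\leq 2^{-n}x$ with $x>0$, and set $s=y/x\in[2^{-n-1},2^{-n}]\subset(0,1]$. Dividing by $x^\alpha$, the claim becomes $(1+s)^\alpha\geq 1+\tfrac{\alpha/2}{2^{(1-\alpha)(n+1)}}s^\alpha$. The key step is the lower bound $(1+s)^\alpha-1=\alpha\int_0^s(1+u)^{\alpha-1}\,du\geq \alpha s(1+s)^{\alpha-1}$, valid because $u\mapsto(1+u)^{\alpha-1}$ is nonincreasing. It then suffices to verify $\alpha s(1+s)^{\alpha-1}\geq \tfrac{\alpha/2}{2^{(1-\alpha)(n+1)}}s^\alpha$, i.e. $\big(\tfrac{s}{1+s}\big)^{1-\alpha}\geq 2^{-1}\,2^{-(1-\alpha)(n+1)}$. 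Since $s\leq 1$ we have $\tfrac{s}{1+s}\geq \tfrac s2\geq 2^{-n-2}$, hence $\big(\tfrac{s}{1+s}\big)^{1-\alpha}\geq 2^{-(n+2)(1-\alpha)}$, and this is $\geq 2^{-1-(n+1)(1-\alpha)}$ precisely because the two exponents differ by $1-(1-\alpha)=\alpha\geq 0$.

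I expect the only place one must be slightly careful is this final comparison of powers of $2$: one has to check that the crude estimate $\tfrac{s}{1+s}\geq s/2$ is not too lossy, and it works out exactly because $0<\alpha\leq 1$ makes $1-\alpha\leq 1$. Everything else is bookkeeping.
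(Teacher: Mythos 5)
Your proof is correct and follows essentially the same strategy as the paper's: homogenize to a one-variable inequality and finish with an elementary concavity/monotonicity estimate. The only cosmetic differences are that you substitute $s=y/x$ (the paper uses $t=x/y$) and you obtain the linear lower bound via the integral representation $(1+s)^\alpha-1=\alpha\int_0^s(1+u)^{\alpha-1}\,du\geq\alpha s(1+s)^{\alpha-1}$ rather than the paper's direct estimate $(1+1/t)^\alpha\geq 1+\tfrac{\alpha}{2}\cdot\tfrac{1}{t}$; both are equally elementary and lead to the same bound.
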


\begin{proof}
    Homogenising the inequality for $x/y$, it is enough to specify the $(t+1)^\alpha - t^\alpha$ for $t>0$. For $0<t\leq 1$,
    \begin{align*}
        1\geq (t+1)^\alpha -t^\alpha\geq 2^{\alpha}-1.
    \end{align*}
    For $2^n\leq t<2^{n+1}$,
    \begin{align*}
        (t+1)^\alpha = t^{\alpha}(1+1/t)^{\alpha}\geq t^{\alpha} + \frac{\alpha}{2} t^{\alpha - 1}\geq t^{\alpha} + \frac{\alpha/2}{2^{(1-\alpha)(n+1)}}.
    \end{align*}
    Taking $t = x/y$ and multiplying $y^\alpha$ on both sides, we get the inequality.
\end{proof}

The following lemma extracts some decaying terms from a geometric factor.
\begin{lemma}\label{lem:cone}
    Let $x,y,a\in\mathbb{R}^N$ satisfies
    \begin{align*}
        x\perp y,\quad |x|\leq |a|/4,\quad R\leq |a|/4,\quad |a+x+y|\leq R.
    \end{align*}
    Then the set
    \begin{align*}
        \{x:\{y:y\perp x\}\cap \{y:|a+x+y|\leq R\}\neq \emptyset,\quad |x|\leq |a|/4\}
    \end{align*}
    is contained a set $C_{a, R}$ satisfying the following: for any isotropic function $f(x) = f(|x|)$,
    \begin{align*}
        \int_{\{|x|\leq |a|/4\}\cap C_{a, R}} f(x)\,dx \leq C_N\frac{R}{|a|}\int_{0}^{|a|/4}|x|^{N-1}f(|x|)\,d|x|
    \end{align*}
    for some constant $C_N$ only depending on $N$. See figure \ref{fig:fig1} for the shape of $C_{a, R}$.
\end{lemma}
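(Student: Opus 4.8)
The plan is to identify $C_{a,R}$ explicitly as a tilted conical band and then reduce the volume estimate to a one-parameter measure computation on the sphere. First I would observe that for a fixed $x\neq 0$ the affine set $\{x+y : y\perp x\}$ is exactly the hyperplane $H_x := \{z\in\mathbb{R}^N : z\cdot x = |x|^2\}$, whose unit normal is $x/|x|$ and whose signed distance constant is $|x|$. Hence there exists $y\perp x$ with $|a+x+y|\leq R$ if and only if $H_x$ meets the closed ball $\overline{B}(-a,R)$, i.e.\ if and only if
\[
\operatorname{dist}(-a,H_x)=\frac{|(-a)\cdot x-|x|^2|}{|x|}=\frac{|a\cdot x+|x|^2|}{|x|}\leq R .
\]
Therefore the set appearing in the statement is contained in (and in fact coincides, after intersecting with $\{|x|\leq |a|/4\}$, with)
\[
C_{a,R}:=\bigl\{\,x\in\mathbb{R}^N\setminus\{0\}\ :\ |a\cdot x+|x|^2|\leq R|x|\,\bigr\},
\]
which is the region depicted in Figure \ref{fig:fig1}.

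Next I would pass to spherical coordinates $x=\rho\omega$ with $\rho=|x|>0$ and $\omega\in\mathbb{S}^{N-1}$. Dividing the defining inequality of $C_{a,R}$ by $\rho$ turns it into $|a\cdot\omega+\rho|\leq R$, i.e.\ $a\cdot\omega\in[-\rho-R,\,-\rho+R]$. Writing $a\cdot\omega=|a|s$ with $s=\cos\angle(a,\omega)\in[-1,1]$, the constraints $\rho\leq|a|/4$ and $R\leq|a|/4$ force the admissible $s$ to lie in an interval $I_\rho\subset[-1/2,1/2]$ of length $2R/|a|$. The key computation is then that, for $N\geq 2$, the surface measure of the band $A_\rho:=\{\omega\in\mathbb{S}^{N-1}: a\cdot\omega\in[-\rho-R,-\rho+R]\}$ satisfies
\[
|A_\rho|=|\mathbb{S}^{N-2}|\int_{I_\rho}(1-s^2)^{\frac{N-3}{2}}\,ds\ \leq\ C_N\,\frac{R}{|a|},
\]
because $(1-s^2)^{(N-3)/2}$ is bounded by a constant depending only on $N$ on $[-1/2,1/2]$ (it is $\leq 1$ for $N\geq 3$ and $\leq 2/\sqrt{3}$ for $N=2$); for $N=1$ the condition $|a\cdot\omega+\rho|\leq R$ is impossible since $|a|=|a\cdot\omega|\leq R+\rho\leq|a|/2$, so $A_\rho=\emptyset$ and the claim is vacuous.

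Finally, since $f$ is isotropic and nonnegative, Fubini in spherical coordinates gives
\[
\int_{C_{a,R}\cap\{|x|\leq|a|/4\}}f(x)\,dx=\int_0^{|a|/4}\rho^{N-1}f(\rho)\,|A_\rho|\,d\rho\ \leq\ C_N\,\frac{R}{|a|}\int_0^{|a|/4}\rho^{N-1}f(\rho)\,d\rho,
\]
which is the claimed bound. There is no deep obstacle here: the only point requiring care is checking that the hypotheses $|x|\leq|a|/4$ and $R\leq|a|/4$ keep the relevant cosine values inside $[-1/2,1/2]$, so that the spherical Jacobian $(1-s^2)^{(N-3)/2}$ stays uniformly bounded — this is precisely where those two assumptions are used — together with the routine bookkeeping in the degenerate dimensions $N=1,2$.
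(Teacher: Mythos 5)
Your proof is correct and follows essentially the same route as the paper's: reduce the existence of $y$ to the distance from $-(a+x)$ to the hyperplane $y\perp x$, observe that the admissible directions $\hat{x}$ form a thin spherical band, and factor the integral in spherical coordinates. The only (harmless) difference is that you measure the band directly in the cosine variable $s=a\cdot\omega/|a|$, where its length is exactly $2R/|a|$ and the Jacobian $(1-s^2)^{(N-3)/2}$ is bounded on $[-1/2,1/2]$, whereas the paper converts to the angle $\theta$ and bounds the angular width by $\frac{4}{\sqrt{15}}\frac{R}{|a|}$ via the derivative of cosine.
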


\begin{figure}[htbp]
    \centering
    \begin{tikzpicture}
        \draw[thick, ->] (0,-3) -- (0,3);
        \draw[thick, ->] (-3.5,0) -- (3.5,0);
        \draw[] (3, 0) node[above, xshift = -0.2cm]{$\frac{|a|}{4}$};
        \draw[thick](0,0) circle (2.5);
        \draw[fill=red!30, rotate=-40] (0,0) --  (0:2.5) arc(0:20:2.5) -- cycle;
        \draw[color = red, rotate=-110] (0,0) -- (0,2.5);
        \draw[color = red, rotate=-130] (0,0) -- (0,2.5);
        \draw[color = red, rotate=-120, dotted, thick] (0,0) -- (0,2.5);
        \draw[fill=red!30, rotate=200] (0,0) --  (0:2.5) arc(0:20:2.5) -- cycle;
        \draw[color = red, rotate=110] (0,0) -- (0,2.5);
        \draw[color = red, rotate=130] (0,0) -- (0,2.5);
        \draw[color = red, rotate=120, dotted, thick] (0,0) -- (0,2.5);
        \draw[latex-latex]  (221:2.5) arc(221:199:2.5) node[midway,left]{$2\delta\theta$};
        \draw[latex-latex]  (-41:2.5) arc(-41:-19:2.5) node[midway,right]{$2\delta\theta$};
        \draw[latex-latex]  (90:0.5) arc(90:-30:0.5) node[midway,right]{$\theta_0$};
    \end{tikzpicture}
    \caption{Shape of $C_a$.}
    \label{fig:fig1}
\end{figure}
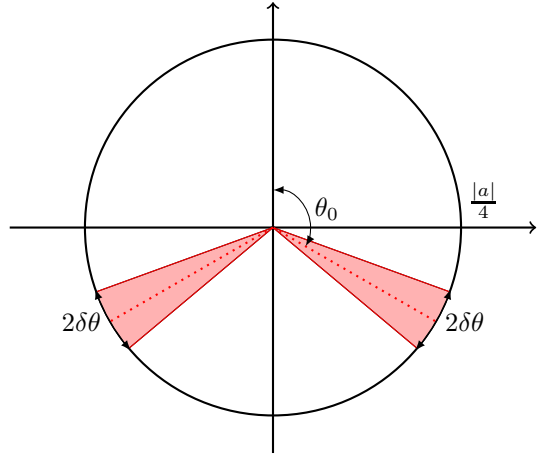
\begin{proof}
    Fixing $x$, $|a+x+y|\leq R$ is a closed ball centered at $-(a+x)$ with radius $R$ about $y$. When the distance between $-(a+x)$ and the plane $y\perp x$ is smaller than $R$, the plane intersects with the ball. The distance is given by
    \begin{align*}
        \left|(a+x)\cdot \hat{x}\right| = \left|a\cdot \hat{x} + |x|\right|\leq R.
    \end{align*}
    It shows that the sufficient condition to make an intersection is 
    \begin{align*}
        -R-|x|\leq a\cdot \hat{x}\leq -|x|+R.
    \end{align*}
    Let $a = (0, \cdots, 0, |a|)$ and $a\cdot \hat{x} = |a|\cos\theta$. Let us define $\cos\theta_0 = -\frac{|x|}{|a|}$ and $\delta\theta= \frac{4}{\sqrt{15}}\frac{|R|}{|a|}$. Since $\frac{|x|}{|a|}\leq \frac{1}{4}$,
    \begin{align*}
        \cos(\theta_0+\delta\theta)-\cos\theta_0&\leq -(\sin \theta_0)\delta\theta \leq -\frac{|R|}{|a|},\\
        \cos(\theta_0-\delta\theta)-\cos\theta_0&\geq (\sin \theta_0)\delta\theta \leq \frac{|R|}{|a|}.
    \end{align*}
    Therefore, we have
    \begin{align*}
        \{x:R-|x|\leq a\cdot \hat{x}\leq |x|+R\}\subset \{x:\theta\in [\theta_0-\delta\theta, \theta_0+\delta\theta]\}\eqqcolon C_{a, R}.
    \end{align*}

    Finally, for any isotropic function $f(x) = f(|x|)$,
    \begin{align*}
        \int_{\{|x|\leq |a|/4\}\cap C_{a, R}} f(x)\,dx &= |\mathbb{S}^{N-2}|\int_{0}^{|a|/4}f(|x|)\int_{\theta_0-\delta\theta}^{\theta_0+\delta\theta}|x|^{N-1}\sin^{N-2}\theta\,d\theta d|x|\\
        &\leq C_N\frac{R}{|a|}\int_{0}^{|a|/4}|x|^{N-1}f(|x|)\,d|x|.
    \end{align*}
\end{proof}

\begin{lemma}\label{lem:cone2}
    Let $x,y,a\in\mathbb{R}^N$ satisfies
    \begin{align*}
        x\perp y,\quad R\leq |a|/4,\quad |a+y|\leq R.
    \end{align*}
    There exists a set $C_{a, R}$ which satisfies
    \begin{align*}
        \{x:\{y:y\perp x\}\cap \{y:|a+y|\leq R\}\neq \emptyset\} \subset C_{a,R}
    \end{align*}
    and for any isotropic function $f(x) = f(|x|)$,
    \begin{align*}
        \int_{\{|x|\leq |a|/4\}\cap C_{a, R}} f(x)\,dx \leq C\frac{R}{|a|}\int_{0}^{|a|/4}|x|^{N-1}f(|x|)\,d|x|,
    \end{align*}
    for some constant $C$ only depending on $N$.
\end{lemma}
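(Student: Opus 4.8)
The plan is to mimic the proof of Lemma \ref{lem:cone}, which is actually simpler here because the unknown $x$ no longer appears inside the ball constraint. Fix $x\neq 0$ and write $\hat{x}=x/|x|$. The plane $P_x=\{y:y\perp x\}$ passes through the origin with unit normal $\hat{x}$, while $\{y:|a+y|\leq R\}$ is the closed ball of radius $R$ centered at $-a$. Hence $P_x$ meets this ball if and only if the distance from $-a$ to $P_x$ is at most $R$; since $P_x$ contains the origin, that distance equals $\left|(-a)\cdot\hat{x}\right|=\left|a\cdot\hat{x}\right|$. Therefore the set whose containment we must establish is exactly the cone $\{x:|a\cdot\hat{x}|\leq R\}$, which depends only on the direction of $x$.

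Next I would fix coordinates so that $a=(0,\ldots,0,|a|)$ and describe $x$ in spherical coordinates with polar angle $\theta$ measured from the $a$-axis, so that $a\cdot\hat{x}=|a|\cos\theta$. The defining inequality becomes $|\cos\theta|\leq R/|a|$. Because $R\leq|a|/4$, this forces $\theta$ to lie in an interval of half-width $\theta_0\coloneqq\arcsin(R/|a|)$ centered at $\pi/2$, and the elementary bound $\arcsin s\leq 2s$ for $0\leq s\leq 1/4$ gives $\theta_0\leq 2R/|a|$. Setting $C_{a,R}\coloneqq\{x:|\theta(x)-\pi/2|\leq\theta_0\}$, the computation of the previous paragraph shows $\{x:P_x\cap\{y:|a+y|\leq R\}\neq\emptyset\}\subset C_{a,R}$, and $C_{a,R}$ is a cone (scale-invariant in $|x|$), so it makes sense to intersect it with $\{|x|\leq|a|/4\}$.

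Finally, for any isotropic $f(x)=f(|x|)$ I would pass to spherical coordinates and bound the angular factor crudely:
\begin{align*}
    \int_{\{|x|\leq|a|/4\}\cap C_{a,R}}f(x)\,dx
    &=|\mathbb{S}^{N-2}|\int_0^{|a|/4}|x|^{N-1}f(|x|)\left(\int_{\pi/2-\theta_0}^{\pi/2+\theta_0}\sin^{N-2}\theta\,d\theta\right)d|x|\\
    &\leq |\mathbb{S}^{N-2}|\,(2\theta_0)\int_0^{|a|/4}|x|^{N-1}f(|x|)\,d|x|
    \leq C_N\frac{R}{|a|}\int_0^{|a|/4}|x|^{N-1}f(|x|)\,d|x|,
\end{align*}
with $C_N=4|\mathbb{S}^{N-2}|$; the bound $\int_{\pi/2-\theta_0}^{\pi/2+\theta_0}\sin^{N-2}\theta\,d\theta\leq 2\theta_0$ also covers the degenerate case $N=2$, where the integrand is identically $1$. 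This is the assertion of the lemma.

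There is no genuine obstacle here: the only two points requiring a line of care are the reduction of \emph{the plane $P_x$ meets the ball} to the scalar inequality $|a\cdot\hat{x}|\leq R$ (a one-line distance computation, since $P_x$ passes through the origin), and the estimate $\arcsin(R/|a|)\lesssim R/|a|$, which is available precisely because the hypothesis $R\leq|a|/4$ keeps the argument bounded away from $1$. Everything else is the same spherical-coordinate bookkeeping already used in the proof of Lemma \ref{lem:cone}.
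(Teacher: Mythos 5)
Your proof is correct and follows the same approach the paper intends: the paper's proof of this lemma simply says it is ``almost identical to Lemma \ref{lem:cone},'' and your argument is precisely that specialization, with the ball center $-a$ in place of $-(a+x)$ so the distance-to-plane condition reduces to $|a\cdot\hat x|\leq R$, i.e.\ a cone about $\theta=\pi/2$ of angular half-width $\arcsin(R/|a|)\lesssim R/|a|$. The only cosmetic difference is that you take the exact half-width $\arcsin(R/|a|)$ with the bound $\arcsin s\leq 2s$, whereas the paper's Lemma \ref{lem:cone} uses the fixed constant $\tfrac{4}{\sqrt{15}}\tfrac{R}{|a|}$; both yield the same conclusion.
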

\begin{proof}
    The proof is almost identical to Lemma \ref{lem:cone}.
\end{proof}

The next lemma is designed to bound a specific integral appearing in proof of Lemma \ref{lem:Q_gain}.
\begin{lemma}\label{lem:int_bound3}
    For $a<N$ and $2\delta + a>N$, we have
    \begin{align*}
        \int_{|y|\leq 2|x|} dy\,\frac{1}{|y|^a\left(1+|y+x|^2\right)^\delta} &\leq C_{N, a, \delta}\min\{|x|^{N-a}, (1+|x|)^{-a} + (1+|x|)^{-2\delta - a + N}\}\\
        \int_{\mathbb{R}^N} dy\,\frac{1}{|y|^a\left(1+|y+x|^2\right)^\delta} &\leq C'_{N, a, \delta}
    \end{align*}
    for some constant $C_{N,a,\delta}$ and $C'_{N,a,\delta}$.
\end{lemma}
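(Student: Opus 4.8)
The plan is to establish the pointwise estimate first and then read off the uniform bound as a short corollary. For the pointwise estimate it suffices to prove two bounds. First, discarding $(1+|y+x|^2)^{-\delta}\le 1$ and using $a<N$,
\[
    \int_{|y|\le 2|x|}\frac{dy}{|y|^a(1+|y+x|^2)^\delta}\;\le\;\int_{|y|\le 2|x|}\frac{dy}{|y|^a}\;=\;c_{N,a}(2|x|)^{N-a},
\]
so the integral is always $\lesssim|x|^{N-a}$. Second, I claim that for $|x|\ge 1$ the integral is $\lesssim (1+|x|)^{-a}+(1+|x|)^{N-a-2\delta}$; granting this, the two bounds combine to the asserted $\min$, since for $|x|\le 1$ the first bound is already $\le C$ and controls the (then bounded) second expression.

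To obtain the second bound I split the ball $\{|y|\le 2|x|\}$ into (i) $\{|y|\le|x|/2\}$, (ii) $\{|y+x|\le|x|/2\}$, and (iii) the remaining set. On (i) one has $|y+x|\ge|x|/2$, hence $(1+|y+x|^2)^{-\delta}\lesssim|x|^{-2\delta}$, and $\int_{|y|\le|x|/2}|y|^{-a}\,dy\lesssim|x|^{N-a}$, so this piece is $\lesssim|x|^{N-a-2\delta}$. On (ii) the triangle inequality forces $|x|/2\le|y|\le 3|x|/2$, so $|y|^{-a}\lesssim(1+|x|)^{-a}$, while $\int_{|y+x|\le|x|/2}(1+|y+x|^2)^{-\delta}\,dy=\int_{|z|\le|x|/2}(1+|z|^2)^{-\delta}\,dz\lesssim 1+|x|^{(N-2\delta)_+}$; multiplying gives $\lesssim(1+|x|)^{-a}+(1+|x|)^{N-a-2\delta}$, where $2\delta+a>N$ makes the last exponent negative. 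On (iii) one still has $|y|\ge|x|/2$ so $|y|^{-a}\lesssim(1+|x|)^{-a}$, and since $|y|\le 2|x|$ forces $|y+x|\le 3|x|$ while $|y+x|\ge|x|/2$ there, I bound $\int_{\text{(iii)}}(1+|y+x|^2)^{-\delta}\,dy$ by (volume of the annulus $\{|x|/2\le|y+x|\le 3|x|\}$) $\times$ (supremum of the integrand) $\lesssim |x|^{N}|x|^{-2\delta}$, so this piece is $\lesssim|x|^{N-a-2\delta}$. Summing (i), (ii), (iii) proves the claim.

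For the uniform bound I split $\mathbb{R}^N$ into $\{|y|\le 2|x|\}$ and $\{|y|>2|x|\}$. On the first set the pointwise estimate just proven bounds the integral by $C\bigl((1+|x|)^{-a}+(1+|x|)^{N-a-2\delta}\bigr)$, which is $\le C'$ because both exponents are nonpositive. On the second set $|y|>2|x|$ gives $|y+x|\ge|y|-|x|\ge|y|/2$, hence $(1+|y+x|^2)^{-\delta}\lesssim(1+|y|)^{-2\delta}$ and
\[
    \int_{|y|>2|x|}\frac{dy}{|y|^a(1+|y+x|^2)^\delta}\;\lesssim\;\int_{\mathbb{R}^N}\frac{dy}{|y|^a(1+|y|)^{2\delta}}\;<\;\infty,
\]
the last integral being finite precisely because $a<N$ (integrability at the origin) and $a+2\delta>N$ (decay at infinity). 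Adding the two contributions yields the constant $C'_{N,a,\delta}$.

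The argument is essentially bookkeeping; the one place meriting care is region (ii), where the size of $\int_{|z|\le|x|/2}(1+|z|^2)^{-\delta}\,dz$ depends on the sign of $N-2\delta$: it is uniformly bounded when $2\delta>N$, grows like $|x|^{N-2\delta}$ when $2\delta<N$, and picks up a (harmless) logarithm in the borderline case $2\delta=N$, which does not occur in the applications where $2\delta>N+2$. In every case the hypothesis $2\delta+a>N$ turns the resulting power of $|x|$ into a decaying one, and $a\ge 0$ is what makes $(1+|x|)^{-a}$ bounded; these are the precise points where the standing assumptions enter, and I expect no conceptual obstacle beyond this case distinction.
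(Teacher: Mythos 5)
Your proof is correct and follows essentially the same route as the paper's: the same decomposition of $\{|y|\le 2|x|\}$ into $\{|y|\le|x|/2\}$, $\{|y+x|\le|x|/2\}$, and the remainder, plus the exterior region $\{|y|>2|x|\}$ handled via $|y+x|\ge|y|/2$, with the same elementary estimate on each piece. Your explicit remarks on where $a\ge 0$ and $2\delta\neq N$ enter are consistent with the paper, which uses these assumptions implicitly in the same places.
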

\begin{proof}
    First, we assume $|x|\leq 2$. If $|y|\geq 4$, then $|y+x|\geq |y|/2$. Using this inequality, the integral is bounded by
    \begin{equation}\label{5_4:xleq2}
        \begin{split}
            &\mathbf{1}_{|x|\leq 2}\int_{|y|\leq 2|x|} dy\,\frac{1}{|y|^a \left(1+|y+x|^2\right)^\delta}\leq \mathbf{1}_{|x|\leq 2}\int_{|y|\leq 2|x|} dy\,\frac{1}{|y|^a}\leq C_{N, a} |x|^{N-a},\\
            &\mathbf{1}_{|x|\leq 2}\int_{\mathbb{R}^N} dy\,\frac{1}{|y|^a \left(1+|y+x|^2\right)^\delta}\leq \int_{|y|\leq 4} dy\,\frac{1}{|y|^a} + \int_{|y|\geq 4} dy\,\frac{1}{|y|^a(1+|y|^2/4)^\delta}<\infty
        \end{split}
    \end{equation}
    for $a<N$ and $2\delta + a > N$.\\

    For $|x|\geq 2$, let us decompose the integral set by
    \begin{align*}
        A_1&= \{y:|y|\leq |x|/2\},\\
        A_2&= \{y:|x+y|\leq |x|/2\},\\
        A_3&= \{y:|y|\leq 2|x|\}\setminus(A_1\cup A_2),\\
        A_4&= \{y:|y|> 2|x|\}.
    \end{align*}
    Each integral on $A_i$ is bounded as follows. For $A_1$,
    \begin{align*}
        \int_{A_1}dy\,\frac{1}{|y|^a\left(1+|x+y|^2\right)^{\delta}}&\leq\int_{A_1}dy\,\frac{1}{|y|^a(1+(|x| - |y|)^2)^{\delta}}\\
        &\leq \frac{|\mathbb{S}^{N-1}|}{(1+(|x|/2)^2)^{\delta}}\int_0^{|x|/2}\frac{r^{N-1}}{r^a}\,dr\\
        &\leq C_{N, a, \delta}\frac{1}{|x|^{a + 2\delta - N}}.
    \end{align*}
    
    For $A_2$, if $a\geq 0$, 
    \begin{align*}
        \int_{A_2}dy\,\frac{1}{|y|^a\left(1+|x+y|^2\right)^{\delta}}&\leq\int_{A_2}dy\,\frac{1}{(|x| - |x+y|)^a\left(1+|x+y|^2\right)^{\delta}}\\
        &\leq \frac{|\mathbb{S}^{N-1}|}{(|x|/2)^a}\int_0^{|x|/2}\frac{r^{N-1}}{(1+r^2)^{\delta}}\,dr\\
        &\leq \frac{|\mathbb{S}^{N-1}|}{(|x|/2)^a}\left(\int_0^1\frac{r^{N-1}}{(1+r^2)^{\delta}}\,dr + \int_1^{|x|/2}\frac{r^{N-1}}{(1+r^2)^{\delta}}\,dr\right)\\
        &\leq \frac{|\mathbb{S}^{N-1}|}{(|x|/2)^a}\left(\int_0^1 r^{N-1}\,dr +C_\delta\int_1^{|x|/2}r^{N-1-2\delta}\,dr\right)\\
        &\leq C_{N, a, \delta}\left(\frac{1}{|x|^a} + \frac{1}{|x|^{a+2\delta-N}}\right).
    \end{align*}
    If $a<0$, as $|y|\leq |x| + |x+y|\leq 3|x|/2$,
    \begin{align*}
        \int_{A_2}dy\,\frac{1}{|y|^a\left(1+|x+y|^2\right)^{\delta}}&\leq \frac{|\mathbb{S}^{N-1}|}{(3|x|/2)^a}\int_0^{|x|/2}\frac{r^{N-1}}{(1+r^2)^{\delta}}\,dr\\
        &\leq C_{N, a, \delta}\left(\frac{1}{|x|^a} + \frac{1}{|x|^{a+2\delta-N}}\right).
    \end{align*}
    
    For $A_3$, since $|x+y|\geq |x|/2$ and $|x|/2\leq |y|\leq 2|x|$, we obtain
    \begin{align*}
        \int_{A_3}dy\,\frac{1}{|y|^a\left(1+|x+y|^2\right)^{\delta}}\leq C_{a, \delta}\frac{|\mathbb{S}^{N-1}|}{|x|^{a+2\delta}} \int_0^{2|x|}r^{N-1}\,dr\leq C_{N, a, \delta}\frac{1}{|x|^{a+2\delta - N}}.
    \end{align*}
    
    Therefore, collecting above estimates for $A_i$, for $2\delta + a >N$, we get
    \begin{equation}\label{5_4:xgeq2}
        \begin{split}
            \mathbf{1}_{|x|\geq 2}
            \int_{|y|\leq 2|x|}dy\,\frac{1}{|y|^a\left(1+|x+y|^2\right)^{\delta}}&=\mathbf{1}_{|x|\geq 2} \sum_{i=1}^3\int_{A_i}dy\,\frac{1}{|y|^a\left(1+|x+y|^2\right)^{\delta}}\\
            &\leq C_{N, a, \delta}\mathbf{1}_{|x|\geq 2}\left(\frac{1}{|x|^a} + \frac{1}{|x|^{a+2\delta - N}}\right).
        \end{split}
    \end{equation}
    Combining \eqref{5_4:xleq2} and \eqref{5_4:xgeq2}, conclude the first inequality of Lemma \ref{lem:int_bound3}.

    If $|y|>2|x|$, then $|x+y|\geq |y|/2$, so we have
    \begin{align*}
        \int_{A_4}dy\,\frac{1}{|y|^a\left(1+|x+y|^2\right)^{\delta}}\leq C_{\delta}\int_{A_4}dy\,\frac{1}{|y|^a\left(1+|y|^2\right)^{\delta}}\leq C_{\delta}\int_{\mathbb{R}^N}dy\,\frac{1}{|y|^a\left(1+|y|^2\right)^{\delta}}<\infty
    \end{align*}
    as $2\delta+a>N$. Since the integral on $A_4$ is uniformly bounded regardless of the value of $|x|$, combining with the first inequality of the lemma, we get
    \begin{align*}
        \int_{\mathbb{R}^N} dy\,\frac{1}{|y|^a\left(1+|y+x|^2\right)^\delta} &\leq C'_{N, a, \delta}
    \end{align*}
    for some constant $C'_{N,a,\delta}$.
    \end{proof}

The following lemma is one of the main lemmas used throughout this section.
\begin{lemma}\label{lem:int_bound4}
    For the choice of $\alpha$, $\beta$, $N$ in \eqref{index:weight}, $-N<\kappa\leq 1$, and $2\delta>N$, we have
    \begin{align*}
        \sup_{v\in\mathbb{R}^N} \iint_{\mathbb{R}^{N}\times   \mathbb{S}^{N-1}}dud\omega B(v-u, \omega) \frac{w_{\delta}(v)}{w_{\delta}(v')w_{\delta}(u')}\leq C_{\alpha,\beta,N,\delta, \kappa}\left(\frac{1}{(1+|v|)^{\frac{N-1}{2}\beta - \kappa}}+\frac{1}{(1+|v|)^{\frac{3}{2}\beta - 1 - \kappa}}\right)
    \end{align*}
    for some constant $C_{\alpha,\beta,N,\delta, \kappa}$.
\end{lemma}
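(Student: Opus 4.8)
The plan is to reduce the collision kernel, pass to a Carleman‑type representation, and then combine the collision conservation laws with the dyadic inequalities of Lemma~\ref{lem:ineq} and the geometric shrinking Lemmas~\ref{lem:cone}--\ref{lem:cone2} to extract the power‑law decay.

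\textbf{Step 1 (reduction).} By \eqref{Collision_kernel} it suffices to bound $\iint |v-u|^\kappa|\cos\theta|\,w_\delta(v)/(w_\delta(v')w_\delta(u'))\,du\,d\sigma$. Setting $y=((u-v)\cdot\sigma)\sigma$ and $z=(u-v)-y\perp y$, one has $v'=v+y$, $u'=v+z$, $u=v+y+z$, $|v-u|^2=|y|^2+|z|^2$ and $|\cos\theta|=|y|/\sqrt{|y|^2+|z|^2}$, while $du\,d\sigma$ becomes $|y|^{-(N-1)}\,dy\,dz$ on $\{(y,z):z\perp y\}$ up to a constant (the Carleman/Radon change of variables). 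Hence, up to a constant,
\[
\iint B(v-u,\sigma)\,\frac{w_\delta(v)}{w_\delta(v')w_\delta(u')}\,du\,d\sigma
\;\lesssim\;\int_{\mathbb{R}^N}\frac{dy}{|y|^{N-2}}\int_{y^\perp}\!dz\,\bigl(|y|^2+|z|^2\bigr)^{\frac{\kappa-1}{2}}\frac{w_\delta(v)}{w_\delta(v+y)\,w_\delta(v+z)},
\]
and near $(y,z)=0$ (i.e.\ $u\approx v$) this is integrable precisely because $\kappa>-N$.

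\textbf{Step 2 (pointwise weight bound).} The identity $|v+y|^2+|v+z|^2=|v|^2+|v+y+z|^2$ (valid since $y\perp z$), together with $(1+s)(1+t)\ge 1+s+t$ and subadditivity $(s+t)^{\beta/2}\le s^{\beta/2}+t^{\beta/2}$ for the exponent $\beta/2\le1$ (Lemma~\ref{lem:ineq}), gives $w_\delta(v+y)w_\delta(v+z)\ge w_\delta(v)$, so the ratio is always $\le1$. The task is to make this quantitative: writing $a=|v+y|\le b=|v+z|$ one has $2b^2\ge|v|^2+|u|^2\ge|v|^2$, and the dyadic estimate of Lemma~\ref{lem:ineq} (with $x=b^2\ge y=a^2$, exponent $\beta/2$) converts a separation between $a$ and $b$, or a lower bound on $b$, into a genuine exponential gain in $e^{\alpha(|v|^\beta-a^\beta-b^\beta)}$; this, together with the polynomial factor $(1+a^2)^{-\delta}(1+b^2)^{-\delta}(1+|v|^2)^\delta$, which is already $\lesssim|v|^{-2\delta}$ once both $a,b\gtrsim|v|$, is the source of all the decay.

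\textbf{Step 3 (splitting).} The case $|v|\le1$ is trivial by Step~1. For $|v|\ge1$: on $|v-u|\gtrsim|v|$ one has $|v|^2+|u|^2\ge(1+c_0)|v|^2$, so $(|v|^2+|u|^2)^{\beta/2}\ge(1+c_0)^{\beta/2}|v|^\beta$ and the weight ratio is $\le e^{-c|v|^\beta}$, which absorbs every polynomial loss. It remains to treat $|v-u|\sim|v|$, i.e.\ $|y|^2+|z|^2\sim|v|^2$. There the ratio is $\approx1$ only near two configurations: ``$|v'|$ bounded, $|u'|\sim|v|$'' (so $y\approx-v$, $z\in y^\perp$), where $e^{\alpha(|v|^\beta-a^\beta-b^\beta)}\approx e^{-c|v|^{\beta-2}|z|^2}$ confines $z$ to a ball of radius $\sim|v|^{1-\beta/2}$ in the $(N-1)$‑plane $y^\perp$; combining with $|y|^{2-N}\approx|v|^{2-N}$ and $(|y|^2+|z|^2)^{(\kappa-1)/2}\approx|v|^{\kappa-1}$ gives the contribution $|v|^{\kappa-1}\cdot|v|^{2-N}\cdot|v|^{(N-1)(1-\beta/2)}=|v|^{\kappa-\frac{N-1}{2}\beta}$, the first term. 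And ``$|u'|$ bounded, $|v'|\sim|v|$'' (so $z\approx-v$), where the constraint that $z$ lie near $-v$ and orthogonal to $y$ pins $y$ into a thin cone about $v^\perp$; Lemma~\ref{lem:cone}/\ref{lem:cone2} supply the shrinking factor $\sim R/|v|$, and after a dyadic summation over the scale $R$ and over the ratio $a^2:b^2$ (again via Lemma~\ref{lem:ineq}) the second power $|v|^{-(\frac32\beta-1-\kappa)}$ appears; all polynomial‑weighted $z$‑integrals over $y^\perp$ are controlled by Lemma~\ref{lem:int_bound3}.

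\textbf{Main obstacle.} The real difficulty is the bookkeeping in Step~3: one must dyadically decompose both the separation of $|v'|^2$ and $|u'|^2$ and the thinness radius $R$, and on each piece balance (i) the exponential gap from Lemma~\ref{lem:ineq}, (ii) the ball/cone volume of scale $\sim|v|^{1-\beta/2}$, (iii) the Jacobian weight $|y|^{2-N}$, and (iv) the kernel power $(|y|^2+|z|^2)^{(\kappa-1)/2}$, which is singular when $\kappa<1$ and, in the soft‑potential range, also requires care at the apex $|y|\to0$; then one must sum the resulting geometric series. The hypotheses of \eqref{index:weight}---$\beta>\tfrac{2}{N-1}\kappa$, $\beta>\tfrac23(1+\kappa)$, $2\delta>N+2$, and the extra condition $1+\kappa\le\beta$ when $N=2$---are precisely what make the two exponents $\tfrac{N-1}{2}\beta-\kappa$ and $\tfrac32\beta-1-\kappa$ strictly positive and the dyadic sums convergent.
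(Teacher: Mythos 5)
Your overall strategy — Carleman representation, the energy identity $|v'|^2+|u'|^2=|v|^2+|u|^2$, a dyadic decomposition feeding Lemma~\ref{lem:ineq}, and the thin-cone Lemmas~\ref{lem:cone}--\ref{lem:cone2} to pick up the geometric factor $R/|a|$ — is exactly the paper's approach, and you arrive at both correct final exponents $\tfrac{N-1}{2}\beta-\kappa$ and $\tfrac32\beta-1-\kappa$. However, two of your intermediate mechanisms are misdirected in ways that would sink a fully written-out proof. First, in Step~3 you dispose of $|v-u|\gtrsim|v|$ by asserting $|v|^2+|u|^2\ge(1+c_0)|v|^2$ there; this is false (take $u=0$: $|v-u|=|v|$ but $|v|^2+|u|^2=|v|^2$, no gain). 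The region that truly yields exponential decay is $|u|\gtrsim|v|$, and the problematic region $u\to0$ sits inside your ``easy'' set; this is why the paper writes the split $\mathbf{1}_{|u|\ge|v|}$ versus dyadic shells $2^{-n-1}|v|^2\le|u|^2\le 2^{-n}|v|^2$ in \eqref{w-exp}. Second, in Step~2 you claim Lemma~\ref{lem:ineq} applied with $x=b^2=|u'|^2\ge y=a^2=|v'|^2$ turns ``a separation between $a$ and $b$'' into exponential gain. This is backwards: the subadditivity deficit $a^\beta+b^\beta-(a^2+b^2)^{\beta/2}$ is \emph{smallest} when $a\ll b$, and the additional deficit $(a^2+b^2)^{\beta/2}-|v|^\beta=(|v|^2+|u|^2)^{\beta/2}-|v|^\beta$ vanishes as $|u|\to0$. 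Both disappear simultaneously precisely when $a\to0$ and $|u|\to0$ (e.g.\ $a=0$, $b=|v|$, $u=0$), which is the hard regime. The paper therefore applies Lemma~\ref{lem:ineq} with $x=|v|^2$, $y=|u|^2$ to obtain $(|v|^2+|u|^2)^{\beta/2}\ge|v|^\beta+c\,2^{-(1-\beta/2)(n+1)}|u|^\beta$ on the shell $|u|^2\approx 2^{-n}|v|^2$, giving a gain of size $\sim 2^{-n}|v|^\beta$. Consequently there is a \emph{single} dyadic parameter $n$ (the scale of $|u|$), which also sets the cone radius $R=2^{-n/2}|v|$; your ``dyadic summation over the scale $R$ and over the ratio $a^2:b^2$'' double-counts what is in fact one sum. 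A related technical point: rather than throwing away the full polynomial factor once both $|v'|,|u'|\gtrsim|v|$, the paper keeps exactly one of $(1+|v'|^2)^{-\delta}$ or $(1+|u'|^2)^{-\delta}$ (justified via \eqref{w-polyv}) so that the retained factor, paired with the $|\tilde u_\parallel|^{-(N-2)}$ Jacobian, can be fed into Lemma~\ref{lem:int_bound3}; this is what makes the $\tilde u_\parallel$-integral localize and converge in the dyadic sum.
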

\begin{proof}
    We first compute upper bound of $\frac{w_{\delta}(v)}{w_{\delta}(v')w_{\delta}(u')}$. From energy conservation $|v|^{2} + |u|^{2} = |v'|^{2}+|u'|^{2}$, either $|v|^{2} \leq 2|v'|^{2}$ or $|v|^{2} \leq 2|u'|^{2}$ must hold. Hence for $\delta > 0$,
    \begin{equation} \label{w-polyv}
        \frac{(1+|v|^2)^{\delta}}{(1+|v'|^2)^\delta (1+|u'|^2)^{\delta}} \leq C_{{\delta}} \Big( \frac{1}{(1+|v'|^2)^{\delta}} + \frac{1}{(1+|u'|^2)^{\delta}} \Big).
    \end{equation}    
    
    For the exponential term, using Lemma \ref{lem:ineq} and energy conservation, for $\beta/2 \leq 1$, we obtain
    \begin{equation} \label{w-exp}
        \begin{split}
            &e^{-\alpha \left(\left(|v'|^2\right)^{\beta/2}+\left(|u'|^2\right)^{\beta/2} - \left(|v|^2\right)^{\beta/2}\right)} \\
            & \leq e^{-\alpha \left(\left(|v'|^2+|u'|^2\right)^{\beta/2} - \left(|v|^2\right)^{\beta/2}\right)} \\
            & \leq e^{-\alpha \left(\left(|v|^2+|u|^2\right)^{\beta/2} - \left(|v|^2\right)^{\beta/2}\right)} \\
            & \leq e^{-\alpha (2^{\beta/2}-1)|u|^\beta}\mathbf{1}_{|u|\geq |v|} + \sum_{n=0}^\infty e^{-\frac{\alpha \beta/4}{2^{(1-\beta/2)(n+1)}}|u|^\beta}\mathbf{1}_{2^{-n-1}|v|^2\leq |u|^2\leq 2^{-n}|v|^2},
        \end{split}
    \end{equation}
    where we used 
    \begin{align*}
        x^\beta+y^\beta\geq (x^2+y^2)^{\beta/2}\geq x^\beta + \frac{\beta/4}{2^{(1-\beta/2)(n+1)}}y^\beta
    \end{align*}
    for $2^{-n-1}x^2\leq y^2\leq 2^{-n}x^2$. From \eqref{w-polyv} and \eqref{w-exp}, we get
    \begin{align*}
        \frac{w_{\delta}(v)}{w_{\delta}(v')w_{\delta}(u')} &\leq C_{\delta} \Big(\frac{1}{(1+|v'|^2)^{\delta}} + \frac{1}{(1+|u'|^2)^{\delta}} \Big)  \\
        &\quad \times \left(e^{-\alpha (2^{\beta/2}-1)|u|^\beta}\mathbf{1}_{|u|\geq |v|} + \sum_{n=0}^\infty e^{-\frac{\alpha \beta/4}{2^{(1-\beta/2)(n+1)}}|u|^\beta}\mathbf{1}_{2^{-n-1}|v|^2\leq |u|^2\leq 2^{-n}|v|^2}\right).
    \end{align*}
    
    Now, we apply Carlemann type estmate with definitions $\tilde{u}:= u-v$, $\tilde{u}_{\parallel}:= (\tilde{u}\cdot\omega)\omega$, and $\tilde{u}_{\perp}:= \tilde{u} - \tilde{u}_{\parallel}$. Since the definitions yields $u = v+\tilde{u}_\parallel+\tilde{u}_\perp$, $v'=v+\tilde{u}_{\parallel}$, and $u'=v+\tilde{u}_{\perp}$, we obtain
    \begin{equation*}
        \begin{split}
            &\iint_{\mathbb{R}^{N}\times   \mathbb{S}^{N-1}}dud\omega \,B(v-u, \omega)\frac{w_{\delta}(v)}{w_{\delta}(v')w_{\delta}(u')}\\
            &\leq C\iint_{\mathbb{R}^{N}\times   \mathbb{S}^{N-1}}dud\omega \,|v-u|^{\kappa-1}|(v-u)\cdot\omega| \frac{w_{\delta}(v)}{w_{\delta}(v')w_{\delta}(u')}\\
            &\leq 2C\int_{\mathbb{R}^N}d\tilde{u}_\parallel\,\int_{E_{\tilde{u}_\parallel}}d\tilde{u}_\perp\, \frac{\left(|\tilde{u}_\parallel|^2+|\tilde{u}_\perp|^2\right)^{\frac{\kappa-1}{2}}}{|\tilde{u}_\parallel|^{N-2}}
            \Big[ \frac{1}{(1+|v+\tilde{u}_{\parallel}|^2)^{\delta} } + \frac{1}{(1+|v+\tilde{u}_{\perp}|^2)^{\delta}} \Big]\\
            &\quad\times 
            \left\{
            e^{-\alpha (2^{\beta/2}-1)|v+\tilde{u}_\parallel+\tilde{u}_\perp|^\beta}\mathbf{1}_{|v+\tilde{u}_\parallel+\tilde{u}_\perp|\geq |v|}
            +
            \sum_{n=0}^\infty e^{-\frac{\alpha \beta/4}{2^{(1-\beta/2)(n+1)}}|v+\tilde{u}_\parallel+\tilde{u}_\perp|^\beta}\mathbf{1}_{2^{-n-1}|v|^2\leq |v+\tilde{u}_\parallel+\tilde{u}_\perp|^2\leq 2^{-n}|v|^2}
            \right\},
        \end{split}
    \end{equation*}
    where $E_{\tilde{u}_\parallel}$ is the hyperplane passing through $0$ and orthogonal to $\tilde{u}_\parallel$.

    Now, let us define
    \begin{align*}
        I_{1,1} &=\int_{\mathbb{R}^N}d\tilde{u}_\parallel\,\int_{E_{\tilde{u}_\parallel}}d\tilde{u}_\perp\,\frac{\left(|\tilde{u}_\parallel|^2+|\tilde{u}_\perp|^2\right)^{\frac{\kappa-1}{2}}}{|\tilde{u}_\parallel|^{N-2}}
        \frac{1}{(1+|v+\tilde{u}_{\parallel}|^2)^{\delta}} e^{-\frac{\alpha\beta/4}{2^{2-\beta}}|v+\tilde{u}_\parallel+\tilde{u}_\perp|^\beta}\mathbf{1}_{|v+\tilde{u}_\parallel+\tilde{u}_\perp|\geq |v|/2},\\
        I_{1,2} &=\int_{\mathbb{R}^N}d\tilde{u}_\parallel\,\int_{E_{\tilde{u}_\parallel}}d\tilde{u}_\perp\,\frac{\left(|\tilde{u}_\parallel|^2+|\tilde{u}_\perp|^2\right)^{\frac{\kappa-1}{2}}}{|\tilde{u}_\parallel|^{N-2}}
        \frac{1}{(1+|v+\tilde{u}_{\parallel}|^2)^{\delta}}\\
        &\qquad \times \sum_{n=2}^\infty e^{-\frac{\alpha \beta/4}{2^{(1-\beta/2)(n+1)}}|v+\tilde{u}_\parallel+\tilde{u}_\perp|^\beta}\mathbf{1}_{2^{-n-1}|v|^2\leq |v+\tilde{u}_\parallel+\tilde{u}_\perp|^2\leq 2^{-n}|v|^2},\\
        I_{2,1} &=\int_{\mathbb{R}^N}d\tilde{u}_\parallel\,\int_{E_{\tilde{u}_\parallel}}d\tilde{u}_\perp\,\frac{\left(|\tilde{u}_\parallel|^2+|\tilde{u}_\perp|^2\right)^{\frac{\kappa-1}{2}}}{|\tilde{u}_\parallel|^{N-2}}
        \frac{1}{(1+|v+\tilde{u}_{\perp}|^2)^{\delta}} e^{-\frac{\alpha \beta/4}{2^{8-4\beta}}|v+\tilde{u}_\parallel+\tilde{u}_\perp|^\beta}\mathbf{1}_{|v+\tilde{u}_\parallel+\tilde{u}_\perp|\geq |v|/16},\\
        I_{2,2} &=\int_{\mathbb{R}^N}d\tilde{u}_\parallel\,\int_{E_{\tilde{u}_\parallel}}d\tilde{u}_\perp\,\frac{\left(|\tilde{u}_\parallel|^2+|\tilde{u}_\perp|^2\right)^{\frac{\kappa-1}{2}}}{|\tilde{u}_\parallel|^{N-2}}\frac{1}{(1+|v+\tilde{u}_{\perp}|^2)^{\delta}}\\
        &\qquad \times \sum_{n=8}^\infty e^{-\frac{\alpha \beta/4}{2^{(1-\beta/2)(n+1)}}|v+\tilde{u}_\parallel+\tilde{u}_\perp|^\beta}\mathbf{1}_{2^{-n-1}|v|^2\leq |v+\tilde{u}_\parallel+\tilde{u}_\perp|^2\leq 2^{-n}|v|^2},
    \end{align*}
    and $I_1 = I_{1,1}+I_{1,2}$ and $I_2 = I_{2,1}+I_{2,2}$. When $n\geq 2$, then $|v+\tilde{u}_\parallel + \tilde{u}_\perp|\leq \frac{|v|}{2}$, so $|\tilde{u}_\parallel + \tilde{u}_\perp|\geq \frac{|v|}{2}$. Therefore, as $\kappa\leq 1$, we have
    \begin{align*}
        &\left(|\tilde{u}_\parallel|^2+|\tilde{u}_\perp|^2\right)^{\frac{\kappa-1}{2}} e^{-\frac{\alpha \beta/4}{2^{(1-\beta/2)(n+1)}}|v+\tilde{u}_\parallel+\tilde{u}_\perp|^\beta}\mathbf{1}_{2^{-n-1}|v|^2\leq |v+\tilde{u}_\parallel+\tilde{u}_\perp|^2\leq 2^{-n}|v|^2}\\
        &\leq C_\kappa|v|^{\frac{\kappa-1}{2}} e^{-\frac{\alpha \beta/4}{2^{(1-\beta/2)(n+1)}}|v+\tilde{u}_\parallel+\tilde{u}_\perp|^\beta}\mathbf{1}_{2^{-n-1}|v|^2\leq |v+\tilde{u}_\parallel+\tilde{u}_\perp|^2\leq 2^{-n}|v|^2}\\
        &\leq C_\kappa|v|^{\frac{\kappa-1}{2}} e^{-\frac{\alpha \beta/4}{2^{n+1}}|v|^\beta}\mathbf{1}_{2^{-n-1}|v|^2\leq |v+\tilde{u}_\parallel+\tilde{u}_\perp|^2\leq 2^{-n}|v|^2}.
    \end{align*}
    Also, by the definition of $\tilde{u}_\parallel$,
    \begin{align*}
        |\tilde{u}_\parallel| = |((u-v)\cdot \omega)\omega|\leq 2|v|.
    \end{align*}
    (Or you can directly show this by investigating possible $\tilde{u}_\parallel$ satisfying $2^{-n-1}|v|^2\leq |v+\tilde{u}_\perp+\tilde{u}_\parallel|^2\leq 2^{-n}|v|^2$ and $\tilde{u}_\parallel\perp\tilde{u}_\perp$.) From these observations, we redefine $I_{1,2}$ and $I_{2,2}$ by
    \begin{align*}
        I_{1,2} &=\sum_{n=2}^\infty \frac{e^{-\frac{\alpha \beta/4}{2^{n+1}}|v|^\beta}}{|v|^{1-\kappa}}\int_{|\tilde{u}_\parallel|\leq 2|v|}d\tilde{u}_\parallel\,\int_{E_{\tilde{u}_\parallel}}d\tilde{u}_\perp\,\frac{1}{|\tilde{u}_\parallel|^{N-2}}
        \frac{1}{(1+|v+\tilde{u}_{\parallel}|^2)^{\delta}}\mathbf{1}_{2^{-n-1}|v|^2\leq |v+\tilde{u}_\parallel+\tilde{u}_\perp|^2\leq 2^{-n}|v|^2},\\
        I_{2,2} &=\sum_{n=8}^\infty \frac{e^{-\frac{\alpha \beta/4}{2^{n+1}}|v|^\beta}}{|v|^{1-\kappa}}\int_{|\tilde{u}_\parallel|\leq 2|v|}d\tilde{u}_\parallel\,\int_{E_{\tilde{u}_\parallel}}d\tilde{u}_\perp\,\frac{1}{|\tilde{u}_\parallel|^{N-2}}\frac{1}{(1+|v+\tilde{u}_{\perp}|^2)^{\delta}}\mathbf{1}_{2^{-n-1}|v|^2\leq |v+\tilde{u}_\parallel+\tilde{u}_\perp|^2\leq 2^{-n}|v|^2}.
    \end{align*}
    \\

    We bound $I_{1,1}$ first. Using
    \begin{align}\label{5_5:kappa_split}
        \begin{split}
            \frac{\left(|\tilde{u}_\parallel|^2+|\tilde{u}_\perp|^2\right)^{\frac{\kappa-1}{2}}}{|\tilde{u}_\parallel|^{N-2}}\leq \frac{|\tilde{u}_\parallel|^{\frac{2}{N+1}(\kappa-1)}}{|\tilde{u}_\parallel|^{N-2}}|\tilde{u}_\perp|^{\frac{N-1}{N+1}(\kappa-1)} = \frac{1}{|\tilde{u}_\parallel|^{(N-2) + \frac{2}{N+1}(1-\kappa)}}\frac{1}{|\tilde{u}_\perp|^{\frac{N-1}{N+1}(1-\kappa)}}
        \end{split}
    \end{align}
    for $\kappa-1\leq 0$, and
    \begin{align*}
        e^{-\frac{\alpha\beta/4}{2^{2-\beta}}|v+\tilde{u}_\parallel+\tilde{u}_\perp|^\beta}\mathbf{1}_{|v+\tilde{u}_\parallel+\tilde{u}_\perp|\geq |v|/2}& = e^{-\frac{\alpha\beta/4}{2^{3-\beta}}|v+\tilde{u}_\parallel+\tilde{u}_\perp|^\beta}e^{-\frac{\alpha\beta/4}{2^{3-\beta}}|v+\tilde{u}_\parallel+\tilde{u}_\perp|^\beta}\mathbf{1}_{|v+\tilde{u}_\parallel+\tilde{u}_\perp|\geq |v|/2}\\
        &\leq e^{-\frac{\alpha\beta/4}{2^3}|v|^\beta}e^{-\frac{\alpha\beta/4}{2^{3-\beta}}|v+\tilde{u}_\parallel+\tilde{u}_\perp|^\beta},
    \end{align*}
    we bound $I_{1,1}$ as follows:
    \begin{equation} \label{5_5:I_11}
        \begin{split}
            I_{1,1} &= \int_{\mathbb{R}^N}d\tilde{u}_\parallel\,\int_{E_{\tilde{u}_\parallel}} d\tilde{u}_\perp\,\frac{\left(|\tilde{u}_\parallel|^2+|\tilde{u}_\perp|^2\right)^{\frac{\kappa-1}{2}}}{|\tilde{u}_\parallel|^{N-2}\left(1+|v+\tilde{u}_\parallel|^2\right)^{\delta}} e^{-\frac{\alpha\beta/4}{2^{2-\beta}}|v+\tilde{u}_\parallel+\tilde{u}_\perp|^\beta}\mathbf{1}_{|v+\tilde{u}_\parallel+\tilde{u}_\perp|\geq |v|/2} \\
            &\leq e^{-\frac{\alpha\beta/4}{2^3}|v|^\beta}\int_{\mathbb{R}^N}d\tilde{u}_\parallel\, \frac{1}{|\tilde{u}_\parallel|^{(N-2) + \frac{2}{N+1}(1-\kappa)}\left(1+|v+\tilde{u}_\parallel|^2\right)^{\delta}} \\
            &\qquad \times \int_{E_{\tilde{u}_\parallel}}d\tilde{u}_\perp\, \frac{1}{|\tilde{u}_\perp|^{\frac{N-1}{N+1}(1-\kappa)}}e^{-\frac{\alpha\beta/4}{2^{3-\beta}}|v+\tilde{u}_{\parallel} + \tilde{u}_{\perp}|^\beta} \\
            &\leq e^{-\frac{\alpha\beta}{2^5}|v|^\beta}\int_{\mathbb{R}^N}d\tilde{u}_\parallel\, \frac{1}{|\tilde{u}_\parallel|^{(N-2) + \frac{2}{N+1}(1-\kappa)}\left(1+|v+\tilde{u}_\parallel|^2\right)^{\delta}}\\
            &\qquad \times \left(\int_{|\tilde{u}_\perp|\leq 1, E_{\tilde{u}_\parallel}}d\tilde{u}_\perp\, \frac{1}{|\tilde{u}_\perp|^{\frac{N-1}{N+1}(1-\kappa)}} + \int_{|\tilde{u}_\perp|\geq 1, E_{\tilde{u}_\parallel}}d\tilde{u}_\perp\, e^{-\frac{\alpha\beta}{2^{5-\beta}}|v+\tilde{u}_\parallel+\tilde{u}_{\perp}|^\beta}\right)\\
            &\leq C_{\alpha, \beta, N, \delta}e^{-\frac{\alpha\beta}{32}|v|^\beta}.
        \end{split}
    \end{equation}
    In the last step, we used 
    \begin{align}\label{5_5:indexes}
        \begin{split}
            &0\leq (N-2) + \frac{2}{N+1}(1-\kappa)< N,\\
            &2\delta + \left((N-2) + \frac{2}{N+1}(1-\kappa)\right)>N,\\
            &0\leq \frac{N-1}{N+1}(1-\kappa)< N-1,\\
            &\sup_{a\in\mathbb{R}^N}\int_E d\tilde{u}_\perp\, e^{-\frac{\alpha\beta}{2^{5-\beta}}|\tilde{u}_{\perp} + a|^\beta}\leq \int_E d\tilde{u}_\perp\, e^{-\frac{\alpha\beta}{2^{5-\beta}}|\tilde{u}_{\perp}|^\beta}<\infty,
        \end{split}
    \end{align}
    where $E$ denotes any $N-1$ dimensional plane through $0$, and Lemma \ref{lem:int_bound3}. Therefore, $I_{1,1}$ is bounded with decay $e^{-\frac{\alpha\beta}{32}|v|^\beta}$.\\
    
    Now, we consider the $I_{1,2}$ part. We need to bound
    \begin{align*}
        I_{1,2} &= \sum_{n=2}^\infty \frac{e^{-\frac{\alpha \beta/4}{2^{n+1}}|v|^\beta}}{|v|^{1-\kappa}} \int_{|\tilde{u}_\parallel|\leq 2|v|}d\tilde{u}_\parallel\,\int_{E_{\tilde{u}_\parallel}}d\tilde{u}_\perp\,\frac{1}{|\tilde{u}_\parallel|^{N-2}}
        \frac{1}{(1+|v+\tilde{u}_{\parallel}|^2)^{\delta}}\mathbf{1}_{2^{-n-1}|v|^2\leq |v+\tilde{u}_\parallel+\tilde{u}_\perp|^2\leq 2^{-n}|v|^2}.
    \end{align*}

    The set
    \begin{align*}
        \{\tilde{u}_\perp:{2^{-n-1}|v|^2\leq |\tilde{u}_\perp + (\tilde{u}_\parallel + v)|^2\leq 2^{-n}|v|^2}\}
    \end{align*}
    is difference between two concentric balls at $\tilde{u}_\parallel + v$ with radius $2^{-n/2}|v|$ and $2^{-(n+1)/2}|v|$. It implies that the integration of $\tilde{u}_\perp$ is the volume of intersection between a plane and the difference between two concentric balls. Therefore,
    \begin{align}\label{int_perp}
        \begin{split}
            &\int_{E_{\tilde{u}_\parallel}}d\tilde{u}_\perp\,\mathbf{1}_{2^{-n-1}|v|^2\leq |v+\tilde{u}_\parallel+\tilde{u}_\perp|^2\leq 2^{-n}|v|^2}\\
            &=\left|\{\tilde{u}_\perp:{2^{-n-1}|v|^2\leq |\tilde{u}_\perp + (\tilde{u}_\parallel + v)|^2\leq 2^{-n}|v|^2}\}\cap E_{\tilde{u}_\parallel}\right|\\
            &\leq C_N 2^{-(N-1)n/2}|v|^{N-1}
        \end{split}
    \end{align}
    for some constant $C_N$.

    Applying Lemma \ref{lem:int_bound3} and \eqref{int_perp} with $2\delta>N$, we get
    \begin{align}\label{5_5:I_1_2_int}
        \begin{split}
            &\sum_{n=2}^\infty \frac{e^{-\frac{\alpha \beta/4}{2^{n+1}}|v|^\beta}}{|v|^{1-\kappa}}\int_{|\tilde{u}_\parallel|\leq 2|v|}d\tilde{u}_\parallel\, \frac{1}{|\tilde{u}_\parallel|^{N-2}\left(1+|v+\tilde{u}_\parallel|^2\right)^{\delta}} \int_{E_{\tilde{u}_\parallel}}d\tilde{u}_\perp\,\mathbf{1}_{2^{-n-1}|v|^2\leq |v+\tilde{u}_\parallel+\tilde{u}_\perp|^2\leq 2^{-n}|v|^2}\\
            &\leq C_N	 
            \sum_{n=2}^\infty \frac{e^{-\frac{\alpha \beta/4}{2^{n+1}}|v|^\beta}}{|v|^{1-\kappa}}\int_{|\tilde{u}_\parallel|\leq 2|v|}d\tilde{u}_\parallel\,\frac{1}{|\tilde{u}_\parallel|^{N-2}\left(1+|v+\tilde{u}_\parallel|^2\right)^{\delta}} 2^{-(N-1)n/2}|v|^{N-1}\\
            &\leq C_{N, \delta} 
            \sum_{n=2}^\infty e^{-\frac{\alpha \beta/4}{2^{n+1}}|v|^\beta} 2^{-(N-1)n/2} |v|^{N-2+\kappa}\min\{|v|^2, (1+|v|)^{-(N-2)}\}\\
            &\leq C_{N, \delta, \kappa} 
            \sum_{n=2}^\infty e^{-\frac{\alpha \beta/4}{2^{n+1}}|v|^\beta} 2^{-(N-1)n/2}\min\{|v|^{N+\kappa}, (1+|v|)^\kappa\}.
        \end{split}
    \end{align}

    For $|v| \leq 1$, the summations are obviously well-bounded, so we focus on $|v| \geq 1$. For given $|v|\geq 1$, choose an integer $M\in \mathbb{N}\cup \{0\}$ such that $2^M\leq |v|< 2^{M+1}$.
    
    Using the fact that $x^N e^{-x}\leq C_N$ for $x\geq 0$ for some constant $C_N$, the summation $\sum_{n=0}^{\lfloor \beta M\rfloor}$ is bounded as follows:
    \begin{align*}
        \begin{split}
            \sum_{n=0}^{\lfloor\beta M\rfloor} e^{-\frac{\alpha \beta/4}{2^{n+1}}|v|^\beta} 2^{-(N-1)n/2} &\leq C_N\sum_{n=0}^{\lfloor\beta M\rfloor} \left(\frac{2^n}{(\alpha \beta/8)|v|^\beta}\right)^{1/2}\left(\frac{2^n}{(\alpha \beta/8)|v|^\beta}\right)^{(N-1)/2} 2^{-(N-1)n/2}\\
            &\leq C_{\alpha,\beta, N}\sum_{n=0}^{\lfloor\beta M\rfloor} 2^{n/2 - \beta M/2}|v|^{-\beta (N-1)/2}\\
            &\leq C_{\alpha,\beta, N}|v|^{-(N-1)\beta/2}.
        \end{split}
    \end{align*}

    The remaining summation is bounded as follows:
    \begin{align*}
        \begin{split}
            \sum_{n=\lfloor\beta M\rfloor+1}^\infty e^{-\frac{\alpha \beta/4}{2^{n+1}}|v|^\beta} 2^{-(N-1)n/2}&\leq \sum_{n=\lfloor\beta M\rfloor+1}^\infty 2^{-(N-1)n/2}\leq 2^{-\frac{N-1}{2}(\lfloor\beta M\rfloor+1)}\frac{1}{1 - 2^{-\frac{N-1}{2}}}\\
            &\leq C_N 2^{-\frac{N-1}{2}\beta M}\leq C_N |v|^{-(N-1)\beta/2}.
        \end{split}
    \end{align*}
    
    By the choice of $\beta$ in \eqref{index:weight}, we have $\kappa - \frac{N-1}{2}\beta < 0$. Therefore, we have
    \begin{align*}
         \sum_{n=2}^\infty e^{-\frac{\alpha \beta/4}{2^{n+1}}|v|^\beta} 2^{-(N-1)n/2}(1+|v|)^\kappa\leq C_{\alpha,\beta, N}(1+|v|)^{\kappa-\frac{N-1}{2}\beta}
    \end{align*}
    for $|v|\geq 1$.
    
    Finally, we get an estimate for $I_{1,2}$:
    \begin{align}\label{5_5:I_12}
        I_{1,2} \leq C_{\alpha,\beta, N}(1+|v|)^{\kappa-\frac{N-1}{2}\beta}.
    \end{align}
    
    We move on to $I_{2,1}$. We split $I_{2,1}$ by
    \begin{align*}
        \begin{split}
            I_{2,1} &=\int_{\mathbb{R}^N}d\tilde{u}_\parallel\,\int_{E_{\tilde{u}_\parallel}}d\tilde{u}_\perp\,\frac{\left(|\tilde{u}_\parallel|^2+|\tilde{u}_\perp|^2\right)^{\frac{\kappa-1}{2}}}{|\tilde{u}_\parallel|^{N-2}}
            \frac{1}{(1+|v+\tilde{u}_{\perp}|^2)^{\delta}} e^{-\frac{\alpha \beta/4}{2^{8-4\beta}}|v+\tilde{u}_\parallel+\tilde{u}_\perp|^\beta}\mathbf{1}_{|v+\tilde{u}_\parallel+\tilde{u}_\perp|\geq |v|/16}\\
            &\leq \int_{\mathbb{R}^N}d\tilde{u}_\parallel\,\int_{E_{\tilde{u}_\parallel}}d\tilde{u}_\perp\, \mathbf{1}_{|\tilde{u}_\parallel|\leq 2|v+\tilde{u}_\parallel+\tilde{u}_\perp|}\frac{1}{|\tilde{u}_\parallel|^{(N-2) + \frac{2}{N+1}(1-\kappa)}}
            \frac{1}{(1+|(v+\tilde{u}_\parallel + \tilde{u}_\perp) - \tilde{u}_\parallel|^2)^{\delta}}\\
            &\qquad \times \frac{1}{|\tilde{u}_\perp|^{\frac{N-1}{N+1}(1-\kappa)}} e^{-\frac{\alpha \beta/4}{2^{8-4\beta}}|v+\tilde{u}_\parallel+\tilde{u}_\perp|^\beta}\mathbf{1}_{|v+\tilde{u}_\parallel+\tilde{u}_\perp|\geq |v|/16}\\
            &\quad + \int_{\mathbb{R}^N}d\tilde{u}_\parallel\,\int_{E_{\tilde{u}_\parallel}}d\tilde{u}_\perp\, \mathbf{1}_{|\tilde{u}_\parallel|\geq 2|v+\tilde{u}_\parallel+\tilde{u}_\perp|}\frac{1}{|\tilde{u}_\parallel|^{(N-2) + \frac{2}{N+1}(1-\kappa)}}
            \frac{1}{(1+|(v+\tilde{u}_\parallel + \tilde{u}_\perp) - \tilde{u}_\parallel|^2)^{\delta}}\\
            &\qquad \times \frac{1}{|\tilde{u}_\perp|^{\frac{N-1}{N+1}(1-\kappa)}} e^{-\frac{\alpha \beta/4}{2^{8-4\beta}}|v+\tilde{u}_\parallel+\tilde{u}_\perp|^\beta}\mathbf{1}_{|v+\tilde{u}_\parallel+\tilde{u}_\perp|\geq |v|/16}\\
            &\eqqcolon I_{2,1,1} + I_{2,1,2}.
        \end{split}
    \end{align*}
    In the middle, we used \eqref{5_5:kappa_split} and divided into two cases $|\tilde{u}_\parallel|\leq 2|u|$ and $|\tilde{u}_\parallel|\geq 2|u|$. To use the exponential decay, we split the exponential function by
    \begin{align}
        e^{-\frac{\alpha \beta/4}{2^{8-4\beta}}|u|^\beta} \mathbf{1}_{|\tilde{u}_\parallel|\leq 2|u|}\mathbf{1}_{|u|\geq |v|/16}&\leq e^{-\frac{1}{4}\frac{\alpha \beta/4}{2^{8-4\beta}}16^{-\beta}|v|^\beta}e^{-\frac{1}{2}\frac{\alpha \beta/4}{2^{8-4\beta}}(|\tilde{u}_\parallel|/2)^\beta}e^{-\frac{1}{4}\frac{\alpha \beta/4}{2^{8-4\beta}}|u|^\beta},\label{5_5:I_211}\\
        e^{-\frac{\alpha \beta/4}{2^{8-4\beta}}|u|^\beta} \mathbf{1}_{|u|\geq |v|/16}&\leq e^{-\frac{1}{2}\frac{\alpha \beta/4}{2^{8-4\beta}}16^{-\beta}|v|^\beta}e^{-\frac{1}{2}\frac{\alpha \beta/4}{2^{8-4\beta}}|u|^\beta}\label{5_5:I_212}
    \end{align}
    with $u = v+\tilde{u}_\parallel+\tilde{u}_\perp$.

    From \eqref{5_5:I_211}, we have
    \begin{align*}
        I_{2,1,1}&\leq e^{-\frac{1}{4}\frac{\alpha \beta/4}{2^8}|v|^\beta}\int_{\mathbb{R}^N}d\tilde{u}_\parallel\,\frac{e^{-\frac{1}{2}\frac{\alpha \beta/4}{2^{8-4\beta}}2^{-\beta}|\tilde{u}_\parallel|^\beta}}{|\tilde{u}_\parallel|^{(N-2) + \frac{2}{N+1}(1-\kappa)}}
        \int_{E_{\tilde{u}_\parallel}}d\tilde{u}_\perp\,\frac{e^{-\frac{1}{4}\frac{\alpha \beta/4}{2^{8-4\beta}}|v+\tilde{u}_\parallel+\tilde{u}_\perp|^\beta}}{|\tilde{u}_\perp|^{\frac{N-1}{N+1}(1-\kappa)}}\\
        &\leq e^{-\frac{\alpha \beta}{2^{12}}|v|^\beta}\int_{\mathbb{R}^N}d\tilde{u}_\parallel\,\frac{e^{-\frac{\alpha \beta}{2^{11-3\beta}}|\tilde{u}_\parallel|^\beta}}{|\tilde{u}_\parallel|^{(N-2) + \frac{2}{N+1}(1-\kappa)}}\\
        &\qquad \times \left(\int_{|\tilde{u}_\perp|\leq 1, E_{\tilde{u}_\parallel}}d\tilde{u}_\perp\,\frac{1}{|\tilde{u}_\perp|^{\frac{N-1}{N+1}(1-\kappa)}} + \int_{|\tilde{u}_\perp|\geq 1, E_{\tilde{u}_\parallel}}d\tilde{u}_\perp\,e^{-\frac{\alpha \beta}{2^{12-4\beta}}|v+\tilde{u}_\parallel+\tilde{u}_\perp|^\beta}\right)\\
        &\leq C_{\alpha,\beta, N, \delta} e^{-\frac{\alpha \beta}{2^{12}}|v|^\beta}.
    \end{align*}
    In the last step, we used \eqref{5_5:indexes} and Lemma \ref{lem:int_bound3} to bound the integrals by constants.
    
    By a similar computation, from \eqref{5_5:I_212}, we get
    \begin{align*}
            I_{2,1,2}&\leq e^{-\frac{1}{2}\frac{\alpha \beta/4}{2^8}|v|^\beta}\int_{\mathbb{R}^N}d\tilde{u}_\parallel\,\frac{1}{|\tilde{u}_\parallel|^{(N-2) + \frac{2}{N+1}(1-\kappa)}}\frac{1}{(1+|\tilde{u}_{\parallel}|^2)^{\delta}} 
            \int_{E_{\tilde{u}_\parallel}}d\tilde{u}_\perp\,\frac{e^{-\frac{1}{2}\frac{\alpha \beta/4}{2^{8-4\beta}}|v+\tilde{u}_\parallel+\tilde{u}_\perp|^\beta}}{|\tilde{u}_\perp|^{\frac{N-1}{N+1}(1-\kappa)}}\\
            &\leq e^{-\frac{\alpha \beta}{2^{11}}|v|^\beta}\int_{\mathbb{R}^N}d\tilde{u}_\parallel\,\frac{1}{|\tilde{u}_\parallel|^{(N-2) + \frac{2}{N+1}(1-\kappa)}}\frac{1}{(1+|\tilde{u}_{\parallel}|^2)^{\delta}}\\
            &\qquad \times \left(\int_{|\tilde{u}_\perp|\leq 1, E_{\tilde{u}_\parallel}}d\tilde{u}_\perp\,\frac{1}{|\tilde{u}_\perp|^{\frac{N-1}{N+1}(1-\kappa)}} + \int_{|\tilde{u}_\perp|\geq 1, E_{\tilde{u}_\parallel}}d\tilde{u}_\perp\,e^{-\frac{\alpha \beta}{2^{11-4\beta}}|v+\tilde{u}_\parallel+\tilde{u}_\perp|^\beta}\right)\\
            &\leq C_{\alpha,\beta, N, \delta} e^{-\frac{\alpha \beta}{2^{11}}|v|^\beta}.
    \end{align*}
    It shows
    \begin{align}\label{5_5:I_21}
        I_{2,1}\leq C_{\alpha,\beta, N, \delta} e^{-\frac{\alpha \beta}{2^{12}}|v|^\beta}.
    \end{align}

    Now, we need to deal with the remaining $I_{2,2}$. We recall the definition of $I_{2,2}$.
    \begin{align}\label{5_5:I_22_0}
        \begin{split}
            I_{2,2}&=\sum_{n=8}^\infty \frac{e^{-\frac{\alpha \beta/4}{2^{n+1}}|v|^\beta}}{|v|^{1-\kappa}}\int_{|\tilde{u}_\parallel|\leq 2|v|}d\tilde{u}_\parallel\,\int_{E_{\tilde{u}_\parallel}}d\tilde{u}_\perp\,\frac{1}{|\tilde{u}_\parallel|^{N-2}}\frac{1}{(1+|v+\tilde{u}_{\perp}|^2)^{\delta}}\mathbf{1}_{2^{-n-1}|v|^2\leq |v+\tilde{u}_\parallel+\tilde{u}_\perp|^2\leq 2^{-n}|v|^2}.
        \end{split}
    \end{align}    
    To bound $I_{2,2}$, we split into three cases: $|\tilde{u}_\parallel|>|v|/4$, $2^{-n/2+2}|v|<|\tilde{u}_\parallel|\leq |v|/4$, and $|\tilde{u}_\parallel|\leq 2^{-n/2+2}|v|$.\\

    \noindent \textit{(i) The case $|\tilde{u}_\parallel|> |v|/4$.}
    
    Since $|\tilde{u}_\parallel|>|v|/4$, by $|v+\tilde{u}_\perp+\tilde{u}_\parallel|\leq 2^{-n/2}|v|\leq |v|/16$ for $n\geq 8$, we have $|v+\tilde{u}_\perp|\geq |v|/8$. Using these upper bounds and \eqref{int_perp}, we get
    \begin{align}\label{5_5:I_22_1}
        \begin{split}
            &\sum_{n=8}^\infty \frac{e^{-\frac{\alpha \beta/4}{2^{n+1}}|v|^\beta}}{|v|^{1-\kappa}} \int_{|v|/4<|\tilde{u}_\parallel|\leq 2|v|}d\tilde{u}_\parallel\,\int_{E_{\tilde{u}_\parallel}}d\tilde{u}_\perp\,\frac{1}{|\tilde{u}_\parallel|^{N-2}}\frac{1}{(1+|v+\tilde{u}_{\perp}|^2)^{\delta}}
            \mathbf{1}_{2^{-n-1}|v|^2\leq |v+\tilde{u}_\perp+\tilde{u}_\parallel|^2\leq 2^{-n}|v|^2}\\
            &\leq C_{N, \delta}\sum_{n=8}^\infty \frac{e^{-\frac{\alpha \beta/4}{2^{n+1}}|v|^\beta}}{|v|^{N-1-\kappa}}\frac{1}{(1+|v|^2)^{\delta}}\int_{|v|/4<|\tilde{u}_\parallel|\leq 2|v|}d\tilde{u}_\parallel\,\int_{E_{\tilde{u}_\parallel}}d\tilde{u}_\perp\,
            \mathbf{1}_{2^{-n-1}|v|^2\leq |v+\tilde{u}_\perp+\tilde{u}_\parallel|^2\leq 2^{-n}|v|^2}\\
            &\leq C_{N, \delta}\sum_{n=8}^\infty e^{-\frac{\alpha \beta/4}{2^{n+1}}|v|^\beta}\frac{2^{-n(N-1)/2}|v|^\kappa}{(1+|v|^2)^{\delta}}\int_{|v|/4\leq |\tilde{u}_\parallel|\leq 2|v|}d\tilde{u}_\parallel\,\\
            & = C_{N, \delta}\frac{|v|^{N+\kappa}}{(1+|v|^2)^{\delta}}\sum_{n=8}^\infty e^{-\frac{\alpha \beta/4}{2^{n+1}}|v|^\beta} 2^{-n(N-1)/2}.
        \end{split}
    \end{align}
    Note that the summation has the same form as \eqref{5_5:I_1_2_int}. Since $2\delta>N$, by \eqref{5_5:I_12}, it is bounded by
    \begin{align*}
        C_{N, \delta}\frac{|v|^{N+\kappa}}{(1+|v|^2)^{\delta}}\sum_{n=8}^\infty e^{-\frac{\alpha \beta/4}{2^{n+1}}|v|^\beta} 2^{-n(N-1)/2}\leq C_{\alpha,\beta, N, \delta}(1+|v|)^{\kappa-\frac{N-1}{2}\beta}.
    \end{align*}
    \\
    
    \noindent \textit{(ii) The case $2^{-n/2+2}|v|< |\tilde{u}_\parallel|\leq |v|/4$.}
    
    On this set, we can apply Lemma \ref{lem:cone}. For $n\geq 8$, replacing $x = \tilde{u}_\parallel$, $y = \tilde{u}_\perp$, $a = v$, and $R = 2^{-n/2}|v|$, we can easily check that these variables satisfy the condition of Lemma \ref{lem:cone}. Therefore, there exists a set $C_{v, 2^{-n/2}|v|}$ satisfying the property in Lemma \ref{lem:cone}.
    
    As $|\tilde{u}_\parallel|>2^{-n/2+2}|v|\geq 4|u|$ as $2^{-n/2}|v|\geq |u|$, using $u = \tilde{u}_\parallel+\tilde{u}_\perp + v$, we have
    \begin{align*}
        |v+\tilde{u}_\perp| = |u-\tilde{u}_\parallel|\geq |\tilde{u}_\parallel| - |u|\geq \frac{3}{4}|\tilde{u}_\parallel|.
    \end{align*}
    Therefore, applying \eqref{int_perp}, we have
    \begin{align*}
        \begin{split}
            &\mathbf{1}_{2^{-n/2+2}|v|< |\tilde{u}_\parallel|\leq |v|/4}\int_{E_{\tilde{u}_\parallel}}d\tilde{u}_\perp\,\frac{1}{(1+|v+\tilde{u}_{\perp}|^2)^{\delta}}
            \mathbf{1}_{2^{-n-1}|v|^2\leq |v+\tilde{u}_\perp+\tilde{u}_\parallel|^2\leq 2^{-n}|v|^2}\\
            &\leq C_\delta\frac{1}{(1+|\tilde{u}_\parallel|^2)^{\delta}}\int_{E_{\tilde{u}_\parallel}}d\tilde{u}_\perp\,
            \mathbf{1}_{2^{-n-1}|v|^2\leq |v+\tilde{u}_\perp+\tilde{u}_\parallel|^2\leq 2^{-n}|v|^2}\\
            &\leq C_{N, \delta}\frac{1}{(1+|\tilde{u}_\parallel|^2)^{\delta}}(2^{-\frac{n}{2}}|v|)^{N-1}.
        \end{split}
    \end{align*}
    
    Applying these bounds to \eqref{5_5:I_22_0}, we get
    \begin{align}\label{5_5:I_22_2}
        \begin{split}
            &\frac{C_{N, \delta}}{|v|^{1-\kappa}}e^{-\frac{\alpha \beta/4}{2^{n+1}}|v|^\beta}\int_{2^{-n/2+2}|v|< |\tilde{u}_\parallel|\leq |v|/4}d\tilde{u}_\parallel\,\frac{1}{|\tilde{u}_\parallel|^{N-2}}\int_{E_{\tilde{u}_\parallel}}d\tilde{u}_\perp\,\frac{1}{(1+|v+\tilde{u}_{\perp}|^2)^{\delta}}
            \mathbf{1}_{2^{-n-1}|v|^2\leq |v+\tilde{u}_\perp+\tilde{u}_\parallel|^2\leq 2^{-n}|v|^2}\\
            &\leq \frac{C_{N, \delta}}{|v|^{1-\kappa}}e^{-\frac{\alpha \beta/4}{2^{n+1}}|v|^\beta}\int_{\{2^{-n/2+2}|v|\leq |\tilde{u}_\parallel|\leq |v|/4\}\cap C_{v, 2^{-n/2}|v|}}d\tilde{u}_\parallel\,\frac{1}{|\tilde{u}_\parallel|^{N-2}}\frac{1}{(1+|\tilde{u}_\parallel|^2)^{\delta}}
            (2^{-\frac{n}{2}}|v|)^{N-1}\\
            &\leq \frac{C_{N, \delta}}{|v|^{2-\kappa}}e^{-\frac{\alpha \beta/4}{2^{n+1}}|v|^\beta}(2^{-\frac{n}{2}}|v|)^{N}\int_{2^{-n/2+2}|v|}^{|v|/4}dr\,\frac{r}{(1+r^2)^{\delta}}.
        \end{split}
    \end{align}
    The final $\frac{2^{-\frac{n}{2}}|v|}{|v|}$ decay is from $C_{v, 2^{-n/2}|v|}$; see Lemma \ref{lem:cone}.
    
    Now, let us assume $2^M\leq |v|\leq 2^{M+1}$ for $M\in\mathbb{N}\cup\{0\}$. Indeed, if $|v|\leq 1$, then
    \begin{align}\label{5_5:I_22_3}
        \begin{split}
            &\sum_{n=8}^\infty \frac{1}{|v|^{2-\kappa}}(2^{-\frac{n}{2}}|v|)^{N}\int_{2^{-n/2+2}|v|}^{|v|/4}dr\,\frac{r}{(1+r^2)^{\delta}}\\
            &\leq\sum_{n=8}^\infty C_\delta |v|^{N-2+\kappa} (2^{-\frac{n}{2}})^{N}\int_0^{|v|/4}dr\,r\leq \sum_{n=8}^\infty C_\delta |v|^{N+\kappa} (2^{-\frac{n}{2}})^{N}<\infty.
        \end{split}
    \end{align}

    We divide the summation by $\sum_{n=8}^{\lfloor \beta M\rfloor}$ and $\sum_{n=\lfloor \beta M\rfloor + 1}^{2M}$, and $\sum_{n=2M+1}^\infty$. Let us start from $\sum_{n=8}^{\lfloor \beta M\rfloor}$ so that $2^{-n/2}|v|\geq 1$. In this case, we bound
    \begin{align*}
        \int_{2^{-n/2+2}|v|}^{|v|/4}dr\,\frac{r}{(1+r^2)^{\delta}}\leq \int_{2^{-n/2+2}|v|}^{|v|/4}dr\,r^{1-2\delta}\leq C_\delta(2^{-n/2}|v|)^{2-2\delta}
    \end{align*}
    as $\delta>1$. Thus, we get
    \begin{align*}
        &\sum_{n=8}^{\lfloor \beta M\rfloor}\frac{e^{-\frac{\alpha \beta/4}{2^{n+1}}|v|^\beta}}{|v|^{2-\kappa}}(2^{-\frac{n}{2}}|v|)^{N}\int_{2^{-n/2+2}|v|}^{|v|/4}dr\,\frac{r}{(1+r^2)^{\delta}}\\
        &\leq C_\delta |v|^{N+\kappa-2\delta}\sum_{n=8}^{\lfloor \beta M\rfloor}e^{-\frac{\alpha \beta/4}{2^{n+1}}|v|^\beta} 2^{-\frac{N+2-2\delta}{2}n}\\
        &\leq C_{N,\delta} |v|^{N+\kappa-2\delta}\sum_{n=8}^{\lfloor \beta M\rfloor}\left(\frac{2^{n+1}}{(\alpha\beta/4)|v|^\beta}\right)\left(\frac{2^{n+1}}{(\alpha\beta/4)|v|^\beta}\right)^{\frac{N+2}{2}} 2^{-\frac{N+2-2\delta}{2}n}\\
        &\leq C_{\alpha,\beta, N,\delta} |v|^{N+\kappa-2\delta-\frac{N+4}{2}\beta}\sum_{n=8}^{\lfloor \beta M\rfloor} 2^{(\delta+1)n}.
    \end{align*}
    In the middle, we used $x^{\frac{N+4}{2}}e^{-x}\leq C_N$ for $x>0$.
    
    Since $\delta> 1$,
    \begin{align*}
        |v|^{N+\kappa-2\delta-\frac{N+4}{2}\beta}\sum_{n=8}^{\lfloor \beta M\rfloor} 2^{(\delta+1)n}&\leq |v|^{N+\kappa-2\delta-\frac{N+4}{2}\beta}2^{8(\delta+1)}\frac{2^{(\lfloor \beta M\rfloor-7)(\delta+1)} - 1}{2^{\delta+1}-1}\\
        &\leq C_{\delta} |v|^{N+\kappa-2\delta-\frac{N+4}{2}\beta}2^{\beta M(\delta+1)}\\
        &\leq C_{\delta} |v|^{N+\kappa-2\delta-\frac{N+4}{2}\beta}|v|^{\beta(\delta+1)}\\
        &=C_{\delta}|v|^{(2-\beta)(\frac{N}{2}-\delta) + \kappa-\beta}.
    \end{align*}
    Using the condition $2\delta>N$, we finally get
    \begin{align}\label{5_5:I_22_4}
        \sum_{n=8}^{\lfloor \beta M\rfloor}\frac{e^{-\frac{\alpha \beta/4}{2^{n+1}}|v|^\beta}}{|v|^{2-\kappa}}(2^{-\frac{n}{2}}|v|)^{N}\int_{2^{-n/2+2}|v|}^{|v|/4}dr\,\frac{r}{(1+r^2)^{\delta}}\leq \frac{C_{\alpha,\beta, N,\delta}}{|v|^{\beta-\kappa}}
    \end{align}
    for some constant $C_{N,\delta}$.

    Next, we move on to $\sum_{n=\lfloor \beta M\rfloor+1}^{2M}$. In this case, we can ignore the exponential term. Also, still we have $2^{-n/2}|v|\geq 1$. Therefore,
    \begin{align*}
        \sum_{n=\lfloor \beta M\rfloor+1}^{2M}\frac{e^{-\frac{\alpha \beta/4}{2^{n+1}}|v|^\beta}}{|v|^{2-\kappa}}(2^{-\frac{n}{2}}|v|)^{N}\int_{2^{-n/2+2}|v|}^{|v|/4}dr\,\frac{r}{(1+r^2)^{\delta}}&\leq C_\delta \sum_{n=\lfloor \beta M\rfloor+1}^{2M}\frac{1}{|v|^{2-\kappa}}(2^{-\frac{n}{2}}|v|)^{N}(2^{-n/2}|v|)^{2-2\delta}\\
        &=C_\delta |v|^{N+\kappa -2\delta}\sum_{n=\lfloor \beta M\rfloor+1}^{2M}2^{-(\frac{N}{2}+1-\delta)n}.
    \end{align*}
    When $\frac{N}{2}+1-\delta>0$, then the summation is given by $C_{N,\delta} |v|^{-\beta(\frac{N}{2}+1-\delta)}$, and if $\frac{N}{2}+1-\delta< 0$, then the summation is given by $C_{N,\delta}|v|^{-(N+2-2\delta)}$. Summing up the two cases, we have
    \begin{align}\label{5_5:I_22_4_1}
        \sum_{n=\lfloor \beta M\rfloor+1}^{2M}\frac{e^{-\frac{\alpha \beta/4}{2^{n+1}}|v|^\beta}}{|v|^{2-\kappa}}(2^{-\frac{n}{2}}|v|)^{N}\int_{2^{-n/2+2}|v|}^{|v|/4}dr\,\frac{r}{(1+r^2)^{\delta}}\leq C_{N,\delta}|v|^{(2-\beta)(\frac{N}{2}-\delta) +\kappa - \beta}
    \end{align}
    when $\frac{N}{2}+1-\delta\neq 0$. As $2\delta>N$, it is bounded by $|v|^{\kappa-\beta}$ order.
    
    When $\frac{N}{2}+1-\delta = 0$, then the summation is bounded by $2M - \lfloor \beta M\rfloor\leq C(2-\beta)\ln(|v|+1)\leq 2C |v|^{2-\beta}$ for $|v|\geq 1$; we used $\epsilon \ln(x+1)\leq 2 x^\epsilon$ for $x\geq 1$. Therefore,
    \begin{align}\label{5_5:I_22_4_2}
        \begin{split}
            \sum_{n=\lfloor \beta M\rfloor+1}^{2M}\frac{e^{-\frac{\alpha \beta/4}{2^{n+1}}|v|^\beta}}{|v|^{2-\kappa}}(2^{-\frac{n}{2}}|v|)^{N}\int_{2^{-n/2+2}|v|}^{|v|/4}dr\,\frac{r}{(1+r^2)^{\delta}}&\leq C_{N,\delta}|v|^{\kappa-2} (2-\beta)\ln(|v|+1)\\
            &\leq C_{N,\delta}|v|^{\kappa-\beta}.
        \end{split}
    \end{align}
    It concludes that the summation is bounded by $C_{N,\delta}|v|^{\kappa-\beta}$ when $\delta>\frac{N}{2}$.

    Now, we consider the remaining $\sum_{n=2M+1}^\infty$ summation. We bound it by
    \begin{align}\label{5_5:I_22_5}
        \begin{split}
            \sum_{n=2M+1}^\infty \frac{1}{|v|^{2-\kappa}}(2^{-\frac{n}{2}}|v|)^{N}\int_{2^{-n/2+2}|v|}^{|v|/4}dr\,\frac{r}{(1+r^2)^{\delta}}\leq \sum_{n=2M+1}^\infty \frac{1}{|v|^{2-\kappa}}(2^{-\frac{n}{2} + M + 1})^{N}\int_0^\infty dr\,\frac{r}{(1+r^2)^{\delta}}.
        \end{split}
    \end{align}
    Since $\delta>1$, the final integral and summation are bounded, so we have $|v|^{-2+\kappa}$ decay.
    
    Combining \eqref{5_5:I_22_3}, \eqref{5_5:I_22_4}, \eqref{5_5:I_22_4_1}, \eqref{5_5:I_22_4_2}, and \eqref{5_5:I_22_5}, we have
    \begin{align}\label{5_5:I_22_6}
        \begin{split}
            &\sum_{n=8}^\infty \frac{e^{-\frac{\alpha \beta/4}{2^{n+1}}|v|^\beta}}{|v|^{1-\kappa}}\int_{2^{-n/2+2}|v|< |\tilde{u}_\parallel|\leq |v|/4}d\tilde{u}_\parallel\,\frac{1}{|\tilde{u}_\parallel|^{N-2}}\int_{E_{\tilde{u}_\parallel}}d\tilde{u}_\perp\,\frac{1}{(1+|v+\tilde{u}_{\perp}|^2)^{\delta}}
            \mathbf{1}_{2^{-n-1}|v|^2\leq |v+\tilde{u}_\perp+\tilde{u}_\parallel|^2\leq 2^{-n}|v|^2}\\
            &\leq \frac{C_{\alpha,\beta, N,\delta}}{(1+|v|)^{\beta-\kappa}}
        \end{split}
    \end{align}
    for some constant $C_{\alpha,\beta, N,\delta}$.\\

    \noindent \textit{(iii) The case $|\tilde{u}_\parallel|\leq 2^{-n/2+2}|v|$.}
    
    When $|\tilde{u}_\parallel|\leq 2^{-n/2+2}|v|$, $|v+\tilde{u}_\perp+\tilde{u}_\parallel|\leq 2^{-n/2}|v|$ implies $|v+\tilde{u}_\perp|\leq 2^{-\frac{n-5}{2}}|v|$. Therefore, we have
    \begin{align*}
        \begin{split}
            &\int_{E_{\tilde{u}_\parallel}}d\tilde{u}_\perp\,\frac{1}{(1+|v+\tilde{u}_{\perp}|^2)^{\delta}}
            \mathbf{1}_{2^{-n-1}|v|^2\leq |v+\tilde{u}_\perp+\tilde{u}_\parallel|^2\leq 2^{-n}|v|^2}\\
            &\leq\int_{E_{\tilde{u}_\parallel}}d\tilde{u}_\perp\,\frac{1}{(1+|v+\tilde{u}_{\perp}|^2)^{\delta}}
            \mathbf{1}_{|v+\tilde{u}_\perp|^2\leq 2^{-n+5}|v|^2}\\
            &\leq C_N\times
            \begin{dcases}
                \int_0^{2^{-(n-5)/2}|v|}dr\,r^{N-2} & 2^{-n/2}|v|\leq 1\\
                \int_0^{2^{5/2}} dr\,r^{N-2} + \int_{2^{5/2}}^\infty dr\,r^{N-2 - 2\delta} & 2^{-n/2}|v|\geq 1
            \end{dcases}\\
            &\leq C_{N, \delta}\times
            \begin{cases}
                (2^{-n/2}|v|)^{N-1} & 2^{-n/2}|v|\leq 1,\\
                1 & 2^{-n/2}|v|\geq 1.
            \end{cases}
        \end{split}
    \end{align*}
    From the second to the third line, we used the fact that the integral is maximized when the integrating plane $E_{\tilde{u}_\parallel}$ passes through $v+\tilde{u}_\perp = 0$, so we can use the spherical coordinate about $v+\tilde{u}_\perp$ on $E_{\tilde{u}_\parallel}$. In fact, it is an application of the Hardy-Littlewood inequality. As $2\delta>N$, we bound the last integral by a constant for $2^{-n/2}|v|\geq 1$.
    
    Using this bound, we obtain
    \begin{align*}
        \begin{split}
            &\frac{e^{-\frac{\alpha \beta/4}{2^{n+1}}|v|^\beta}}{|v|^{1-\kappa}}\int_{|\tilde{u}_\parallel|\leq 2^{-n/2+2}|v|}d\tilde{u}_\parallel\,\int_{E_{\tilde{u}_\parallel}}d\tilde{u}_\perp\,\frac{1}{|\tilde{u}_\parallel|^{N-2}}\frac{1}{(1+|v+\tilde{u}_{\perp}|^2)^{\delta}}
            \mathbf{1}_{2^{-n-1}|v|^2\leq |v+\tilde{u}_\perp+\tilde{u}_\parallel|^2\leq 2^{-n}|v|^2}\\
            &\leq \frac{C_{N, \delta}}{|v|^{1-\kappa}}e^{-\frac{\alpha \beta/4}{2^{n+1}}|v|^\beta}\int_{\{|\tilde{u}_\parallel|\leq 2^{-n/2+2}|v|\}\cap C_{v, 2^{-n/2}|v|}}d\tilde{u}_\parallel\,\frac{1}{|\tilde{u}_\parallel|^{N-2}}\times \begin{cases}
                (2^{-n/2}|v|)^{N-1}) & 2^{-n/2}|v|\leq 1,\\
                1 & 2^{-n/2}|v|\geq 1.
            \end{cases}
        \end{split}
    \end{align*}
    In the final step, we applied Lemma \ref{lem:cone} for the exact same reason in the previous case. As
    \begin{align*}
        \int_{\{|\tilde{u}_\parallel|\leq 2^{-n/2+2}|v|\}\cap C_{v, 2^{-n/2}|v|}}d\tilde{u}_\parallel\,\frac{1}{|\tilde{u}_\parallel|^{N-2}}\leq C_N\frac{2^{-n/2}|v|}{|v|}\int_0^{2^{-n/2+2}|v|}dr\,r\leq \frac{C_N}{|v|}(2^{-n/2+2}|v|)^3,
    \end{align*}
    we finally obtain
    \begin{align}\label{5_5:I_22_7}
        \begin{split}
            &\frac{e^{-\frac{\alpha \beta/4}{2^{n+1}}|v|^\beta}}{|v|^{1-\kappa}}\int_{\{|\tilde{u}_\parallel|\leq 2^{-n/2+2}|v|\}\cap C_{v, 2^{-n/2}|v|}}d\tilde{u}_\parallel\,\int_{E_{\tilde{u}_\parallel}}d\tilde{u}_\perp\,\frac{1}{|\tilde{u}_\parallel|^{N-2}}\\
            &\qquad \times \frac{1}{(1+|v+\tilde{u}_{\perp}|^2)^{\delta}}
            \mathbf{1}_{2^{-n-1}|v|^2\leq |v+\tilde{u}_\perp+\tilde{u}_\parallel|^2\leq 2^{-n}|v|^2}\\
            &\leq \frac{C_{N, \delta} e^{-\frac{\alpha \beta/4}{2^{n+1}}|v|^\beta}}{|v|^{2-\kappa}}(2^{-n/2+2}|v|)^3\times
            \begin{cases}
            (2^{-n/2}|v|)^{N-1} & 2^{-n/2}|v|\leq 1,\\
            1 & 2^{-n/2}|v|\geq 1.
            \end{cases}
        \end{split}
    \end{align}
    Now, we assume $2^M\leq |v|\leq 2^{M+1}$ for $M\in \mathbb{N}\cup\{0\}$. Indeed, if $|v|\leq 1$,
    \begin{align*}
        \sum_{n=8}^\infty \frac{e^{-\frac{\alpha \beta/4}{2^{n+1}}|v|^\beta}}{|v|^{2-\kappa}}(2^{-n/2}|v|)^3(2^{-n/2}|v|)^{N -1}\leq C|v|^{N+\kappa}\sum_{n=8}^\infty (2^{-n/2})^{N+2}<\infty.
    \end{align*}
    
    Let us divide the summation by $\sum_{n=8}^{\lfloor\beta M\rfloor}$, $\sum_{n=\lfloor\beta M\rfloor + 1}^{2M+1}$, and $\sum_{2M+2}^\infty$. For the first case, since $2^{-n/2}|v|\geq 1$,
    \begin{align}\label{5_5:I_22_8}
        \begin{split}
            \sum_{n=8}^{\lfloor\beta M\rfloor} \frac{e^{-\frac{\alpha \beta/4}{2^{n+1}}|v|^\beta}}{|v|^{2-\kappa}}(2^{-n/2}|v|)^3&\leq C\sum_{n=8}^{\lfloor\beta M\rfloor}\left(\frac{2^{n+1}}{(\alpha\beta/4)|v|^\beta}\right)\left(\frac{2^{n+1}}{(\alpha\beta/4)|v|^\beta}\right)^{3/2}\frac{1}{|v|^{2-\kappa}}(2^{-n/2}|v|)^3\\
            &\leq C_{\alpha,\beta}\sum_{n=8}^{\lfloor\beta M\rfloor} 2^{n-\beta M}|v|^{\kappa + 1 -\frac{3}{2}\beta}\\
            &\leq C_{\alpha,\beta}|v|^{\kappa + 1 -\frac{3}{2}\beta},
        \end{split}
    \end{align}
    where we used $x^{5/2} e^{-x}\leq C$ for some constant $C$ for $x\geq 0$. Since $\kappa + 1 - \frac{3}{2}\beta<0$, it gives $|v|$ decay.
    
    If $\lfloor\beta M\rfloor + 1 \leq n\leq 2M+1$, then
    \begin{align*}
        &\sum_{n=\lfloor\beta M\rfloor + 1}^{2M+1} \frac{e^{-\frac{\alpha \beta/4}{2^{n+1}}|v|^\beta}}{|v|^{2-\kappa}}(2^{-n/2}|v|)^3\leq \sum_{n=\lfloor\beta M\rfloor + 1}^{2M + 1} |v|^{\kappa + 1}2^{-\frac{3}{2}n}\\
        &= |v|^{\kappa + 1} 2^{-\frac{3}{2}(\lfloor\beta M\rfloor + 1)}\frac{2^{-\frac{3}{2}(2M + 1 - \lfloor\beta M\rfloor)} - 1}{2^{-\frac{3}{2}} - 1}\leq C|v|^{\kappa + 1 - \frac{3}{2}\beta}.
    \end{align*}
    Since $\kappa + 1 - \frac{3}{2}\beta<0$, it again gives $|v|$ decay.

    If $n\geq 2M+2$, as $2^{-n/2}|v|\leq 1$ in \eqref{5_5:I_22_7}, we have
    \begin{align*}
        \sum_{n=2M+2}^\infty \frac{e^{-\frac{\alpha \beta/4}{2^{n+1}}|v|^\beta}}{|v|^{2-\kappa}}(2^{-n/2}|v|)^{N -1}(2^{-n/2+2}|v|)^3\leq C\frac{1}{|v|^{2-\kappa}}\sum_{n=2M+2}^\infty (2^{-n/2 + (M+1)})^{N + 2}\leq \frac{C_N}{|v|^{2-\kappa}}.
    \end{align*}
    Therefore,
    \begin{align}\label{5_5:I_22_9}
        \begin{split}
            &\sum_{n=8}^\infty e^{-\frac{\alpha \beta/4}{2^{n+1}}|v|^\beta} \int_{|\tilde{u}_\parallel|\leq 2^{-n/2+2}|v|}d\tilde{u}_\parallel\,\int_{E_{\tilde{u}_\parallel}}d\tilde{u}_\perp\,\frac{\left(|\tilde{u}_\parallel|^2+|\tilde{u}_\perp|^2\right)^{\frac{\kappa-1}{2}}}{|\tilde{u}_\parallel|^{N-2}}\\
            &\qquad \times \frac{1}{(1+|v+\tilde{u}_{\perp}|^2)^{\delta}}\mathbf{1}_{2^{-n-1}|v|^2\leq |v+\tilde{u}_\perp+\tilde{u}_\parallel|^2\leq 2^{-n}|v|^2}\\
            &\leq \left(\sum_{n=8}^{\lfloor\beta M\rfloor} + \sum_{\lceil\beta M\rceil}^{2M+1} + \sum_{n=2M+2}^\infty\right)\left(\cdots\right)\leq C_{\alpha,\beta,N,\delta}\left(\frac{1}{(1+|v|)^{\frac{3}{2}\beta - 1 - \kappa}} + \frac{1}{(1+|v|)^{2 - \kappa}}\right)
        \end{split}
    \end{align}
    for some constant $C_{\alpha,\beta,N,\delta}$.
    
    Combining \eqref{5_5:I_22_1}, \eqref{5_5:I_22_6}, and \eqref{5_5:I_22_9}, we get
    \begin{align}\label{5_5:I_22}
        I_{2,2}\leq C_{\alpha,\beta,N,\delta}\frac{1}{(1+|v|)^{\frac{3}{2}\beta - 1 - \kappa}}.
    \end{align}
    Note that $\frac{3}{2}\beta - 1 - \kappa\leq \beta-\kappa$ as $\beta\leq 2$. Therefore, the dominating decaying term is $\frac{1}{(1+|v|)^{\frac{3}{2}\beta - 1 - \kappa}}$.

    Finally, combining $I_{1,1}$ (\eqref{5_5:I_11}), $I_{1,2}$ (\eqref{5_5:I_12}), $I_{2,1}$ (\eqref{5_5:I_21}), and $I_{2,2}$ (\eqref{5_5:I_22}) bounds, we get
    \begin{align*}
        I \leq I_{1,1}+I_{1,2}+I_{2,1}+I_{2,2}\leq C_{\alpha,\beta,N,\delta, \kappa}\left(\frac{1}{(1+|v|)^{\frac{N-1}{2}\beta - \kappa}}+\frac{1}{(1+|v|)^{\frac{3}{2}\beta - 1 - \kappa}}\right).
    \end{align*}
\end{proof}

Let us recall the Boltzmann collision operator $Q$ in \eqref{eq:Boltzmann}. Since the angular part of collision kernel $b(\cos\theta)$ is locally integrable, we split $Q(f,f)$ into two parts: for later use, let us define it by bi-linear form
\begin{equation} \notag
    \begin{split}
        Q_{\text{gain}}(f,g)(t,x,v) &:= \iint_{\mathbb{R}^{N}\times  \mathbb{S}^{N-1}} B(v-u, \omega)f(t,x,u')g(t,x,v') d\omega du,\\
        Q_{\text{loss}}(f,g)(t,x,v) &:= \iint_{\mathbb{R}^{N}\times  \mathbb{S}^{N-1}} B(v-u, \omega)f(t,x,u)g(t,x,v) d\omega du.
    \end{split}
\end{equation}
Recall the weight $w_{\delta}(v) = w_{\alpha,\beta,\delta}(v)$ defined in \eqref{def_w}. We keep the choices of $N, \kappa, \alpha,\beta,\delta$ in \eqref{index:weight} throughout this section, unless there is an additional statement.\\

The following lemma is the heart of the Boltzmann well-posedness theory.
\begin{lemma} \label{lem:Q_gain}
    For the choice of $\alpha$, $\beta$, $N$ in \eqref{index:weight}, $-N< \kappa\leq 1$, and $2\delta>N$, let $w_\delta(v)f(x,v), w_\delta(v)g(x,v)\in L^\infty_vL^p_x\cap L^\infty_{x,v}$. Then $Q_{\text{gain}}(f,g)$ satisfies the following estimate
    \begin{align*}
        &\|w_{\delta}(v)Q_{\text{gain}}(f,g)(x, v)\|_{L^p_x} \\
        &\leq C_{gain}\left(\frac{1}{(1+|v|)^{\frac{N-1}{2}\beta - \kappa}}+\frac{1}{(1+|v|)^{\frac{3}{2}\beta - 1 - \kappa}}\right)\|w_{\delta}(v)f(x, v)\|_{L^\infty_vL^p_x}\|w_{\delta}(v)g(x, v)\|_{L^\infty_{x,v}},
        \intertext{and}
        &\|w_{\delta}(v)Q_{\text{gain}}(f,g)(x, v)\|_{L^p_x} \\
        &\leq C_{gain}\left(\frac{1}{(1+|v|)^{\frac{N-1}{2}\beta - \kappa}}+\frac{1}{(1+|v|)^{\frac{3}{2}\beta - 1 - \kappa}}\right)\|w_{\delta}(v)f(x, v)\|_{L^\infty_{x,v}}\|w_{\delta}(v)g(x, v)\|_{L^\infty_vL^p_x}
    \end{align*}
    for a.e. $v$ for all $1\leq p\leq \infty$, where $C_{gain}$ depends on $\alpha,\beta,N,\delta$, and $\kappa$.
\end{lemma}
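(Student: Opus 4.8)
The plan is to reduce both inequalities to the weighted collision-frequency bound already established in Lemma \ref{lem:int_bound4}, using nothing more than a redistribution of the weight $w_\delta$ across the collision and Minkowski's integral inequality. Since $B\geq 0$ we have $|Q_{\text{gain}}(f,g)|\leq Q_{\text{gain}}(|f|,|g|)$, so I would first reduce to the case $f,g\geq 0$. Then, multiplying and dividing by $w_\delta(u')w_\delta(v')$ inside the integral, write
\[
w_\delta(v)\,Q_{\text{gain}}(f,g)(x,v)=\iint_{\mathbb{R}^N\times\mathbb{S}^{N-1}} B(v-u,\sigma)\,\frac{w_\delta(v)}{w_\delta(u')w_\delta(v')}\,\bigl(w_\delta(u')f(x,u')\bigr)\bigl(w_\delta(v')g(x,v')\bigr)\,d\sigma\,du .
\]
The point is that the ``bad'' factor is exactly the kernel $B(v-u,\sigma)\,w_\delta(v)/(w_\delta(u')w_\delta(v'))$ whose $(u,\sigma)$-integral is controlled by Lemma \ref{lem:int_bound4}, while the remaining two factors carry one copy of $w_\delta f$ and one copy of $w_\delta g$ which will be placed in $L^\infty_v L^p_x$ and $L^\infty_{x,v}$ respectively (or vice versa).

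\textbf{First estimate.} I would bound $w_\delta(v')g(x,v')\leq \|w_\delta g\|_{L^\infty_{x,v}}$ and pull this constant out. Taking the $L^p_x$ norm and applying Minkowski's integral inequality, which is valid for all $1\leq p\leq\infty$ (at $p=\infty$ it is just $\sup_x\int\leq\int\sup_x$), gives
\[
\bigl\|w_\delta(v)\,Q_{\text{gain}}(f,g)(\cdot,v)\bigr\|_{L^p_x}\leq \|w_\delta g\|_{L^\infty_{x,v}}\iint_{\mathbb{R}^N\times\mathbb{S}^{N-1}} B(v-u,\sigma)\,\frac{w_\delta(v)}{w_\delta(u')w_\delta(v')}\,\bigl\|w_\delta(u')f(\cdot,u')\bigr\|_{L^p_x}\,d\sigma\,du .
\]
For fixed $u,v,\sigma$ the post-collisional velocity $u'$ is a fixed point of $\mathbb{R}^N$, so $\|w_\delta(u')f(\cdot,u')\|_{L^p_x}\leq \|w_\delta f\|_{L^\infty_v L^p_x}$; pulling this out leaves precisely $\iint B(v-u,\sigma)\,w_\delta(v)/(w_\delta(u')w_\delta(v'))\,d\sigma\,du$, which Lemma \ref{lem:int_bound4} bounds by $C_{\alpha,\beta,N,\delta,\kappa}\bigl((1+|v|)^{-(\frac{N-1}{2}\beta-\kappa)}+(1+|v|)^{-(\frac{3}{2}\beta-1-\kappa)}\bigr)$. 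Setting $C_{gain}:=C_{\alpha,\beta,N,\delta,\kappa}$ yields the first claimed inequality.

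\textbf{Second estimate and main obstacle.} The second inequality is proved symmetrically: instead bound $w_\delta(u')f(x,u')\leq\|w_\delta f\|_{L^\infty_{x,v}}$, keep the factor $w_\delta(v')g(x,v')$, apply Minkowski, and use $\|w_\delta(v')g(\cdot,v')\|_{L^p_x}\leq\|w_\delta g\|_{L^\infty_v L^p_x}$; the leftover $(u,\sigma)$-integral is the same one controlled by Lemma \ref{lem:int_bound4}. I do not expect a genuine obstacle here: all the analytic difficulty — the Carleman representation, the dyadic splitting of the energy shells, the cone estimates of Lemmas \ref{lem:cone}, \ref{lem:cone2}, and the $|v|$-decay bookkeeping — has already been absorbed into Lemma \ref{lem:int_bound4}. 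The only points needing a line of care are the measurability of the integrands (so that Tonelli and Minkowski apply), which follows from measurability of $f,g$ together with continuity of the map $(u,\sigma)\mapsto(u',v')$, and the trivial verification of the $p=\infty$ endpoint of Minkowski's inequality noted above.
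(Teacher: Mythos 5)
Your high-level strategy matches the paper's exactly: redistribute the weight as $w_\delta(v)/(w_\delta(u')w_\delta(v'))$, apply Minkowski's integral inequality to bring the $L^p_x$ norm inside the $du\,d\sigma$ integral, and invoke Lemma~\ref{lem:int_bound4}. The gap is in how you extract the norms of $f$ and $g$. The norms $\|w_\delta g\|_{L^\infty_{x,v}}$ and $\|w_\delta f\|_{L^\infty_v L^p_x}$ are \emph{essential} suprema (see the paper's definition of $L^\infty_v$), so the inequalities $w_\delta(v')g(x,v')\leq\|w_\delta g\|_{L^\infty_{x,v}}$ and $\|w_\delta(u')f(\cdot,u')\|_{L^p_x}\leq\|w_\delta f\|_{L^\infty_v L^p_x}$ are not valid "for fixed $u,v,\sigma$" as you assert — they hold only when $u'$ (resp.\ $(x,v')$) avoids a Lebesgue-null exceptional set, and for fixed $v,\sigma$ the map $u\mapsto u'=(I-\sigma\otimes\sigma)u+(v\cdot\sigma)\sigma$ is a rank-$(N-1)$ affine projection whose image is a single hyperplane, so the pointwise claim is genuinely unjustified. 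One must show that the pushforward of $B(v-u,\sigma)\,du\,d\sigma$ under $(u,\sigma)\mapsto(v',u')$ charges no null set, and even this is only true for a.e.\ $v$ — which is exactly why the lemma carries the qualifier "for a.e.\ $v$", a point your write-up leaves unexplained.

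The paper's proof is structured precisely to sidestep this: it integrates against $\mathbf{1}_{v\in A}\,dv$ (and against $\mathbf{1}_{v\in A}\mathbf{1}_{x\in B}\,dv\,dx$ in the $p=\infty$ case), applies the measure-preserving pre/post-collision involution $(v,u)\leftrightarrow(v',u')$ so that $f$ and $g$ are evaluated at the \emph{unprimed} integration variables where the essential sup bound is legitimate, extracts the norms, undoes the change of variables, and then applies Lemma~\ref{lem:int_bound4}; the arbitrary measurable sets $A$ (and $B$) turn the result into the stated a.e.\ bound via Fubini. To close your argument you should either adopt this duality-plus-involution scheme, or at minimum insert a lemma establishing that for a.e.\ $v$ the pushforward measure is absolutely continuous — which, in the end, is proved by the same involution followed by Fubini. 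The rest of your proposal (reduction to $f,g\geq 0$, the $p=\infty$ endpoint of Minkowski, measurability of the integrands) is fine.
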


\begin{proof}
    Suppose $1\leq p<\infty$ and $A\subset \mathbb{R}^N$ be a measurable set. Using Minkowski's integral inequality and Lemma \ref{lem:int_bound4}, we have
    \begin{equation*}
        \begin{split}
            &\int \left(\int \left(w_{\delta}(v)Q_{\text{gain}}(f,g)(\tau, x, v)\right)^p \,dx\right)^{1/p}\mathbf{1}_{v\in A}\,dv \\
            &\leq \int dv\,\mathbf{1}_{v\in A}\iint_{\mathbb{R}^{N}\times   \mathbb{S}^{N-1}}dud\omega \left(\int_{\Omega}dx\, \left(B(v-u, \omega)\frac{w_{\delta}(v)}{w_{\delta}(v')w(u')} w_{\delta}(v')f(\tau, x, v')w(u')g(\tau, x, u')\right)^p\right)^{1/p}\\
            &=\iiint_{\mathbb{R}^N\times \mathbb{R}^{N}\times \mathbb{S}^{N-1}}dvdud\omega \,\mathbf{1}_{v'\in A}\left(\int_{\Omega}dx\, \left(B(v-u, \omega)\frac{w_{\delta}(v')}{w_{\delta}(v)w(u)} w_{\delta}(v)f(\tau, x, v)w(u)g(\tau, x, u)\right)^p\right)^{1/p}\\
            &\leq \min\{\left\|w_{\delta}(v)f(\tau, x, v)\right\|_{L^\infty_{x,v}}\left\|w_{\delta}(v)g(\tau, x, v)\right\|_{L^\infty_vL^p_x}, \left\|w_{\delta}(v)f(\tau, x, v)\right\|_{L^\infty_vL^p_x}\left\|w_{\delta}(v)g(\tau, x, v)\right\|_{L^\infty_{x,v}}\}\\
            &\qquad\times \iiint_{\mathbb{R}^{N}\times \mathbb{R}^{N}\times   \mathbb{S}^{N-1}}dv dud\omega \, \mathbf{1}_{v'\in A}B(v-u, \omega) \frac{w_{\delta}(v')}{w_{\delta}(v)w_{\delta}(u)}\\
            &\leq \min\{\left\|w_{\delta}(v)f(\tau, x, v)\right\|_{L^\infty_{x,v}}\left\|w_{\delta}(v)g(\tau, x, v)\right\|_{L^\infty_vL^p_x}, \left\|w_{\delta}(v)f(\tau, x, v)\right\|_{L^\infty_vL^p_x}\left\|w_{\delta}(v)g(\tau, x, v)\right\|_{L^\infty_{x,v}}\}\\
            &\qquad\times \int_{\mathbb{R}^N}dv\,\mathbf{1}_{v\in A}\iint_{\mathbb{R}^{N}\times   \mathbb{S}^{N-1}}dv dud\omega \, B(v-u, \omega) \frac{w_{\delta}(v)}{w_{\delta}(v')w_{\delta}(u')}\\
            & \leq \min\{\left\|w_{\delta}(v)f(\tau, x, v)\right\|_{L^\infty_{x,v}}\left\|w_{\delta}(v)g(\tau, x, v)\right\|_{L^\infty_vL^p_x}, \left\|w_{\delta}(v)f(\tau, x, v)\right\|_{L^\infty_vL^p_x}\left\|w_{\delta}(v)g(\tau, x, v)\right\|_{L^\infty_{x,v}}\}\\
            &\qquad \times \int_{\mathbb{R}^N}C_{\alpha,\beta,N,\delta, \kappa}\left(\frac{1}{(1+|v|)^{\frac{N-1}{2}\beta - \kappa}}+\frac{1}{(1+|v|)^{\frac{3}{2}\beta - 1 - \kappa}}\right)\mathbf{1}_{v\in A}\,dv.
        \end{split}
    \end{equation*}
    It shows that the lemma holds a.e. $v$ for $1\leq p<\infty$.
    
    If $p=\infty$, we also choose a measurable set $B\subset \mathbb{R}^N$. Repeating the above argument, we obtain
    \begin{equation*}
        \begin{split}
            &\iint_{\Omega\times \mathbb{R}^N} w_\delta(v)Q_{\text{gain}}(f,g)(x,v)\mathbf{1}_{v\in A}\mathbf{1}_{x\in B}\,dxdv\\
            & = \iiiint_{\Omega\times\mathbb{R}^{N}\times\mathbb{R}^{N}\times \mathbb{S}^{N-1}}dxdvdud\omega \,B(v-u, \omega)\frac{w_{\delta}(v)}{w_{\delta}(v')w(u')} w_{\delta}(v')f(\tau, x, v')w(u')g(\tau, x, u')\mathbf{1}_{v\in A}\mathbf{1}_{x\in B}\\
            & =\iiiint_{ \mathbb{S}^{N-1}\times \Omega\times\mathbb{R}^{N}\times\mathbb{R}^{N}}d\omega dxdvdu \,B(v-u, \omega)\frac{w_{\delta}(v')}{w_{\delta}(v)w(u)} w_{\delta}(v)f(\tau, x, v)w(u)g(\tau, x, u)\mathbf{1}_{v'\in A}\mathbf{1}_{x\in B}\\
            & \leq \|w_{\delta}(v)f(x, v)\|_{L^\infty_{x,v}}\|w_{\delta}(v)g(x, v)\|_{L^\infty_{x,v}}\iiiint_{ \mathbb{S}^{N-1}\times \Omega\times\mathbb{R}^{N}\times\mathbb{R}^{N}}d\omega dxdvdu \,B(v-u, \omega)\frac{w_{\delta}(v')}{w_{\delta}(v)w(u)} \mathbf{1}_{v'\in A}\mathbf{1}_{x\in B}\\
            & \leq \|w_{\delta}(v)f(x, v)\|_{L^\infty_{x,v}}\|w_{\delta}(v)g(x, v)\|_{L^\infty_{x,v}}\iiiint_{ \Omega\times\mathbb{R}^{N}\times\mathbb{R}^{N}\times  \mathbb{S}^{N-1}}dxdvdud\omega \,B(v-u, \omega)\frac{w_{\delta}(v)}{w_{\delta}(v')w(u')} \mathbf{1}_{v\in A}\mathbf{1}_{x\in B}\\
            & \leq \|w_{\delta}(v)f(x, v)\|_{L^\infty_{x,v}}\|w_{\delta}(v)g(x, v)\|_{L^\infty_{x,v}}\\
            &\qquad \times \iint_{\Omega\times\mathbb{R}^{N}}dxdv\,C_{\alpha,\beta,N,\delta, \kappa}\left(\frac{1}{(1+|v|)^{\frac{N-1}{2}\beta - \kappa}}+\frac{1}{(1+|v|)^{\frac{3}{2}\beta - 1 - \kappa}}\right)\mathbf{1}_{v\in A}\mathbf{1}_{x\in B}.
        \end{split}
    \end{equation*}
    It shows that the lemma holds for $p=\infty$.
\end{proof}

Using the $Q_{\text{gain}}$ estimate, we can directly deduce an \textit{a priori} estimate.
\begin{proposition}
    (\textit{A priori} estimate) Any solution $f(t,x,v)$ of the inhomogeneous Boltzmann equation with cutoff collision kernel \eqref{Collision_kernel} satisfies the following: if $\alpha,\beta, N$ satisfy \eqref{index:weight}, $-N<\kappa\leq 1$, $2\delta>N$, and it satisfies $\sup_{0\leq t\leq T}\left\|w_{\delta}(v)f(t,x,v)\right\|_{L^\infty_{x,v} }<\infty$ for some $T$, then
    \begin{align*}
        \left\|w_{\delta}(v)f(t, x, v)\right\|_{L^\infty_{x,v}}\leq \frac{1}{\left\|w_{\delta}(v)f_0(x,v)\right\|_{L^\infty_{x,v}}^{-1} - C_{\text{gain}}t},
    \end{align*}
    where $w_\delta(v)$ is the velocity weight \eqref{def_w} and $C_{\text{gain}}$ is the constant in Lemma \ref{lem:Q_gain}.
\end{proposition}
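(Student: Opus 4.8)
The plan is to combine the mild (Duhamel) formulation of \eqref{eq:Boltzmann} along characteristics with the nonnegativity of the loss term, and then close a Riccati-type inequality using the gain estimate of Lemma~\ref{lem:Q_gain}. First I would split the collision operator as $Q(f,f)=Q_{\text{gain}}(f,f)-f\,\mathcal{L}_f$, where $\mathcal{L}_f(t,x,v):=\iint_{\mathbb{R}^N\times\mathbb{S}^{N-1}}B(v-u,\sigma)f(t,x,u)\,d\sigma du\ge 0$; here one uses that $f\ge 0$ is propagated by the Boltzmann flow, so that also $Q_{\text{gain}}(f,f)\ge 0$. Writing $X(s)=x-v(t-s)$, the map $s\mapsto f(s,X(s),v)\exp\!\big(\int_0^s\mathcal{L}_f(\tau,X(\tau),v)\,d\tau\big)$ has derivative $Q_{\text{gain}}(f,f)(s,X(s),v)\exp(\cdots)$; integrating from $0$ to $t$ and bounding $\exp(-\int_s^t\mathcal{L}_f\,d\tau)\le 1$ gives the pointwise estimate
\begin{equation*}
0\le f(t,x,v)\le f_0(x-vt,v)+\int_0^t Q_{\text{gain}}(f,f)(s,x-v(t-s),v)\,ds.
\end{equation*}

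Next, I would multiply by $w_\delta(v)$ and take the $L^\infty_{x,v}$ norm. The free-transport term contributes $\|w_\delta(v)f_0(x-vt,v)\|_{L^\infty_{x,v}}=\|w_\delta(v)f_0(x,v)\|_{L^\infty_{x,v}}$ by translation invariance in $x$. For the Duhamel term I would move the norm inside the time integral and apply Lemma~\ref{lem:Q_gain} with $p=\infty$, observing that under \eqref{index:weight} both exponents $\tfrac{N-1}{2}\beta-\kappa$ and $\tfrac32\beta-1-\kappa$ are nonnegative, so the velocity prefactor is bounded by $2$ and can be absorbed into the constant. Setting $M(t):=\|w_\delta(v)f(t,x,v)\|_{L^\infty_{x,v}}$, this yields
\begin{equation*}
M(t)\le M(0)+C_{\text{gain}}\int_0^t M(s)^2\,ds,\qquad 0\le t\le T,
\end{equation*}
with $C_{\text{gain}}$ the constant of Lemma~\ref{lem:Q_gain} (after the harmless rescaling above).

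Finally, I would integrate this Bihari--Gronwall inequality. The hypothesis $\sup_{[0,T]}M<\infty$ makes $\Phi(t):=M(0)+C_{\text{gain}}\int_0^t M(s)^2\,ds$ absolutely continuous and nondecreasing with $M\le\Phi$ and $\Phi'=C_{\text{gain}}M^2\le C_{\text{gain}}\Phi^2$ a.e.; if $f_0\not\equiv 0$ then $\Phi(0)=M(0)>0$, hence $(1/\Phi)'=-\Phi'/\Phi^2\ge -C_{\text{gain}}$ and so $1/\Phi(t)\ge 1/M(0)-C_{\text{gain}}t$. Therefore $M(t)\le\Phi(t)\le\big(\,\|w_\delta f_0\|_{L^\infty_{x,v}}^{-1}-C_{\text{gain}}t\,\big)^{-1}$ on the time interval where the right-hand side is positive (the case $f_0\equiv 0$ being trivial). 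The substantive ingredient is the $Q_{\text{gain}}$ estimate of Lemma~\ref{lem:Q_gain}; granting that, the only delicate points are the rigorous justification of the mild formulation for the class of solutions at hand and the use of $f\ge 0$ to ensure $\mathcal{L}_f\ge 0$ (both standard for the cutoff Boltzmann equation), together with the usual caveat that such a Riccati bound is only valid up to time $\|w_\delta f_0\|_{L^\infty_{x,v}}^{-1}/C_{\text{gain}}$.
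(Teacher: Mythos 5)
Your proposal is correct and follows essentially the same route as the paper: the Duhamel formula along characteristics with the loss term dropped by nonnegativity, the $L^\infty_{x,v}$ estimate of $Q_{\text{gain}}$ from Lemma~\ref{lem:Q_gain} with $p=\infty$ (absorbing the bounded velocity prefactor, which you correctly check is $\le 2$ under \eqref{index:weight}), and the resulting Riccati-type integral inequality $M(t)\le M(0)+C_{\text{gain}}\int_0^t M(s)^2\,ds$. The only cosmetic difference is in the last step: you majorize $M$ by $\Phi(t)=M(0)+C_{\text{gain}}\int_0^t M^2$ and integrate $(1/\Phi)'\ge -C_{\text{gain}}$ directly, whereas the paper introduces the explicit solution $\varphi(t)=(\|w_\delta f_0\|_{L^\infty_{x,v}}^{-1}-C_{\text{gain}}t)^{-1}$ of the Riccati equation, subtracts, and applies Gr\"onwall to $M-\varphi$ (using the hypothesis $\sup_{[0,T]}M<\infty$ to bound the factor $M+\varphi$). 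Both closures are standard and equivalent; yours is arguably a touch more self-contained since it avoids factoring $M^2-\varphi^2$, but the content is the same.
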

\begin{proof}
    Let $f(t,x,v)$ be a solution of the Boltzmann equation with cutoff collision kernel \eqref{Collision_kernel}. Multiplying by weight $w_\delta(v)$ on both sides and using Lemma \ref{lem:Q_gain}, we get
    \begin{align*}
        w_{\delta}(v)f(t,x, v) &\leq w_{\delta}(v)f_0(x-vt, v) + \int_0^t \left\|w_{\delta}(v)Q_{\text{gain}}(f,f)(\tau, x-v(t-\tau), v)\right\|_{L^\infty_{x,v} }^2\,d\tau\\
        &\leq w_{\delta}(v)f_0(x-vt, v) + \int_0^t C_{\text{gain}}\left\|w_{\delta}(v)f(\tau, x, v)\right\|_{L^\infty_{x,v} }^2\,d\tau.
    \end{align*}
    Therefore,
    \begin{align*}
        \left\|w_{\delta}(v)f(t, x, v)\right\|_{L^\infty_{x,v}} &\leq \left\|w_{\delta}(v)f_0(x,v)\right\|_{L^\infty_{x,v}} + \int_0^t C_{\text{gain}}\left\|w_{\delta}(v)f(\tau, x,v)\right\|_{L^\infty_{x,v}}^2\,d\tau.
    \end{align*}
    Let us define $\varphi(t) := \frac{1}{\left\|w_\delta(v)f_0(x,v)\right\|_{L^\infty_{x,v} }^{-1} - C_{\text{gain}}t}$, then it satisfies
    \begin{align*}
        \varphi(t) = \left\|w_\delta(v)f_0(x,v)\right\|_{L^\infty_{x,v}} + \int_0^t \varphi'(\tau)\,d\tau = \left\|w_{\delta}(v)f_0(x,v)\right\|_{L^\infty_{x,v}} + \int_0^t C_{\text{gain}}\left(\varphi(\tau)\right)^2\,d\tau,
    \end{align*}
    so
    \begin{equation*}
        \begin{split}
            &\left\|w_{\delta}(v)f(t, x-vt, v)\right\|_{L^\infty_{x,v}} - \varphi(t) \\
            &\leq C_{\text{gain}}\int_0^t \left\|w_{\delta}(v)f(\tau, x, v)\right\|_{L^\infty_{x,v}}^2 - \left(\varphi(\tau)\right)^2\,d\tau \\
            &\leq C_{\text{gain}}\int_0^t \left(\left\|w_{\delta}(v)f(\tau, x, v)\right\|_{L^\infty_{x,v}} +\varphi(\tau)\right)\left(\left\|w_{\delta}(v)f(\tau, x, v)\right\|_{L^\infty_{x,v}} - \varphi(\tau)\right)\,d\tau.
        \end{split}
    \end{equation*}
    By the assumption, $\left\|w_{\delta}(v)f(\tau, x, v)\right\|_{L^\infty_{x,v}} +\varphi(\tau)<\infty$ for all $0\leq t\leq T$. Using Gr{\"o}nwall's inequality, we get the result that $\left\|w_{\delta}(v)f(\tau, x, v)\right\|_{L^\infty_{x,v}}\leq \varphi(t).$
\end{proof}

We now establish a well-posedness theory for the Boltzmann equation \eqref{eq:Boltzmann}. 
In the next proposition, we impose the condition $2\delta > N+1$, where the additional $+1$ ensures the construction of solutions for $0 < \kappa \le 1$.
\begin{proposition} \label{prop:wellposed}
    Let $f_0 $ be a function satisfying $\left\|w_{\delta}(v)f_0\right\|_{L^\infty_{x,v}}<\infty$ for $2\delta>N+1$, $-N<\kappa\leq 1$, and \eqref{index:weight}. Then there exists $T^*>0$ such that there exists a unique solution $f$ of the cutoff Boltzmann equation \eqref{eq:Boltzmann} \eqref{Collision_kernel} in 
    \begin{align*}
        \mathcal{A}\coloneqq \left\{f\geq 0: \sup_{0\leq t\leq T^*}\left\|w_{\delta}(v)f(t,x,v)\right\|_{L^\infty_{x,v}}<\infty\right\}
    \end{align*}
    having the initial data $f_0$. Furthermore, $T^*$ can be chosen so that it satisfies
    \begin{align*}
        \sup_{0\leq t\leq T^*} \left\|w_{\delta}(v)f(t,x,v)\right\|_{L^\infty_{x,v}}\leq 3\left\|w_{\delta}(v)f_0(x,v)\right\|_{L^\infty_{x,v}}.
    \end{align*}
\end{proposition}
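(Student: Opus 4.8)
The plan is a Picard iteration on the mild formulation along characteristics, carrying the loss term inside an integrating factor. Write $Q(f,f)=Q_{\text{gain}}(f,f)-\mathcal R(f)f$, where $\mathcal R(f)(t,x,v):=\iint_{\mathbb R^{N}\times\mathbb S^{N-1}}B(v-u,\sigma)f(t,x,u)\,d\sigma du\geq 0$ for $f\geq 0$; set $f^{0}:=f_{0}(x-vt,v)$ and
\begin{align*}
    f^{n+1}(t,x,v)&=e^{-\int_{0}^{t}\mathcal R(f^{n})(s,x-v(t-s),v)\,ds}\,f_{0}(x-vt,v)\\
    &\quad+\int_{0}^{t}e^{-\int_{\tau}^{t}\mathcal R(f^{n})(s,x-v(t-s),v)\,ds}\,Q_{\text{gain}}(f^{n},f^{n})(\tau,x-v(t-\tau),v)\,d\tau .
\end{align*}
Since $f^{n}\geq 0$ forces $\mathcal R(f^{n})\geq 0$ and $Q_{\text{gain}}(f^{n},f^{n})\geq 0$, induction gives $f^{n}\geq 0$ and all integrating factors lie in $(0,1]$.

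First I would establish a uniform-in-$n$ bound. Put $M_{0}:=\|w_{\delta}f_{0}\|_{L^{\infty}_{x,v}}$ (if $M_{0}=0$ then $f\equiv 0$). Multiplying the iteration by $w_{\delta}(v)$, discarding the exponential factors, and applying Lemma \ref{lem:Q_gain} with $p=\infty$ — noting that under \eqref{index:weight} both exponents $\tfrac{N-1}{2}\beta-\kappa$ and $\tfrac32\beta-1-\kappa$ are strictly positive, so the prefactor there is bounded — yields
\[
    \sup_{0\leq t\leq T}\|w_{\delta}f^{n+1}(t)\|_{L^{\infty}_{x,v}}\leq M_{0}+C_{\text{gain}}T\Big(\sup_{0\leq t\leq T}\|w_{\delta}f^{n}(t)\|_{L^{\infty}_{x,v}}\Big)^{2}.
\]
Since $\sup_{[0,T]}\|w_{\delta}f^{0}\|_{L^{\infty}_{x,v}}=M_{0}$, choosing $T$ proportional to $(C_{\text{gain}}M_{0})^{-1}$ and arguing by induction gives $\sup_{[0,T]}\|w_{\delta}f^{n}\|_{L^{\infty}_{x,v}}\leq 3M_{0}$ for every $n$; this is exactly the quantitative bound claimed for the limit.

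Next I would show $\{f^{n}\}$ is Cauchy in $C([0,T];L^{\infty}_{x,v}(w_{\delta}))$. Subtracting the formulas, the difference $h^{n}:=f^{n+1}-f^{n}$ is driven by two mechanisms: (i) the bilinear difference $Q_{\text{gain}}(f^{n},f^{n})-Q_{\text{gain}}(f^{n-1},f^{n-1})=Q_{\text{gain}}(h^{n-1},f^{n})+Q_{\text{gain}}(f^{n-1},h^{n-1})$, which Lemma \ref{lem:Q_gain} with the uniform bound $3M_{0}$ controls by $C M_{0}\,\sup_{s}\|w_{\delta}h^{n-1}(s)\|_{L^{\infty}_{x,v}}$, with an extra factor $T$ from the $\tau$-integration; and (ii) the difference of integrating factors of consecutive iterates, estimated via $|e^{-a}-e^{-b}|\leq|a-b|$ together with $\bigl|\int_{\tau}^{t}\mathcal R(h^{n-1})(s,\cdot,v)\,ds\bigr|\leq Ct\,(1+|v|)^{\kappa_{+}}\sup_{s}\|w_{\delta}h^{n-1}(s)\|_{L^{\infty}_{x,v}}$, where $\kappa_{+}=\max\{\kappa,0\}\leq 1$ (using $B\leq|v-u|^{\kappa}b(\cos\theta)$, $b\in L^{1}(\mathbb S^{N-1})$, and $\int|v-u|^{\kappa}w_{\delta}(u)^{-1}\,du\leq C(1+|v|)^{\kappa_{+}}$). \textbf{The main obstacle is the term in (ii) in which this $(1+|v|)^{\kappa_{+}}$ growth multiplies $w_{\delta}(v)f_{0}(x-vt,v)\leq M_{0}$:} in the companion term the decay $(1+|v|)^{-\min\{\frac{N-1}{2}\beta-\kappa,\ \frac32\beta-1-\kappa\}}$ supplied by Lemma \ref{lem:Q_gain} absorbs it, but the transported datum carries no spare decay. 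For soft potentials ($\kappa\leq 0$) this is vacuous and the contraction closes at once; for hard potentials I would instead estimate $\mathcal R(h^{n-1})(s,y,v)$ more carefully — isolating the region $|v-u|\lesssim 1$, where $u$ and $v$ are comparable, from $|v-u|\gtrsim 1$, where the surplus polynomial decay afforded by $2\delta>N+2$ and the lower bounds on $\beta$ in \eqref{index:weight} (and the auxiliary condition $1+\kappa\leq\beta$ when $N=2$) render the contribution summable — so that after shrinking $T$ once more one gets $\sup_{[0,T]}\|w_{\delta}h^{n}\|_{L^{\infty}_{x,v}}\leq\tfrac12\sup_{[0,T]}\|w_{\delta}h^{n-1}\|_{L^{\infty}_{x,v}}$, with $T$ still depending only on $\alpha,\beta,\delta,\kappa,N$ and $M_{0}$.

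Finally, the limit $f:=\lim_{n}f^{n}$ is nonnegative, inherits $\sup_{[0,T]}\|w_{\delta}f\|_{L^{\infty}_{x,v}}\leq 3M_{0}$, and — passing to the limit in the iteration by the same bilinear and collision-frequency estimates — solves the mild formulation, hence \eqref{eq:Boltzmann}. Uniqueness within $\mathcal A$ follows by running the identical difference estimate on two solutions sharing the datum $f_{0}$ (the datum terms cancel and a Gr\"onwall argument closes the resulting inequality), which also shows $f$ is independent of the iteration scheme.
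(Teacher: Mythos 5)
You have the right scaffolding --- the mild formulation with the loss term absorbed into an integrating factor, Picard iteration, non-negativity of iterates, the uniform bound from Lemma~\ref{lem:Q_gain}, and you have correctly located the sticking point: the term in which the difference of integrating factors produces a collision-frequency increment $\mathcal R(h^{n-1})$ that grows like $(1+|v|)^{\kappa}$ for hard potentials and multiplies the transported datum $w_{\delta}(v)f_{0}(x-vt,v)\leq M_{0}$, which has no spare decay in $v$.

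Where your proposal has a genuine gap is the resolution you sketch. Splitting the $u$-integral in $\mathcal R(h^{n-1})(v)=\int B(v-u,\sigma)h^{n-1}(u)\,dud\sigma$ into $|v-u|\lesssim 1$ and $|v-u|\gtrsim 1$ does not remove the $(1+|v|)^{\kappa}$ growth. In the far region the dominant contribution comes from $|u|\lesssim|v|/2$, where $|v-u|^{\kappa}\sim|v|^{\kappa}$ and $\int_{|u|\lesssim|v|/2}w_{\delta}(u)^{-1}\,du$ converges to an $O(1)$ constant; the result is still $\sim|v|^{\kappa}\sup\|w_{\delta}h^{n-1}\|_{L^{\infty}}$, exactly as before. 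The "surplus polynomial decay from $2\delta>N+2$" makes the $u$-integral \emph{converge}; it does not convert the result into something that decays in $v$. Nor does $1+\kappa\leq\beta$ when $N=2$ play any role here --- that hypothesis enters the $Q_{\text{gain}}$ bound in Lemma~\ref{lem:Q_gain} (via Lemma~\ref{lem:Q_gainLp}), not the collision-frequency estimate. So the datum term cannot be bounded by $CT\sup\|w_{\delta}h^{n-1}\|_{L^{\infty}}$ in the $w_{\delta}$-weighted norm, and the contraction as you state it does not close for $\kappa>0$.

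The paper's fix is a weight downgrade: run the contraction in $\|w_{\delta-\kappa/2}(v)\cdot\|_{L^{\infty}_{x,v}}$ rather than $\|w_{\delta}(v)\cdot\|_{L^{\infty}_{x,v}}$. Then the datum contributes $w_{\delta-\kappa/2}(v)f_{0}\cdot(1+|v|^{2})^{\kappa/2}\sup\|w_{\delta-\kappa/2}h^{n-1}\|_{L^{\infty}}=w_{\delta}(v)f_{0}\cdot\sup\|w_{\delta-\kappa/2}h^{n-1}\|_{L^{\infty}}\leq M_{0}\sup\|w_{\delta-\kappa/2}h^{n-1}\|_{L^{\infty}}$, and the $(1+|v|)^{\kappa}$ growth is absorbed exactly. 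The hypotheses $2\delta>N+2$ and $\kappa\leq 1$ ensure $\delta-\kappa>N/2$, so the downgraded weight is still admissible in the relevant collision-frequency and $Q_{\text{gain}}$ estimates. Since the iterates are Cauchy in the weaker $w_{\delta-\kappa/2}$ norm and uniformly bounded in the stronger $w_{\delta}$ norm, the limit still lands in $\mathcal A$ with the claimed bound. Without some version of this weight sacrifice (or an equivalent device), the hard-potential contraction does not go through, so this is a real missing ingredient rather than a detail.
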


\begin{proof}
    The proof follows a similar argument in \cite{DHWY}. We start with the differential version of the Boltzmann equation:
    \begin{align*}
        \partial_t f + v\cdot \nabla_x f = Q_{\text{gain}}(f,f) - f(t,x,v)\iint_{\mathbb{R}^N\times   \mathbb{S}^{N-1}}B(v-u,\omega) f(t,x,u)\,dud\omega.
    \end{align*}
    Defining
    \begin{align*}
        g(t,x,v) = \iint_{\mathbb{R}^N\times   \mathbb{S}^{N-1}}B(v-u,\omega) f(t,x,u)\,dud\omega,
    \end{align*}
    we can rewrite the solution of the Boltzmann equation by
    \begin{align*}
        f(t,x+vt, v) &= \exp\left(-\int_0^t g(\tau, x+v\tau, v)\,d\tau\right)f_0(x,v) \\
        &\quad + \int_0^t \exp\left(-\int_\tau^t g(s, x+vs, v)\,ds\right)Q_{\text{gain}}(f,f)(\tau, x+v\tau, v)\,d\tau.
    \end{align*}

    We first prove the existence statement. We define a map
    \begin{align*}
        \mathcal{T}f(t,x+vt,v) &= \exp\left(-\int_0^t g(\tau, x+v\tau, v)\,d\tau\right)f_0(x,v)  \\
        &\quad + \int_0^t \exp\left(-\int_\tau^t g(s, x+vs, v)\,ds\right)Q_{\text{gain}}(f,f)(\tau, x+v\tau, v)\,d\tau
    \end{align*}
    and construct $f_n = \mathcal{T}f_{n-1}$ with $f_0(t,x,v) = f_0(x,v)$. If $f\geq 0$, then $\mathcal{T}f\geq 0$ by the construction of $\mathcal{T}$, so $f_n\geq 0$ for all $n$. We show that $f_n\in \mathcal{A}$ for all $n$ by choosing small enough $T_1$; in fact, we claim 
    \begin{align*}
        \left\|w_{\delta}(v)f_n(t, x+vt, v)\right\|_{L^\infty_{x,v} }\leq \frac{1}{\left\|w_{\delta}(v)f_0(x,v)\right\|_{L^\infty_{x,v} }^{-1}-C_{\text{gain}}t}
    \end{align*}
    until RHS is well-defined. We use the induction hypothesis. Obviously, $f_0\in \mathcal{A}$. Assuming the hypothesis is true for $f_{n-1}$, by Lemma \ref{lem:Q_gain},
    \begin{align*}
        &\left\|w_{\delta}(v)f_n(t, x+vt, v)\right\|_{L^\infty_{x,v} } \\
        &\leq \left\|w_{\delta}(v)f_0(x,v)\right\|_{L^\infty_{x,v}} + \int_0^t w_{\delta}(v)Q_{\text{gain}}(f_{n-1},f_{n-1})(\tau, x+v\tau, v)\,d\tau\\
        &\leq \left\|w_{\delta}(v)f_0(x,v)\right\|_{L^\infty_{x,v}} + \int_0^t C_{\text{gain}}\left\|w_{\delta}(v)f_{n-1}(\tau, x+v\tau, v)\right\|_{L^\infty_{x,v} }^2\,d\tau\\
        &\leq \left\|w_{\delta}(v)f_0(x,v)\right\|_{L^\infty_{x,v} }+\int_0^t C_{\text{gain}}\frac{1}{\left(\left\|wf_0\right\|_{L^\infty_{x,v} }^{-1}-C_{\text{gain}}\tau\right)^2}\,d\tau=\frac{1}{\left\|wf_0\right\|_{L^\infty_{x,v} }^{-1}-C_{\text{gain}}t}.
    \end{align*}
    Therefore, $f_{n}\in \mathcal{A}$ for all $n$.\\

    We show that $f_n(t,x,v)$ converges to a function $f(t,x,v)$. In this step, we divide the cases $\kappa\leq 0$ and $\kappa>0$ since $\kappa>0$ requires an additional technical step.\\

    \noindent \textit{(i) The case $\kappa>0$.}
    
    In this case, we will modify the weight function by $w_{\delta-\kappa/2}(v)$. Note that it still satisfies the conditions for Lemma \ref{lem:Q_gain} as $2(\delta-\kappa/2)>N$.
    
    We fix $T_1>0$, which will be chosen later, and consider time domain $0\leq t\leq T_1$. The difference $w_{\delta-\kappa/2}(v)(f_n-f_{n-1})$ is written as follows.
    \begin{align*}
        &w_{\delta-\kappa/2}(v)(f_n-f_{n-1})(t,x+vt,v) \\
        & = \left\{\exp\left(-\int_0^t g_{n-1}(\tau, x+v\tau, v)\,d\tau\right) - \exp\left(-\int_0^t g_{n-2}(\tau, x+v\tau, v)\,d\tau\right)\right\} w_{\delta-\kappa/2}(v)f_0(x,v)\\
        &\quad + \int_0^t \left\{\exp\left(-\int_\tau^t g_{n-1}(s, x+vs, v)\,d\tau\right) - \exp\left(-\int_\tau^t g_{n-2}(s, x+vs, v)\,ds\right)\right\}\\
        &\quad\qquad \times w_{\delta-\kappa/2}(v)Q_{\text{gain}}(f_{n-1},f_{n-1})(\tau, x+v\tau, v)\,d\tau\\
        &\quad + \int_0^t \exp\left(-\int_\tau^t g_{n-2}(s, x+vs, v)\,ds\right) w_{\delta-\kappa/2}(v)\left(Q_{\text{gain}}(f_{n-1},f_{n-1}) - Q_{\text{gain}}(f_{n-2},f_{n-2})\right)(\tau, x+v\tau, v)\,d\tau\\
        &\eqqcolon I_1+I_2+I_3.
    \end{align*}

    We first bound $I_1$ and $I_2$. By the definition of $g$ and using
    \begin{align}\label{5_2:Bbound}
        B(v-u, \omega)\leq |v-u|^\kappa b(\cos\theta)\leq 2(1+|v|^2)^{\kappa/2}(1+|u|^2)^{\kappa/2} b(\cos\theta),
    \end{align}
    we have
    \begin{align*}
        &|g_{n-1}- g_{n-2}|(\tau, x+v\tau, v) \\
        & \leq 2\iint_{\mathbb{R}^N\times \mathbb{S}^{N-1}} (1+|v|^2)^{\kappa/2}(1+|u|^2)^{\kappa/2} b(\cos\theta) |f_{n-1}-f_{n-2}|(\tau, x+v\tau, u)\,du d\omega\\
        & \leq 2(1+|v|^2)^{\kappa/2}\left\|w_{\delta-\kappa/2}(u)(f_{n-1}-f_{n-2})(\tau, x+v\tau, u)\right\|_{L^\infty_{x,u}}\iint_{\mathbb{R}^{N}\times  \mathbb{S}^{N-1}} \frac{(1+|u|^2)^{\kappa/2}}{w_{\delta-\kappa/2}(u)}b(\cos\theta)\,du d\omega\\
        & \leq C'_{N, \delta, \kappa}(1+|v|^2)^{\kappa/2}\left\|w_{\delta-\kappa/2}(v)(f_{n-1}-f_{n-2})(\tau, x, v)\right\|_{L^\infty_{x,v}}
    \end{align*}
    for some constant $C'_{N, \delta, \kappa}$ as $2\delta-\kappa>N$. As $|e^{-x}-e^{-y}|\leq |x-y|$ for $x,y\geq 0$, we get
    \begin{equation}\label{5_2:I1est}
        \begin{split}
            |I_1|&\leq w_{\delta-\kappa/2}(v)f_0(x,v)\int_0^t |g_{n-1}- g_{n-2}|(\tau, x+v\tau, v)\,d\tau\\
            &\leq C'(1+|v|^2)^{\kappa/2} w_{\delta-\kappa/2}(v)f_0(x,v)\int_0^t \left\|w_{\delta-\kappa/2}(u)(f_{n-1}-f_{n-2})(\tau, x, u)\right\|_{L^\infty_{x,v}}\,d\tau\\
            &\leq C'T_1\left\|w_{\delta}(v)f_0(x,v)\right\|_{L^\infty_{x,v}} \sup_{0\leq t\leq T_1}\left\|w_{\delta-\kappa/2}(v)(f_{n-1}-f_{n-2})(t, x, v)\right\|_{L^\infty_{x,v}}
        \end{split}
    \end{equation}
    and
    \begin{equation}\label{5_2:I2est}
        \begin{split}
            |I_2|&\leq \int_0^t \left(\int_\tau^t |g_n- g_{n-1}|(s, x+vs, v)\,ds\right) w_{\delta-\kappa/2}(v)Q_{\text{gain}}(f_{n-1}, f_{n-1})(\tau, x+v\tau, v)\,d\tau\\
            &\leq C' \sup_{0\leq t\leq T_1} \left\|w_{\delta-\kappa/2}(v)(f_{n-1}-f_{n-2})(t, x, v)\right\|_{L^\infty_{x,v}}\\
            &\qquad \times\int_0^t (1+|v|^2)^{\kappa/2} (t-\tau) w_{\delta-\kappa/2}(v)Q_{\text{gain}}(f_{n-1}, f_{n-1})(\tau, x+v\tau, v)\,d\tau\\
            &\leq C' \sup_{0\leq t\leq T_1} \left\|w_{\delta-\kappa/2}(v)(f_{n-1}-f_{n-2})(t, x, v)\right\|_{L^\infty_{x,v}}\int_0^t (t-\tau)w_{\delta}(v)Q_{\text{gain}}(f_{n-1}, f_{n-1})(\tau, x+v\tau, v)\,d\tau\\
            &\leq C_{\text{gain}} C' \sup_{0\leq t\leq T_1} \left\|w_{\delta}(v)f_{n-1}(t, x, v)\right\|^2_{L^\infty_{x,v}}\sup_{0\leq t\leq T_1} \left\|w_{\delta-\kappa/2}(v)(f_{n-1}-f_{n-2})(t, x, v)\right\|_{L^\infty_{x,v}}\int_0^t (t-\tau)\,d\tau\\
            &= \frac{C_{\text{gain}} C'T_1^2}{2} \sup_{0\leq t\leq T_1} \left\|w_{\delta}(v)f_{n-1}(t, x, v)\right\|^2_{L^\infty_{x,v}}\sup_{0\leq t\leq T_1} \left\|w_{\delta-\kappa/2}(v)(f_{n-1}-f_{n-2})(t, x, v)\right\|_{L^\infty_{x,v}}.
        \end{split}
    \end{equation}
    
    For $I_3$, we can use Lemma \ref{lem:Q_gain} as follows.
    \begin{equation}\label{5_2:I3est}
        \begin{split}
            |I_3|&\leq \int_0^t \exp\left(-\int_\tau^t g_{n-2}(s, x+vs, v)\,ds\right) w_{\delta-\kappa/2}(v)\\
            &\qquad\times\left|Q_{\text{gain}}(f_{n-1},f_{n-1}-f_{n-2}) + Q_{\text{gain}}(f_{n-1}-f_{n-2},f_{n-2})\right|(\tau, x+v\tau, v)\,d\tau\\
            &\leq \int_0^t w_{\delta-\kappa/2}(v)\left|Q_{\text{gain}}(f_{n-1},f_{n-1}-f_{n-2})\right| + w_{\delta-\kappa/2}(v)\left|Q_{\text{gain}}(f_{n-1}-f_{n-2},f_{n-2})\right|(\tau, x+v\tau, v)\,d\tau\\
            &\leq C_{\text{gain}}T_1\left(\sup_{0\leq t\leq T_1} \left\|w_{\delta}(v)f_{n-1}(t, x, v)\right\|_{L^\infty_{x,v}} + \sup_{0\leq t\leq T_1} \left\|w_{\delta}(v)f_{n-2}(t, x, v)\right\|_{L^\infty_{x,v}}\right)\\
            &\qquad\times \sup_{0\leq t\leq T_1} \left\|w_{\delta-\kappa/2}(v)(f_{n-1}-f_{n-2})(t, x, v)\right\|_{L^\infty_{x,v}}.
        \end{split}
    \end{equation}

    Now, we choose
    \begin{align*}
        T_1\leq \min\{C_{\text{gain}}^{-1},(C')^{-1}\}\frac{\left\|w_{\delta}(v)f_0(t, x, v)\right\|^{-1}_{L^\infty_{x,v}}}{4},
    \end{align*}
    then it satisfies the following estimates: 
    \begin{align*}
        &\sup_{0\leq t\leq T_1} \left\|w_{\delta}(v)f_n(t, x, v)\right\|_{L^\infty_{x,v}}\leq \frac{1}{\left\|w_{\delta}(v)f_0(t, x, v)\right\|^{-1}_{L^\infty_{x,v}} - C_{\text{gain}}T_1}\leq \frac{4}{3}\left\|w_{\delta}(v)f_0(t, x, v)\right\|^{-1}_{L^\infty_{x,v}},\\
        &C'T_1\left\|w_{\delta}(v)f_0(x,v)\right\|_{L^\infty_{x,v}}\leq \frac{1}{4},\\
        &C_{\text{gain}}T_1\left(\sup_{0\leq t\leq T_1} \left\|w_{\delta}(v)f_{n-1}(t, x, v)\right\|_{L^\infty_{x,v}} + \sup_{0\leq t\leq T_1} \left\|w_{\delta}(v)f_{n-2}(t, x, v)\right\|_{L^\infty_{x,v}}\right)\leq \frac{2}{3},\\
        &\frac{C_{\text{gain}} C'T_1^2}{2} \sup_{0\leq t\leq T_1} \left\|w_{\delta}(v)f_{n-1}(t, x, v)\right\|^2_{L^\infty_{x,v}}\leq \frac{1}{18}.
    \end{align*}
    
    Combining this bounds under the condition $t\leq T_1$ with estimate of $I_i$, we have
    \begin{align*}
        \left\|w_{\delta-\kappa/2}(v)(f_n-f_{n-1})(t,x+vt,v)\right\|_{L^\infty_{x,v}}
        \leq\frac{35}{36}\sup_{0\leq t\leq T_1}\left\|w_{\delta-\kappa/2}(v)(f_{n-1}-f_{n-2})(t, x, v)\right\|_{L^\infty_{x,v}}.
    \end{align*}
    Therefore, there exists a function $f(t,x,v)\in \mathcal{A}$ satisfying $f = \mathcal{T}f$, implying $f$ satisfies the Boltzmann equation. Since $f_n\geq 0$ for all $t$, $x$, and $v$, $f(t,x,v)\geq 0$ for a.e. $x, v$ for all $t$.

    Since constant $C_{\text{gain}}$ and $C'$ in \eqref{5_2:I1est}, \eqref{5_2:I2est}, and \eqref{5_2:I3est} only depends on $\alpha,\beta,\delta, \kappa$, and $N$, it is a standard argument that extends the solution in time by replacing initial data $f(T_1/2, x, v)$ for the first step for example and repeating it. Thus, we can check that the solution exists until $t\leq T^*$ satisfying 
    \begin{align*}
        \sup_{0\leq t\leq T^*} \left\|w_{\delta}(v)f(t,x,v)\right\|_{L^\infty_{x,v}}\leq 3\left\|w_{\delta}(v)f_0(x,v)\right\|_{L^\infty_{x,v}}.
    \end{align*}
    It ends the proof of existence part.

    To prove the uniqueness part, let us assume there exist two solutions of Boltzmann equation $f,\tilde{f}\in\mathcal{A}$ with the same initial $f_0$. Then, following the analysis for existence part, we can derive that
    \begin{align*}
        &\sup_{0\leq t\leq T_1}\left\|w_{\delta-\kappa/2}(v)\left(f- \tilde{f}\right)(t, x, v)\right\|_{L^\infty_{x,v}} \\
        & \leq \left(C'T_1\left\|w_{\delta}(v)f_0(x,v)\right\|_{L^\infty_{x,v}} + \frac{C_{\text{gain}} C'T_1^2}{2} \sup_{0\leq t\leq T_1} \left\|w_{\delta}(v)f(t, x, v)\right\|^2_{L^\infty_{x,v}} \right.\\
        &\quad\quad +\left. C_{\text{gain}}T_1\left(\sup_{0\leq t\leq T_1} \left\|w_{\delta}(v)f(t, x, v)\right\|_{L^\infty_{x,v}} + \sup_{0\leq t\leq T_1} \left\|w_{\delta}(v)\tilde{f}(t, x, v)\right\|_{L^\infty_{x,v}}\right)\right)\\
        &\quad\qquad \times \sup_{0\leq t\leq T_1} \left\|w_{\delta-\kappa/2}(v)(f-\tilde{f})(t, x, v)\right\|_{L^\infty_{x,v}}
    \end{align*}
    for some $T_1$, which is just to replace $f_{n-1}$ and $f_{n-2}$ in $I_i$ estimates \eqref{5_2:I1est}, \eqref{5_2:I2est}, and \eqref{5_2:I3est} with $f$ and $\tilde{f}$. Now, choose small enough $T_1$ so that the value in parenthesis in the RHS is smaller than $1$. In this case, we have
    \begin{align*}
        \sup_{0\leq t\leq T_1}\left\|w_{\delta-\kappa/2}(v)\left(f- \tilde{f}\right)(t, x, v)\right\|_{L^\infty_{x,v}} = 0.
    \end{align*}
    We can again repeat this calculation as
    \begin{align*}
        \sup_{0\leq t\leq T^*} \left\|w_{\delta}(v)f(t,x,v)\right\|_{L^\infty_{x,v}}, \left\|w_{\delta}(v)\tilde{f}(t,x,v)\right\|_{L^\infty_{x,v}}\leq 3\left\|w_{\delta}(v)f_0(x,v)\right\|_{L^\infty_{x,v}},
    \end{align*}
    so we finally obtain uniqueness for the whole time interval $[0, T^*]$.\\

    \noindent \textit{(ii) The case $\kappa\leq 0$.}
    
    In this case, we use the original weight $w_\delta(v)$. The difference $w_{\delta}(v)(f_n-f_{n-1})$ is written by
    \begin{align*}
        &w_\delta(v)(f_n-f_{n-1})(t,x+vt,v) \\
        & = \left\{\exp\left(-\int_0^t g_{n-1}(\tau, x+v\tau, v)\,d\tau\right) - \exp\left(-\int_0^t g_{n-2}(\tau, x+v\tau, v)\,d\tau\right)\right\} w_\delta(v)f_0(x,v)\\
        &\quad + \int_0^t \left\{\exp\left(-\int_\tau^t g_{n-1}(s, x+vs, v)\,d\tau\right) - \exp\left(-\int_\tau^t g_{n-2}(s, x+vs, v)\,ds\right)\right\}\\
        &\quad\qquad \times w_\delta(v)Q_{\text{gain}}(f_{n-1},f_{n-1})(\tau, x+v\tau, v)\,d\tau\\
        &\quad + \int_0^t \exp\left(-\int_\tau^t g_{n-2}(s, x+vs, v)\,ds\right) w_\delta(v)\left(Q_{\text{gain}}(f_{n-1},f_{n-1}) - Q_{\text{gain}}(f_{n-2},f_{n-2})\right)(\tau, x+v\tau, v)\,d\tau\\
        &\eqqcolon I_4+I_5+I_6.
    \end{align*}
    
    All the steps to bound $I_4$, $I_5$, and $I_6$ are almost same except the bound of $B(v-u,\omega)$ \eqref{5_2:Bbound}, requiring different bound of $|g_{n-1}-g_{n-2}|$. The difference $|g_{n-1}-g_{n-2}|$ is bounded by
    \begin{align*}
        &|g_{n-1}- g_{n-2}|(\tau, x+v\tau, v) \\
        & \leq \iint_{\mathbb{R}^N\times \mathbb{S}^{N-1}} |v-u|^\kappa b(\cos\theta)|f_{n-1}-f_{n-2}|(\tau, x+v\tau, u)\,du d\omega\\
        & \leq \left\|w_\delta(u)(f_{n-1}-f_{n-2})(\tau, x+v\tau, u)\right\|_{L^\infty_{x,u}}\iint_{\mathbb{R}^{N}\times  \mathbb{S}^{N-1}}\frac{1}{|v-u|^{-\kappa}w_\delta(u)}b(\cos\theta)\,du d\omega\\
        &\leq C_N\left\|w_\delta(u)(f_{n-1}-f_{n-2})(\tau, x+v\tau, u)\right\|_{L^\infty_{x,u}}\\
        &\qquad \times \left(\int_{|v-u|\leq 1}\frac{1}{|v-u|^{-\kappa}}\,du + \int_{|v-u|\geq 1}\frac{1}{w_\delta(u)}\,du\right)\\
        & \leq C_{N, \delta, \kappa}\left\|w_\delta(v)(f_{n-1}-f_{n-2})(\tau, x, v)\right\|_{L^\infty_{x,v}}.
    \end{align*}
    Therefore, following the same steps in the previous case, we get the existence and uniqueness of the solution.
\end{proof}

\begin{remark}
    We compare Theorem \ref{thm:2} and Proposition \ref{prop:wellposed}. The velocity weight $w_{\alpha,\beta, \delta}(v)$ under the condition \eqref{index:weight} also fulfills the condition of Theorem \ref{thm:2}. Therefore, $L^{\infty}_{x,v}((1+|v|^2)^{\delta} e^{\alpha|v|^{\beta}})$ with \eqref{index:weight} is a function space in which the BGK equation \eqref{BGK} is ill-posed while the Boltzmann equation \ref{eq:Boltzmann} is well-posed at least local in time.
\end{remark}

From now on, we will extend the $L^\infty_{x,v}$ well-posed result to $L^p_xL^q_v$ or $L^p_xL^q_v$ space, i.e. $\sup_{0\leq t\leq T}\|f(t,x,v)\|_{L^p_xL^q_v}<\infty$ or $\sup_{0\leq t\leq T}\|f(t,x,v)\|_{L^q_vL^p_x}<\infty$. Calculating the $v$ integral first, however, generates a problem; we need to bound $\int_{\mathbb{R}^N} f(t,x-vt, v)\,dv$, adding spatial effect to velocity integration. To avoid this issue, we initially impose $f_0(x,v)\in L^q_vL^p_x$ and then show that $\sup_{0\leq t\leq T}\|f(t,x,v)\|_{L^q_vL^p_x}<\infty$. The $L^q_vL^p_x$ case will be treated as a subspace of $L^p_xL^q_v$ if $q\leq p$ using Minkowski's integral inequality.\\

The result will depend on the validity of Proposition \ref{prop:wellposed}; we suppose that there exists a unique solution $f(t,x,v)$ satisfying $\sup_{0\leq t\leq T^*}\|f(t,x,v)\|_{L^\infty_{x,v}}<\infty$. Complex interpolation technique is the key technique in extending the solution space to $L^p_vL^q_x$. For the reader's convenience, we write some facts about complex interpolation method referring to \cite{Bergh J}. Let $V$ be a vector space, and $L^\infty_{0, v}(V)$ be a closed subspace of $L^\infty_v(V)$ constructed by taking completion of step functions $\sum_{i=1}^n a_i\mathbf{1}_{E_i}$ such that $a_i\in V$ and $E_i\subset \mathbb{R}^N$ with $|E_i|<\infty$.
\begin{lemma} [Riesz-Thorin] \label{Riesz-Thorin}
     Suppose $1\leq p_0,p_1,p_2,p_3\leq \infty$ and $1\leq q_0,q_1,q_2,q_3< \infty$, and let a linear function $T$ is continuous as an operator on each $L^{q_0}_vL^{p_0}_x\rightarrow L^{q_2}_vL^{p_2}_x$ and $L^{q_1}_vL^{p_1}_x\rightarrow L^{q_3}_vL^{p_3}_x$. For $0<\theta<1$, let us define
     \begin{align*}
         \frac{1}{p_{0,\theta}} = \frac{\theta}{p_0} + \frac{1-\theta}{p_1},\quad \frac{1}{p_{1,\theta}} = \frac{\theta}{p_2} + \frac{1-\theta}{p_3},\quad \frac{1}{q_{0,\theta}} = \frac{\theta}{q_0} + \frac{1-\theta}{q_1},\quad \frac{1}{q_{1,\theta}} = \frac{\theta}{q_2} + \frac{1-\theta}{q_3}.
     \end{align*}
     Then $T$ can be defined as an continuous operator from $L^{q_{0,\theta}}_vL^{p_{0,\theta}}_x\rightarrow L^{q_{1,\theta}}_vL^{p_{1,\theta}}_x$ satisfying
     \begin{align*}
         \|T\|_{L^{q_{0,\theta}}_vL^{p_{0,\theta}}_x\rightarrow L^{q_{1,\theta}}_vL^{p_{1,\theta}}_x} \leq          \|T\|^{\theta}_{L^{q_{0}}_vL^{p_{0}}_x\rightarrow L^{q_{2}}_vL^{p_{2}}_x}\|T\|^{1-\theta}_{L^{q_{1}}_vL^{p_{1}}_x\rightarrow L^{q_{3}}_vL^{p_{3}}_x}.
     \end{align*}
     Let $1\leq q_0, q_2<\infty$, and $T$ is continuous as an operator on each $L^{q_0}_vL^{p_0}_x\rightarrow L^{q_2}_vL^{p_2}_x$ and $L^{\infty}_{0,v}L^{p_1}_x\rightarrow L^{\infty}_{0,v}L^{p_3}_x$. For $0<\theta<1$
     \begin{align*}
         \|T\|_{L^{q_{0,\theta}}_vL^{p_{0,\theta}}_x\rightarrow L^{q_{1,\theta}}_vL^{p_{1,\theta}}_x} \leq          \|T\|^{\theta}_{L^{q_{0}}_vL^{p_{0}}_x\rightarrow L^{q_{2}}_vL^{p_{2}}_x}\|T\|^{1-\theta}_{L^\infty_{0,v}L^{p_{1}}_x\rightarrow L^{\infty}_{0,v}L^{p_{3}}_x},
     \end{align*}
     where all are same but $\frac{1}{q_{0,\theta}} = \frac{\theta}{q_0}$ and $\frac{1}{q_{1,\theta}} = \frac{\theta}{q_2}$.
\end{lemma}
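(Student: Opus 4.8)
The plan is to deduce both assertions from the fundamental theorem of the complex interpolation method, combined with the identification of the mixed-norm Lebesgue spaces as a complex interpolation scale. Recall (see \cite{Bergh J}) that for a compatible couple $(A_0,A_1)$ of Banach spaces and $0<\theta<1$, the interpolation space $[A_0,A_1]_\theta$ has the property that any linear operator $T$ bounded both as $A_0\to B_0$ and as $A_1\to B_1$ extends to a bounded operator $[A_0,A_1]_\theta\to[B_0,B_1]_\theta$ with the multiplicative norm bound $\|T\|_{[A_0,A_1]_\theta\to[B_0,B_1]_\theta}\le\|T\|_{A_0\to B_0}^{\theta}\,\|T\|_{A_1\to B_1}^{1-\theta}$. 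So it suffices to realize all six spaces appearing in the statement as complex interpolation spaces of the endpoint couples.

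First I would record the mixed-norm interpolation identity: whenever $1\le p_0,p_1\le\infty$ and $1\le q_0,q_1<\infty$,
\[
  [\,L^{q_0}_vL^{p_0}_x,\ L^{q_1}_vL^{p_1}_x\,]_\theta \;=\; L^{q_\theta}_vL^{p_\theta}_x,\qquad \frac1{q_\theta}=\frac{\theta}{q_0}+\frac{1-\theta}{q_1},\quad \frac1{p_\theta}=\frac{\theta}{p_0}+\frac{1-\theta}{p_1},
\]
with equal norms. This follows from Calderón's vector-valued complex interpolation, $L^q_v(V)=[L^{q_0}_v(V_0),L^{q_1}_v(V_1)]_\theta$ when $V=[V_0,V_1]_\theta$, applied with $V_i=L^{p_i}_x$ together with the scalar identity $[L^{p_0}_x,L^{p_1}_x]_\theta=L^{p_\theta}_x$. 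Plugging this into the fundamental theorem above, with $A_i=L^{q_i}_vL^{p_i}_x$ on the domain side ($i=0,1$) and $B_0=L^{q_2}_vL^{p_2}_x$, $B_1=L^{q_3}_vL^{p_3}_x$ on the target side, yields the first displayed inequality at once, with the interpolated exponents $p_{0,\theta},p_{1,\theta},q_{0,\theta},q_{1,\theta}$ exactly as defined in the statement.

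For the second assertion the only new feature is the endpoint space $L^\infty$, for which $V$-valued step functions with finite-measure supports fail to be dense; this is precisely why one passes to the closed subspace $L^\infty_{0,v}(V)$. The relevant fact, again reducible to the scalar statement $[L^{q_0},L^\infty_0]_\theta=L^{q_0/\theta}$ and Calderón's vector-valued result, is
\[
  [\,L^{q_0}_vL^{p_0}_x,\ L^{\infty}_{0,v}L^{p_1}_x\,]_\theta \;=\; L^{q_{0,\theta}}_vL^{p_{0,\theta}}_x,\qquad \frac1{q_{0,\theta}}=\frac{\theta}{q_0},\quad \frac1{p_{0,\theta}}=\frac{\theta}{p_0}+\frac{1-\theta}{p_1},
\]
and the analogous identity on the target side with $(q_2,p_2,p_3)$ replacing $(q_0,p_0,p_1)$. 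Since by hypothesis $T$ maps $L^{q_0}_vL^{p_0}_x\to L^{q_2}_vL^{p_2}_x$ and $L^\infty_{0,v}L^{p_1}_x\to L^\infty_{0,v}L^{p_3}_x$ boundedly, the fundamental theorem of complex interpolation again produces the stated geometric-mean bound.

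The place where care is required — rather than a genuine obstacle — is the handling of the $L^\infty$ endpoint: one must verify that $T$ does map the closed subspace $L^\infty_{0,v}$ into itself (or, equivalently, work first on the dense class of $L^{p_i}_x$-valued finite-support step functions and pass to the closure, using the $L^{q_0}_v$-bound to control the limit), and that the mixed-norm interpolation identity persists with $L^\infty_{0,v}$ in place of the full $L^\infty_v$. Both are the mixed-norm analogue of the classical Riesz--Thorin theorem with one endpoint exponent equal to $\infty$. Alternatively, if a self-contained argument is preferred, one bypasses the abstract identities and proves both inequalities directly via the Hadamard three-lines lemma applied to $z\mapsto \iint (Tf_z)\,g_z\,dx\,dv$ for suitably chosen analytic families $f_z,g_z$ of step functions that interpolate the exponents simultaneously in the $v$- and $x$-variables; this is the route I would actually write out in full.
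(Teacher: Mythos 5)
Your proposal is correct and follows essentially the same route as the paper: the paper's proof is simply a citation to Theorems~5.1.1 and~5.1.2 of Bergh--L\"ofstr\"om, which are exactly the scalar Riesz--Thorin interpolation identity and Calder\'on's vector-valued complex interpolation identity that you invoke. What you have written out is the argument behind that citation, including the needed caveat about $L^\infty_{0,v}$ at the endpoint; nothing in your approach differs in substance from the paper's.
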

\begin{proof}
    We refer to Theorem 5.1.1 and 5.1.2 in \cite{Bergh J}.
\end{proof}

To use the Riesz-Thorin interpolation, we split $Q_{gain}(f,g)$ as follows. Starting from
\begin{align*}
    &\iint_{\mathbb{R}^N\times\mathbb{S}^{N-1}}B(v-u,\omega)w_\delta(v)f(x,v')g(x,u')\,dud\omega\\
    &=\iint_{\mathbb{R}^N\times\mathbb{S}^{N-1}}B(v-u,\omega)\frac{w_\delta(v)}{w_\delta(v')w_\delta(u')}w_\delta(v')f(x,v')w_\delta(u')g(x,u')\,dud\omega\\
    &\leq C\iint_{\mathbb{R}^N\times\mathbb{S}^{N-1}}B(v-u,\omega)\Big(\frac{1}{(1+|v'|^2)^{\delta}} + \frac{1}{(1+|u'|^2)^{\delta}} \Big)\\
    &\quad \times \left(e^{-\alpha (2^{\beta/2}-1)|u|^\beta}\mathbf{1}_{|u|\geq |v|} + \sum_{n=0}^\infty e^{-\frac{\alpha \beta/4}{2^{(1-\beta/2)(n+1)}}|u|^\beta}\mathbf{1}_{2^{-n-1}|v|^2\leq |u|^2\leq 2^{-n}|v|^2}\right)w_\delta(v')f(x,v')w_\delta(u')g(x,u')\,dud\omega,
\end{align*}
we define
\begin{align*}
    W_1(v,u,\omega) &= B(v-u,\omega)\frac{1}{(1+|v'|^2)^{\delta}}\left(e^{-\alpha (2^{\beta/2}-1)|u|^\beta}\mathbf{1}_{|u|\geq |v|} + \sum_{n=0}^\infty e^{-\frac{\alpha \beta/4}{2^{(1-\beta/2)(n+1)}}|u|^\beta}\mathbf{1}_{2^{-n-1}|v|^2\leq |u|^2\leq 2^{-n}|v|^2}\right),\\
    W_2(v,u,\omega) &= B(v-u,\omega)\frac{1}{(1+|u'|^2)^{\delta}}\left(e^{-\alpha (2^{\beta/2}-1)|u|^\beta}\mathbf{1}_{|u|\geq |v|} + \sum_{n=0}^\infty e^{-\frac{\alpha \beta/4}{2^{(1-\beta/2)(n+1)}}|u|^\beta}\mathbf{1}_{2^{-n-1}|v|^2\leq |u|^2\leq 2^{-n}|v|^2}\right),\\
    Q_1(f,g) &= \iint_{\mathbb{R}^N\times\mathbb{S}^{N-1}}W_1(v,u,\omega)w_\delta(v')f(x,v')w_\delta(u')g(x,u')\,dud\omega,\\
    Q_2(f,g) &= \iint_{\mathbb{R}^N\times\mathbb{S}^{N-1}}W_2(v,u,\omega)w_\delta(v')f(x,v')w_\delta(u')g(x,u')\,dud\omega.\\
\end{align*}

The following lemma shows that $Q_1(f,g)$ (resp., $Q_2(f,g)$) is a bounded operator in $L^1_vL^p_x$ norm in $f$ (resp., $g$) when $g$ (resp., $f$) is in $L^\infty_{x,v}$ for all $1\leq p\leq \infty$. The proof is similar to Lemma \ref{lem:int_bound4}.
\begin{lemma}\label{lem:Q_gainL1}
    Assume that $\alpha$, $\beta$, and $N$ satisfy \eqref{index:weight}, $-N<\kappa\leq 1$, and $2\delta>N+1$. If $N=2$, we impose one more condition $1+\kappa\leq \beta$. Then, we have
    \begin{align*}
        \left\|w_\delta(v) Q_1(f,g)\right\|_{L^1_vL^p_x}&\leq C_{\alpha,\beta,N, \delta, \kappa}\|w_\delta(v)f\|_{L^\infty_{x,v}}\|w_\delta(v)g\|_{L^1_vL^p_x},\\
        \left\|w_\delta(v) Q_2(f,g)\right\|_{L^1_vL^p_x}&\leq C_{\alpha,\beta,N, \delta, \kappa}\|w_\delta(v)g\|_{L^\infty_{x,v}}\|w_\delta(v)f\|_{L^1_vL^p_x}
    \end{align*}
    for all $1\leq p\leq \infty$.
\end{lemma}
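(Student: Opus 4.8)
The plan is to reduce both inequalities, via a Minkowski inequality and the extraction of an $L^\infty_{x,v}$ factor, to a uniform‑in‑velocity bound on a scalar collision kernel of exactly the type already controlled inside the proof of Lemma \ref{lem:int_bound4}. I describe $Q_1$ (so that $f$ is placed in $L^\infty_{x,v}$ and $g$ in $L^1_vL^p_x$); $Q_2$ is identical after exchanging the roles of $v'$ and $u'$, hence of $f$ and $g$. First I would fix $1\le p\le\infty$ and apply Minkowski's integral inequality in $x$ — the ordinary triangle inequality when $p=\infty$ — to the definition of $Q_1(f,g)$, and then bound $w_\delta(v')f(\cdot,v')\le\|w_\delta f\|_{L^\infty_{x,v}}$ pointwise inside the $u,\sigma$‑integral. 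Writing $h(u'):=\|w_\delta(u')g(\cdot,u')\|_{L^p_x}$, so that $\|h\|_{L^1}=\|w_\delta g\|_{L^1_vL^p_x}$, this yields
\begin{align*}
    \|w_\delta(v)Q_1(f,g)\|_{L^1_vL^p_x}
    \le \|w_\delta f\|_{L^\infty_{x,v}}\int_{\mathbb{R}^N}\iint_{\mathbb{R}^N\times\mathbb{S}^{N-1}}W_1(v,u,\sigma)\,h(u')\,du\,d\sigma\,dv ,
\end{align*}
and it remains to prove that the triple integral is $\le C\|h\|_{L^1}$.

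For the triple integral I would pass to the Carleman parametrization used for Lemma \ref{lem:int_bound4}: with $\tilde u=u-v$, $\tilde u_\parallel=(\tilde u\cdot\sigma)\sigma$, $\tilde u_\perp=\tilde u-\tilde u_\parallel$ one has $v'=v+\tilde u_\parallel$, $u'=v+\tilde u_\perp$, $u=u'+\tilde u_\parallel$, and $\iint du\,d\sigma$ becomes, up to a constant, $\int_{\mathbb{R}^N}d\tilde u_\parallel\int_{E_{\tilde u_\parallel}}d\tilde u_\perp\,|\tilde u_\parallel|^{-(N-2)}(|\tilde u_\parallel|^2+|\tilde u_\perp|^2)^{(\kappa-1)/2}$, with $E_{\tilde u_\parallel}$ the hyperplane through the origin orthogonal to $\tilde u_\parallel$. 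For each fixed $(\tilde u_\parallel,\tilde u_\perp)$ the substitution $v\mapsto u'=v+\tilde u_\perp$ has unit Jacobian, so by Tonelli the triple integral equals $\int_{\mathbb{R}^N}h(u')\,K(u')\,du'$, where $K(u')$ is the integral over $(\tilde u_\parallel,\tilde u_\perp)$ of $\tfrac{(|\tilde u_\parallel|^2+|\tilde u_\perp|^2)^{(\kappa-1)/2}}{|\tilde u_\parallel|^{N-2}(1+|v'|^2)^{\delta}}$ against the indicator–exponential factor of $W_1$ (the term $e^{-\alpha(2^{\beta/2}-1)|u|^\beta}\mathbf{1}_{|u|\ge|v|}$ plus the dyadic sum $\sum_n e^{-\frac{\alpha\beta/4}{2^{(1-\beta/2)(n+1)}}|u|^\beta}\mathbf{1}_{2^{-n-1}|v|^2\le|u|^2\le 2^{-n}|v|^2}$), now with $v'=u'+\tilde u_\parallel-\tilde u_\perp$, $u=u'+\tilde u_\parallel$, $v=u'-\tilde u_\perp$. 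The statement is thus reduced to $\sup_{u'\in\mathbb{R}^N}K(u')<\infty$, and for $Q_2$, after replacing $\tfrac1{(1+|v'|^2)^\delta}$ by $\tfrac1{(1+|u'|^2)^\delta}$ and recentering on $v'$ instead of $u'$, to the analogous bound for the corresponding kernel.

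This last bound is where essentially all the work sits, and it is the same work as in Lemma \ref{lem:int_bound4}: $K$ is of precisely the type of the quantities $I_1$ (for $Q_1$) and $I_2$ (for $Q_2$) there — a combination of the Carleman Jacobian singularity $|\tilde u_\parallel|^{-(N-2)}(|\tilde u_\parallel|^2+|\tilde u_\perp|^2)^{(\kappa-1)/2}$, a polynomial weight of order $\delta$ on one collisional velocity, and exponential decay in a velocity constrained by an indicator — with the sole differences that the external velocity is now $u'$ (resp.\ $v'$) rather than $v$, and that we only need $K$ bounded, not polynomially decaying. Concretely I would split the $\tilde u_\parallel$‑integral according to whether $|u|\ge|v|$ — handled by the first summand of $W_1$ using the index conditions $0\le(N-2)+\tfrac{2}{N+1}(1-\kappa)<N$, $2\delta+(N-2)+\tfrac{2}{N+1}(1-\kappa)>N$, and $\tfrac{N-1}{N+1}(1-\kappa)<N-1$ from \eqref{5_5:indexes}, together with Lemma \ref{lem:int_bound3} for the $\tilde u_\perp$‑integral — or $2^{-n-1}|v|^2\le|u|^2\le 2^{-n}|v|^2$ for some $n\ge0$, handled by the dyadic sum exactly as in the estimates for $I_{1,2}$ and $I_{2,2}$, via Lemma \ref{lem:ineq}, the shell bound $(|\tilde u_\parallel|^2+|\tilde u_\perp|^2)^{(\kappa-1)/2}\le C_\kappa|v|^{-(1-\kappa)}$, the cone estimates of Lemma \ref{lem:cone} and Lemma \ref{lem:cone2}, and the geometric‑type sums $\sum_n 2^{-(N-1)n/2}e^{-c\,2^{-n}|v|^\beta}$ already appearing there. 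The one piece of new bookkeeping is that the polynomial weight now sits on $v'=u'+\tilde u_\parallel-\tilde u_\perp$ (resp.\ $u'$), whose modulus must be compared with $|\tilde u_\parallel|$, $|\tilde u_\perp|$ and $|u'|$ in each regime; since $2\delta>N+2$ this weight is always integrable against the $|\tilde u_\parallel|$‑ and $|\tilde u_\perp|$‑singularities, and no decay in $|u'|$ and no $|\tilde u_\parallel|$‑versus‑$|u'|$ cancellation are required.

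The main obstacle is therefore organizational rather than analytic: faithfully reproducing the dyadic‑plus‑cone decomposition of the proof of Lemma \ref{lem:int_bound4} with the pre‑ and post‑collisional velocities permuted, and verifying that every index inequality in \eqref{index:weight} and \eqref{5_5:indexes} is still invoked in the same direction. Because the $L^1_v$ target here is strictly softer than the weighted‑$L^\infty_v$ target of Lemma \ref{lem:int_bound4} (we discard all the gained powers $(1+|v|)^{-(\ldots)}$), the bounds established there already suffice, and no sharper input is needed.
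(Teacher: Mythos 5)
Your overall strategy --- Minkowski's inequality in $x$, extraction of the $L^\infty_{x,v}$ factor, a unit-Jacobian change of variables to move the remaining norm onto the post-collisional velocity, and a uniform-in-velocity bound on the leftover collision kernel --- is exactly the paper's. The paper additionally splits off the region $|u|\ge|v|/256$ first and treats it by the global pre/post-collisional change of variables $(v,u)\mapsto(v',u')$ rather than through the Carleman form, but that difference is cosmetic.

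The one place where your plan understates the work is the claim that ``the bounds established [in Lemma \ref{lem:int_bound4}] already suffice.'' The kernel $K(u')$ is not the kernel of Lemma \ref{lem:int_bound4} with variables renamed: after recentering on $u'$, the dyadic constraint reads $2^{-n-1}|u'-\tilde{u}_\perp|^2\le|u'+\tilde{u}_\parallel|^2\le 2^{-n}|u'-\tilde{u}_\perp|^2$, and for $Q_1$ the thin-cone/small-ball gain now falls on $\tilde{u}_\perp$ rather than $\tilde{u}_\parallel$. Since the Carleman measure $|\tilde{u}_\parallel|^{-(N-2)}\,d\tilde{u}_\parallel\,d\tilde{u}_\perp$ is not symmetric in the two variables, the paper has to rewrite it with $\tilde{u}_\perp$ as the outer $\mathbb{R}^N$ variable and $\tilde{u}_\parallel\in E_{\tilde{u}_\perp}$, which turns the Jacobian factor into $|\tilde{u}_\perp|^{-1}|\tilde{u}_\parallel|^{-(N-3)}$ (last line of \eqref{5_9:J_1_2}); only then do Lemmas \ref{lem:cone} and \ref{lem:cone2} apply and the dyadic sum close. (For $Q_2$ no swap is needed, so the two operators are not quite ``identical after exchanging roles.'') The resulting case analysis is a new computation of length comparable to Lemma \ref{lem:int_bound4} itself, and it re-invokes the index condition $\beta\ge 2(1+\kappa)/N$ at a new spot (the bound $|v|^{1+\kappa-\beta N/2}$ in \eqref{5_9:J_12_7}--\eqref{5_9:J_12_8}). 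None of this invalidates your plan --- the reduction to $\sup_{u'}K(u')<\infty$ is correct and is precisely what the paper proves --- but the uniform kernel bound must be re-derived with the reordered Carleman parametrization, not cited from Lemma \ref{lem:int_bound4}.
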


\begin{proof}
    By Minkowski's integral inequality, for $1\leq p\leq \infty$ and $i=1,2$,
    \begin{align*}
        \left\|w_\delta(v) Q_i(f,g)\right\|_{L^1_vL^p_x} &= \int_{\mathbb{R}^N}\,\left[\int_{\Omega}\left(\iint_{\mathbb{R}^N\times\mathbb{S}^{N-1}}W_i(v,u,\omega)w_\delta(v')f(x,v')w_\delta(u')g(x,u')\,dud\omega\right)^p\,dx\right]^{1/p}\,dv\\
        &\leq \int_{\mathbb{R}^N}\,\iint_{\mathbb{R}^N\times\mathbb{S}^{N-1}}\left(\int_{\Omega}\left[W_i(v,u,\omega)w_\delta(v')f(x,v')w_\delta(u')g(x,u')\right]^p\,dx\right)^{1/p}\,dud\omega\,dv\\
        &\leq \int_{\mathbb{R}^N}\,\iint_{\mathbb{R}^N\times\mathbb{S}^{N-1}}W_i(v,u,\omega)\left(\int_{\Omega}[w_\delta(v')f(x,v')w_\delta(u')g(x,u')]^p\,dx\right)^{1/p}\,dud\omega\,dv.
    \end{align*}
    Now, we split the integral by
    \begin{align*}
        I_{i,1} &= \int_{\mathbb{R}^N}\,\iint_{\mathbb{R}^N\times\mathbb{S}^{N-1}}\mathbf{1}_{|u|\geq |v|/256} W_i(v,u,\omega)\left(\int_{\Omega}\left[w_\delta(v')f(x,v')w_\delta(u')g(x,u')\right]^p\,dx\right)^{1/p}\,dud\omega\,dv,\\
        I_{i, 2} &= \int_{\mathbb{R}^N}\,\iint_{\mathbb{R}^N\times\mathbb{S}^{N-1}}\mathbf{1}_{|u|\leq |v|/256} W_i(v,u,\omega)\left(\int_{\Omega}\left[w_\delta(v')f(x,v')w_\delta(u')g(x,u')\right]^p\,dx\right)^{1/p}\,dud\omega\,dv
    \end{align*}
    for $i=1, 2$. It is equivalent to divide $n\leq 15$ and $n\geq 16$ cases in the summation of $n$.\\
    
    \noindent \textit{(i) Estimate of $I_{1, 1}$ and $I_{2, 1}$.}
    
    In this case, we will demonstrate the bound of $I_{1,1}$ since $I_{2,1}$ can be bounded using a similar method.
    
    For $|u|\geq |v|/256$, as $n\leq 15$, we have
    \begin{align*}
        \begin{split}
            I_{1, 1}&\leq\int_{\mathbb{R}^N}\,\iint_{\mathbb{R}^N\times\mathbb{S}^{N-1}}\mathbf{1}_{|u|\geq |v|/256} B(v-u,\omega)\frac{1}{(1+|v'|^2)^\delta}e^{-\frac{\alpha\beta}{2^{(18-8\beta)}}|u|^\beta}\\
            &\qquad \times \left(\int_{\Omega}\left[w_\delta(v')f(x,v')w_\delta(u')g(x,u')\right]^p\,dx\right)^{1/p}\,dud\omega\,dv\\
            &\leq\int_{\mathbb{R}^N}\,\iint_{\mathbb{R}^N\times\mathbb{S}^{N-1}}\mathbf{1}_{|u'|\geq |v'|/256} B(v-u,\omega)\frac{1}{(1+|v|^2)^\delta}e^{-\frac{\alpha\beta}{2^{(18-8\beta)}}|u'|^\beta}\|w_\delta(v)f(x,v)\|_{L^\infty_x}\|w_\delta(u)g(x,u)\|_{L^p_x}\,dud\omega\,dv\\
            &\leq \|w_\delta(v)f(x,v)\|_{L^\infty_{x,v}}\|w_\delta(v)g(x,v)\|_{L^1_vL^p_x}\\
            &\qquad \times \sup_u\,\iint_{\mathbb{R}^N\times\mathbb{S}^{N-1}}\mathbf{1}_{|u'|\geq |v'|/256} B(v-u,\omega)\frac{1}{(1+|v|^2)^\delta}e^{-\frac{\alpha\beta}{2^{(18-8\beta)}}|u'|^\beta}\,dvd\omega
        \end{split}
    \end{align*}
    In the middle step, we used the coordinate change $(v,u)\mapsto (v', u')$. In the final step, we applied $u$ integral to $w_\delta(u)g(x,u)$.
    
    By energy conservation and $|u'|\geq |v'|/256$ condition, 
    \begin{align*}
        &\max\{|v|^2,|u|^2\}\leq |v|^2+|u|^2=|v'|^2+|u'|^2\leq (256^2+1)|u'|^2.
    \end{align*}
    Therefore, if $\kappa\geq 0$, using \eqref{5_2:Bbound},
    \begin{align*}
        &\iint_{\mathbb{R}^N\times\mathbb{S}^{N-1}}\mathbf{1}_{|u'|\geq |v'|/256} B(v-u,\omega)\frac{1}{(1+|v|^2)^\delta}e^{-\frac{\alpha\beta}{2^{(18-8\beta)}}|u'|^\beta}\,dvd\omega\\
        & \leq C\int (1+|v|^2)^{\kappa/2}(1+|u|^2)^{\kappa/2} \frac{e^{-\frac{1}{2}\frac{\alpha\beta}{2^{(18-8\beta)}}(256^2+1)^{-\beta/2}|u|^\beta}e^{-\frac{1}{2}\frac{\alpha\beta}{2^{(18-8\beta)}}(256^2+1)^{-\beta/2}|v|^\beta}}{(1+|v|^2)^{\delta}}\,dvd\omega.
    \end{align*}
    The right-hand side is bounded uniformly about $u$.
    
    If $\kappa<0$, then
    \begin{align*}
        &\iint_{\mathbb{R}^N\times\mathbb{S}^{N-1}}\mathbf{1}_{|u'|\geq |v'|/256} B(v-u,\omega)\frac{1}{(1+|v|^2)^\delta}e^{-\frac{\alpha\beta}{2^{(18-8\beta)}}|u'|^\beta}\,dvd\omega\\
        &\leq C\int \frac{1}{|v-u|^{-\kappa}} \frac{e^{-\frac{1}{2}\frac{\alpha\beta}{2^{(18-8\beta)}}(256^2+1)^{-\beta/2}|u|^\beta}e^{-\frac{1}{2}\frac{\alpha\beta}{2^{(18-8\beta)}}(256^2+1)^{-\beta/2}|v|^\beta}}{(1+|v|^2)^{\delta}}\,dvd\omega.
    \end{align*}
    Since $-\kappa<N$, the right-hand side is bounded uniformly about $u$. Therefore, we get
    \begin{align}\label{5_9:I_11}
        I_{1,1}&\leq C_{\alpha,\beta, N,\delta,\kappa}\|w_\delta(v)f(x,v)\|_{L^\infty_{x,v}}\|w_\delta(v)g(x,v)\|_{L^1_vL^p_x},
    \end{align}
    and
    \begin{align}\label{5_9:I_21}
        I_{2,1}&\leq C_{\alpha,\beta, N,\delta,\kappa}\|w_\delta(v)g(x,v)\|_{L^\infty_{x,v}}\|w_\delta(v)f(x,v)\|_{L^1_vL^p_x}
    \end{align}
    for the same reason.\\

    \noindent \textit{(ii) Estimate of $I_{1,2}$.}
    
    For $|u|\leq |v|/256$, we use Carlemann type representation as in the proof of Lemma \ref{lem:int_bound4}.

    \begin{align}\label{5_9:J_1_2}
        \begin{split}
            I_{1, 2}&= \sum_{n=16}^\infty \int_{\mathbb{R}^N}dv\,\int_{\mathbb{R}^N} d\tilde{u}_\parallel\,\int_{E_{\tilde{u}_\parallel}} d\tilde{u}_\perp\,\mathbf{1}_{2^{-n-1}|v|^2\leq |v+\tilde{u}_\parallel+\tilde{u}_\perp|^2\leq 2^{-n}|v|^2}\\
            &\qquad \times \left(\int_{\Omega}\left[w_\delta(v+\tilde{u}_\parallel)f(x,v+\tilde{u}_\parallel)w_\delta(v+\tilde{u}_\perp)g(x,v+\tilde{u}_\perp)\right]^p\,dx\right)^{1/p}\\
            &\qquad\times\frac{\left(|\tilde{u}_\parallel|^2+|\tilde{u}_\perp|^2\right)^{\frac{\kappa-1}{2}}}{|\tilde{u}_\parallel|^{N-2}}\frac{1}{\left(1+|v+\tilde{u}_\parallel|^2\right)^\delta}e^{-\frac{\alpha \beta/4}{2^{(1-\beta/2)(n+1)}}|v+\tilde{u}_\parallel+\tilde{u}_\perp|^\beta}\mathbf{1}_{2^{-n-1}|v|^2\leq |v+\tilde{u}_\perp+\tilde{u}_\parallel|^2\leq 2^{-n}|v|^2}\\
            &\leq \sum_{n=16}^\infty \|w_\delta(v)f(x,v)\|_{L^\infty_{x,v}}\int_{\mathbb{R}^N} dv\,\int_{\mathbb{R}^N} d\tilde{u}_\parallel\,\int_{E_{\tilde{u}_\parallel}} d\tilde{u}_\perp\,\|w_\delta(v+\tilde{u}_\perp)g(x,v+\tilde{u}_\perp)\|_{L^p_x}\\
            &\qquad\times\frac{\left(|\tilde{u}_\parallel|^2+|\tilde{u}_\perp|^2\right)^{\frac{\kappa-1}{2}}}{|\tilde{u}_\parallel|^{N-2}}\frac{1}{\left(1+|v+\tilde{u}_\parallel|^2\right)^\delta} e^{-\frac{\alpha \beta/4}{2^{(1-\beta/2)(n+1)}}|v+\tilde{u}_\perp+\tilde{u}_\parallel|^\beta}\mathbf{1}_{2^{-n-1}|v|^2\leq |v+\tilde{u}_\perp+\tilde{u}_\parallel|^2\leq 2^{-n}|v|^2}\\
            &= \sum_{n=16}^\infty \|w_\delta(v)f(x,v)\|_{L^\infty_{x,v}}\int_{\mathbb{R}^N} dv\,\int_{\mathbb{R}^N} d\tilde{u}_\parallel\,\int_{E_{\tilde{u}_\parallel}} d\tilde{u}_\perp\,\|w_\delta(v)g(x,v)\|_{L^p_x}\\
            &\qquad\times\frac{\left(|\tilde{u}_\parallel|^2+|\tilde{u}_\perp|^2\right)^{\frac{\kappa-1}{2}}}{|\tilde{u}_\parallel|^{N-2}}\frac{1}{\left(1+|v-\tilde{u}_\perp+\tilde{u}_\parallel|^2\right)^\delta} e^{-\frac{\alpha \beta/4}{2^{(1-\beta/2)(n+1)}}|v+\tilde{u}_\parallel|^\beta}\mathbf{1}_{2^{-n-1}|v-\tilde{u}_\perp|^2\leq |v+\tilde{u}_\parallel|^2\leq 2^{-n}|v-\tilde{u}_\perp|^2}\\
            &\leq\sum_{n=16}^\infty \|w_\delta(v)f(x,v)\|_{L^\infty_{x,v}}\|w_\delta(v)g(x,v)\|_{L^1_vL^p_x}\sup_v\int_{\mathbb{R}^N} d\tilde{u}_\parallel\,\int_{E_{\tilde{u}_\parallel}} d\tilde{u}_\perp\,\\
            &\qquad\times\frac{\left(|\tilde{u}_\parallel|^2+|\tilde{u}_\perp|^2\right)^{\frac{\kappa-1}{2}}}{|\tilde{u}_\parallel|^{N-2}}\frac{1}{\left(1+|v-\tilde{u}_\perp+\tilde{u}_\parallel|^2\right)^\delta} e^{-\frac{\alpha \beta/4}{2^{(1-\beta/2)(n+1)}}|v+\tilde{u}_\parallel|^\beta}\mathbf{1}_{2^{-n-1}|v-\tilde{u}_\perp|^2\leq |v+\tilde{u}_\parallel|^2\leq 2^{-n}|v-\tilde{u}_\perp|^2}\\
            &=\sum_{n=16}^\infty \|w_\delta(v)f(x,v)\|_{L^\infty_{x,v}}\|w_\delta(v)g(x,v)\|_{L^1_vL^p_x}\sup_v\int_{\mathbb{R}^N} d\tilde{u}_\perp\,\int_{E_{\tilde{u}_\perp}} d\tilde{u}_\parallel\,\\
            &\qquad\times\frac{\left(|\tilde{u}_\parallel|^2+|\tilde{u}_\perp|^2\right)^{\frac{\kappa-1}{2}}}{|\tilde{u}_\perp||\tilde{u}_\parallel|^{N-3}}\frac{1}{\left(1+|v-\tilde{u}_\perp+\tilde{u}_\parallel|^2\right)^\delta} e^{-\frac{\alpha \beta/4}{2^{(1-\beta/2)(n+1)}}|v+\tilde{u}_\parallel|^\beta}\mathbf{1}_{2^{-n-1}|v-\tilde{u}_\perp|^2\leq |v+\tilde{u}_\parallel|^2\leq 2^{-n}|v-\tilde{u}_\perp|^2},
        \end{split}
    \end{align}
    where $E_{\tilde{u}_\parallel}$ (resp., $E_{\tilde{u}_\perp}$) is $N-1$ dimensional plane through $0$ perpendicular to $\tilde{u}_\parallel$ (resp., $\tilde{u}_\perp$). In the final step, we exchanged $d\tilde{u}_\perp$ and $d\tilde{u}_\parallel$ with additional factor $\frac{|\tilde{u}_\parallel|}{|\tilde{u}_\perp|}$. We define
    \begin{align*}
        &J_{1,2} =\sum_{n=16}^\infty \int_{\mathbb{R}^N} d\tilde{u}_\perp\,\int_{E_{\tilde{u}_\perp}} d\tilde{u}_\parallel\,\\
        &\qquad\times\frac{\left(|\tilde{u}_\parallel|^2+|\tilde{u}_\perp|^2\right)^{\frac{\kappa-1}{2}}}{|\tilde{u}_\perp||\tilde{u}_\parallel|^{N-3}}\frac{1}{\left(1+|v-\tilde{u}_\perp+\tilde{u}_\parallel|^2\right)^\delta} e^{-\frac{\alpha \beta/4}{2^{(1-\beta/2)(n+1)}}|v+\tilde{u}_\parallel|^\beta}\mathbf{1}_{2^{-n-1}|v-\tilde{u}_\perp|^2\leq |v+\tilde{u}_\parallel|^2\leq 2^{-n}|v-\tilde{u}_\perp|^2}.
    \end{align*}
    The remaining goal is bound $J_{1,2}$ uniformly about $v$.\\

    \noindent \textit{(ii)-1 The case $|\tilde{u}_\perp|\geq |v|/4$.}
    
    In this case, as $n\geq 16$, $|v+\tilde{u}_\parallel|\leq 2^{-8}|v-\tilde{u}_\perp|\leq 2^{-8}(|\tilde{u}_\perp|+|v|)\leq |\tilde{u}_\perp|/4$. Therefore,
    \begin{align*}
        \frac{1}{1+|v-\tilde{u}_\perp+\tilde{u}_\parallel|^2}\leq \frac{C}{1+|\tilde{u}_\perp|^2}.
    \end{align*}
    Also, we use a similar argument to \eqref{5_5:kappa_split} so that
    \begin{align*}
        \frac{\left(|\tilde{u}_\parallel|^2+|\tilde{u}_\perp|^2\right)^{\frac{\kappa-1}{2}}}{|\tilde{u}_\perp||\tilde{u}_\parallel|^{N-3}} &\leq \frac{1}{|\tilde{u}_\perp|^{1 + \frac{N-1}{N+1}(1-\kappa)}}\frac{1}{|\tilde{u}_\parallel|^{N-3 + \frac{2}{N+1}(1-\kappa)}}.
    \end{align*}
    Using these bounds, we have
    \begin{align*}
        &\int_{|\tilde{u}_\perp|\geq |v|/4} d\tilde{u}_\perp\,\int_{E_{\tilde{u}_\perp}} d\tilde{u}_\parallel\,\frac{\left(|\tilde{u}_\parallel|^2+|\tilde{u}_\perp|^2\right)^{\frac{\kappa-1}{2}}}{|\tilde{u}_\perp||\tilde{u}_\parallel|^{N-3}}\frac{1}{\left(1+|v-\tilde{u}_\perp+\tilde{u}_\parallel|^2\right)^\delta} \\
        &\qquad\times e^{-\frac{\alpha \beta/4}{2^{(1-\beta/2)(n+1)}}|v+\tilde{u}_\parallel|^\beta}\mathbf{1}_{2^{-n-1}|v-\tilde{u}_\perp|^2\leq |v+\tilde{u}_\parallel|^2\leq 2^{-n}|v-\tilde{u}_\perp|^2}\\
        &\leq C_\delta \int_{|\tilde{u}_\perp|\geq |v|/4} d\tilde{u}_\perp\,\frac{1}{|\tilde{u}_\perp|^{1 + \frac{N-1}{N+1}(1-\kappa)}}\frac{1}{\left(1+|\tilde{u}_\perp|^2\right)^\delta} \int_{E_{\tilde{u}_\perp}} d\tilde{u}_\parallel\,\frac{1}{|\tilde{u}_\parallel|^{N-3 + \frac{2}{N+1}(1-\kappa)}}\mathbf{1}_{|v+\tilde{u}_\parallel|^2\leq 2^{-n}|v-\tilde{u}_\perp|^2}.
    \end{align*}
    Bounding the integral can be a little tricky compared to the other cases since $N-3 + \frac{2}{N+1}(1-\kappa)$ can be negative when $N=2$, which obstructs the use of Hardy-Littlewood inequality. Therefore, we use a different method.

    By summing up the integral about $n\geq 16$, we write
    \begin{align*}
        &\sum_{n=16}^\infty\int_{|\tilde{u}_\perp|\geq |v|/4} d\tilde{u}_\perp\,\frac{1}{|\tilde{u}_\perp|^{1 + \frac{N-1}{N+1}(1-\kappa)}}\frac{1}{\left(1+|\tilde{u}_\perp|^2\right)^\delta} \int_{E_{\tilde{u}_\perp}} d\tilde{u}_\parallel\,\frac{1}{|\tilde{u}_\parallel|^{N-3 + \frac{2}{N+1}(1-\kappa)}}\mathbf{1}_{2^{-n-1}|v-\tilde{u}_\perp|^2\leq |v+\tilde{u}_\parallel|^2\leq 2^{-n}|v-\tilde{u}_\perp|^2}\\
        &=\int_{|\tilde{u}_\perp|\geq |v|/4} d\tilde{u}_\perp\,\frac{1}{|\tilde{u}_\perp|^{1 + \frac{N-1}{N+1}(1-\kappa)}}\frac{1}{\left(1+|\tilde{u}_\perp|^2\right)^\delta} \int_{E_{\tilde{u}_\perp}} d\tilde{u}_\parallel\,\frac{1}{|\tilde{u}_\parallel|^{N-3 + \frac{2}{N+1}(1-\kappa)}}\mathbf{1}_{|v+\tilde{u}_\parallel|\leq 2^{-8}|v-\tilde{u}_\perp|}.
    \end{align*}
    For $|\tilde{u}_\perp|\geq |v|/4$, it satisfies $|\tilde{u}_\parallel|\leq |v| + 2^{-8}|v-\tilde{u}_\perp|\leq 5|\tilde{u}_\perp|$. So, we have
    \begin{align*}
        \int_{E_{\tilde{u}_\perp}} d\tilde{u}_\parallel\,\frac{1}{|\tilde{u}_\parallel|^{N-3 + \frac{2}{N+1}(1-\kappa)}}\mathbf{1}_{|v+\tilde{u}_\parallel|\leq 2^{-8}|v-\tilde{u}_\perp|}&\leq \int_{E_{\tilde{u}_\perp}} d\tilde{u}_\parallel\,\frac{1}{|\tilde{u}_\parallel|^{N-3 + \frac{2}{N+1}(1-\kappa)}}\mathbf{1}_{|\tilde{u}_\parallel|\leq 5|\tilde{u}_\perp|}\\
        &\leq C_{N,\kappa} (5|\tilde{u}_\perp|)^{2-\frac{2}{N+1}(1-\kappa)}.
    \end{align*}
    As
    \begin{align}\label{5_9:J_12_1}
        J_{1,2}\leq C_{N,\kappa}\int_{|\tilde{u}_\perp|\geq |v|/4} d\tilde{u}_\perp\,\frac{|\tilde{u}_\perp|^{2-\frac{2}{N+1}(1-\kappa)}}{|\tilde{u}_\perp|^{1 + \frac{N-1}{N+1}(1-\kappa)}}\frac{1}{\left(1+|\tilde{u}_\perp|^2\right)^\delta} \leq C_{N,\kappa}\int_{\mathbb{R}^N} d\tilde{u}_\perp\,\frac{1}{|\tilde{u}_\perp|^{-\kappa}}\frac{1}{\left(1+|\tilde{u}_\perp|^2\right)^\delta}<\infty
    \end{align}
    for $-N<\kappa\leq 1$ and $2\delta>N+1$, by Lemma \ref{lem:int_bound3}, we get a uniform estimate of $J_{1,2}$ in this case.\\
    
    \noindent \textit{(ii)-2 The case $2^{-n/2+2}|v|\leq |\tilde{u}_\perp|\leq |v|/4$.}

    We first compute lower and upper bounds of variables. Since $|v+\tilde{u}_\parallel|\leq 2^{-8}|v-\tilde{u}_\perp|\leq 2^{-7}|v|$, we have $|v|/2\leq |\tilde{u}_\parallel|\leq 3|v|/2$. It implies
    \begin{align}\label{5_9:J_12_2_1}
        \left(|\tilde{u}_\parallel|^2+|\tilde{u}_\perp|^2\right)^{\frac{\kappa-1}{2}}\leq \frac{C_\kappa}{|v|^{1-\kappa}},\text{ and}\quad \frac{1}{|\tilde{u}_\parallel|^{N-3}}\leq \frac{C_N}{|v|^{N-3}}.
    \end{align}
    Employing the conditions $2^{-\frac{n+1}{2}}|v-\tilde{u}_\perp|\leq |v+\tilde{u}_\parallel|$ and $|\tilde{u}_\perp|\leq |v|/4$, we deduce
    \begin{align}\label{5_9:J_12_2_2}
        e^{-\frac{\alpha \beta/4}{2^{(1-\beta/2)(n+1)}}|v+\tilde{u}_\parallel|^\beta}\leq e^{-\frac{\alpha \beta/4}{2^{n+1}}|v-\tilde{u}_\perp|^\beta}\leq e^{-\frac{\alpha \beta/4}{2^{n+1+\beta}}|v|^\beta}.
    \end{align}
    Furthermore, as $2^{-n/2}|v-\tilde{u}_\perp|\leq 2^{-n/2+1}|v|\leq \frac{1}{2}|\tilde{u}_\perp|$, we have
    \begin{align*}
        |v-\tilde{u}_\perp + \tilde{u}_\parallel|\geq |\tilde{u}_\perp| - |v+ \tilde{u}_\parallel|\geq |\tilde{u}_\perp| - 2^{-n/2}|v-\tilde{u}_\perp|\geq \frac{1}{2}|\tilde{u}_\perp|.
    \end{align*}
    
    Applying all these bounds, we have
    \begin{align}\label{5_9:J_12_2}
        \begin{split}
            &\int_{2^{-n/2+2}|v|\leq |\tilde{u}_\perp|\leq |v|/4} d\tilde{u}_\perp\,\int_{E_{\tilde{u}_\perp}} d\tilde{u}_\parallel\,\frac{\left(|\tilde{u}_\parallel|^2+|\tilde{u}_\perp|^2\right)^{\frac{\kappa-1}{2}}}{|\tilde{u}_\perp||\tilde{u}_\parallel|^{N-3}}\\
            &\qquad\times \frac{1}{\left(1+|v-\tilde{u}_\perp+\tilde{u}_\parallel|^2\right)^\delta} e^{-\frac{\alpha \beta/4}{2^{(1-\beta/2)(n+1)}}|v+\tilde{u}_\parallel|^\beta}\mathbf{1}_{2^{-n-1}|v-\tilde{u}_\perp|^2\leq |v+\tilde{u}_\parallel|^2\leq 2^{-n}|v-\tilde{u}_\perp|^2}\\
            &\leq \frac{C_{\delta, \kappa}e^{-\frac{\alpha \beta/4}{2^{n+1+\beta}}|v|^\beta}}{|v|^{1-\kappa}|v|^{N-3}}\int_{2^{-n/2+2}|v|\leq |\tilde{u}_\perp|\leq |v|/4} d\tilde{u}_\perp\,\frac{1}{|\tilde{u}_\perp|\left(1+|\tilde{u}_\perp|^2\right)^\delta} \int_{E_{\tilde{u}_\perp}} d\tilde{u}_\parallel\,\mathbf{1}_{|v+\tilde{u}_\parallel|^2\leq 2^{-n+2}|v|^2}\\
            &\leq \frac{C_{N, \delta, \kappa}e^{-\frac{\alpha \beta/4}{2^{n+1+\beta}}|v|^\beta}}{|v|^{N-2-\kappa}}(2^{-n/2}|v|)^{N-1}\int_{\{2^{-n/2+2}|v|\leq |\tilde{u}_\perp|\leq |v|/4\}\cap C_{v, 2^{-n/2+1}|v|}} d\tilde{u}_\perp\,\frac{1}{|\tilde{u}_\perp|\left(1+|\tilde{u}_\perp|^2\right)^\delta}\\
            &\leq \frac{C_{N, \delta, \kappa}e^{-\frac{\alpha \beta/4}{2^{n+1+\beta}}|v|^\beta}}{|v|^{N-1-\kappa}}(2^{-n/2}|v|)^N\int_{2^{-n/2+2}|v|}^{|v|/4} dr\,\frac{r^{N-2}}{\left(1+r^2\right)^\delta}.
        \end{split}
    \end{align}
    In the last two lines, we used Lemma \ref{lem:cone2} for $a=v$, $x = \tilde{u}_\perp$, $y = \tilde{u}_\parallel$, and $R = 2^{-n/2+1}|v|$. For $2\delta>N+2$, the last integral is bounded by
    \begin{align*}
        \int_{2^{-n/2+2}|v|}^{|v|/4} dr\,\frac{r^{N-2}}{\left(1+r^2\right)^\delta}&\leq C_{N,\delta}\times \begin{dcases}
            1 & 2^{-n/2}|v|\leq 1,\\
            (2^{-n/2}|v|)^{N-1-2\delta} & 2^{-n/2}|v|\geq 1.
        \end{dcases}
    \end{align*}
    
    If $|v|\leq 1$, the last line of \eqref{5_9:J_12_2} is bounded as
    \begin{align*}
        \sum_{n=16}^\infty\frac{1}{|v|^{N-1-\kappa}}(2^{-n/2}|v|)^N\int_{2^{-n/2+2}|v|}^{|v|/4} dr\,\frac{r^{N-2}}{\left(1+r^2\right)^\delta}&\leq C_{N,\delta}\sum_{n=16}^\infty\frac{1}{|v|^{N-1-\kappa}}(2^{-n/2}|v|)^N\\
        &\leq C_{N,\delta} |v|^{N+\kappa}\sum_{n=16}^\infty 2^{-Nn/2}.
    \end{align*}
    Since $-N<\kappa\leq 1$, the summation is well-bounded. Therefore, we can assume $|v|\geq 1$ and choose $M\in \mathbb{N}\cup\{0\}$ such that $2^M\leq |v|\leq 2^{M+1}$. In contrast to the other cases, the summation $\sum_{n=16}^\infty$ of \eqref{5_9:J_12_2} requires some tricky steps. For this reason, we split $N=2$ and $N\geq 3$ cases.
    
    When $N=2$, we divide into two cases for $n$: $16\leq n\leq 2M$ and $n\geq 2M+1$. When $16\leq n\leq 2M$, as $2^{-n/2}|v|\geq 1$, we have
    \begin{align*}
        \sum_{n=16}^{2M}\frac{1}{|v|^{1-\kappa}}(2^{-n/2}|v|)^2\int_{2^{-n/2+2}|v|}^{|v|/4} dr\,\frac{1}{\left(1+r^2\right)^\delta}\leq C_\delta\sum_{n=16}^{2M}\frac{1}{|v|^{1-\kappa}}(2^{-n/2}|v|)^{3-2\delta}.
    \end{align*}
    As $2\delta>N+1=3$, the summation is given by
    \begin{align*}
        |v|^{\kappa +2-2\delta}\sum_{n=16}^{2M}2^{\frac{2\delta-3}{2}n}\leq C_\delta |v|^{\kappa +2-2\delta}|v|^{2\delta-3} = C_\delta |v|^{\kappa -1}.
    \end{align*}
    So, we get
    \begin{align*}
        \sum_{n=16}^{2M}\frac{1}{|v|^{1-\kappa}}(2^{-n/2}|v|)^2\int_{2^{-n/2+2}|v|}^{|v|/4} dr\,\frac{1}{\left(1+r^2\right)^\delta}\leq \frac{C_\delta}{|v|^{1-\kappa}}.
    \end{align*}
    If $n\geq 2M+1$, then
    \begin{align}\label{5_9:J_12_3_N=2}
        \sum_{n=2M+1}^\infty\frac{1}{|v|^{1-\kappa}}(2^{-n/2}|v|)^2\int_{2^{-n/2+2}|v|}^{|v|/4} dr\,\frac{1}{\left(1+r^2\right)^\delta}\leq C_\delta\sum_{n=2M+1}^\infty\frac{1}{|v|^{1-\kappa}}(2^{-n/2}|v|)^2\leq \frac{C_\delta}{|v|^{1-\kappa}}.
    \end{align}
    Thus, we have
    \begin{align*}
        \sum_{n=16}^\infty\frac{1}{|v|^{1-\kappa}}(2^{-n/2}|v|)^2\int_{2^{-n/2+2}|v|}^{|v|/4} dr\,\frac{1}{\left(1+r^2\right)^\delta}\leq \frac{C_\delta}{(1+|v|)^{1-\kappa}}.
    \end{align*}
    for $N=2$.

    Next, we choose $N\geq 3$. In this case, we divide into three cases for $n$: $16\leq n\leq \lfloor \beta M\rfloor$, $\lfloor \beta M\rfloor+1\leq n\leq 2M$, and $n\geq 2M+1$. When $16\leq n\leq \lfloor \beta M\rfloor$, we write
    \begin{align*}
        &\sum_{n=16}^{\lfloor \beta M\rfloor}\frac{(2^{-n/2}|v|)^N}{|v|^{N-1-\kappa}}e^{-\frac{\alpha \beta/4}{2^{n+1+\beta}}|v|^\beta}\int_{2^{-n/2+2}|v|}^{|v|/4} dr\,\frac{r^{N-2}}{\left(1+r^2\right)^\delta}\\
        &\leq C_{N,\delta}\sum_{n=16}^{\lfloor \beta M\rfloor}\frac{(2^{-n/2}|v|)^{2N-1-2\delta}}{|v|^{N-1-\kappa}}e^{-\frac{\alpha \beta/4}{2^{n+1+\beta}}|v|^\beta}.
    \end{align*}
    In this case, we use the exponential term. From $x^{2N} e^{-x}\leq C_N$ for $x>0$, we have the summation is attained at $n = \lfloor \beta M\rfloor$, so
    \begin{align*}
        \sum_{n=16}^{\lfloor \beta M\rfloor}\frac{(2^{-n/2}|v|)^{2N-1-2\delta}}{|v|^{N-1-\kappa}}e^{-\frac{\alpha \beta/4}{2^{n+1+\beta}}|v|^\beta}&\leq \sum_{n=16}^{\lfloor \beta M\rfloor}\frac{(2^{-n/2}|v|)^{2N-1-2\delta}}{|v|^{N-1-\kappa}}\left(\frac{2^{n+1+\beta}}{(\alpha \beta/4)|v|^\beta}\right)^{2N}\\
        &\leq C_{\alpha,\beta, N}\sum_{n=16}^{\lfloor \beta M\rfloor}|v|^{N+\kappa-2\delta - 2\beta N} 2^{\frac{1+2\delta}{2}n}.
    \end{align*}
    The summation is attained at $n=\lfloor \beta M\rfloor$, so it is bounded by
    \begin{align*}
        \sum_{n=16}^{\lfloor \beta M\rfloor}\frac{(2^{-n/2}|v|)^{2N-1-2\delta}}{|v|^{N-1-\kappa}}e^{-\frac{\alpha \beta/4}{2^{n+1+\beta}}|v|^\beta}\leq C_{\alpha,\beta,\delta}|v|^{(\delta-\frac{N+1}{2})(\beta-2) - \frac{\beta}{2}N -1+\kappa + \beta}.
    \end{align*}
    Since $2\delta>N+1$ and $N\geq 3$, we have
    \begin{align}\label{5_9:J_12_3_N=3_1}
        \sum_{n=16}^{\lfloor \beta M\rfloor}\frac{(2^{-n/2}|v|)^{2N-1-2\delta}}{|v|^{N-1-\kappa}}e^{-\frac{\alpha \beta/4}{2^{n+1+\beta}}|v|^\beta}\leq C_{N,\delta}|v|^{-\frac{\beta}{2}N + \beta - 1+\kappa}\leq C_{N,\delta}|v|^{-1+\kappa}.
    \end{align}
    
    When $\lfloor \beta M\rfloor+1\leq n\leq 2M$, the summation is given by
    \begin{align*}
        \sum_{n=\lfloor \beta M\rfloor+1}^{2M}\frac{(2^{-n/2}|v|)^N}{|v|^{N-1-\kappa}}e^{-\frac{\alpha \beta/4}{2^{n+1+\beta}}|v|^\beta}\int_{2^{-n/2+2}|v|}^{|v|/4} dr\,\frac{r^{N-2}}{\left(1+r^2\right)^\delta}\leq C_{N,\delta}\sum_{n=\lfloor \beta M\rfloor+1}^{2M}\frac{(2^{-n/2}|v|)^{2N-1-2\delta}}{|v|^{N-1-\kappa}}.
    \end{align*}
    If $2N-1-2\delta<0$, then the summation is achieved at $n = \lfloor \beta M\rfloor+1$, so
    \begin{align*}
        \sum_{n=\lfloor \beta M\rfloor+1}^{2M}\frac{(2^{-n/2}|v|)^{2N-1-2\delta}}{|v|^{N-1-\kappa}}\leq C_{N,\delta}|v|^{(\delta-\frac{N+1}{2})(\beta-2) - \frac{\beta}{2}N -1+\kappa + \beta}\leq C_{N,\delta}|v|^{-\frac{\beta}{2}N + \beta - 1+\kappa}.
    \end{align*}
    If $2N-1-2\delta>0$, then the summation is attained at $n = 2M$, so
    \begin{align*}
        \sum_{n=\lfloor \beta M\rfloor+1}^{2M}\frac{(2^{-n/2}|v|)^{2N-1-2\delta}}{|v|^{N-1-\kappa}}\leq \frac{C_{N,\delta}}{|v|^{N-1-\kappa}}.
    \end{align*}
    Finally, when $2N-1-2\delta = 0$, then
    \begin{align*}
        \sum_{n=\lfloor \beta M\rfloor+1}^{2M}\frac{(2^{-n/2}|v|)^{2N-1-2\delta}}{|v|^{N-1-\kappa}}\leq (2M-\lfloor \beta M\rfloor)|v|^{-N+1+\kappa}\leq C(2-\beta)|v|^{-N+1+\kappa}\ln(|v|+1).
    \end{align*}
    Merging all the possible cases for $\delta$, we have
    \begin{align}\label{5_9:J_12_3_N=3_2}
        \begin{split}
            \sum_{n=\lfloor \beta M\rfloor+1}^{2M}\frac{(2^{-n/2}|v|)^{2N-1-2\delta}}{|v|^{N-1-\kappa}}&\leq C_{N,\delta}\max\left\{|v|^{-\frac{\beta}{2}N + \beta - 1+\kappa}, (2-\beta)|v|^{-N+1+\kappa}\ln(|v|+1)\right\}\\
            &\leq \frac{C_{N,\delta}}{(1+|v|)^{1-\kappa}}.
        \end{split}
    \end{align}
    as $N\geq 3$.

    The remaining case corresponds to $n\geq 2M+1$. Since the integral is bounded by a constant, we estimate 
    \begin{align}\label{5_9:J_12_3_N=3_3}
        \begin{split}
            &\sum_{n=2M+1}^\infty\frac{(2^{-n/2}|v|)^N}{|v|^{N-1-\kappa}}\int_{2^{-n/2+2}|v|}^{|v|/4} dr\,\frac{r^{N-2}}{\left(1+r^2\right)^\delta}\\
            &\leq C_{N,\delta}\sum_{n=2M+1}^\infty 2^{-\frac{N}{2}n}|v|^{1+\kappa}\\
            &\leq C_{N,\delta} 2^{-NM}|v|^{1+\kappa}\leq C_{N,\delta} |v|^{-N+1+\kappa}.
        \end{split}
    \end{align}
    
    Combining all the estimates \eqref{5_9:J_12_3_N=2} for $N=2$ and \eqref{5_9:J_12_3_N=3_1}, \eqref{5_9:J_12_3_N=3_2}, and \eqref{5_9:J_12_3_N=3_3} for $N\geq 3$, we have
    \begin{align}\label{5_9:J_12_5}
        \begin{split}
            &\sum_{n=16}^\infty\int_{2^{-n/2+2}|v|\leq |\tilde{u}_\perp|\leq |v|/4} d\tilde{u}_\perp\,\int_{E_{\tilde{u}_\perp}} d\tilde{u}_\parallel\,\frac{\left(|\tilde{u}_\parallel|^2+|\tilde{u}_\perp|^2\right)^{\frac{\kappa-1}{2}}}{|\tilde{u}_\perp||\tilde{u}_\parallel|^{N-3}}\\
            &\qquad\times \frac{1}{\left(1+|v-\tilde{u}_\perp+\tilde{u}_\parallel|^2\right)^\delta} e^{-\frac{\alpha \beta/4}{2^{(1-\beta/2)(n+1)}}|v+\tilde{u}_\parallel|^\beta}\mathbf{1}_{2^{-n-1}|v-\tilde{u}_\perp|^2\leq |v+\tilde{u}_\parallel|^2\leq 2^{-n}|v-\tilde{u}_\perp|^2}\\
            &\leq C_{N, \delta, \kappa}\frac{1}{(1+|v|)^{1-\kappa}}.
        \end{split}
    \end{align}
    for $N\geq 2$. It gives a uniform upper bound.\\
    
    \noindent \textit{(ii)-3 The case $|\tilde{u}_\perp|\leq 2^{-n/2+2}|v|$.}
    
    One can readily check that we can import the estimates \eqref{5_9:J_12_2_1} and \eqref{5_9:J_12_2_2} in this case. Since $|v-\tilde{u}_\perp|\leq 2|v|$, we bound the integral as follows.
    \begin{align*}
        &\int_{E_{\tilde{u}_\perp}} d\tilde{u}_\parallel\,\frac{1}{|\tilde{u}_\parallel|^{N-3}}\frac{1}{\left(1+|v-\tilde{u}_\perp+\tilde{u}_\parallel|^2\right)^\delta} \mathbf{1}_{2^{-n-1}|v-\tilde{u}_\perp|^2\leq |v+\tilde{u}_\parallel|^2\leq 2^{-n}|v-\tilde{u}_\perp|^2}\\
        &\leq \frac{C_N}{|v|^{N-3}}\int_{E_{\tilde{u}_\perp}} d\tilde{u}_\parallel\,\frac{1}{\left(1+|v-\tilde{u}_\perp+\tilde{u}_\parallel|^2\right)^\delta} \mathbf{1}_{|v+\tilde{u}_\parallel|^2\leq 2^{-n+2}|v-\tilde{u}_\perp|^2}\\
        &\leq \frac{C_N}{|v|^{N-3}}\int_{E_{\tilde{u}_\perp}} d\tilde{u}_\parallel\,\frac{1}{\left(1+|\tilde{u}_\parallel|^2\right)^\delta}\mathbf{1}_{|\tilde{u}_\parallel|^2\leq 2^{-n+2}|v|^2}\\
        &\leq \frac{C_N}{|v|^{N-3}}\int_0^{2^{-n/2+1}|v|} dr\,\frac{r^{N-2}}{\left(1+r^2\right)^\delta}\\
        &\leq \frac{C_N}{|v|^{N-3}}\times\begin{dcases}
            \int_0^2 dr\,r^{N-2} + \int_2^\infty dr\,r^{N-2-2\delta} & 2^{-n/2}|v|\geq 1\\
            \int_0^{2^{-n/2+1}|v|} dr\,r^{N-2} & 2^{-n/2}|v|\leq 1
        \end{dcases}\\
        &\leq \frac{C_{N,\delta}}{|v|^{N-3}}\times\begin{dcases}
            1 & 2^{-n/2}|v|\geq 1,\\
            (2^{-n/2}|v|)^{N-1} & 2^{-n/2}|v|\leq 1.
        \end{dcases}
    \end{align*}
    From the second to the third line, we used the Hardy-Littlewood inequality for $\tilde{u}_\parallel$. The condition $2\delta>N+1$ implies that the final integral is bounded by a constant for $2^{-n/2}|v|\geq 1$.
    
    Therefore, using Lemma \ref{lem:cone2},
    \begin{align*}
        &\int_{|\tilde{u}_\perp|\leq 2^{-n/2+2}|v|} d\tilde{u}_\perp\,\int_{E_{\tilde{u}_\perp}} d\tilde{u}_\parallel\,\frac{\left(|\tilde{u}_\parallel|^2+|\tilde{u}_\perp|^2\right)^{\frac{\kappa-1}{2}}}{|\tilde{u}_\perp||\tilde{u}_\parallel|^{N-3}}\\
        &\qquad\times \frac{1}{\left(1+|v-\tilde{u}_\perp+\tilde{u}_\parallel|^2\right)^\delta} e^{-\frac{\alpha \beta/4}{2^{(1-\beta/2)(n+1)}}|v+\tilde{u}_\parallel|^\beta}\mathbf{1}_{2^{-n-1}|v-\tilde{u}_\perp|^2\leq |v+\tilde{u}_\parallel|^2\leq 2^{-n}|v-\tilde{u}_\perp|^2}\\
        &\leq \frac{C_{N,\delta, \kappa} e^{-\frac{\alpha \beta/4}{2^{n+1+\beta}}|v|^\beta}}{|v|^{N-2-\kappa}}\int_{\{|\tilde{u}_\perp|\leq 2^{-n/2+2}|v|\}\cap C_{v, 2^{-n/2+1}|v|}} d\tilde{u}_\perp\,\frac{1}{|\tilde{u}_\perp|}\times\begin{dcases}
            1 & 2^{-n/2}|v|\geq 1\\
            (2^{-n/2}|v|)^{N-1} & 2^{-n/2}|v|\leq 1
        \end{dcases}\\
        &\leq \frac{C_{N,\delta, \kappa} e^{-\frac{\alpha \beta/4}{2^{n+1+\beta}}|v|^\beta}}{|v|^{N-2-\kappa}}2^{-n/2} (2^{-n/2}|v|)^{N-1}\times\begin{dcases}
            1 & 2^{-n/2}|v|\geq 1,\\
            (2^{-n/2}|v|)^{N-1} & 2^{-n/2}|v|\leq 1.
        \end{dcases}
    \end{align*}
    
    If $|v|\leq 1$, then
    \begin{align}\label{5_9:J_12_6}
        \begin{split}
            &\sum_{n=16}^\infty \frac{e^{-\frac{\alpha \beta/4}{2^{n+1+\beta}}|v|^\beta}}{|v|^{N-2-\kappa}}2^{-n/2} (2^{-n/2}|v|)^{N-1}(2^{-n/2}|v|)^{N-1}\\
            &\leq \sum_{n=16}^\infty (2^{-n/2})^{2N-1}|v|^{N+\kappa}<\infty
        \end{split}
    \end{align}
    as $N+\kappa>0$.
    
    If $|v|\geq 1$, we follow the argument from $I_{2,2}$ bound of Lemma \ref{lem:int_bound4}. First, choose $M\in \mathbb{N}\cup\{0\}$ satisfying $2^M\leq |v|\leq 2^{M+1}$. Following \eqref{5_5:I_22_8}, we have
    \begin{align}\label{5_9:J_12_7}
        \begin{split}
            \sum_{n=16}^{\lfloor \beta M\rfloor} \frac{e^{-\frac{\alpha \beta/4}{2^{n+1+\beta}}|v|^\beta}}{|v|^{N-2-\kappa}}2^{-n/2}(2^{-n/2}|v|)^{N-1}&= |v|^{1+\kappa}\sum_{n=16}^{\lfloor \beta M\rfloor} e^{-\frac{\alpha \beta/4}{2^{n+1+\beta}}|v|^\beta} (2^{-n/2})^N\\
            &\leq C_{\alpha,\beta,N}|v|^{1+\kappa - \beta N/2}.
        \end{split}
    \end{align}
    For the remaining summation, we have
    \begin{align}\label{5_9:J_12_8}
        \sum_{n=\lfloor \beta M\rfloor+1}^\infty \frac{e^{-\frac{\alpha \beta/4}{2^{n+1+\beta}}|v|^\beta}}{|v|^{N-2-\kappa}}2^{-n/2}(2^{-n/2}|v|)^{N-1}\leq |v|^{1+\kappa}\sum_{n=\lfloor \beta M\rfloor+1}^\infty (2^{-n/2})^N\leq C_N |v|^{1+\kappa - \beta N/2}.
    \end{align}
    \eqref{5_9:J_12_7} and \eqref{5_9:J_12_8} are bounded if $\beta \geq \frac{2(1+\kappa)}{N}$. Note that it is satisfied by \eqref{index:weight}.
    
    Finally, from \eqref{5_9:J_12_6}, \eqref{5_9:J_12_7}, and \eqref{5_9:J_12_8}, we obtain
    \begin{align}\label{5_9:J_12_9}
        \begin{split}
            &\int_{|\tilde{u}_\perp|\leq 2^{-n/2+2}|v|} d\tilde{u}_\perp\,\int_{E_{\tilde{u}_\perp}} d\tilde{u}_\parallel\,\frac{\left(|\tilde{u}_\parallel|^2+|\tilde{u}_\perp|^2\right)^{\frac{\kappa-1}{2}}}{|\tilde{u}_\perp||\tilde{u}_\parallel|^{N-3}}\\
            &\qquad\times \frac{1}{\left(1+|v-\tilde{u}_\perp+\tilde{u}_\parallel|^2\right)^\delta} e^{-\frac{\alpha \beta/4}{2^{(1-\beta/2)(n+1)}}|v+\tilde{u}_\parallel|^\beta}\mathbf{1}_{2^{-n-1}|v-\tilde{u}_\perp|^2\leq |v+\tilde{u}_\parallel|^2\leq 2^{-n}|v-\tilde{u}_\perp|^2}\\
            &\leq C_{\alpha,\beta, N, \delta, \kappa}\frac{1}{(1+|v|)^{1+\kappa -\beta N/2}}.
        \end{split}
    \end{align}
    (For this case, we additionally imposed the condition $1+\kappa\leq \beta$ when $N=2$.)
    
    Combining \eqref{5_9:J_12_1}, \eqref{5_9:J_12_5}, and \eqref{5_9:J_12_9} under \eqref{index:weight}, we conclude that there exists a constant $C_{\alpha,\beta, N, \delta, \kappa}$ such that
    \begin{align*}
        J_{1,2}\leq C_{\alpha,\beta,N, \delta, \kappa},
    \end{align*}
    and it implies
    \begin{align}\label{5_9:I_12}
        I_{1,2}\leq C_{\alpha,\beta,N, \delta, \kappa}\|w_\delta(v)f(x,v)\|_{L^\infty_{x,v}}\|w_\delta(v)g(x,v)\|_{L^1_vL^p_x}.
    \end{align}\\
    
    \noindent \textit{(iii) Estimate of $I_{2,2}$.}
    
    As in the \eqref{5_9:J_1_2}, we calculate $I_{2,2}$ as follows:
    \begin{align*}
        I_{2,2}&=\sum_{n=16}^\infty \int_{\mathbb{R}^N}dv\,\int d\tilde{u}_\parallel\,\int_{E_{\tilde{u}_\parallel}} d\tilde{u}_\perp\,\mathbf{1}_{2^{-n-1}|v|^2\leq |v+\tilde{u}_\parallel+\tilde{u}_\perp|^2\leq 2^{-n}|v|^2}\\
        &\qquad \times \left(\int_{\Omega}\left[w_\delta(v+\tilde{u}_\parallel)f(x,v+\tilde{u}_\parallel)w_\delta(v+\tilde{u}_\perp)g(x,v+\tilde{u}_\perp)\right]^p\,dx\right)^{1/p}\\
        &\qquad\times\frac{\left(|\tilde{u}_\parallel|^2+|\tilde{u}_\perp|^2\right)^{\frac{\kappa-1}{2}}}{|\tilde{u}_\parallel|^{N-2}}\frac{1}{\left(1+|v+\tilde{u}_\perp|^2\right)^\delta} e^{-\frac{\alpha \beta/4}{2^{(1-\beta/2)(n+1)}}|v+\tilde{u}_\perp+\tilde{u}_\parallel|^\beta}\mathbf{1}_{2^{-n-1}|v|^2\leq |v+\tilde{u}_\perp+\tilde{u}_\parallel|^2\leq 2^{-n}|v|^2}\\
        &\leq\sum_{n=16}^\infty \|w_\delta(v)g(x,v)\|_{L^\infty_{x,v}}\|w_\delta(v)f(x,v)\|_{L^1_vL^p_x}\sup_v\int d\tilde{u}_\parallel\,\int_{E_{\tilde{u}_\parallel}} d\tilde{u}_\perp\,\\
        &\qquad\times\frac{\left(|\tilde{u}_\parallel|^2+|\tilde{u}_\perp|^2\right)^{\frac{\kappa-1}{2}}}{|\tilde{u}_\parallel|^{N-2}}\frac{1}{\left(1+|v+\tilde{u}_\perp-\tilde{u}_\parallel|^2\right)^\delta}e^{-\frac{\alpha \beta/4}{2^{(1-\beta/2)(n+1)}}|v+\tilde{u}_\perp|^\beta}\mathbf{1}_{2^{-n-1}|v-\tilde{u}_\parallel|^2\leq |v+\tilde{u}_\perp|^2\leq 2^{-n}|v-\tilde{u}_\parallel|^2}\\
        &=\sum_{n=16}^\infty \|w_\delta(v)g(x,v)\|_{L^\infty_{x,v}}\|w_\delta(v)f(x,v)\|_{L^1_vL^p_x}\sup_v\int d\tilde{u}_\parallel\,\int_{E_{\tilde{u}_\parallel}} d\tilde{u}_\perp\,\\
        &\qquad\times\frac{\left(|\tilde{u}_\parallel|^2+|\tilde{u}_\perp|^2\right)^{\frac{\kappa-1}{2}}}{|\tilde{u}_\parallel|^{N-2}}\frac{1}{\left(1+|v+\tilde{u}_\perp-\tilde{u}_\parallel|^2\right)^\delta}e^{-\frac{\alpha \beta/4}{2^{(1-\beta/2)(n+1)}}|v+\tilde{u}_\perp|^\beta}\mathbf{1}_{2^{-n-1}|v-\tilde{u}_\parallel|^2\leq |v+\tilde{u}_\perp|^2\leq 2^{-n}|v-\tilde{u}_\parallel|^2}.
    \end{align*}
    
    We define
    \begin{align*}
        J_{2,2}&=\sum_{n=16}^\infty \int d\tilde{u}_\parallel\,\int_{E_{\tilde{u}_\parallel}} d\tilde{u}_\perp\,\\
        &\qquad\times\frac{\left(|\tilde{u}_\parallel|^2+|\tilde{u}_\perp|^2\right)^{\frac{\kappa-1}{2}}}{|\tilde{u}_\parallel|^{N-2}}\frac{1}{\left(1+|v+\tilde{u}_\perp-\tilde{u}_\parallel|^2\right)^\delta}e^{-\frac{\alpha \beta/4}{2^{(1-\beta/2)(n+1)}}|v+\tilde{u}_\perp|^\beta}\mathbf{1}_{2^{-n-1}|v-\tilde{u}_\parallel|^2\leq |v+\tilde{u}_\perp|^2\leq 2^{-n}|v-\tilde{u}_\parallel|^2}.
    \end{align*}
    As before, we divide cases to bound $J_{2,2}$ uniformly about $v$.\\
    
    \noindent \textit{(iii)-1 The case $|\tilde{u}_\parallel|\geq |v|/4$.}
    
    In this case, as $n\geq 16$, $|v+\tilde{u}_\perp|\leq 2^{-8}|v-\tilde{u}_\parallel|\leq 2^{-8}\left(|\tilde{u}_\parallel|+|v|\right)\leq |\tilde{u}_\parallel|/4$.
    
    We use similar argument to \eqref{5_5:kappa_split} so that
    \begin{align*}
        \frac{\left(|\tilde{u}_\parallel|^2+|\tilde{u}_\perp|^2\right)^{\frac{\kappa-1}{2}}}{|\tilde{u}_\perp|^{N-2}} &\leq \frac{1}{|\tilde{u}_\perp|^{N-2 + \frac{2}{N+1}(1-\kappa)}}\frac{1}{|\tilde{u}_\parallel|^{\frac{N-1}{N+1}(1-\kappa)}}.
    \end{align*}
    Using these bounds, we have
    
    \begin{align*}
        &\int_{|\tilde{u}_\parallel| \geq |v|/4}d\tilde{u}_\parallel\,\int_{E_{\tilde{u}_\parallel}} d\tilde{u}_\perp\,\frac{\left(|\tilde{u}_\parallel|^2+|\tilde{u}_\perp|^2\right)^{\frac{\kappa-1}{2}}}{|\tilde{u}_\parallel|^{N-2}\left(1+|v+\tilde{u}_\perp-\tilde{u}_\parallel|^2\right)^\delta}\\
        &\qquad\times e^{-\frac{\alpha \beta/4}{2^{(1-\beta/2)(n+1)}}|v+\tilde{u}_\perp|^\beta}\mathbf{1}_{2^{-n-1}|v-\tilde{u}_\parallel|^2\leq |v+\tilde{u}_\perp|^2\leq 2^{-n}|v-\tilde{u}_\parallel|^2}\\
        &\leq \int_{|\tilde{u}_\parallel| \geq |v|/4}d\tilde{u}_\parallel\,\frac{1}{|\tilde{u}_\perp|^{N-2 + \frac{2}{N+1}(1-\kappa)}\left(1+(|\tilde{u}_\parallel| - |\tilde{u}_\parallel|/4)^2\right)^\delta}\\
        &\qquad\times \int_{E_{\tilde{u}_\parallel}} d\tilde{u}_\perp\,\frac{1}{|\tilde{u}_\parallel|^{\frac{N-1}{N+1}(1-\kappa)}} \mathbf{1}_{2^{-n-1}|v-\tilde{u}_\parallel|^2\leq |v+\tilde{u}_\perp|^2\leq 2^{-n}|v-\tilde{u}_\parallel|^2}\\
        &\leq C_\delta \int_{|\tilde{u}_\parallel| \geq |v|/4} d\tilde{u}_\parallel\,\frac{1}{|\tilde{u}_\parallel|^{N-2 + \frac{2}{N+1}(1-\kappa)}\left(1+|\tilde{u}_\parallel|^2\right)^\delta} \int_{E_{\tilde{u}_\parallel}} d\tilde{u}_\perp\,\frac{1}{|\tilde{u}_\perp|^{\frac{N-1}{N+1}(1-\kappa)}}\mathbf{1}_{|v+\tilde{u}_\perp|^2\leq 2^{-n}|v-\tilde{u}_\parallel|^2}.
    \end{align*}
    
    By the Hardy-Littlewood inequality, the last integral is bounded by
    \begin{align*}
        &\int_{E_{\tilde{u}_\parallel}} d\tilde{u}_\perp\,\frac{1}{|\tilde{u}_\perp|^{\frac{N-1}{N+1}(1-\kappa)}}\mathbf{1}_{|v+\tilde{u}_\perp|^2\leq 2^{-n}|v-\tilde{u}_\parallel|^2}\\
        &\leq \int_{E_{\tilde{u}_\parallel}} d\tilde{u}_\perp\,\frac{1}{|\tilde{u}_\perp|^{\frac{N-1}{N+1}(1-\kappa)}}\mathbf{1}_{|\tilde{u}_\perp|^2\leq 2^{-n}|v-\tilde{u}_\parallel|^2}\\
        &\leq C_N(2^{-n/2}|v-\tilde{u}_\parallel|)^{N-1 - \frac{N-1}{N+1}(1-\kappa)}\leq C_{N, \kappa}(2^{-n/2}|\tilde{u}_\parallel|)^{N-1 - \frac{N-1}{N+1}(1-\kappa)}.
    \end{align*}
    In the final line, we used $|\tilde{u}_\parallel|\geq |v|/4$.
    
    Therefore,
    \begin{align*}
        &\sum_{n=16}^\infty \int_{|\tilde{u}_\parallel| \geq |v|/4} d\tilde{u}_\parallel\,\frac{1}{|\tilde{u}_\parallel|^{N-2 + \frac{2}{N+1}(1-\kappa)}\left(1+|\tilde{u}_\parallel|^2\right)^\delta} \int_{E_{\tilde{u}_\parallel}} d\tilde{u}_\perp\,\frac{1}{|\tilde{u}_\perp|^{\frac{N-1}{N+1}(1-\kappa)}}\mathbf{1}_{|v+\tilde{u}_\perp|^2\leq 2^{-n}|v-\tilde{u}_\parallel|^2}\\
        &\leq C_{N, \kappa}\int_{|\tilde{u}_\parallel| \geq |v|/4} d\tilde{u}_\parallel\,\frac{1}{|\tilde{u}_\parallel|^{-\kappa}\left(1+|\tilde{u}_\parallel|^2\right)^\delta}\sum_{n=16}^\infty (2^{-n/2})^{N-1 - \frac{N-1}{N+1}(1-\kappa)}.
    \end{align*}
    Since $N-1 - \frac{N-1}{N+1}(1-\kappa)>0$, the summation is finite. Also, the final integral is finite as $2\delta>N + 1$ and $-N<\kappa\leq 1$; see Lemma \ref{lem:int_bound3}. Therefore, we get
    \begin{align}\label{5_9:J_22_1}
        \begin{split}
            &\sum_{n=16}^\infty \int_{|\tilde{u}_\parallel| \geq |v|/4}d\tilde{u}_\parallel\,\int_{E_{\tilde{u}_\parallel}} d\tilde{u}_\perp\,\frac{\left(|\tilde{u}_\parallel|^2+|\tilde{u}_\perp|^2\right)^{\frac{\kappa-1}{2}}}{|\tilde{u}_\parallel|^{N-2}\left(1+|v+\tilde{u}_\perp-\tilde{u}_\parallel|^2\right)^\delta}\\
            &\qquad\times e^{-\frac{\alpha \beta/4}{2^{(1-\beta/2)(n+1)}}|v+\tilde{u}_\perp|^\beta}\mathbf{1}_{2^{-n-1}|v-\tilde{u}_\parallel|^2\leq |v+\tilde{u}_\perp|^2\leq 2^{-n}|v-\tilde{u}_\parallel|^2}\\
            &\leq C_{N,\delta,\kappa}.
        \end{split}
    \end{align}\\
    
    \noindent \textit{(iii)-2 The case $2^{-n/2+2}|v|\leq |\tilde{u}_\parallel|\leq |v|/4$.}

    As in $J_{1,2}$, we first estimate lower and upper bounds of variables. First, $|v+\tilde{u}_\perp|\leq 2^{-n/2}|v-\tilde{u}_\parallel|$ and $|\tilde{u}_\parallel|\leq |v|/4$ imply
    \begin{align*}
        |v+\tilde{u}_\perp|\leq 2^{-n/2}|v-\tilde{u}_\parallel|\leq 2^{-n/2+1}|v|\leq |v|/2,
    \end{align*}
    so $|\tilde{u}_\perp|\geq |v|/2$. Therefore, we get
    \begin{align}\label{5_9:J_22_2_1}
        \left(|\tilde{u}_\parallel|^2+|\tilde{u}_\perp|^2\right)^{\frac{\kappa-1}{2}}\leq \frac{C_\kappa}{|v|^{1-\kappa}}.
    \end{align}
    Conversely, from $2^{-\frac{n+1}{2}}|v-\tilde{u}_\parallel|\leq |v+\tilde{u}_\perp|$ and $|\tilde{u}_\parallel|\leq |v|/4$, we have
    \begin{align}\label{5_9:J_22_2_2}
        e^{-\frac{\alpha \beta/4}{2^{(1-\beta/2)(n+1)}}|v+\tilde{u}_\perp|^\beta}\leq e^{-\frac{\alpha \beta/4}{2^{n+1}}|v-\tilde{u}_\parallel|^\beta}\leq e^{-\frac{\alpha \beta/4}{2^{n+1+\beta}}|v|^\beta}.
    \end{align}
    As $2^{-n/2}|v-\tilde{u}_\parallel|\leq 2^{-n/2+1}|v|\leq \frac{1}{2}|\tilde{u}_\parallel|$,
    \begin{align*}
        |v+\tilde{u}_\perp-\tilde{u}_\parallel|\geq |\tilde{u}_\parallel| - |v+\tilde{u}_\perp|\geq |\tilde{u}_\parallel| - 2^{-n/2}|v-\tilde{u}_\parallel|\geq \frac{1}{2}|\tilde{u}_\parallel|.
    \end{align*}
    
    Now, we obtain
    \begin{align*}
        &\int_{2^{-n/2+2}|v|\leq |\tilde{u}_\parallel|\leq |v|/4} d\tilde{u}_\parallel\,\int_{E_{\tilde{u}_\parallel}} d\tilde{u}_\perp\,\frac{\left(|\tilde{u}_\parallel|^2+|\tilde{u}_\perp|^2\right)^{\frac{\kappa-1}{2}}}{|\tilde{u}_\parallel|^{N-2}\left(1+|v+\tilde{u}_\perp-\tilde{u}_\parallel|^2\right)^\delta}\\
        &\qquad \times e^{-\frac{\alpha \beta/4}{2^{(1-\beta/2)(n+1)}}|v+\tilde{u}_\perp|^\beta}\mathbf{1}_{2^{-n-1}|v-\tilde{u}_\parallel|^2\leq |v+\tilde{u}_\perp|^2\leq 2^{-n}|v-\tilde{u}_\parallel|^2}\\
        &\leq \frac{C_{\delta, \kappa}}{|v|^{1-\kappa}}e^{-\frac{\alpha \beta/4}{2^{n+1+\beta}}|v|^\beta}\int_{2^{-n/2+2}|v|\leq |\tilde{u}_\parallel|\leq |v|/4} d\tilde{u}_\parallel\,\frac{1}{|\tilde{u}_\parallel|^{N-2}\left(1+|\tilde{u}_\parallel|^2\right)^\delta} \int_{E_{\tilde{u}_\parallel}} d\tilde{u}_\perp\,\mathbf{1}_{|v+\tilde{u}_\perp|^2\leq 2^{-n+2}|v|^2}\\
        &\leq \frac{C_{N, \delta, \kappa}}{|v|^{1-\kappa}}e^{-\frac{\alpha \beta/4}{2^{n+1+\beta}}|v|^\beta}(2^{-n/2}|v|)^{N-1}\int_{\{2^{-n/2+2}|v|\leq |\tilde{u}_\parallel|\leq |v|/4\}\cap C_{v, 2^{-n/2+1}|v|}} d\tilde{u}_\parallel\,\frac{1}{|\tilde{u}_\parallel|^{N-2}\left(1+|\tilde{u}_\parallel|^2\right)^\delta}\\
        &\leq \frac{C_{N, \delta, \kappa}}{|v|^{2-\kappa}}e^{-\frac{\alpha \beta/4}{2^{n+1+\beta}}|v|^\beta}(2^{-n/2}|v|)^{N}\int_{2^{-n/2+2}|v|}^{|v|/4} dr\,\frac{r}{\left(1+r^2\right)^\delta}.
    \end{align*}
    In the fourth line, we used Lemma \ref{lem:cone2} and similar logic in \eqref{int_perp}.
    
    This is the same as \eqref{5_5:I_22_2}. Therefore, we bound it by \eqref{5_5:I_22_6}:
    \begin{align}\label{5_9:J_22_2}
        \begin{split}
            &\sum_{n=16}^\infty \int_{2^{-n/2+2}|v|\leq |\tilde{u}_\parallel|\leq |v|/4} d\tilde{u}_\parallel\,\int_{E_{\tilde{u}_\parallel}} d\tilde{u}_\perp\,\frac{\left(|\tilde{u}_\parallel|^2+|\tilde{u}_\perp|^2\right)^{\frac{\kappa-1}{2}}}{|\tilde{u}_\parallel|^{N-2}\left(1+|v+\tilde{u}_\perp-\tilde{u}_\parallel|^2\right)^\delta}\\
            &\qquad \times e^{-\frac{\alpha \beta/4}{2^{(1-\beta/2)(n+1)}}|v+\tilde{u}_\perp|^\beta}\mathbf{1}_{2^{-n-1}|v-\tilde{u}_\parallel|^2\leq |v+\tilde{u}_\perp|^2\leq 2^{-n}|v-\tilde{u}_\parallel|^2}\\
            &\leq \frac{C_{\alpha,\beta, N,\delta, \kappa}}{(1+|v|)^{\beta-\kappa}}.
        \end{split}
    \end{align}\\
    
    \noindent \textit{(iii)-3 The case $|\tilde{u}_\parallel|\leq 2^{-n/2+2}|v|$.}

    We can again import \eqref{5_9:J_22_2_1} and \eqref{5_9:J_22_2_2} since $|\tilde{u}_\parallel|\leq |v|/4$. We also bound the step function by
    \begin{align*}
        \mathbf{1}_{2^{-n-1}|v-\tilde{u}_\parallel|^2\leq |v+\tilde{u}_\perp|^2\leq 2^{-n}|v-\tilde{u}_\parallel|^2}\leq \mathbf{1}_{|v+\tilde{u}_\perp|^2\leq 2^{-n+2}|v|^2}.
    \end{align*}
    Therefore, we first obtain
    \begin{align*}
        &\int_{|\tilde{u}_\parallel|\leq 2^{-n/2+2}|v|} d\tilde{u}_\parallel\,\int_{E_{\tilde{u}_\parallel}} d\tilde{u}_\perp\,\frac{\left(|\tilde{u}_\parallel|^2+|\tilde{u}_\perp|^2\right)^{\frac{\kappa-1}{2}}}{|\tilde{u}_\parallel|^{N-2}\left(1+|v+\tilde{u}_\perp-\tilde{u}_\parallel|^2\right)^\delta}\\
        &\qquad \times e^{-\frac{\alpha \beta/4}{2^{(1-\beta/2)(n+1)}}|v+\tilde{u}_\perp|^\beta}\mathbf{1}_{2^{-n-1}|v-\tilde{u}_\parallel|^2\leq |v+\tilde{u}_\perp|^2\leq 2^{-n}|v-\tilde{u}_\parallel|^2}\\
        &\leq \frac{C_\kappa e^{-\frac{\alpha \beta/4}{2^{n+1+\beta}}|v|^\beta}}{|v|^{1-\kappa}}\int_{|\tilde{u}_\parallel|\leq 2^{-n/2+2}|v|} d\tilde{u}_\parallel\,\frac{1}{|\tilde{u}_\parallel|^{N-2}}\int_{E_{\tilde{u}_\parallel}} d\tilde{u}_\perp\,\frac{1}{\left(1+|v+\tilde{u}_\perp-\tilde{u}_\parallel|^2\right)^\delta}\mathbf{1}_{|v+\tilde{u}_\perp|^2\leq 2^{-n+2}|v|^2}.
    \end{align*}
    
    We first bound the $\tilde{u}_\perp$ integral. By an application of the Hardy-Littlewood inequality, we have
    \begin{align*}
        &\int_{E_{\tilde{u}_\parallel}} d\tilde{u}_\perp\,\frac{1}{\left(1+|v+\tilde{u}_\perp-\tilde{u}_\parallel|^2\right)^\delta}\mathbf{1}_{|v+\tilde{u}_\perp|^2\leq 2^{-n+2}|v|^2}\\
        &\leq\int_{E_{\tilde{u}_\parallel}} d\tilde{u}_\perp\,\frac{1}{\left(1+|\tilde{u}_\perp|^2\right)^\delta}\mathbf{1}_{|\tilde{u}_\perp|^2\leq 2^{-n+2}|v|^2}=|\mathbb{S}^{N-2}|\int_0^{2^{-n/2+1}|v|} dr\,\frac{r^{N-2}}{\left(1+r^2\right)^\delta}\\
        &\leq |\mathbb{S}^{N-2}|\times \begin{dcases}
            \int_0^{2^{-n/2+1}|v|} dr\,r^{N-2} & 2^{-n/2}|v|\leq 1\\
            \left(\int_0^2 dr\,r^{N-2} + \int_2^\infty dr\,r^{N-2 - 2\delta}\right) & 2^{-n/2}|v|\geq 1
        \end{dcases}\\
        &\leq C_{N,\delta}\times \begin{dcases}
            (2^{-n/2+1}|v|)^{N-1} & 2^{-n/2}|v|\leq 1,\\
            1 & 2^{-n/2}|v|\geq 1.
        \end{dcases}
    \end{align*}
    In the last line, the integral is bounded by a constant as $2\delta>N+1$.
    
    Combining these estimates and Lemma \ref{lem:cone2}, we obtain
    \begin{align*}
        &\int_{|\tilde{u}_\parallel|\leq 2^{-n/2+2}|v|} d\tilde{u}_\parallel\,\int_{E_{\tilde{u}_\parallel}} d\tilde{u}_\perp\,\frac{\left(|\tilde{u}_\parallel|^2+|\tilde{u}_\perp|^2\right)^{\frac{\kappa-1}{2}}}{|\tilde{u}_\parallel|^{N-2}\left(1+|v+\tilde{u}_\perp-\tilde{u}_\parallel|^2\right)^\delta}\\
        &\qquad \times e^{-\frac{\alpha \beta/4}{2^{(1-\beta/2)(n+1)}}|v+\tilde{u}_\perp|^\beta}\mathbf{1}_{2^{-n-1}|v-\tilde{u}_\parallel|^2\leq |v+\tilde{u}_\perp|^2\leq 2^{-n}|v-\tilde{u}_\parallel|^2}\\
        &\leq \frac{C_{N,\delta, \kappa} e^{-\frac{\alpha \beta/4}{2^{n+1+\beta}}|v|^\beta}}{|v|^{1-\kappa}}\int_{\{|\tilde{u}_\parallel|\leq 2^{-n/2+2}|v|\}\cap C_{v, 2^{-n/2+1}|v|}} d\tilde{u}_\parallel\,\frac{1}{|\tilde{u}_\parallel|^{N-2}}\times \begin{dcases}
            (2^{-n/2+1}|v|)^{N-1} & 2^{-n/2}|v|\leq 1\\
            1 & 2^{-n/2}|v|\geq 1
        \end{dcases}\\
        &\leq \frac{C_{N,\kappa} e^{-\frac{\alpha \beta/4}{2^{n+1+\beta}}|v|^\beta}}{|v|^{2-\kappa}}(2^{-n/2}|v|)^3\times \begin{dcases}
            (2^{-n/2+1}|v|)^{N-1} & 2^{-n/2}|v|\leq 1,\\
            1 & 2^{-n/2}|v|\geq 1.
        \end{dcases}
    \end{align*}
    The final form is same as \eqref{5_5:I_22_7}, so we finally bound it by \eqref{5_5:I_22}:
    \begin{align}\label{5_9:J_22_3}
        \begin{split}
            &\int_{|\tilde{u}_\parallel|\leq 2^{-n/2+2}|v|} d\tilde{u}_\parallel\,\int_{E_{\tilde{u}_\parallel}} d\tilde{u}_\perp\,\frac{\left(|\tilde{u}_\parallel|^2+|\tilde{u}_\perp|^2\right)^{\frac{\kappa-1}{2}}}{|\tilde{u}_\parallel|^{N-2}\left(1+|v+\tilde{u}_\perp-\tilde{u}_\parallel|^2\right)^\delta}\\
            &\qquad \times e^{-\frac{\alpha \beta/4}{2^{(1-\beta/2)(n+1)}}|v+\tilde{u}_\perp|^\beta}\mathbf{1}_{2^{-n-1}|v-\tilde{u}_\parallel|^2\leq |v+\tilde{u}_\perp|^2\leq 2^{-n}|v-\tilde{u}_\parallel|^2}\\
            &\leq C_{\alpha,\beta,N,\delta, \kappa}\frac{1}{(1+|v|)^{\frac{3}{2}\beta - 1 - \kappa}}.
        \end{split}
    \end{align}
    \\
    
    Combining \eqref{5_9:J_22_1}, \eqref{5_9:J_22_2}, and \eqref{5_9:J_22_3} under \eqref{index:weight}, we conclude that there exists a constant $C_{\alpha,\beta,N, \delta, \kappa}$ such that
    \begin{align}\label{5_9:I_22}
        \begin{split}
            J_{2,2}&\leq C_{\alpha,\beta,N, \delta, \kappa},\\
            I_{2,2}&\leq C_{\alpha,\beta,N, \delta, \kappa}\|w_\delta(v)g(x,v)\|_{L^\infty_{x,v}}\|w_\delta(v)f(x,v)\|_{L^1_vL^p_x}.
        \end{split} 
    \end{align}
    
    Collecting all bounds \eqref{5_9:I_11}, \eqref{5_9:I_21}, \eqref{5_9:I_12}, and \eqref{5_9:I_22}, we finally reach
    \begin{align*}
        I_{1,1}+I_{1,2}&\leq C_{\alpha,\beta,N, \delta, \kappa}\|w_\delta(v)f(x,v)\|_{L^\infty_{x,v}}\|w_\delta(v)g(x,v)\|_{L^1_vL^p_x},\\
        I_{2,1}+I_{2,2}&\leq C_{\alpha,\beta,N, \delta, \kappa}\|w_\delta(v)f(x,v)\|_{L^1_vL^p_x}\|w_\delta(v)g(x,v)\|_{L^\infty_{x,v}}
    \end{align*}
    and get Lemma \ref{lem:Q_gainL1}.
\end{proof}

Using $L^1_v$ bound of $Q_{\text{gain}}$, we get $L^q_vL^p_x$ bound for all $1\leq q,p\leq \infty$.

\begin{lemma}\label{lem:Q_gainLp}
    Let us recall \eqref{def_w} and assume \eqref{index:weight}. When $N=2$, we impose one more condition $1+\kappa\leq \beta$. If $w_\delta(v)f(x,v)\in L^q_vL^p_x \cap L^\infty_{x,v}$, then we have
    \begin{align*}
        \left\|w_\delta(v) Q_{\text{gain}}(f,f)\right\|_{L^q_vL^p_x}\leq C_{\alpha,\beta,N, \delta, \kappa}\|w_\delta(v)f(x,v)\|_{L^\infty_{x,v}}\|w_\delta(v)f(x,v)\|_{L^q_vL^p_x}
    \end{align*}
    for all $1\leq p,q\leq \infty$.
\end{lemma}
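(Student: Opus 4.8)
The plan is to obtain the $L^q_vL^p_x$ estimate by interpolating the velocity exponent $q$ between the $L^1_v$ endpoint supplied by Lemma \ref{lem:Q_gainL1} and an $L^\infty_v$ endpoint, keeping the spatial exponent $p$ fixed, with the Riesz--Thorin theorem for mixed norms (Lemma \ref{Riesz-Thorin}) as the interpolation device. First I would recall the pointwise decomposition
\[
    w_\delta(v)\,Q_{\text{gain}}(f,f)(x,v)\ \le\ C\big(Q_1(f,f)+Q_2(f,f)\big)(x,v),
\]
which is exactly what \eqref{w-polyv}--\eqref{w-exp} and the lines preceding Lemma \ref{lem:Q_gainL1} produce; it therefore suffices to bound $Q_1(f,f)$ and $Q_2(f,f)$ in $L^q_vL^p_x$, and since the two are symmetric I concentrate on $Q_1$.

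Writing $F=w_\delta f$, $G=w_\delta g$, one has $Q_1(f,g)(x,v)=\iint_{\mathbb{R}^N\times\mathbb{S}^{N-1}} W_1(v,u,\sigma)\,F(x,v')\,G(x,u')\,dud\sigma$, so with $\|F\|_{L^\infty_{x,v}}\le1$ frozen, $T_1G:=Q_1(f,g)$ is a linear operator in $G$. Lemma \ref{lem:Q_gainL1} is precisely the first endpoint, $\|T_1G\|_{L^1_vL^p_x}\le C\|G\|_{L^1_vL^p_x}$ for all $1\le p\le\infty$. For the second endpoint I would estimate, for a.e.\ $v$ by Minkowski's integral inequality,
\[
    \|T_1G(\cdot,v)\|_{L^p_x}\ \le\ \|F\|_{L^\infty_{x,v}}\,\|G\|_{L^\infty_vL^p_x}\iint_{\mathbb{R}^N\times\mathbb{S}^{N-1}}W_1(v,u,\sigma)\,dud\sigma,
\]
so everything reduces to $\sup_v\iint_{\mathbb{R}^N\times\mathbb{S}^{N-1}}W_1(v,u,\sigma)\,dud\sigma<\infty$. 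After the Carleman change of variables $\tilde u=u-v$ this integral is literally the sum $I_{1,1}+I_{1,2}$ occurring in the proof of Lemma \ref{lem:int_bound4} (the $\tfrac{1}{(1+|v+\tilde{u}_\parallel|^2)^\delta}$-piece of $\iint B\,\tfrac{w_\delta(v)}{w_\delta(v')w_\delta(u')}$), which was shown there to be $\le C(1+|v|)^{-(\frac{N-1}{2}\beta-\kappa)}$, hence uniformly bounded under \eqref{index:weight}; symmetrically $\sup_v\iint W_2\,dud\sigma\le C(1+|v|)^{-(\frac{3}{2}\beta-1-\kappa)}<\infty$. This gives $\|T_1G\|_{L^\infty_vL^p_x}\le C\|G\|_{L^\infty_vL^p_x}$.

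Both endpoint norms being $\le C\|F\|_{L^\infty_{x,v}}$, the $L^\infty_{0,v}$ version in Lemma \ref{Riesz-Thorin}, applied with $q_0=q_2=1$, all spatial exponents equal to $p$, and $\theta\in(0,1)$, yields $T_1:L^q_vL^p_x\to L^q_vL^p_x$ with norm $\le C\|F\|_{L^\infty_{x,v}}$ for every $q=1/\theta\in(1,\infty)$; the endpoints $q=1$ and $q=\infty$ are already available. Unfreezing $F$ and evaluating at $G=w_\delta f$ gives $\|Q_1(f,f)\|_{L^q_vL^p_x}\le C\|w_\delta f\|_{L^\infty_{x,v}}\|w_\delta f\|_{L^q_vL^p_x}$ for all $1\le p,q\le\infty$, and the identical argument --- now freezing the slot that Lemma \ref{lem:Q_gainL1} places in $L^\infty_{x,v}$ for $Q_2$, i.e.\ the $g$-slot --- handles $Q_2$. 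Adding the two bounds, inserting the decomposition of the first paragraph, and taking $\sup_{0\le t\le T}$ finishes the proof. The extra hypothesis $\beta\ge1+\kappa$ for $N=2$ is used only through the appeal to Lemma \ref{lem:Q_gainL1}, where it enters via a summation estimate requiring $\beta\ge 2(1+\kappa)/N$.

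The main obstacle I anticipate is this second endpoint: one must match $\sup_v\iint W_i\,dud\sigma$ with precisely the right subexpressions of the long computation in the proof of Lemma \ref{lem:int_bound4} rather than re-running that case analysis, and one must handle the $L^\infty$ versus $L^\infty_{0,v}$ distinction built into Lemma \ref{Riesz-Thorin}, i.e.\ verify that $T_1$ maps the closed subspace $L^\infty_{0,v}L^p_x$ into itself --- a routine density argument (using $\iint W_i(v,\cdot)\,dud\sigma\to0$ as $|v|\to\infty$) that nonetheless has to be recorded.
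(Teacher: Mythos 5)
Your proposal is correct and takes essentially the same route as the paper: interpolate the $v$-exponent between the $L^1_vL^p_x$ endpoint of Lemma \ref{lem:Q_gainL1} and an $L^\infty_vL^p_x$ endpoint (which the paper simply cites from Lemma \ref{lem:Q_gain}, whose proof is precisely your Minkowski-plus-kernel-bound derivation via $I_{1,1}+I_{1,2}$ of Lemma \ref{lem:int_bound4}), using the mixed-norm Riesz--Thorin theorem of Lemma \ref{Riesz-Thorin}. The one point you flag — verifying that $Q_i$ lands in $L^\infty_{0,v}L^p_x$ so the endpoint form of Lemma \ref{Riesz-Thorin} applies — is also needed in the paper's proof, which asserts it without detail, and your observation that the kernel integral decays as $|v|\to\infty$ is indeed the right way to justify it.
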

\begin{proof}
    Lemma \ref{lem:Q_gain} proves
    \begin{align*}
        \|w_\delta(v)Q_1(f,g)\|_{L^\infty_v L^p_x}&\leq \|w_\delta(v)f\|_{L^\infty_{x,v}}\|w_\delta(v)g\|_{L^\infty_v L^p_x},\\
        \|w_\delta(v)Q_2(f,g)\|_{L^\infty_v L^p_x}&\leq \|w_\delta(v)f\|_{L^\infty_vL^p_x}\|w_\delta(v)g\|_{L^\infty_{x,v}},
    \end{align*}
    and $w_\delta(v)Q_i(f,g)\in L^\infty_{0,v}L^p_x$ for all $1\leq p\leq \infty$ when $\max\Big\{0, \frac{2}{N-1}\kappa, \frac{2}{3}(1+\kappa)\Big\}<\beta\leq 2$.
    Lemma \ref{lem:Q_gainL1} proves
    \begin{align*}
        \|w_\delta(v)Q_1(f,g)\|_{L^1_v L^p_x}&\leq \|w_\delta(v)f\|_{L^\infty_{x,v}}\|w_\delta(v)g\|_{L^1_v L^p_x},\\
        \|w_\delta(v)Q_2(f,g)\|_{L^1_v L^p_x}&\leq \|w_\delta(v)f\|_{L^1_vL^p_x}\|w_\delta(v)g\|_{L^\infty_{x,v}}
    \end{align*}
    for all $1\leq p\leq \infty$. Therefore, using Lemma \ref{Riesz-Thorin} (Riesz-Thorin interpolation) for each $Q_1$ and $Q_2$, we can extend these bounds for all $1\leq p,q\leq \infty$. Finally, putting $f = g \in L^\infty_{x,v}\cap L^q_vL^p_x$, we have
    \begin{align*}
        \|w_\delta(v)Q_{gain}(f,f)\|_{L^q_v L^p_x}&\leq \|w_\delta(v)Q_1(f,f)\|_{L^q_v L^p_x} + \|w_\delta(v)Q_2(f,f)\|_{L^q_v L^p_x}\\
        &\leq C\|w_\delta(v)f\|_{L^\infty_{x,v}} \|w_\delta(v)f\|_{L^q_v L^p_x}.
    \end{align*}
\end{proof}

Using the $Q_{\text{gain}}$ estimate for $L^q_vL^p_x$, we get the following \textit{a priori} estimate.

\begin{proposition}\label{prop:Q_gainLp}
    (\textit{A priori} estimate) Suppose $f$ satisfies
    \begin{align*}
        f(t,x,v)\leq f_0(x-vt,v) + \int_0^t Q_{\text{gain}}(f,f)(\tau,x-v(t-\tau),v)\,d\tau.
    \end{align*}
    If $w_\delta(v)f(t,x,v)\in L^q_vL^p_x\cap L^\infty_{x,v}$, then
    \begin{align*}
        \left\|w_\delta(v)f(t,x,v)\right\|_{L^q_vL^p_x}\leq \left\|w_\delta(v)f_0(x,v)\right\|_{L^q_vL^p_x} \exp\left(C\int_0^t \left\|w_\delta(v)f(\tau, x,v)\right\|_{L^\infty_{x,v}} \,d\tau\right).
    \end{align*}
\end{proposition}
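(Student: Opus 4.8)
The plan is to run a Grönwall argument on the scalar quantity $A(t):=\left\|w_\delta(v)f(t,x,v)\right\|_{L^q_vL^p_x}$, which is finite on $[0,T]$ by hypothesis. Since $f_0\ge0$ and $Q_{\text{gain}}(f,f)\ge0$ (as $f\ge0$ in the class under consideration), the right-hand side of the assumed bound is nonnegative; applying $w_\delta(v)$ and the monotonicity of the norm on nonnegative functions, followed by the triangle inequality, gives
\begin{align*}
A(t)\leq \left\|w_\delta(v)f_0(x-vt,v)\right\|_{L^q_vL^p_x}+\left\|w_\delta(v)\int_0^t Q_{\text{gain}}(f,f)(\tau,x-v(t-\tau),v)\,d\tau\right\|_{L^q_vL^p_x}.
\end{align*}
The first term equals $A(0)$: the weight depends only on $v$, and for a.e.\ fixed $v$ the map $x\mapsto x-vt$ is a measure-preserving bijection of $\Omega$, so $\left\|w_\delta(v)f_0(x-vt,v)\right\|_{L^p_x}=\left\|w_\delta(v)f_0(x,v)\right\|_{L^p_x}$, and taking the $L^q_v$ norm preserves this identity.

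For the collision term I would first apply Minkowski's integral inequality to move the $L^q_vL^p_x$ norm inside the $d\tau$ integral, then use the same shift-invariance in $x$ (now along the free flow $x\mapsto x-v(t-\tau)$, again an $L^p_x$-isometry for each fixed $v$) to delete the characteristic translation, and finally invoke the bilinear bound of Lemma \ref{lem:Q_gainLp} with $f=g$:
\begin{align*}
\left\|w_\delta(v)\int_0^t Q_{\text{gain}}(f,f)(\tau,x-v(t-\tau),v)\,d\tau\right\|_{L^q_vL^p_x}
&\leq \int_0^t\left\|w_\delta(v)Q_{\text{gain}}(f,f)(\tau,x,v)\right\|_{L^q_vL^p_x}\,d\tau\\
&\leq C\int_0^t\left\|w_\delta(v)f(\tau,x,v)\right\|_{L^\infty_{x,v}}A(\tau)\,d\tau.
\end{align*}
Here $C=C_{\alpha,\beta,N,\delta,\kappa}$ is precisely the constant furnished by Lemma \ref{lem:Q_gainLp}, whose hypotheses are the index conditions \eqref{index:weight} (together with $\beta\geq 1+\kappa$ when $N=2$) that are assumed throughout this section; note the decay gains of Lemma \ref{lem:Q_gain} are not needed here, only the $L^q_vL^p_x$-boundedness.

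Combining the two estimates yields the integral inequality
\begin{align*}
A(t)\leq A(0)+C\int_0^t\left\|w_\delta(v)f(\tau,x,v)\right\|_{L^\infty_{x,v}}A(\tau)\,d\tau,
\end{align*}
and since $\tau\mapsto\left\|w_\delta(v)f(\tau)\right\|_{L^\infty_{x,v}}$ is integrable on $[0,T]$ and $A$ is finite there, Grönwall's inequality gives
\begin{align*}
A(t)\leq A(0)\exp\left(C\int_0^t\left\|w_\delta(v)f(\tau,x,v)\right\|_{L^\infty_{x,v}}\,d\tau\right),
\end{align*}
which is the claimed bound. The only step requiring care is the interchange of the $d\tau$ integral with the mixed norm: this is legitimate by Minkowski's integral inequality once the integrand is seen to be measurable and the resulting majorant finite, the finiteness following from the $Q_{\text{gain}}$ estimate applied to $w_\delta f\in L^q_vL^p_x\cap L^\infty_{x,v}$. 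Everything else is routine bookkeeping.
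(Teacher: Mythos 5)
Your proof is correct and follows essentially the same route as the paper: Minkowski's integral inequality to bring the $d\tau$ integral outside the mixed norm, shift-invariance of $L^p_x$ along the free flow to remove the characteristic translation, the bilinear $L^q_vL^p_x$ bound of Lemma \ref{lem:Q_gainLp}, and then Grönwall. You spell out the shift-invariance and measurability bookkeeping more explicitly than the paper, but the argument is the same.
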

\begin{proof}
    Having $L^\infty_{x,v}$ bound for $f(t,x,v)$,
    \begin{align*}
        \left\|w_\delta(v)f(t,x,v)\right\|_{L^q_vL^p_x}&\leq \left\|w_\delta(v)f_0(x-vt,v)\right\|_{L^q_vL^p_x} + \left\|\int_0^t w_\delta(v)Q_{\text{gain}}(f,f)(\tau, x-v(t-\tau), v)\,d\tau\right\|_{L^q_vL^p_x}\\
        &\leq \left\|w_\delta(v)f_0(x,v)\right\|_{L^q_vL^p_x} + \int_0^t \left\|w_\delta(v)Q_{\text{gain}}(f,f)(\tau, x-v(t-\tau), v)\right\|_{L^q_vL^p_x}\,d\tau\\
        &= \left\|w_\delta(v)f_0(x,v)\right\|_{L^q_vL^p_x} + \int_0^t \left\|w_\delta(v)Q_{\text{gain}}(f,f)(\tau, x, v)\right\|_{L^q_vL^p_x}\,d\tau\\
        &\leq \left\|w_\delta(v)f_0(x,v)\right\|_{L^q_vL^p_x} + C\int_0^t \|w_\delta(v)f\|_{L^\infty_{x,v}}(\tau)\|w_\delta(v)f\|_{L^q_vL^p_x}(\tau)\,d\tau.
    \end{align*}
    In the first to second line, we used Minkowski's integral inequality to interchange $d\tau$ integral with $dx$ and $dv$ integral. The final step uses Lemma \ref{lem:Q_gainLp}. By the Gr{\"o}nwall's inequality, we obtain the result.
\end{proof}

Using the \textit{a priori} estimate, we extend the result of Proposition \ref{prop:wellposed} into the general $L^q_vL^p_x$ case.
\begin{proof} [\textbf{Proof of Theorem \ref{thm:boltz}}]
    Proof of $L^{\infty}_{x,v}$ estimate is already obtained in Proposition \ref{prop:wellposed}. Applying Gr{\"o}nwall's inequality in the proof of Proposition \ref{prop:Q_gainLp} at the iteration step of Proposition \ref{prop:wellposed}, we can check that the solution $f$ satisfies $w_\delta(v)f(t,x,v)\in L^q_vL^p_x\cap L^\infty_{x,v}$ local in time and
    \begin{align*}
        \left\|w_\delta(v)f(t,x,v)\right\|_{L^q_vL^p_x}\leq \left\|w_\delta(v)f_0(x,v)\right\|_{L^q_vL^p_x} \exp\left(C\int_0^t \left\|w_\delta(v)f(\tau, x,v)\right\|_{L^\infty_{x,v}} \,d\tau\right).
    \end{align*}
    Using this \textit{a priori} estimate, we can extend $L^q_vL^p_x$ estimate until $\left\|w_\delta(v)f(\tau, x,v)\right\|_{L^\infty_{x,v}}$ is finite. Since we chose $T^*$ satisfying
    \begin{align*}
        \sup_{0\leq t\leq T^*}\left\|w_\delta(v)f(\tau, x,v)\right\|_{L^\infty_{x,v}}\leq 3\left\|w_\delta(v)f_0(x,v)\right\|_{L^\infty_{x,v}},
    \end{align*}
    we get
    \begin{align*}
        \left\|w_\delta(v)f(t,x,v)\right\|_{L^q_vL^p_x}\leq \left\|w_\delta(v)f_0(x,v)\right\|_{L^q_vL^p_x} \exp\left(3C\left\|w_\delta(v)f(0, x,v)\right\|_{L^\infty_{x,v}} t\right)
    \end{align*}
    for $t\in [0,T^*]$.

    The last statement of Theorem \ref{thm:boltz} is due to Minkowski's integral inequality.
\end{proof}

\section{Asymptotic blow-up of the Boltzmann equation}
In the previous section, we proved that any solution $f(t,x,v)$ of the Boltzmann equation has finite $\|w_{\alpha,\beta,\delta}(v)f(t,x,v)\|_{L^q_vL^p_x}$ at least for a finite time. However, it does not guarantee that the weighted norm is finite for all time. In fact, it is more plausible to say that the weighted norm blows up as $t\rightarrow\infty$ if the solution converges to a Maxwellian with low temperature as in the BGK case. In this section, we explicitly construct a simple example to demonstrate this idea.

We consider spatially homogeneous Boltzmann equation
\begin{align*}
    \begin{split}
        \partial_t f&=Q(f,f) \\
    \end{split}
\end{align*}
where $Q(f,f)$ is defined in \eqref{eq:Boltzmann}, and collision kernel is given by
\begin{align*}
    B(|v-u|,\cos\theta) = |v-u|^\kappa\cos\theta,
\end{align*}
where $\kappa \in (0,1]$. We further restrict $N=3$ for $v\in \mathbb{R}^N$. We provide the following long-time blow-up scenario of the Boltzmann equation in the exponential class.
\begin{theorem} \label{thm:3}
    For any $0<\alpha'\leq \alpha$ and $1\leq p\leq \infty$, there exists an initial data $f_0$ such that 
    \[
        \|e^{\alpha|v|^{2}}f_0 \|_{L^p_{v}} < \infty,
    \]
    but a unique global solution $f(t)$ satisfies
    \[
        \lim_{t\rightarrow\infty}\|e^{\alpha'|v|^{2}}f(t)\|_{L^p_{v}} = \infty.
    \]
\end{theorem}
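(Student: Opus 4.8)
The plan is to exploit the two features of the spatially homogeneous Boltzmann equation that the relaxation model does not share in the same form: exact conservation of mass, momentum and energy, and strong trend to the associated global Maxwellian as $t\to\infty$. Accordingly, I would run the \emph{same} far-field truncated Gaussian that produced the homogeneous BGK ill-posedness, but now read off the limiting equilibrium instead of the instantaneous relaxation. For the given $\alpha>0$, $\alpha'\le\alpha$ and $p$, take
\begin{align*}
    f_0(v)=A_{n,p}\,e^{-\alpha|v|^2}\mathbf 1_{n\le|v|\le n^2},
\end{align*}
with $A_{n,p}$ as in \eqref{2_2:def_Anp}, exactly as in \eqref{initial1}, so that $\|e^{\alpha|v|^2}f_0\|_{L^p_v}=1<\infty$. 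By Lemma \ref{lem:Tinfty_asym} the moments of $f_0$ satisfy $u=0$, $\rho_n>0$ and $T_n\to\infty$, so we may fix $n$ so large that $T_n>\tfrac1{2\alpha'}$; write $M:=\mathcal M(f_0)$ for the Maxwellian built from these moments. Since $\alpha'>\tfrac1{2T_n}$, the exponent $\alpha'|v|^2-\tfrac1{2T_n}|v|^2$ diverges as $|v|\to\infty$, so, exactly as in the computation in Proposition \ref{prop:2_2}(1),
\begin{align*}
    \big\|e^{\alpha'|v|^2}M\big\|_{L^p_v}=\infty\qquad\text{for every }1\le p\le\infty.
\end{align*}

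Next I would invoke the classical theory for the spatially homogeneous Boltzmann equation with the cutoff hard-potential kernel $|v-u|^\kappa\cos\theta$, $\kappa\in(0,1]$. Since $f_0\ge 0$ lies in $L^1\cap L^\infty$ with finite mass, momentum, energy and entropy, there is a unique global solution $f(t,v)\ge0$, the three macroscopic conservation laws hold, and the $H$-theorem together with the Boltzmann $L^1$ trend-to-equilibrium theorem (\cite{Carlemann}) gives $\|f(t)-\widetilde M\|_{L^1_v}\to0$ as $t\to\infty$, where $\widetilde M$ is the Maxwellian sharing mass, momentum and energy with $f_0$. By conservation of these quantities, $\widetilde M=\mathcal M(f_0)=M$; hence $f(t)\to M$ in $L^1_v$.

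The final step upgrades this $L^1$ relaxation to the claimed norm blow-up by a subsequence/lower-semicontinuity argument. Let $t_k\to\infty$ be arbitrary; strong $L^1_v$ convergence yields a subsequence along which $f(t_{k_j})\to M$ almost everywhere, hence $e^{\alpha'|v|^2}f(t_{k_j})\to e^{\alpha'|v|^2}M$ a.e. Since the $L^p_v$ norm is lower semicontinuous under a.e.\ convergence for every $p\in[1,\infty]$ (Fatou's lemma for $p<\infty$, and an Egorov argument for $p=\infty$),
\begin{align*}
    \liminf_{j\to\infty}\big\|e^{\alpha'|v|^2}f(t_{k_j})\big\|_{L^p_v}\ \ge\ \big\|e^{\alpha'|v|^2}M\big\|_{L^p_v}=\infty .
\end{align*}
Applying the same reasoning to every subsequence of $(t_k)$, the full sequence $\|e^{\alpha'|v|^2}f(t_k)\|_{L^p_v}\to\infty$, and since $t_k\to\infty$ was arbitrary, $\lim_{t\to\infty}\|e^{\alpha'|v|^2}f(t)\|_{L^p_v}=\infty$.

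The one genuinely delicate point is importing the right classical inputs with compatible hypotheses: we need (i) global existence and \emph{uniqueness} for the homogeneous cutoff hard-potential Boltzmann equation from data of finite mass and energy (so that ``the solution $f(t)$'' is meaningful), and, more importantly, (ii) \emph{strong} $L^1_v$ convergence to the equilibrium Maxwellian — mere weak $L^1$ or measure-valued convergence would be useless above, since it need not furnish an almost-everywhere convergent subsequence to feed the lower-semicontinuity estimate. Both (i) and (ii) are standard in the literature for $0<\kappa\le1$; granting them, the temperature bookkeeping and the Fatou/subsequence argument are elementary and uniform in $p\in[1,\infty]$.
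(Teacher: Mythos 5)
Your proof is correct, and it takes a genuinely different route from the paper's. Where the paper imports the \emph{quantitative} exponential relaxation estimate $\|f^n(t)-\mathcal M(f^n)\|_{L^1_v}\le C_n e^{-\lambda t}$ from \cite{L1Mouhot,MisWen} and then runs a careful two-scale argument — a Chebyshev bound showing the set $A^n_t$ where $|f^n-\mathcal M(f^n)|\ge C_ne^{-\lambda t}$ has measure $\le 1$, a splitting of the weighted norm over the ball $\{|v|\le\sqrt{\lambda t/\alpha'}\}\setminus A^n_t$, and an explicit comparison of the Maxwellian contribution against the remainder, both of which are engineered to be uniformly controlled in $t$ — you replace all of that machinery with the purely qualitative statement $f(t)\to\mathcal M(f_0)$ in $L^1_v$ together with lower semicontinuity of $\|\cdot\|_{L^p_v}$ under a.e.\ convergence (Fatou for $p<\infty$, a pointwise‐bound argument for $p=\infty$) and a subsequence-of-subsequence extraction. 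This is cleaner, needs weaker input (no rate), and works uniformly in $p$ with no case analysis; what it gives up is the explicit rate of growth in $t$ of the weighted norm, which the paper's estimate implicitly provides but which is not part of the stated theorem. Two small remarks: (i) the logical closing step should be phrased as a contradiction (if $\|e^{\alpha'|v|^2}f(t_k)\|_{L^p_v}$ were bounded along some $t_k\to\infty$, passing to an a.e.-convergent subsequence and applying Fatou would contradict $\|e^{\alpha'|v|^2}\mathcal M(f_0)\|_{L^p_v}=\infty$), which is exactly what your subsequence argument does once unpacked; (ii) for the strong $L^1$ trend to equilibrium you should cite the modern quantitative or at least global references (e.g.\ \cite{L1Mouhot,MisWen}, or Arkeryd for the purely qualitative statement) rather than \cite{Carlemann}, whose scope is narrower; the paper itself relies on \cite{L1Mouhot,MisWen} and the hypothesis $f_0\in L^1_v(\langle v\rangle^2)\cap L^2_v$, which your $f_0$ satisfies.
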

\begin{proof}
    We choose the initial data
    \begin{align*}
        f^n_0(v) &= A_{n,p} e^{-\alpha|v|^2} \mathbf{1}_{n\leq |v|\leq n^2},
    \end{align*}
    where $A_{n,p}$ was defined in \eqref{2_2:def_Anp}; it was used in Proposition \ref{prop:2_2}. It is obvious that $f^n_0 \in L_v^1(\langle v^2 \rangle)\cap L_v^2$ for all $n$. From \cite{L1Mouhot} and \cite{MisWen}, we have a unique global in time solution $f^n(t,v)\in C([0,\infty);L_v^1(\langle v^2 \rangle))$ which converges to give Maxwellian exponentially, i.e.,
    \begin{align*}
        \left\|f^n - \mathcal{M}(f^n)\right\|_{L^1}\leq C_n e^{-\lambda t}
    \end{align*}
    for small enough fixed $\lambda$ and a constant $C_n$ depending on the collision kernel $B$, the mass, energy and $L^2$ norm of $f^n_0$, and $\lambda$.
    Applying Lemma \ref{lem:Tinfty_asym}, we obtain $T_n\rightarrow \infty$ as $n\rightarrow \infty$. 
    
    We define
    \begin{align*}
        g^n := f^n - \mathcal{M}(f^n).
    \end{align*}
    Since $g^n$ satisfies
    \[
        \|g^n(t,v)\|_{L_v^1}\leq C_n e^{-\lambda t},
    \]
    thanks to Chebyshev's inequality, we have
    \begin{align*}
        \big|\{ v\in\mathbb{R}^3\,\big|\, |g^n(t,v)|\geq C_ne^{-\lambda t}\}\big|\leq \frac{\|g^n(t)\|_{L^1}}{C_n e^{-\lambda t}}\leq 1.
    \end{align*}
    Let us denote the set
    \[
        A^n_t=\{v\in\mathbb{R}^3\,\big|\, |g(v)|\geq C_ne^{-\lambda t}\}
    \]
    for $t>t_0$, where $t_0$ satisfies $\left|\{v:e^{\alpha'|v|^2}\leq e^{\lambda t_0}\}\right|>1$. Now, we have
    \begin{align}
        &\left\|e^{\alpha'|v|^2}f^n(t)\right\|_{L^p_v}
        =\left\|e^{\alpha'|v|^2}\left\{\mathcal{M}(f^n) + g^n(t)\right\}\right\|_{L^p_v} \notag \\
        &\geq\left\|e^{\alpha'|v|^2}\left\{\mathcal{M}(f^n) + g^n(t)\right\}\mathbf{1}_{\left\{v:e^{\alpha'|v|^2}\leq e^{\lambda t}\right\}\setminus A^n_t} \right\|_{L^p_v}\notag \\
        &\geq\left\|\frac{\rho_n}{(2\pi T_n)^{3/2}} e^{(\alpha' -\frac{1}{2T_n})|v|^{2}}\mathbf{1}_{\left\{|v|\leq \sqrt{\frac{\lambda t}{\alpha'}}\right\}\setminus A^n_t}\right\|_{L^p_v}-\left\|e^{\alpha'|v|^2} g^n(t)\mathbf{1}_{\left\{|v|\leq \sqrt{\frac{\lambda t}{\alpha'}}\right\}\setminus A^n_t} \right\|_{L^p_v},\label{last}  
    \end{align}
    where $\rho_n$, $T_n$ are the mass and energy of $f^n_0$.
    
    Since $|g^n(t)|\leq C_n e^{-\lambda t}$ on $A^c_t$, we can compute the second term of \eqref{last} as
    \begin{align*}
        &\left\|e^{\alpha'|v|^2} g^n(t)\mathbf{1}_{\left\{|v|\leq \sqrt{\frac{\lambda t}{\alpha'}}\right\}\setminus A^n_t} \right\|_{L^p_v} \\
        &\leq \left(\left\|e^{\alpha'|v|^2} g^n(t)\mathbf{1}_{\left\{|v|\leq \sqrt{\frac{\lambda t}{\alpha'}}\right\}\setminus A^n_t} \right\|_{L^1_v}\right)^{1/p}\left(\left\|e^{\alpha'|v|^2} g^n(t)\mathbf{1}_{\left\{|v|\leq \sqrt{\frac{\lambda t}{\alpha'}}\right\}\setminus A^n_t}\right\|_{L^\infty_v}\right)^{1-1/p}\\
        &\leq \left(\left\|e^{\lambda t} g^n(t)\mathbf{1}_{\left\{|v|\leq \sqrt{\frac{\lambda t}{\alpha'}}\right\}\setminus A^n_t} \right\|_{L^1_v}\right)^{1/p}\left(C_n e^{\lambda t}e^{-\lambda t}\right)^{1-1/p}\\
        &\leq \left(C_n e^{\lambda t} e^{-\lambda t}\right)^{1/p}\left(C_n\right)^{1-1/p} = C_n
    \end{align*}
    for $1\leq p<\infty$. If we define $1/\infty = 0$, then this equation covers the $p=\infty$ case.
    
    On the other hand, we bound the first term of \eqref{last} from below as follows. First, choose $\sqrt{\frac{\lambda t}{\alpha'}}>2$. Since $|A^n_t|\leq 1$, \begin{align*}
        \left|\left\{v:\sqrt{\lambda t/\alpha'}-1\leq |v|\leq \sqrt{\lambda t/\alpha'}\right\}\setminus A_t^n\right|>0
    \end{align*}
    for all $t$ satisfying $\sqrt{\lambda t/\alpha'}>2$. It means that
    \begin{align} \label{last2}
        \left\|\frac{\rho_n}{(2\pi T_n)^{3/2}} e^{(\alpha' -\frac{1}{2T_n})|v|^{2}}\mathbf{1}_{\left\{|v|\leq \sqrt{\frac{\lambda t}{\alpha'}}\right\}\setminus A^n_t}\right\|_{L^p_v}&\geq \left\|\frac{\rho_n}{(2\pi T_n)^{3/2}} e^{(\alpha' -\frac{1}{2T_n})|v|^{2}}\mathbf{1}_{|v|\leq \sqrt{\frac{\lambda t}{\alpha'}} - 1}\right\|_{L^p_v}.
    \end{align}
    
    From \eqref{last} and \eqref{last2},
    \begin{align*}
        \|e^{\alpha'|v|^2}f^n(t)\big\|_{L^p_v}
        \geq \left\|\frac{\rho_n}{(2\pi T_n)^{3/2}} e^{(\alpha' -\frac{1}{2T_n})|v|^{2}}\mathbf{1}_{|v|\leq \sqrt{\frac{\lambda t}{\alpha'}} - 1}\right\|_{L^p_v} - C_n
    \end{align*}
    for $t>\max\{t_0, 4\alpha'/\lambda\}$. Since the temperature $T_n$ goes to $\infty$ as $n\rightarrow \infty$, choose large $n$ so that $\alpha' > \frac{1}{2T_n}$. Now, we obtain
    \begin{align*}
        \lim_{t\rightarrow \infty}\|e^{\alpha'|v|^2}f^n(t)\big\|_{L^p_{v}}=\infty.
    \end{align*}
\end{proof}
Theorem \ref{thm:3} again states that we can construct an ill-posedness result in the space $L^p_v(e^{\alpha'|v|^2})$ if there is no temperature ceiling for the initial $f_0$.\\

\noindent{\bf Data availability:} No data was used for the research described in the article.
\newline

\noindent{\bf Conflict of interest:} The authors declare that they have no conflict of interest.\newline

\noindent{\bf Acknowledgement}
The authors would like to thank Professor Xuwen Chen for the advice on references.
D. Lee and S. Park are supported by the National Research Foundation of Korea(NRF) grant funded by the Korea government(MSIT)(No.RS-2023-00212304 and No.RS-2023-00219980). Yun was supported by the National Research Foundation of Korea(NRF) grant funded by the Korea government(MSIT). (No.RS-2023-NR076676)

%
%
%
%

\bibliographystyle{amsplain}

\begin{thebibliography}{10}

\bibitem{ACGM} Alonso, Ricardo; Ca{\~n}izo, Jos{\'e} A.; Gamba, Irene; Mouhot, Cl{\'e}ment: A new approach to the creation and propagation of exponential moments in the Boltzmann equation. Comm. Partial Differential Equations {\bf38} (2013), no. 1, 155–169.

\bibitem{A-G} Alonso, Ricardo J.; Gamba, Irene M.: Propagation of $L^1$ and $L^\infty$ Maxwellian weighted bounds for derivatives of solutions to the homogeneous elastic Boltzmann equation. J. Math. Pures Appl. (9) {\bf89} (2008), no. 6, 575–595.

\bibitem{ACI} Andr\'easson, H., Calogero, S. and Illner, R. On blow-up for gain-term-only classical and relativistic Boltzmann equations. MATH METHOD APPL SCI. 27 (2004), Issue 18, 2231-2240.


\bibitem{A-P} Andries, P.,  Le Tallec, P., Perlat, J.-P., Perthame, B.: The Gaussian-BGK model of Boltzmann
equation with small Prandtl number. Eur. J. Mech. B Fluids {\bf19} (2000), no. 6, 813-830.	


\bibitem{Arsenio} Arsénio, Diogo: On the global existence of mild solutions to the Boltzmann equation for small data in  $L^D$.
Comm. Math. Phys. 302 (2011), no. 2, 453–476.

\bibitem{BGS} Bedrossian, J., Gualdani, M. and Snelson, S.: Non-existence of some approximately self-similar singularities for the Landau, Vlasov-Poisson-Landau, and Boltzmann equations. Transactions of the AMS. 375(3) 2187–2216, 2022.


\bibitem{Bobylev} Bobylev, A. V.:
Moment inequalities for the Boltzmann equation and applications to spatially homogeneous problems. J. Statist. Phys. {\bf88}(1997), no.5-6, 1183–1214.


\bibitem{BCRY}Boscarino, S., Cho, S.-Y., Russo, G., Yun, S.-B.:
Convergence estimates of a semi-Lagrangian scheme for the ellipsoidal BGK model for polyatomic
molecules. ESAIM: Mathematical Modelling and Numerical Analysis {\bf56} no.3, 893--942

\bibitem{BKLY} Bae, G.-C., Ko, G. Lee, D., Yun, S.-B.:
Large amplitude problem of BGK model: Relaxation to quadratic nonlinearity. To appear in SIAM J. Math. Anal.


\bibitem{Bello} Bellouquid, A.: Global existence and large-time behavior for BGK model for a gas with non-constant cross section. Transport Theory Statist. Phys. {\bf 32 } (2003) no. 2, 157-185.

\bibitem{BGCY} Bae, G.-C., Yun, S.-B.: Stationary quantum BGK model for bosons and fermions in a bounded interval. J. Stat. Phys. {\bf178} (2020), no. 4, 845–868.

\bibitem{Bang Y} Bang, J. and Yun, S.-B.: Stationary solution for the ellipsoidal BGK model in slab. J. Differential Equations, {\bf261} (2016), 5803--5828

\bibitem{Bergh J} Bergh, J{\"o}ran; L{\"o}fstr{\"o}m, J{\"o}rgen: Interpolation spaces. An introduction, Springer-Verlag, 1976.


\bibitem{BGK} Bhatnagar, P. L., Gross, E. P. and Krook, M.: A model for collision processes in gases. Small amplitude process in charged and neutral one-component systems. Phys. Rev., {\bf 94} (1954), 511-525.

\bibitem{Brull-Yun} Brull, S., Yun, S.-B.: Stationary Flows of the ES-BGK model with the correct Prandtl number. SIAM J. Math. Anal. {\bf56} (2024), no. 5, 6361–6397.

\bibitem{CSS} Cameron, Stephen; Silvestre, Luis; Snelson, Stanley: Global a priori estimates for the inhomogeneous Landau equation with moderately soft potentials. Ann. Inst. H. Poincar{\'e} C Anal. Non Lin{\'e}aire {\bf35} (2018), no.3, 625–642.

\bibitem{CS} Cameron, Stephen; Snelson, Stanley: Velocity decay estimates for Boltzmann equation with hard potentials. Nonlinearity {\bf33} (2020), no. 6, 2941–2958.

\bibitem{CHJ} Cao, Chuqi; He, Ling-Bing; Ji, Jie: Propagation of moments and sharp convergence rate for inhomogeneous noncutoff Boltzmann equation with soft potentials. SIAM J. Math. Anal. {\bf56} (2024), no. 1, 1321–1426.

\bibitem{Carlemann} Carlemann, I.: Sur la th\'eorie de l'\'equation int\'egrodifferentielle de Boltzmann, Acta Math. 60 (1933), no. 1, 91–146, doi:10.1007/BF02398270.

\bibitem{TRN1} Chen, T., Denlinger, R., Pavlovi\'c, N.: Local well-posedness for Boltzmann’s equation and the Boltzmann hierarchy via Wigner transform. Comm. Math. Phys. 368, 427–465 (2019)

\bibitem{TRN2} Chen, T., Denlinger, R., Pavlovi\'c, N.: Small data global well-posedness for a Boltzmann equation via bilinear spacetime estimates. Arch. Ration. Mech. Anal. 240, 327–381 (2021)

\bibitem {CH} Chen, X., Holmer, J.:
Well/Ill-Posedness Bifurcation for the Boltzmann Equation with Constant Collision Kernel.Ann. PDE10(2024), no.2, 14.

\bibitem {CSZ} Chen, X., Shen, S. and Zhang Z.:
Well/Ill-posedness of the Boltzmann Equation with Soft Potential, Comm. Math. Phys. {\bf405} (2024), no. 12, Paper No. 283, 51 pp.

\bibitem{C-Z} Chen, Z., Zhang, X.: Global existence and uniqueness to the Cauchy problem of the BGK equation with infinite energy.
Math. Methods Appl. Sci. {\bf 39}, no.11, 3116--3135 (2016)

\bibitem{Cercignani} Cercignani, C.: The Boltzmann Equation and Its Application. Springer-Verlag, 1988

\bibitem{CC} Chapman, C. and Cowling, T. G.: The mathematical theory of non-uniform gases, Cambridge University Press, 1970.

\bibitem{Desvillettes3} Desvillettes, L.: Entropy dissipation estimates for the Landau equation in the Coulomb case and applications. J. Funct. Anal. {\bf269} (2015), no. 5, 1359–1403.

\bibitem{Desvillettes} Desvillettes, L.: Some applications of the method of moments for the homogeneous Boltzmann and Kac equations. Arch. Rational Mech. Anal. {\bf123} (1993), no. 4, 387–404.

\bibitem{Desvillettes2} Desvillettes, L.; Villani, C.: On the spatially homogeneous Landau equation for hard potentials. I. Existence, uniqueness and smoothness. Comm. Partial Differential Equations {\bf25} (2000), no. 1-2, 179–259.

\bibitem{DL} DiPerna, R. and Lions, P-L.: On the Cauchy problem for Boltzmann equations: global existence and weak stability, Ann. of Math. (2) 130 (1989), no. 2, 321–366, doi:10.2307/1971423.

\bibitem{DHWY} Duan, R., Huang, F., Wang, Y., Yang, T.: Global well-posedness of the Boltzmann equation with large amplitude initial data, Arch. Rational Mech. Anal. {\bf 225} (2017), no. 1, 375--424.
\bibitem{DKL2019} Duan, R., Ko, G., Lee, D.: The Boltzmann equation with a class of large-amplitude initial data and specular reflection boundary condition. J. Stat. Phys. {\bf 190} (2023), no.12, Paper No. 189, 46 pp.

\bibitem{DuanAdv} Duan, R., Wang, Y.: The Boltzmann equation with large-amplitude initial data in bounded domains. Adv. Math.{\bf343}(2019), 36–109.

\bibitem{Elmroth} Elmroth, T.: Global boundedness of moments of solutions of the Boltzmann equation for forces of infinite range.Arch. Rational Mech. Anal. {\bf82} (1983), no. 1, 1–12.

\bibitem{Fournier} Fournier, Nicolas: On exponential moments of the homogeneous Boltzmann equation for hard potentials without cutoff. Comm. Math. Phys. {\bf387} (2021), no. 2, 973–994.

\bibitem{GPV} Gamba, I. M.; Panferov, V.; Villani, C.: Upper Maxwellian bounds for the spatially homogeneous Boltzmann equation. Arch. Ration. Mech. Anal. {\bf194}(2009), no. 1, 253–282.

\bibitem{GPT} Gamba, Irene M.; Pavlovi{\'c}, Nata{\v s}a; Taskovi{\'c}, Maja: On pointwise exponentially weighted estimates for the Boltzmann equation. SIAM J. Math. Anal. {\bf51} (2019), no. 5, 3921–3955.

\bibitem{Grad} Grad, H.: Principles of the kinetic theory of gases, Handbuch der Physik [Encyclopedia of Physics], vol. Bd. 12, Thermodynamik der, Springer-Verlag, Berlin-G\"ottingen-Heidelberg, 1958, Herausgegeben von S. Fl\"ugge, pp. 205–294.

\bibitem{GMM} Gualdani, M. P.; Mischler, S.; Mouhot, C.: Factorization of non-symmetric operators and exponential H-theorem. M{\'e}m. Soc. Math. Fr. (N.S.) (2017), no. 153, 137 pp.

\bibitem{NGLS} Guillen, N. and Silvestre, L.: The Landau equation does not blow up, Acta Math. {\bf234} (2025), no. 2, 315–375.

\bibitem {HJ} He, L.-B., Jiang, J.-C.: On the Cauchy problem for the cutoff Boltzmann equation with small initial data. J. Stat. Phys. 190 (2023), no. 3, Paper No. 52, 25 pp.

\bibitem{HY} Hwang, B.-H., Yun, S.-B.: Stationary solutions to the boundary value problem for the relativistic BGK model in a slab. Kinet. Relat. Models {\bf12} (2019), no. 4, 749–-764. 

\bibitem{IS} Illner, R. and Shinbrot, M.: The Boltzmann equation: global existence for a rare gas in an infinite vacuum, Comm. Math. Phys. 95 (1984), no. 2, 217–226.

\bibitem{ISN} Illner, R., Shinbrot, M. and Neunzert H. Blow-up of solutions of the gain-term-only Boltzmann equation. MATH METHOD APPL SCI. Vol 9, (1987), Issue 1, 251-259.

\bibitem{IMS} Imbert, Cyril; Mouhot, Cl{\'e}ment; Silvestre, Luis: Decay estimates for large velocities in the Boltzmann equation without cutoff. J. {\'E}c. polytech. Math. {\bf7} (2020), 143–184.

\bibitem{Issau} Issautier, D.: Convergence of a weighted particle method for solving the Boltzmann (B.G,K.) equaiton, Siam Journal on Numerical Analysis,{\bf 33}, no 6 (1996), 2099-2199.

\bibitem{KS} Kaniel, S. and Shinbrot, M.: The Boltzmann equation. I. Uniqueness and local exis- tence, Comm. Math. Phys. 58 (1978), no. 1, 65–84.

\bibitem{KLP} Ko, G., Lee, D., Park, K.: The large amplitude solution of the Boltzmann equation with soft potential, J. Differential Equations, 307 (2022), 297-347.

\bibitem{LM} Lu, Xuguang; Mouhot, Cl{\'e}ment: On measure solutions of the Boltzmann equation, part I: moment production and stability estimates. J. Differential Equations {\bf252} (2012), no. 4, 3305–3363.

\bibitem{Mischler} Mischler, S.: Uniqueness for the BGK-equation in $R^n$ and the rate of convergence for a semi-discrete scheme. Differential integral Equations {\bf 9} (1996), no.5, 1119--1138.

\bibitem{MisWen} Mischler, S. and Wennberg, B.: On the spatially homogeneous Boltzmann equation. Ann. Inst. Henri Poincar\'e. {\bf16}, no 4, pp. 467-501 (1999)

\bibitem{L1Mouhot} Mouhot, C.: Rate of Convergence to Equilibrium for the Spatially Homogeneous Boltzmann Equation with Hard Potentials. Commun. Math. Phys. 261, 629–672 (2006)

\bibitem{PY1} Park, S. J. and Yun, S.-B.:  Cauchy problem for the ellipsoidal-BGK model of the Boltzmann equation. J. Math. Phys. {\bf57} (8), 081512 (2016).

\bibitem{PY2} Park, S. J. and Yun, S.-B.: Cauchy problem for the ellipsoidal BGK model for polyatomic particles. J. Differ. Equations {\bf266} (11), 7678-–7708 (2019). 





\bibitem{Perthame} Perthame, B.: Global existence to the BGK model of Boltzmann equation. J. Differential Equations. {\bf 82} (1989), no.1, 191-205.

\bibitem{P-P} Perthame, B., Pulvirenti, M.: Weighted $L^{\infty}$ bounds and uniqueness for the Boltzmann BGK model. Arch. Rational Mech. Anal. {\bf 125}
(1993), no. 3, 289--295.

\bibitem{Povzner} Povzner, A.J.: On the Boltzmann equation in the kinetic theory of gases. Mat. Sb. (N.S.) {\bf58}(100), 65–86 (1962)


\bibitem{RSY} Russo, G., Santagati, P., and Yun, S.-B.: Convergence of a semi-Lagrangian scheme for the BGK model of the Boltzmann equation. SIAM Journal on Numerical Analysis,{\bf 50}, no 3 (2012), 1111-1135.

\bibitem{RY} Russo, G. and Yun, S.-B. : Convergence of a Semi-Lagrangian Scheme for the Ellipsoidal BGK Model of the Boltzmann Equation.  SIAM J. Numer. Anal., {\bf56} (6), (2018),  3580--3610.

\bibitem{Son-Yun} Son, S.-j.; Yun, S.-B.: The ES-BGK for the polyatomic molecules with infinite energy. J. Stat. Phys. {\bf190} (2023), no. 8, Paper No. 129, 27 pp.

\bibitem{Sone} Sone, Y.: Kinetic Theory and Fluid Mechanics. Boston: Birkh\"{a}user, 2002.
\bibitem{Sone2} Sone, Y.: Molecular Gas Dynamics: Theory, Techniques, and Applications. Boston: Brikh\"{a}user, 2006.

\bibitem{TAGP}Taskovi{\'c}, Maja; Alonso, Ricardo J.; Gamba, Irene M.; Pavlovi{\'c}, Nata{\v s}a: On Mittag-Leffler moments for the Boltzmann equation for hard potentials without cutoff. SIAM J. Math. Anal. {\bf50} (2018), no. 1, 834–869.

\bibitem{Tricomi} Tricomi, F.G.: Asymptotische eigenschaften der unvollständigen gammafunktion. Math. Z. {\bf 53} (1950), 136–148.

\bibitem{Seiji Ukai} Ukai, S.: On the existence of global solutions of mixed problem for non-linear Boltzmann equation, Proc. Japan Acad. 50 (1974), 179–184.

\bibitem{V} Villani, C.: A Review of mathematical topics in collisional kinetic theory. Handbook of mathematical fluid dynamics. Vol. I. North-Holland. Amsterdam, 2002, 71-305.

\bibitem{V2} Villani, C.: On the spatially homogeneous Landau equation for Maxwellian molecules. Math. Models Methods Appl. Sci. {\bf8} (1998), no. 6, 957–983.

\bibitem{Wal} Walender, P.: On the temperature jump in a rarefied gas, Ark, Fys. {\bf 7}  (1954), 507-553.

\bibitem{HF} Wang, Hao; Fang, Zhendong: Global existence and time decay of the non-cutoff Boltzmann equation with hard potential. Nonlinear Anal. Real World Appl. {\bf63} (2022), Paper No. 103416, 32 pp.

\bibitem{W-Z} Wei, J., Zhang, X.: The Cauchy problem for the BGK equation with an external force. J. Math. Anal. Appl. {\bf391}, no.1, 10--25, (2012)



\bibitem{Yun1} Yun, S.-B.: Cauchy problem for the Boltzmann-BGK model near a global Maxwellian. J. Math. Phy. {\bf 51} (2010), no. 12, 123514, 24pp.
\bibitem{Yun2} Yun, S.-B.: Classical solutions for the ellipsoidal BGK model with fixed collision frequency. J. Differential Equations  259  (2015),  no. 11, 6009–-6037

\bibitem{Yun22} Yun, S.-B.: Ellipsoidal BGK model for polyatomic molecules near Maxwellians: a dichotomy in the dissipation estimate. J. Differential Equations {\bf266} (2019), no. 9, 5566–-5614.
\bibitem{Yun3} Yun, S.-B.: Ellipsoidal BGK model near a global Maxwellian. SIAM J. Math. Anal. {\bf47} (2015), no. 3, 2324–-2354.

\bibitem{Z-VP} Zhang, X.: Global weak solutions to the Vlasov–Poisson–BGK system for initial data in $L^p (\rrr \times \rrr)$.
Appl. Math. Lett.{\bf26}, no.11, 1087–-1093 (2013)

\bibitem{Zhang} Zhang, X: On the Cauchy problem of the Vlasov-Poisson-BGK system: global existence of weak solutions
J Stat Phys, {\bf141}, 566-–588 (2010)

\bibitem{Z-H} Zhang, X., Hu, S.: $L^p$ solutions to the Cauchy problem of the BGK equation,
J. Math. Phys. 48, 113304 (2007)


\end{thebibliography}

\end{document}
